\documentclass[12pt]{article}

\usepackage{amsmath}
\usepackage{amssymb}
\usepackage{amsthm}
\usepackage[a4paper, margin=2.54cm]{geometry}
\usepackage{lmodern}
\usepackage{microtype}
\usepackage{mathtools}
\usepackage{array}
\usepackage{xspace}
\usepackage{tikz}
\usetikzlibrary{arrows.meta,positioning}
\tikzset{
  permdot/.style={circle,fill=black,inner sep=1.1pt},
  covedge/.style={thick,black!75},
  cutdot/.style={circle,draw=black,fill=white,thick,inner sep=1.6pt},
  peelarr/.style={-{Stealth[length=4pt]},thick},
  auxarr/.style={-{Stealth[length=3pt]},densely dashed,black!55},
  corebox/.style={rounded corners=3pt,densely dashed,black!55}
}
\usepackage{comment}
\usepackage[colorlinks=true,linkcolor=blue,citecolor=blue]{hyperref}
\usepackage{cleveref}
\crefformat{equation}{(#2#1#3)}
\Crefformat{equation}{(#2#1#3)}
\crefrangeformat{equation}{(#3#1#4)--(#5#2#6)}
\Crefrangeformat{equation}{(#3#1#4)--(#5#2#6)}
\crefmultiformat{equation}{(#2#1#3)}{ and (#2#1#3)}{, (#2#1#3)}{ and (#2#1#3)}
\Crefmultiformat{equation}{(#2#1#3)}{ and (#2#1#3)}{, (#2#1#3)}{ and (#2#1#3)}
\crefname{enumi}{part}{parts}
\Crefname{enumi}{Part}{Parts}
\creflabelformat{enumi}{(#2#1#3)}
\crefname{section}{Section}{Sections}
\Crefname{section}{Section}{Sections}
\crefname{subsection}{Section}{Sections}
\Crefname{subsection}{Section}{Sections}
\crefname{theorem}{Theorem}{Theorems}
\Crefname{theorem}{Theorem}{Theorems}
\crefname{proposition}{Proposition}{Propositions}
\Crefname{proposition}{Proposition}{Propositions}
\crefname{lemma}{Lemma}{Lemmas}
\Crefname{lemma}{Lemma}{Lemmas}
\crefname{corollary}{Corollary}{Corollaries}
\Crefname{corollary}{Corollary}{Corollaries}
\crefname{conjecture}{Conjecture}{Conjectures}
\Crefname{conjecture}{Conjecture}{Conjectures}
\crefname{definition}{Definition}{Definitions}
\Crefname{definition}{Definition}{Definitions}
\crefname{remark}{Remark}{Remarks}
\Crefname{remark}{Remark}{Remarks}
\crefname{example}{Example}{Examples}
\Crefname{example}{Example}{Examples}
\crefname{table}{Table}{Tables}
\Crefname{table}{Table}{Tables}
\crefname{figure}{Figure}{Figures}
\Crefname{figure}{Figure}{Figures}
\usepackage{amsrefs}

\allowdisplaybreaks

\usepackage{aliascnt}
\newtheorem{theorem}{Theorem}[section]
\newaliascnt{proposition}{theorem}
\newtheorem{proposition}[proposition]{Proposition}
\aliascntresetthe{proposition}
\newaliascnt{lemma}{theorem}
\newtheorem{lemma}[lemma]{Lemma}
\aliascntresetthe{lemma}
\newaliascnt{corollary}{theorem}
\newtheorem{corollary}[corollary]{Corollary}
\aliascntresetthe{corollary}
\newaliascnt{conjecture}{theorem}

\aliascntresetthe{conjecture}
\theoremstyle{definition}
\newaliascnt{definition}{theorem}
\newtheorem{definition}[definition]{Definition}
\aliascntresetthe{definition}
\newaliascnt{remark}{theorem}
\newtheorem{remark}[remark]{Remark}
\aliascntresetthe{remark}
\newaliascnt{example}{theorem}
\newtheorem{example}[example]{Example}
\aliascntresetthe{example}

\newcommand{\prig}{pseudorigid\xspace}
\newcommand{\Prig}{Pseudorigid\xspace}
\newcommand{\prigity}{pseudorigidity\xspace}
\newcommand{\Prigity}{Pseudorigidity\xspace}
\newcommand{\obt}{obtuse\xspace}

\newcommand{\smth}{smooth\xspace}
\newcommand{\Smth}{Smooth\xspace}
\newcommand{\smthness}{smoothness\xspace}
\newcommand{\psm}{pseudosmooth\xspace}
\newcommand{\Psm}{Pseudosmooth\xspace}
\newcommand{\psmness}{pseudosmoothness\xspace}
\newcommand{\good}{good\xspace}
\newcommand{\cb}{corner-blocked\xspace}
\newcommand{\cbness}{corner-blockedness\xspace}

\newcommand{\goodness}{goodness\xspace}
\newcommand{\Goodness}{Goodness\xspace}
\newcommand{\spc}{special\xspace}

\newcommand{\spcness}{specialness\xspace}
\newcommand{\pal}{palette\xspace}      % a color set admitting a peeling datum

\newcommand{\pals}{palettes\xspace}
\newcommand{\fpal}{framed \pal}        % a palette containing the frame

\newcommand{\pset}{A}                  % the symbol for a palette
\newcommand{\pvs}{prehomogeneous\xspace}

\newcommand{\pvsness}{prehomogeneity\xspace}
\newcommand{\wpvs}{pseudo\pvs}

\newcommand{\wpvsness}{pseudo\pvsness}
\newcommand{\dgd}{diagonal deficiency\xspace}

\newcommand{\pwd}{pairwise deficiency\xspace}

\newcommand{\argmin}{\operatorname{argmin}}
\newcommand{\orbs}{\mathcal{O}}
\newcommand{\orb}{\mathfrak{o}}
\newcommand{\pstn}{\mathfrak{p}}
\newcommand{\typ}{\mathfrak{t}}
\newcommand{\topp}{\mathfrak{s}}
\newcommand{\rbm}{\mathfrak{r}}  % the radical of a bimodule
\newcommand{\sms}{\operatorname{ss}}
\newcommand{\cyclem}{\mathcal{C}_-}
\newcommand{\cyclep}{\mathcal{C}_+}

\newcommand{\ZZ}{\mathbb{Z}}
\newcommand{\NN}{\mathbb{N}}
\newcommand{\CC}{\mathbb{C}}
\newcommand{\mix}{\mathrm{mix}}   % subscript: the mixed Peirce part
\newcommand{\diag}{\mathrm{diag}} % subscript: the diagonal Peirce part
\newcommand{\Lec}{\mathrm{Lec}}   % subscript: Leclerc's multisegment
\newcommand{\op}{\mathrm{op}}
\newcommand{\HH}{\mathrm{HH}}
\newcommand{\bd}{\boldsymbol{d}}  % graded dimension vectors
\newcommand{\be}{\boldsymbol{e}}

\newcommand{\GL}{\operatorname{GL}}

\newcommand{\pr}{\operatorname{pr}}
\newcommand{\tr}{\operatorname{tr}}

\newcommand{\Ext}{\operatorname{Ext}}
\newcommand{\ext}{\operatorname{ext}}
\newcommand{\Hom}{\operatorname{Hom}}
\newcommand{\End}{\operatorname{End}}
\newcommand{\Aut}{\operatorname{Aut}}
\newcommand{\ad}{\operatorname{ad}}
\newcommand{\grdim}{\operatorname{grdim}}
\newcommand{\Comp}{\operatorname{Comp}}
\newcommand{\cmp}{C}                       % an irreducible component of Lusztig's nilpotent variety
\newcommand{\Mfr}{\mathfrak{M}}
\newcommand{\mm}{\mathfrak{m}}
\newcommand{\kk}{\Bbbk}
\newcommand{\alg}{\mathcal{A}}   % the associative algebra of a pair
\newcommand{\bmdl}{\mathcal{D}}   % the bimodule of a pair
\newcommand{\XX}{\mathbb{X}}  % the linkage relation of a multisegment
\newcommand{\YY}{\mathbb{Y}}  % the homomorphism relation of a multisegment
\newcommand{\DD}{\mathbb{D}}  % the diagonal part of \YY
\newcommand{\Kk}{K}         % the index set of a multisegment
\newcommand{\qvert}{Q_0}    % the vertex set of the quiver Q
\newcommand{\qarr}{Q_1}     % the arrow set of the quiver Q
\newcommand{\algbss}{\alpha}     % basis vectors of \alg (matrix units)
\newcommand{\bmdlbss}{\beta}    % basis vectors of \bmdl (matrix units)
\newcommand{\Ff}{\mathcal{F}}   % the fringe subbimodule
\newcommand{\Vv}{\mathcal{V}}   % the quotient bimodule \bmdl/\Ff
\newcommand{\pquo}{\theta}      % the quotient map \bmdl \to \Vv
\newcommand{\dfr}{\mathfrak{d}}  % the corank invariants \dfr_\rsub, \dfr_\psub
\newcommand{\Sop}{S}
\newcommand{\Dop}{D}
\newcommand{\rsub}{\sharp}       % subscript: rigidity (equal parameters)
\newcommand{\psub}{\flat}        % subscript: \prigity (free parameters)
\newcommand{\rank}{\operatorname{rank}}
\newcommand{\rad}{\operatorname{rad}}
\newcommand{\gen}{\mathrm{gen}}         % superscript: the set of generators of \bmdl
\newcommand{\cork}{\operatorname{corank}}
\newcommand{\coker}{\operatorname{coker}}
\newcommand{\im}{\operatorname{im}}
\newcommand{\Span}{\operatorname{span}}
\newcommand{\wo}{w_0}                 % the longest element, i \mapsto n+1-i
\newcommand{\rc}{\rho}                % reverse-complement: \rc\sigma = \wo\sigma\wo
\newcommand{\PA}{\mathcal{P}}          % the class of \psm permutations
\newcommand{\Av}{\operatorname{Av}}
\newcommand{\std}{\operatorname{std}}
\newcommand{\ptst}{U}                  % a set of positions of a permutation
\newcommand{\Gc}{\mathcal{G}}          % the colored bipartite graph
\newcommand{\win}{\operatorname{win}}     % the window of a record
\newcommand{\Cov}{\operatorname{Cov}}  % the set of minimal inversions
\newcommand{\Fr}{\operatorname{Fr}}    % the frame of a permutation
\newcommand{\pos}{\operatorname{pos}}  % the position of a point
\newcommand{\Wcr}{W\xspace}    % the first position      = the leftmost point
\newcommand{\Ncr}{N\xspace}    % the position of value n = the highest point
\newcommand{\Scr}{S\xspace}    % the position of value 1 = the lowest point
\newcommand{\Ecr}{E\xspace}    % the last position       = the rightmost point
\newcommand{\SWcr}{SW\xspace}  % the pair \Wcr,\Scr and its \core
\newcommand{\NEcr}{NE\xspace}  % the pair \Ncr,\Ecr and its \core
\newcommand{\blam}{\boldsymbol{\lambda}}   % a color of type \blam
\newcommand{\bmu}{\boldsymbol{\mu}}        % a color of type \bmu
\newcommand{\prj}{\varpi}                  % projection {1,...,n} -> {1,...,k}
\newcommand{\prjY}{\varpi_{\YY}}           % induced map \YY_\sigma -> \YY_\tau
\newcommand{\lmin}{\mathfrak{l}_{\min}}    % first position of a block
\newcommand{\lmax}{\mathfrak{l}_{\max}}    % last position of a block
\newcommand{\cl}{\mathfrak{l}}             % corner lift of an inversion, or a color
\newcommand{\core}{corner core\xspace}

\newcommand{\cores}{corner cores\xspace}
\newcommand{\Cm}{C_{-}}
\newcommand{\Cp}{C_{+}}
\newcommand{\Cpm}{C_{\pm}}
\newcommand{\Rm}{R_{-}}
\newcommand{\Rp}{R_{+}}
\newcommand{\Rpm}{R_{\pm}}
\newcommand{\clr}{\mathfrak{k}}            % a color
\newcommand{\crn}{\mathbf{c}}              % a corner
\newcommand{\cyc}{\mathfrak{c}}            % cycle rank of a graph
\newcommand{\acyc}{t}                     % number of acyclic components of a graph
\newcommand{\cgr}{\Gamma}                 % the cover graph
\newcommand{\igr}{G}                      % the inversion graph

\title{A hereditary theorem for rigid and \prig components of Lusztig's nilpotent varieties in type $A$}
\author{Erez Lapid and Mark Shusterman\\[3pt]
\normalsize Department of Mathematics, Weizmann Institute of Science, Rehovot 7610001, Israel\\[2pt]
\normalsize\texttt{erez.m.lapid@gmail.com}, \texttt{mark.shusterman@weizmann.ac.il}}
\date{}

\begin{document}
\maketitle

\begin{abstract}
\noindent
For irreducible components $\cmp_\sigma$ of Lusztig's nilpotent varieties of type $A$
with graded dimension $(1,2,\dots,n,n-1,\dots,1)$ arising from permutations $\sigma\in S_n$, we 
characterize the property that $\cmp_\sigma\oplus\cmp_\sigma$ is an irreducible component
combinatorially in terms of $\sigma$. The permutations that occur, which we call
\emph{\psm}, are described by a recursion on direct sums and deleting suitable corners, whose terminal
cases are the permutations obtained from
\[
3412,\quad 4231,\quad 35142,\quad 42513,\quad 45312,\quad 426153,\quad 463152,\quad 526413
\]
by inflating the entries into consecutive decreasing blocks. The main new input is a
hereditary property valid
for decompositions of multisegments with disjoint extreme points.
\end{abstract}

\setcounter{tocdepth}{1}
\tableofcontents

\section{Introduction}\label{sec:intro}

Let $Q$ be the equioriented quiver $A_N$.
Let $R_Q(V)$ be the space of representations of $Q$ on a finite-dimensional graded vector space $V$ over an algebraically closed field $\kk$ of
arbitrary characteristic and let $\Lambda(V)\subset T^*(R_Q(V))$ be Lusztig's nilpotent variety \cite{MR1088333}.
It parametrizes modules over the preprojective algebra $\Pi=\Pi(Q)$ and its irreducible components are the closures of the conormal bundles of the $G_V$-orbits on $R_Q(V)$.
As we vary $V$, they are indexed by multisegments, which are formal sums of segments $[a,b]$, $1\le a\le b\le N$.
An irreducible component $\cmp_\mm$ (and by extension, the multisegment $\mm$ itself) of $\Lambda(V)$ is
\emph{rigid} if it contains an open orbit, equivalently a point $x$ with $\Ext^1_\Pi(x,x) = 0$.
These are important for the representation theory of the general linear groups over a non-archimedean local field \cites{MR584084,MR3866895,MR4847251},
and likewise for quiver Hecke (KLR) algebras, where the analogous notion is that of a \emph{real} simple module \cites{MR3314831,MR4016058}.
Rigid modules over preprojective algebras are also the representation-theoretic counterpart of cluster monomials in the work of Geiss--Leclerc--Schr\"oer
\cites{MR2242628,MR2822235}, a point of view that developed into the monoidal categorification of cluster algebras \cites{MR2682185,MR3758148,MR4094378}.
The first example of a non-rigid multisegment was given by Leclerc: $[1,2]+[2,4]+[3,3]+[4,5]$ \cite{MR1959765}.
In the special case where the multisegment $\mm=\sum_{i=1}^n[a_i,b_i]$ is regular in the sense that the $a_i$'s are distinct and the $b_i$'s are distinct
there is a simple geometric/combinatorial criterion for the rigidity of $\mm$ \cites{MR3866895,MR4757326}.\footnote{The corrigendum \cite{MR4757326}
concerns only \cite{MR3866895}*{Corollary 9.6} and the basic cases of \cite{MR3866895}*{\S8}; all the statements of \cite{MR3866895}
that we use --- \S3.5, \S4.1, Remark~4.11, Example~4.12 and the equivalence of rigidity with smoothness --- are unaffected.}
In the typical case where
\[
\mm=\mm_\sigma=\sum_{i=1}^n[i,2n-\sigma(i)]
\]
for a permutation $\sigma\in S_n$, $\mm$ is rigid if and only if $\sigma$ is $4231$- and $3412$-avoiding \cite{MR3866895}, which is exactly the condition that
the Schubert variety in the complete flag variety of $\GL_n$ corresponding to $\sigma$ is smooth \cite{MR1051089}. (Such permutations are called smooth.)

The purpose of the current paper is to study a weaker notion of rigidity, which we call \emph{\prigity}.
By definition, an irreducible component $\cmp$ (or the corresponding multisegment $\mm$) is \emph{\prig} if $\Ext^1_\Pi(x,y) = 0$ for a generic \emph{pair} $x,y\in\cmp$.
Equivalently, $\cmp\oplus\cmp$, the closure of the orbits of direct sums $x\oplus y$, $x,y\in\cmp$, is an irreducible component of $\Lambda(V\oplus V)$ \cite{MR1944812}*{Theorem 1.2},
in which case it is parametrized by $\mm+\mm$.
In the language of \cite{MR4847234}, \prigity is self-strong-commutation.
Over $\CC$ this also implies that the element $\rho_{\cmp}$ of Lusztig's dual semicanonical basis parametrized by $\cmp$ satisfies $\rho_{\cmp}^2=\rho_{\cmp\oplus\cmp}$ \cite{MR2144987}.\footnote{In contrast, this is not true for the corresponding element of the dual canonical basis in Leclerc's example, although the component is \prig (\Cref{ex:leclerc}).}

Both rigidity and \prigity amount to generic surjectivity of certain explicit families $\Sop_\lambda$ and $\Sop_{\lambda,\mu}$ of linear maps that depend linearly on parameters.
More precisely, for any $\lambda,\mu\in\Ext^1_Q(M,M)^*$
\[
\Sop_{\lambda,\mu} : \End_Q(M)\to \Ext^1_Q(M,M)^* ,
\]
is defined by $\Sop_{\lambda,\mu}(a) = a\lambda - \mu a$ and $\Sop_\lambda=\Sop_{\lambda,\lambda}$ is the differential of the orbit map
$\Aut_Q(M)\to \Ext^1_Q(M,M)^*$ of $\lambda$.

The main result of the paper is to characterize combinatorially the permutations $\sigma\in S_n$ for which $\mm_\sigma$ is \prig.

To explain this, we first recall an alternative recursive description of smooth permutations.
Let us say that the \Wcr corner of $\sigma$ is \good if $\sigma(j)<\sigma(i)$ whenever $\sigma(j),\sigma(i)<\sigma(1)$ with $i<j$.
Similarly for the \Scr, \Ecr, \Ncr corners, by passing to $\sigma^{-1}$, $\rc\sigma$, $\rc\sigma^{-1}$ where $\rc$ denotes the reverse-complement.
Then, a nonempty permutation $\sigma$ is smooth if and only if $\sigma$ has a \good $\Wcr$ or $\Scr$ corner and deleting it results in a smooth permutation in $S_{n-1}$.

We can analogously define recursively the class of \emph{\psm} permutations.
The permutation $1\in S_1$ is \psm.
For $n>1$, an indecomposable permutation $\sigma\in S_n$ is \psm if either it has a \Wcr, \Scr, \Ncr or \Ecr \good corner whose deletion yields a \psm permutation, \emph{or} $\sigma$ is in one of eight families, obtained from the permutations
\begin{equation} \label{eq:8quot}
   3412,\quad 4231,\quad 35142,\quad 42513,\quad 45312,\quad 426153,\quad 463152,\quad 526413,
\end{equation}
by inflating the entries by decreasing consecutive blocks of arbitrary length and keeping the relative order.
Finally, a decomposable permutation is \psm if its indecomposable components are.

Our first main result (\Cref{thm:mainA}) is that $\sigma$ is \psm if and only if $\mm_\sigma$ is \prig.

The two directions of \Cref{thm:mainA} are proved by different means.
The forward direction is proved by induction.
It provides support sets of $\lambda$ and $\mu$ that guarantee surjectivity of $\Sop_{\lambda,\mu}$.
The procedure is compatible with \good corners and the eight families above (the \emph{towers}), and eventually boils down to explicit certificates for the eight permutations in \eqref{eq:8quot}.

For the converse direction we prove the following hereditary property, which is our second main result, valid
for rigidity and \prigity alike, for arbitrary multisegments in equioriented type $A$.

\begin{theorem}\label{thm:mainB}
Let $\mm = \sum_{i \in \Kk} [a_i,b_i]$ be a multisegment and let $\Kk = I \sqcup J$ be a partition such that no beginning $a_i$ and no end
$b_i$ occurs in both blocks. If the component $\cmp_\mm$ is rigid (resp.\ \prig),
then so are $\cmp_{\mm_I}$ and $\cmp_{\mm_J}$.
\end{theorem}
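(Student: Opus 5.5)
We work with the linear-algebra formulation from the introduction. Rigidity (resp.\ \prigity) of $\cmp_\mm$ means that for generic $\lambda$ (resp.\ generic $\lambda,\mu$) in $\Ext^1_Q(M,M)^*$ the map $\Sop_{\lambda,\mu}\colon\End_Q(M)\to\Ext^1_Q(M,M)^*$ is surjective. Here $M=M_\mm=\bigoplus_{i\in\Kk}M_{[a_i,b_i]}$, and $\mu=\lambda$ in the rigid case. Write $M=M_I\oplus M_J$. Then $\End_Q(M)$ and $\Ext^1_Q(M,M)^*$ decompose into four Peirce blocks $(X,Y)$ with $X,Y\in\{I,J\}$, and $\Sop_{\lambda,\mu}$ is assembled from these blocks. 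The plan is to show that, under the hypothesis on $I,J$, surjectivity of the full map forces surjectivity of the diagonal pieces $\Sop_{\lambda_{II},\mu_{II}}$ on $\End_Q(M_I)$ and $\Sop_{\lambda_{JJ},\mu_{JJ}}$ on $\End_Q(M_J)$. Since $(\lambda_{II},\mu_{II})$ is generic when $(\lambda,\mu)$ is, this gives the conclusion.

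First I would compute the relevant Hom and Ext spaces between intervals. For equioriented $A_N$, $\Hom(M_{[a,b]},M_{[c,d]})\neq0$ iff $c\le a\le d\le b$ (up to convention), and $\Ext^1(M_{[a,b]},M_{[c,d]})\neq0$ iff $a<c\le b+1\le d$, again up to convention. The disjointness hypothesis says that no $a_i$ with $i\in I$ equals any $a_j$ with $j\in J$, and likewise for the ends. The key combinatorial step is to show that the hypothesis kills the "diagonal" correction terms. Consider the component of $\Sop_{\lambda,\mu}(a)$ in the $(I,I)$ Ext-block. It equals $\Sop_{\lambda_{II},\mu_{II}}(a_{II})$ plus cross terms $a_{JI}\lambda_{IJ}$-type products, more precisely products involving $a_{IJ}$ or $a_{JI}$ composed with $\lambda_{JI}$, $\lambda_{IJ}$, $\mu_{JI}$ or $\mu_{IJ}$.

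To handle these cross terms I would introduce a grading, or a filtration by a torus. Use the one-parameter subgroup scaling the summands $M_i$ with $i\in J$ by $t$. This conjugates $\Sop_{\lambda,\mu}$ into the same map with the off-diagonal blocks of $\lambda$ and $\mu$ scaled by $t^{\pm1}$. Surjectivity is an open condition and the generic locus is $T$-stable. So I would rather degenerate $\lambda,\mu$ to their block-diagonal parts and argue semicontinuity in the other direction, dimension-counting by the corank. The corank of $\Sop$ is upper semicontinuous, so degenerating only increases it, and this direction alone does not help.

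Instead, the target of the argument is a splitting. I would show that the off-diagonal blocks of $\End_Q(M)$ map, under $\Sop_{\lambda,\mu}$ composed with projection to the diagonal Ext-blocks, into a subspace that is complementary in a controlled way. The real claim needed is that for generic $(\lambda,\mu)$ the image of $\Sop_{\lambda,\mu}$ in $\Ext^1(M_I,M_I)^*$ equals the image of $\Sop_{\lambda_{II},\mu_{II}}$ plus terms that vanish. This is where the hypothesis enters. A composite $M_{[a_i,b_i]}\to M_{[a_j,b_j]}\to M_{[a_{i'},b_{i'}]}$ through $j\in J$, pairing an endomorphism with an Ext-dual element, contributes to $\Ext^1(M_i,M_{i'})^*$. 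I expect, via the fringe/quotient bimodule $\bmdl\to\Vv$ structure suggested by the notation, that such contributions lie in the kernel of the natural pairing unless some endpoint of $j$ coincides with an endpoint of $i$ or $i'$. Concretely, I would check on interval modules that the product map $\Hom(M_i,M_j)\otimes\Ext^1(M_j,M_{i'})\to\Ext^1(M_i,M_{i'})$, and its variants, is zero when the endpoints are pairwise distinct. Passing to duals, this shows that cross terms do not reach the diagonal block.

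Given this vanishing, the composite of $\Sop_{\lambda,\mu}$ with projection to the $(I,I)$ block factors through $a\mapsto a_{II}$ and equals $\Sop_{\lambda_{II},\mu_{II}}$. Surjectivity of the whole map therefore implies surjectivity of this block, and symmetrically for $J$.

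The main obstacle is the vanishing lemma for the compositions. Yoneda products of Hom with $\Ext^1$ between intervals are nonzero in some configurations with distinct endpoints, for instance nested-then-linked intervals. If the naive vanishing fails, I would instead show that the offending products factor through an $\Ext^1$ class that pairs trivially with the relevant part of $\lambda$. Failing that, I would pass to a generic choice in which the relevant $\lambda_{IJ}$ and $\lambda_{JI}$ entries are chosen so that the cross-term image lies inside the image of $\Sop_{\lambda_{II},\mu_{II}}$. This would be done by an explicit matrix-unit analysis with the bases $\algbss,\bmdlbss$, carefully tracking which matrix-unit products survive given distinct endpoints.
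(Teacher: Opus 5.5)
Your proposal has a genuine gap at its central step, and the fallbacks do not close it.

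\emph{The vanishing lemma is false.} Under endpoint separation the cross terms $a_{IJ}\lambda_{JI}-\mu_{IJ}a_{JI}$ do reach $\bmdl_{II}$. Take $\sigma=321$, i.e.\ $\mm_\sigma=[1,3]+[2,4]+[3,5]$ (the Borel case $\alg=\mathfrak{b}_3$, $\bmdl=\mathfrak{n}_3$), with $I=\{1,3\}$, $J=\{2\}$. This multisegment is regular, so the partition is separated, and the middle segment $[2,4]$ shares no endpoint with the other two. Since $(1,2)\in\YY$ and $(2,3),(1,3)\in\XX$, the multiplication rules give $\Sop_{\lambda,\mu}(\algbss_{12})=\lambda_{23}\bmdlbss_{13}$; this is just $E_{12}E_{23}=E_{13}$. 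It is a nonzero element of $\bmdl_{II}$ whose coefficient is a mixed coordinate of $\lambda$. So the $(I,I)$-projection of $\Sop_{\lambda,\mu}$ does not factor through $a\mapsto a_{II}$ at generic parameters, and no refinement of the Yoneda-vanishing lemma will make it do so.

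\emph{The fallbacks do not help.} Choosing the mixed parts of $\lambda,\mu$ specially fails because the hypothesis gives surjectivity only at generic parameters. At the special choice that does kill the cross terms, namely block-diagonal parameters, the corank jumps. For $\mm_{21}=[1,2]+[2,3]$ with $I=\{1\}$, $J=\{2\}$ one has $\bmdl=\bmdl_{\mix}$, so block-diagonal forces $\lambda=\mu=0$ and $\Sop_{0,0}=0$, although $\mm_{21}$ is rigid. This is the wrong-direction semicontinuity you already noticed.

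\emph{The missing idea is to deform the map rather than the parameters.} What separation buys is not vanishing of cross terms but a relative derivation.
Let $\Ff$ be spanned by the $\bmdlbss_{ij}$ with $(i,j)\in\XX_{\mix}\setminus\YY_{\mix}$, the mixed juxtaposed pairs. Separation gives $\YY_{\mix}\subseteq\XX_{\mix}$ and makes $\Ff$ an $\alg$-subbimodule. Then $f(\algbss_{ij})=(\chi(i)-\chi(j))\bar\bmdlbss_{ij}$, with $\chi$ the indicator of $I$, is a derivation $\alg\to\Vv=\bmdl/\Ff$ vanishing on $\alg_{\diag}$ with image $\Vv_{\mix}$. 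Passing from $\bmdl$ to $\Vv$ can only lower coranks and does not change the corners.

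Now set $\Dop^t_{v,w}(a)=av-wa+tf(a)$, so that $\Dop^0_{v,w}=\Sop_{v,w}$. This family is covariant for the twisted affine action $v\mapsto gvg^{-1}+tf(g)g^{-1}$. For $t\ne0$ the map $G\times\Vv_{\diag}\to\Vv$, $(g,z)\mapsto gzg^{-1}+tf(g)g^{-1}$, is dominant, since its differential at $(1,0)$ is $(a,z)\mapsto z+tf(a)$, which is onto. Hence for $t\ne0$ the minimal corank is attained at block-diagonal $(\xi,\eta)$.

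There your projection argument \emph{is} correct. As $f$ lands in $\Vv_{\mix}$, $\pr\circ\Dop^t_{\xi,\eta}$ is the direct sum of the two corner maps precomposed with $a\mapsto(a_I,a_J)$, so the corank is at least $\dfr_\ast(\mm_I)+\dfr_\ast(\mm_J)$. Upper semicontinuity of the corank in $t$ then carries this bound back to $t=0$. In the $\mm_{21}$ example, $\Dop^t_{0,0}(\algbss_{12})=t\bmdlbss_{12}$: the twist is exactly what restores surjectivity at block-diagonal parameters.
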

For the multisegments $\mm_\sigma$ this immediately implies that \prigity is closed under pattern containment.
Therefore, for the converse direction of \Cref{thm:mainA} it suffices to show that $\mm_\sigma$ is not \prig if $\sigma$ is critical, that is
$\sigma$ is not \psm but every point deletion of $\sigma$ is \psm.
We will not classify the critical permutations here, but instead show that their cover graph $\cgr_\sigma$ (the graph of minimal inversions)
is connected and bicyclic.\footnote{For $n\le8$ this is proved by an exhaustive computation.
For $n\ge9$ we show more: $\cgr_\sigma$ is a dumbbell graph, two four-cycles
joined by a path (\Cref{cor:dumbbell}).}
This suffices since a necessary condition for \prigity is that $\cgr_\sigma$ is a pseudoforest.
(For rigidity, a necessary condition is that $\cgr_\sigma$ is acyclic.)

The operators $\Sop_{\lambda,\mu}$ are well defined for any finite-dimensional bimodule $\bmdl$ over a finite-dimensional $\kk$-algebra $\alg$
and give rise to two deficiency invariants: the \emph{\dgd} $\dfr_\rsub(\alg,\bmdl)$,
the minimal corank of $\Sop_\lambda$, and the \emph{\pwd} $\dfr_\psub(\alg,\bmdl)$,
the minimal corank of $\Sop_{\lambda,\mu}$.
They measure lack of rigidity/\prigity.
The additional data for \Cref{thm:mainB} is an idempotent decomposition $1=e_I+e_J$ in $\alg$, with corresponding Peirce decompositions
of $\alg=\alg_{\diag}\oplus\alg_{\mix}$ and $\bmdl=\bmdl_{\diag}\oplus\bmdl_{\mix}$, together with an $\alg_{\diag}$-bilinear derivation
$f:\alg\rightarrow\bmdl/\Ff$ with image $\bmdl_{\mix}/\Ff$ where $\Ff$ is an $\alg$-subbimodule contained in $\bmdl_{\mix}$.
This is a rather stringent condition. In \Cref{prop:twoblock} we show that it implies that
\[
   \dfr_\ast(\alg,\bmdl) \;\ge\; \dfr_\ast(\alg_I,\bmdl_I) + \dfr_\ast(\alg_J,\bmdl_J), \qquad \ast \in \{\rsub,\psub\}.
\]
In particular, $\dfr_\ast(\alg_I,\bmdl_I) = \dfr_\ast(\alg_J,\bmdl_J) = 0$ whenever $\dfr_\ast(\alg,\bmdl) = 0$.
For the application to \Cref{thm:mainB} the choice of $f$ readily suggests itself from the explicit description of $\alg$ and $\bmdl$ in the equioriented type $A$ case.
We get
\[
   \dfr_\ast(\mm) \;\ge\; \dfr_\ast(\mm_I) + \dfr_\ast(\mm_J), \qquad \ast \in \{\rsub,\psub\}.
\]

\Cref{thm:mainB} is already new for rigidity: for regular multisegments it is deduced a posteriori from the
classification of \cite{MR3866895}*{Theorem 7.1} (see Remark 7.6 there), but here it is proved directly
and is the main tool for the classification.
It provides a simple proof of the (known) equivalence of rigidity and smoothness for permutations.
In the forward direction we actually obtain a precise description of the open orbit in terms of non-vanishing of the minimal inversion coordinates
of $\lambda$. The converse direction, thanks to \Cref{thm:mainB},
reduces to the non-rigidity of $\mm_\sigma$ for $\sigma=3412$ and $4231$ alone,
which follows from the fact that $\cgr_\sigma$ is a four-cycle in these cases.

We do not address in this paper the representation-theoretic counterpart of \prigity, and leave it to a future occasion.

\subsection{Conventions}

Throughout, $\kk$ is an algebraically closed field, of arbitrary characteristic.
Much of what we do works for more general fields -- see \Cref{ssec:arbhered,ssec:arbnilp,ssec:arbesi}.
Unless otherwise specified, all vector spaces, linear maps, ranks, varieties, algebras, etc.~are implicitly over $\kk$;
algebras are finite-dimensional, associative and unital and modules/bimodules over them are finite-dimensional.

\section{Heredity}\label{sec:hered}

\subsection{The setup: bimodules and corank invariants}\label{ssec:pairs}

Let $\alg$ be a finite-dimensional algebra, and let $\bmdl$ be a finite-dimensional $\alg$-bimodule.
The unit group $G=\alg^\times$ is open in $\alg$ and hence a smooth linear algebraic group with Lie algebra $\alg$.
It acts on $\bmdl$ by conjugation.
Conforming to the standard terminology (in a more general setup), we say that the pair $(\alg,\bmdl)$, or simply $\bmdl$ itself, is \emph{\pvs} if $\bmdl$ admits an open $G$-orbit.

Note that for any $\lambda\in\bmdl$ the orbit map $g \mapsto g\lambda g^{-1}$ is separable \emph{in any characteristic}.
Indeed, the differential of the orbit map  at $g = 1$ is
\[
\Sop_\lambda:\alg\longrightarrow\bmdl, \ \ \Sop_\lambda(a)=a\lambda - \lambda a.
\]
The stabilizer of $\lambda$ in $G$ is the unit group of the subalgebra
\[
\alg_\lambda = \{a \in \alg :a\lambda = \lambda a\}.
\]
Being open in $\alg_\lambda$, it is therefore smooth with Lie algebra $\alg_\lambda = \ker \Sop_\lambda$.

In particular, the orbit of $\lambda$ is open in $\bmdl$ if and only if $\Sop_\lambda$ is surjective.
Therefore, $\bmdl$ is \pvs if and only if $\Sop_\lambda$ is surjective for some $\lambda\in\bmdl$.

More generally, for $\lambda,\mu \in \bmdl$ let
\[
\Sop_{\lambda,\mu} : \alg \to \bmdl
\]
be the linear map $\Sop_{\lambda,\mu}(a) = a\lambda - \mu a$, so that $\Sop_\lambda=\Sop_{\lambda,\lambda}$. Set
\begin{subequations}\label{def:pair}
\begin{align}
   \dfr_\rsub(\alg,\bmdl) &= \min_{\lambda\in \bmdl}\cork \Sop_\lambda,
   \label{def:pairdg}\\
   \dfr_\psub(\alg,\bmdl) &= \min_{\lambda,\mu\in \bmdl}\cork \Sop_{\lambda,\mu}.
   \label{def:pairpw}
\end{align}
\end{subequations}
We call $\dfr_\rsub$ the \emph{\dgd} and $\dfr_\psub$ the \emph{\pwd} of
$(\alg,\bmdl)$: the first minimum is taken over the diagonal of
$\bmdl\times\bmdl$ and the second over all of it.
Both minima are attained on a dense open subset of the parameters, and both are the corank at a generic
parameter, that is, the corank of $\Sop_\lambda$, resp.\ $\Sop_{\lambda,\mu}$,
over the field of rational functions in the coordinates of the parameters,
taken as indeterminates. %See \Cref{ssec:arbhered} for a general $\kk$.

By the above, $\bmdl$ is \pvs if and only if $\dfr_\rsub(\alg,\bmdl) = 0$.
We say that $\bmdl$ is \emph{\wpvs} if $\dfr_\psub(\alg,\bmdl) = 0$.
Clearly $\dfr_\rsub \ge \dfr_\psub$, so \pvs implies \wpvs. When considering
both invariants we sometimes write $\dfr_\ast$, where $\ast$ is either
$\rsub$ or $\psub$. We allow the zero algebra ($1 = 0$), whose only bimodule
is $\bmdl = 0$, with $\dfr_\rsub = \dfr_\psub = 0$; it is both \pvs and \wpvs.

The terminology \wpvs does not suggest an open orbit condition for any group action; it is used for want of
a better one.

Our basic example will be the following.
Suppose that $M$ is a finite-dimensional module over a finitely generated algebra $R$. Take
\[
   \alg = \End_R(M), \qquad \bmdl = \Ext^1_R(M,M)^* ,
\]
with the bimodule structure obtained by functoriality. Explicitly,
\begin{equation}\label{eq:functorial}
(a\lambda b)(\xi) = \lambda(\Ext^1_R(a,b)(\xi))\qquad a,b \in \alg,\ \xi \in \Ext^1_R(M,M),\ \lambda \in \bmdl.\footnotemark
\end{equation}
\footnotetext{Here $\Ext^1_R(a,b)$ is the pullback along $a$ followed by the pushout along
$b$: on an extension $\xi : 0 \to M \to E \to M \to 0$, the first argument
acts at the quotient end and the second at the sub end, and the two
operations commute. The variance is reversed by passing to the dual, which is
why the contravariant argument $a$ acts on the left of $\bmdl$ and the
covariant argument $b$ on the right.}

For simplicity, we write in this case
\begin{equation} \label{def:df*M}
\dfr_*^R(M)=\dfr_*(\alg,\bmdl),\ \ \ *=\rsub,\psub
\end{equation}
and omit the superscript $R$ if it is clear from the context.
By abuse of language, we will say that $M$ is \pvs/\wpvs if $\bmdl$ is.

\subsection{Generators, and splitting off an idempotent}\label{ssec:gen}

We keep a pair $(\alg,\bmdl)$ and $G = \alg^\times$.
We call $\lambda \in \bmdl$ a \emph{generator} if $\alg\lambda\alg =\bmdl$.
We write $\bmdl^{\gen}$ for the set of generators.
Since $\im \Sop_\lambda \subseteq \alg\lambda\alg$, being a generator is a
necessary condition for the orbit to be open:
\begin{equation}\label{eq:opengen}
\text{if $\Sop_\lambda$ is surjective, i.e.\ if the orbit of $\lambda$ is
open, then $\lambda \in \bmdl^{\gen}$.}
\end{equation}

%\begin{lemma}\label{lem:opengen}
%If the $G$-orbit of $\lambda$ is open in $\bmdl$, equivalently if
%$\Sop_\lambda$ is surjective, then $\lambda$ is a generator.
%\end{lemma}

Let $\rad(\alg)$ be the radical of $\alg$ and let $\alg_{\sms}$ be the semisimple quotient $\alg/\rad(\alg)$.
Denote by $\rbm(\bmdl)$ the $\alg$-subbimodule
\begin{equation}\label{eq:radbim}
   \rbm(\bmdl) \;=\; \rad(\alg)\,\bmdl \;+\; \bmdl\,\rad(\alg)
\end{equation}
which is the radical of $\bmdl$ as a module over $\alg\otimes\alg^{\op}$.\footnote{$\alg_{\sms}\otimes\alg_{\sms}^{\op}$
is semisimple since $\kk$ is algebraically closed.}
We denote the quotient by
\[
\topp(\bmdl)=\bmdl/\rbm(\bmdl).
\]
It is a bimodule over $\alg_{\sms}=\alg/\rad(\alg)$. By Nakayama's lemma
\begin{equation}\label{eq:nakayama}
\text{$\lambda$ is a generator if and only if its class generates $\topp(\bmdl)$.}
\end{equation}

The following situation will be considered in \Cref{prop:covparam}.
Let $e \in \alg$ be a nonzero idempotent and suppose that
\begin{subequations}\label{eq:cornerax}
\begin{align}
   \alg e &= \kk e , \label{ax:le}\\
   \bmdl e &= 0 . \label{ax:re}
\end{align}
\end{subequations}
Put $e' = 1-e$ and
\begin{equation}\label{eq:cornerpieces}
   \alg' = e'\alg e', \qquad \mathcal N = e\alg e', \qquad \bmdl' = e'\bmdl .
\end{equation}
Let $\pi : \alg\rightarrow\alg'$, $x \mapsto e'xe'$, $p : \bmdl
\rightarrow \bmdl'$, $\lambda\mapsto e'\lambda$ and $G' = (\alg')^\times$.

\begin{lemma}[splitting off $e$]\label{lem:cornersplit}
Assume \eqref{eq:cornerax}. Then:
\begin{enumerate}
\item\label{cs:alg} In the Peirce decomposition of $\alg$ at $e$ one has
$e\alg e = \kk e$ and $e'\alg e = 0$ by \eqref{ax:le}, so that $\alg$ is
the triangular matrix algebra
\begin{equation}\label{eq:triangular}
   \alg \;=\;
   \begin{pmatrix} \kk e & \mathcal N\\ 0 & \alg'\end{pmatrix} ,
   \qquad
   a \;\longmapsto\;
   \begin{pmatrix} eae & eae'\\ 0 & e'ae'\end{pmatrix} ,
\end{equation}
where $\mathcal N$ is a $\kk$-$\alg'$-bimodule and the product is the matrix one.
In particular $\alg = e\alg\oplus\alg'$, where $\alg'$ is a
subalgebra with unit $e'$ and $e\alg = \alg e\alg = \kk e\oplus \mathcal N$ is a
two-sided ideal of $\alg$, and $\pi$ is a surjective algebra homomorphism with
kernel $e\alg$. Moreover,
\begin{equation}\label{eq:radsplit}
   \rad(\alg) \;=\; \mathcal N \oplus \rad(\alg'), \qquad
   \alg_{\sms}\simeq \kk\times \alg'_{\sms} ,
\end{equation}
so that $e$ is central in $\alg_{\sms}$, $\rad(\alg)e = 0$, $e\rad(\alg) = \mathcal N$ and $e'\rad(\alg) =
\rad(\alg')$.
\item\label{cs:group} The map $\pi$ induces a split epimorphism $\tilde\pi :
G \rightarrow G'$ with section $g'\in G'\mapsto e+g'
\in G$, and kernel
\begin{equation}\label{eq:Hdef}
   H = \{ce+x+e' : c \in \kk^\times,\ x \in \mathcal N\} 
\end{equation}
so that $G=H\rtimes G'$.
\item\label{cs:bim} The subspace $e\bmdl$ is an $\alg$-subbimodule, $\bmdl'$ is an
$\alg'$-bimodule, and $p$ is a surjection of bimodules along $\pi$ with
kernel $e\bmdl$. Moreover $e'\rbm(\bmdl) = \rbm(\bmdl')$ and, as a
$\kk\times\alg'_{\sms}$-bimodule,
\begin{equation}\label{eq:topsplit}
   \topp(\bmdl) \;=\; e\bmdl/e\rbm(\bmdl) \;\oplus\;\topp(\bmdl') ,
\end{equation}
the two summands being the images of left multiplication by $e$ and by $e'$.
\item\label{cs:gen} For every $\lambda\in\bmdl$,
\begin{equation}\label{eq:gensplit}
   \lambda \in \bmdl^{\gen}\iff
   p(\lambda) \in (\bmdl')^{\gen} \ \text{ and } \ e\lambda \text{
   generates the bimodule } e\bmdl/e\rbm(\bmdl).
\end{equation}
\item\label{cs:H} We have $\lambda h = \lambda$ for all $\lambda\in\bmdl$ and $h
\in H$; hence $H$ acts linearly on $\bmdl$ and preserves the fibers of $p$:
\begin{equation}\label{eq:Zorbit}
   H\cdot\lambda \;=\; \{\zeta\lambda + e'\lambda :
      \zeta \in e\alg\setminus \mathcal N\} .
\end{equation}
\item\label{cs:transitive} Suppose in addition that $\dim
e\bmdl/e\rbm(\bmdl) = 1$, so that the second condition in
\eqref{eq:gensplit} reads $e\lambda\notin e\rbm(\bmdl)$, and let $\lambda
\in \bmdl^{\gen}$. Then $H$ acts transitively on $p^{-1}(p(\lambda)) \cap
\bmdl^{\gen}$ if and only if $\mathcal N\lambda = e\rbm(\bmdl)$.
\end{enumerate}
\end{lemma}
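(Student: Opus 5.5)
The plan is to verify each part directly from the Peirce decomposition at $e$, using only \eqref{ax:le} and \eqref{ax:re}.

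\emph{Part \eqref{cs:alg}.} Write $\alg = e\alg e\oplus e\alg e'\oplus e'\alg e\oplus e'\alg e'$. By \eqref{ax:le} we have $e\alg e\subseteq \kk e$, and $e\ne0$ gives equality. Also $e'\alg e\subseteq \alg e=\kk e$, so $e'\alg e=e'\kk e=0$. Matrix multiplication then yields the triangular form \eqref{eq:triangular}. It follows that $\alg e=\kk e$, so $\alg e\alg = e\alg = \kk e\oplus\mathcal N$ is a two-sided ideal. Moreover $\pi$ is multiplicative, since $e'ae'be' = e'abe'$ (because $e'ae = 0$), and its kernel is $e\alg$.

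For the radical, note that $\mathcal N$ is a nilpotent ideal: $\mathcal N^2 \subseteq e\alg e'e\alg=0$, and $\alg\mathcal N\alg\subseteq \mathcal N$ follows from the triangular form. Hence $\mathcal N\oplus\rad(\alg')$ is a nilpotent ideal; the check $\rad(\alg')\mathcal N\subseteq e'\alg e=0$ and the other products are immediate. The quotient by this ideal is $\kk\times\alg'_{\sms}$, which is semisimple, so this ideal is exactly $\rad(\alg)$. The remaining identities for $\rad(\alg)$ are then read off.

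\emph{Part \eqref{cs:group}.} An element $a = ce+x+a'$ is invertible iff $c\ne0$ and $a'\in G'$, as for triangular matrices. This gives $\tilde\pi$, the section $g'\mapsto e+g'$, and the kernel $H$, and hence $G=H\rtimes G'$.

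\emph{Part \eqref{cs:bim}.} By \eqref{ax:re} we have $\bmdl = e\bmdl\oplus e'\bmdl$ and $\bmdl e' = \bmdl$.
\begin{itemize}
\item $e\bmdl$ is a subbimodule since $\alg e\bmdl = \kk e\bmdl$.
\item $p(a\lambda b) = e'ae'\cdot e'\lambda \cdot b$ because $e'ae=0$, and the right action factors through $\pi$ because $\bmdl e=0$.
\item Using part \eqref{cs:alg}, $e'\rbm(\bmdl) = e'\rad(\alg)\bmdl + e'\bmdl\rad(\alg) = \rad(\alg')\bmdl'+\bmdl'\rad(\alg')$. This uses $\bmdl\rad(\alg)=\bmdl\, e'\rad(\alg)e'$, since $\bmdl e=0$ and $\rad(\alg) e = 0$.
\item Since $\rbm(\bmdl)$ is stable under left multiplication by $e,e'$, it splits as $e\rbm\oplus e'\rbm$, which gives \eqref{eq:topsplit}.
\end{itemize}

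\emph{Part \eqref{cs:gen}.} This follows from \eqref{eq:nakayama} applied to $\topp(\bmdl)$. A $\kk\times\alg'_{\sms}$-bimodule generated by $\bar\lambda$ has left-$e$ and left-$e'$ components, and these are generated by the respective components of $\bar\lambda$, because left multiplication by the central idempotents $e,e'$ of $\alg_{\sms}$ projects onto the summands.

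\emph{Part \eqref{cs:H}.} For $h=ce+x+e'$ we get $\lambda h = c\lambda e + \lambda x+\lambda e' = \lambda$, using $\lambda x\in\bmdl e\alg=0$. Then $h\lambda h^{-1}=h\lambda = (ce+x)\lambda + e'\lambda$, and as $h$ ranges over $H$ the element $\zeta = ce+x$ ranges over $e\alg\setminus\mathcal N$.

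\emph{Part \eqref{cs:transitive}.} By \eqref{eq:Zorbit} the $H$-orbit is $\{\zeta\lambda+e'\lambda\}$. The fiber $p^{-1}(p(\lambda))\cap\bmdl^{\gen}$ is $\{\nu+e'\lambda : \nu\in e\bmdl\setminus e\rbm(\bmdl)\}$ by \eqref{eq:gensplit}. So transitivity means
\[
\{c\,e\lambda + x\lambda : c\ne0,\ x\in\mathcal N\} = e\bmdl\setminus e\rbm(\bmdl).
\]
We have $\mathcal N\lambda\subseteq e\rad(\alg)\bmdl\subseteq e\rbm(\bmdl)$ automatically, and $e\bmdl = \kk e\lambda\oplus e\rbm(\bmdl)$ since $\dim e\bmdl/e\rbm(\bmdl)=1$ and $e\lambda\notin e\rbm(\bmdl)$. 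Thus the left side is $\kk^\times e\lambda + \mathcal N\lambda$, while the right side is $\kk^\times e\lambda+e\rbm(\bmdl)$. These coincide iff $\mathcal N\lambda=e\rbm(\bmdl)$.

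The only mildly delicate step is the radical identification and the equality $e'\rbm(\bmdl)=\rbm(\bmdl')$, where one must keep track of $\bmdl e=0$ and $\rad(\alg)e=0$.
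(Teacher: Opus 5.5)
Your proposal is correct and follows essentially the same route as the paper. The structural parts are direct Peirce-decomposition checks, which the paper simply calls straightforward; part (4) comes from splitting $\topp(\bmdl)$ by the central idempotents $e,e'$ of $\alg_{\sms}$ and applying Nakayama; and part (6) is the same comparison of $\kk^\times e\lambda+\mathcal N\lambda$ with $\kk^\times e\lambda+e\rbm(\bmdl)$. The one step left implicit in part (4) is that Nakayama must also be applied to $(\alg',\bmdl')$, after noting that the $\alg_{\sms}$-action on $e'\topp(\bmdl)=\topp(\bmdl')$ factors through $\alg'_{\sms}$ (since $e$ acts by zero there on both sides and $\mathcal N\subseteq\rad(\alg)$); only then does ``the $e'$-component generates $\topp(\bmdl')$'' mean $p(\lambda)\in(\bmdl')^{\gen}$, as the paper states explicitly.
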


\begin{proof}
\labelcref{cs:alg}, \labelcref{cs:group}, \labelcref{cs:bim} and \labelcref{cs:H} are straightforward.

For \labelcref{cs:gen}, the two summands of
\eqref{eq:topsplit} are subbimodules, and for every $m\in\topp(\bmdl)$,
\[
\alg_{\sms}m\alg_{\sms}=\alg_{\sms}em\alg_{\sms}\oplus \alg_{\sms}e'm\alg_{\sms}.
\]
A class therefore generates $\topp(\bmdl)$ if and only if each of its two
components generates the corresponding summand. It remains to apply
\eqref{eq:nakayama} to $(\alg,\bmdl)$ and to $(\alg',\bmdl')$, the action
of $\alg$ on $\topp(\bmdl')$ factoring through $\pi$, since $\mathcal N \subseteq
\rad(\alg)$ annihilates $\topp(\bmdl)$, so that generation
over $\alg$ and over $\alg'$ mean the same thing there.

\labelcref{cs:transitive} Note first that $\mathcal N\lambda \subseteq
e\rbm(\bmdl)$, since $\mathcal N = e\mathcal N$ and $\mathcal N \subseteq \rad(\alg)$. By
\eqref{eq:gensplit} the fiber in question is $\{\nu+e'\lambda : \nu \in
e\bmdl\setminus e\rbm(\bmdl)\}$, while by \eqref{eq:Zorbit}
\[
   H\cdot\lambda = \{c\,e\lambda + w + e'\lambda :
      c \in \kk^\times,\ w \in \mathcal N\lambda\} .
\]
As $e\rbm(\bmdl)$ is a hyperplane of $e\bmdl$ not containing $e\lambda$,
we have
\[
e\bmdl\setminus e\rbm(\bmdl) = \{c\,e\lambda + w : c \in\kk^\times,\ w \in e\rbm(\bmdl)\}.
\]
Comparing the two sets gives the assertion.
\end{proof}

\subsection{Deforming the conjugation action by a derivation}\label{ssec:cov}

Let $f : \alg \to \bmdl$ be a \emph{derivation}:
\begin{equation}\label{eq:leibniz}
   f(ab) = a f(b) + f(a) b, \qquad a,b \in \alg .
\end{equation}
In particular, $f(1) = 0$.
In cohomological terms, \eqref{eq:leibniz} says that $f$ is a
Hochschild $1$-cocycle of $\alg$ with values in $\bmdl$.
Equivalently, $f$ gives rise to an extension of $\alg$-bimodules of
$\alg$ by $\bmdl$: let $E = \alg\oplus \bmdl$ as a right $\alg$-module, and define the left $\alg$-action by
\[
   b\,(a,v) = (ba,\; bv + f(b)a), \qquad a,b \in \alg,\ v \in \bmdl .
\]
Thus $E$ is an $\alg$-bimodule containing $\bmdl = \{0\}\oplus \bmdl$ as a
subbimodule with quotient $\alg$, and the class of the extension
\[
   0 \longrightarrow \bmdl \longrightarrow E \longrightarrow \alg
   \longrightarrow 0
\]
in $\Ext^1_{\alg\otimes\alg^{\op}}(\alg,\bmdl) = \HH^1(\alg,\bmdl)$
is the class of the cocycle $f$; the extension splits if and only if $f$ is inner.

The fiber of $E \to \alg$ over $1$ is an affine subspace $\{(1,v) : v \in \bmdl\}$ of $E$ which is stable
under conjugation by $G$. Thus we get an action of $G$ on $\bmdl$ by affine transformations
\begin{equation}\label{eq:tau}
   g\,(1,v)\,g^{-1} = \bigl(1,\tau^f_g(v)\bigr), \qquad
   \tau^f_g(v) = gvg^{-1} + f(g)g^{-1} ,\ \ \ g\in G, v\in \bmdl,
\end{equation}
that is, the affine twist of the linear conjugation action on $\bmdl$ by the cocycle $g\mapsto f(g)g^{-1}$.
Similarly, for $v,w \in \bmdl$ the linear map $a \mapsto a\,(1,v) - (1,w)\,a$, $a \in
\alg$, takes values in $\bmdl$:
\begin{equation}\label{eq:D}
   a\,(1,v) - (1,w)\,a = \bigl(0, \Dop^f_{v,w}(a)\bigr), \qquad
   \Dop^f_{v,w}(a) = av - wa + f(a) ,
\end{equation}
and, as with $\Sop_\lambda$ above, we write $\Dop^f_v = \Dop^f_{v,v}$.
The identity
\[
   \bigl(hag^{-1}\bigr)\bigl(g(1,v)g^{-1}\bigr) -
   \bigl(h(1,w)h^{-1}\bigr)\bigl(hag^{-1}\bigr)
   = h\bigl(a\,(1,v) - (1,w)\,a\bigr)g^{-1}
\]
translates to the two-sided covariance
\begin{equation}\label{eq:covar}
   \Dop^f_{\tau^f_g(v),\,\tau^f_h(w)}(hag^{-1}) = h\,\Dop^f_{v,w}(a)\,g^{-1},
   \qquad a \in \alg,\ g,h \in G .
\end{equation}
In particular the maps $\Dop^f_{\tau^f_g(v),\tau^f_h(w)}$ and $\Dop^f_{v,w}$ have the same rank.

\subsection{The idempotent decomposition}\label{ssec:split}

From now on we fix, as part of the setup, an (automatically orthogonal) idempotent decomposition
\[
   1 = e_I + e_J
\]
in $\alg$. For $P, Q \in \{I,J\}$ put $\alg_{PQ} =
e_P\,\alg\,e_{Q}$ and $\bmdl_{PQ} = e_P\,\bmdl\,e_{Q}$, so that we have the
\emph{Peirce decompositions}
\begin{equation}\label{eq:decomp}
   \alg = \alg_I \oplus \alg_J \oplus \alg_{\mix},
   \qquad
   \bmdl = \bmdl_I \oplus \bmdl_J \oplus \bmdl_{\mix}
\end{equation}
collecting the diagonal corners $\alg_P = \alg_{PP}$, $\bmdl_P = \bmdl_{PP}$
and the mixed parts $\alg_{\mix} = \alg_{IJ}\oplus\alg_{JI}$,
$\bmdl_{\mix} = \bmdl_{IJ}\oplus \bmdl_{JI}$; put also
$\alg_{\diag} = \alg_I\oplus\alg_J$ and $\bmdl_{\diag} =
\bmdl_I \oplus \bmdl_J$. In the \emph{corners} $(\alg_P, \bmdl_P)$, $P \in
\{I,J\}$, $\alg_P$ is an algebra with unit $e_P$ and $\bmdl_P$ is an
$\alg_P$-bimodule, so the invariants \eqref{def:pair} are defined for them.

In the example of \Cref{ssec:pairs}, a direct sum decomposition $M = M_I
\oplus M_J$ of $R$-modules induces an idempotent decomposition
$1 = e_I + e_J$ in $\alg = \End_R(M)$, with $e_P$ the projection onto
$M_P$. The resulting Peirce decompositions are identified by additivity:

\begin{lemma}[corners in the module case]\label{lem:corners}
In the situation above, composition with the projections and inclusions
identifies
\[
   \alg_P \simeq \End_R(M_P), \qquad
   \alg_{IJ} \simeq \Hom_R(M_J,M_I), \qquad
   \alg_{JI} \simeq \Hom_R(M_I,M_J),
\]
as algebras, respectively as $(\alg_I,\alg_J)$- and
$(\alg_J,\alg_I)$-bimodules. Dually, $\bmdl_{PQ} \simeq \Ext^1_R(M_P,M_Q)^*$
for all $P, Q \in \{I,J\}$: in particular $\bmdl_P \simeq
\Ext^1_R(M_P,M_P)^*$ as $\alg_P$-bimodules --- so that
$\dfr_\ast(\alg_P,\bmdl_P)$ and the notions of \Cref{ssec:pairs} for $\bmdl_P$ are
those attached to $M_P$ --- and $\bmdl_{IJ} \simeq \Ext^1_R(M_I,M_J)^*$,
$\bmdl_{JI} \simeq \Ext^1_R(M_J,M_I)^*$. In particular
\begin{gather*}
   \dim\bmdl_{\mix} = \dim\Ext^1_R(M_I,M_J) +
   \dim\Ext^1_R(M_J,M_I),\\
   \dim\alg_{\mix} = \dim\Hom_R(M_I,M_J) + \dim\Hom_R(M_J,M_I) .
\end{gather*}
\end{lemma}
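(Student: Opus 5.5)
The plan is to reduce everything to the additivity of $\Hom$ and $\Ext^1$ in each argument, and then check that the structures match under the resulting identifications.

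First I set up notation. Let $\iota_P:M_P\to M$ and $\pi_P:M\to M_P$ be the inclusions and projections, so that $e_P=\iota_P\pi_P$, $\pi_P\iota_P=1_{M_P}$ and $\pi_P\iota_Q=0$ for $P\ne Q$.

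For the algebra part, I define $\alg_{PQ}=e_P\alg e_Q\to\Hom_R(M_Q,M_P)$ by $a\mapsto\pi_P a\iota_Q$. Its inverse is $\phi\mapsto\iota_P\phi\pi_Q$. This map is compatible with composition, because $\pi_Q\iota_Q=1$ and the cross terms vanish. Hence it is an algebra isomorphism when $P=Q$, and a bimodule isomorphism in the mixed cases. This step is formal.

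For the bimodule part, the same idempotents act on $\Ext^1_R(M,M)$ via $\Ext^1_R(a,b)$. Additivity of $\Ext^1_R$ in both variables gives a decomposition
\[
\Ext^1_R(M,M)=\bigoplus_{P,Q}\Ext^1_R(M_P,M_Q).
\]
Here the $(P,Q)$ summand is the image of $\Ext^1_R(e_P,e_Q)$, which is identified with $\Ext^1_R(M_P,M_Q)$ through $\Ext^1_R(\pi_P,\iota_Q)$ and $\Ext^1_R(\iota_P,\pi_Q)$.

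Next I dualize and use \eqref{eq:functorial}. For $\lambda\in\bmdl$ one has $(e_P\lambda e_Q)(\xi)=\lambda(\Ext^1_R(e_P,e_Q)\xi)$. So $e_P\bmdl e_Q$ consists exactly of the functionals that vanish on the other summands, and it restricts isomorphically onto $\Ext^1_R(M_P,M_Q)^*$.

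The convention matters here. The left factor $a$ acts at the quotient end, which is the first argument of $\Ext^1$, so $\bmdl_{PQ}$ corresponds to $\Ext^1_R(M_P,M_Q)^*$. In particular $\bmdl_{IJ}\simeq\Ext^1_R(M_I,M_J)^*$, as claimed.

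For $P=Q$, I then check that the $\alg_P$-bimodule structure is transported correctly. For $a,b\in\End_R(M_P)$ the relevant identity is
\[
\Ext^1_R(\iota_P a\pi_P,\iota_P b\pi_P)=\Ext^1_R(\pi_P,\iota_P)\circ\Ext^1_R(a,b)\circ\Ext^1_R(\iota_P,\pi_P).
\]
This follows from functoriality in each variable. Under the identifications it gives $\bmdl_P\simeq\Ext^1_R(M_P,M_P)^*$ with the bimodule structure of \eqref{eq:functorial} for $M_P$. Consequently the invariants $\dfr_\ast(\alg_P,\bmdl_P)$ coincide with $\dfr^R_\ast(M_P)$ as defined in \eqref{def:df*M}.

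The dimension formulas then follow by summing the two mixed summands.

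The only place where care is needed is the bookkeeping of variances in the $\Ext$ part: which index corresponds to the sub and which to the quotient end, and the reversal introduced by dualizing. This step is routine but is where an $IJ\leftrightarrow JI$ swap could slip in, so I would fix it by checking with the footnote's convention for the actions.
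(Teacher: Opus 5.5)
Your proposal is correct and takes the same approach as the paper. The paper gives no written proof and simply says that the Peirce corners "are identified by additivity" of $\Hom_R$ and $\Ext^1_R$. Your explicit transport of the bimodule structure through $\Ext^1_R(\iota_P a\pi_P,\iota_P b\pi_P)=\Ext^1_R(\pi_P,\iota_P)\circ\Ext^1_R(a,b)\circ\Ext^1_R(\iota_P,\pi_P)$, and your variance check against \eqref{eq:functorial}, are exactly the details the paper leaves implicit.
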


We freely use the resulting \emph{corner calculus}, an immediate
consequence of $e_Pe_Q = \delta_{PQ}e_P$: the products
$\alg_{PQ}\,\alg_{P'Q'}$, $\alg_{PQ}\,\bmdl_{P'Q'}$ and $\bmdl_{PQ}\,\alg_{P'Q'}$
vanish unless $Q = P'$, in which case they lie in the $(P,Q')$-corner. In
particular $\alg_{\mix}$, $\bmdl_{\diag}$ and
$\bmdl_{\mix}$ are subbimodules over $\alg_{\diag}$, though
not over $\alg$.

\subsection{Derivations linear over the diagonal, and the main results}\label{ssec:datum}

We keep the decomposition $1=e_I+e_J$ of \Cref{ssec:split}. Consider an $\alg_{\diag}$-bilinear derivation $f : \alg \to \bmdl$.
Equivalently, $f$ vanishes on $\alg_{\diag}$.
These are the cocycles representing the relative Hochschild cohomology
$\HH^1(\alg,\alg_{\diag};\bmdl)$, which classifies the extensions
of $\alg$-bimodules of $\alg$ by $\bmdl$ that split over $\alg_{\diag}$.
Any such derivation maps $\alg_{IJ}$ to $\bmdl_{IJ}$ and $\alg_{JI}$ to $\bmdl_{JI}$. In particular,
\begin{equation}\label{eq:reldiag}
   f(\alg)=f(\alg_{\mix}) \;\subseteq\; \bmdl_{\mix} .
\end{equation}

\begin{subequations}
We can factorize $f$ as
\begin{equation}\label{eq:factor}
   f = \iota\circ\ad(e_I), \qquad \ad(e_I)a = e_Ia - ae_I ,
\end{equation}
where $\iota : \alg_{\mix} \to \bmdl_{\mix}$ agrees with
$f$ on $\alg_{IJ}$ and with $-f$ on $\alg_{JI}$; it is a homomorphism of
$\alg_{\diag}$-bimodules and satisfies
\begin{equation} \label{eq:iotaporp}
   \iota(a)b = a\,\iota(b), \qquad b\,\iota(a) = \iota(b)\,a,
   \qquad a \in \alg_{IJ},\ b \in \alg_{JI} .
\end{equation}
Conversely, any $\iota$ with these two properties gives rise in this way
to an $\alg_{\diag}$-bilinear derivation. Both directions are a
direct check on the Peirce components of \eqref{eq:leibniz}.
\end{subequations}

The crucial special feature that we will use is the existence of an $\alg_{\diag}$-bilinear derivation whose image
is \emph{all} of $\bmdl_{\mix}$, which is the largest possible by \eqref{eq:reldiag}.
More precisely,

\begin{proposition}[two-block corank inequality]\label{prop:twoblock}
Let $\Ff$ be an $\alg$-subbimodule of $\bmdl$ contained in $\bmdl_{\mix}$ and let 
$\Vv = \bmdl/\Ff$ be the quotient bimodule.
Suppose that there exists an $\alg_{\diag}$-bilinear derivation $f : \alg \to \Vv$ with $f(\alg) =\Vv_{\mix}$.
Then, for $\ast\in \{\rsub,\psub\}$,
\begin{equation}\label{eq:twoblock}
   \dfr_\ast(\alg,\bmdl) \;\ge\; \dfr_\ast(\alg_I,\bmdl_I) + \dfr_\ast(\alg_J,\bmdl_J) .
\end{equation}
In particular, if $\bmdl$ is \pvs (resp.\ \wpvs), then so are the corner
bimodules $\bmdl_I$ over $\alg_I$ and $\bmdl_J$ over $\alg_J$.
\end{proposition}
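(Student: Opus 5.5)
The plan is to compare $\Sop_{\lambda,\mu}$ with the deformed operators $\Dop^f_{v,w}$ of \Cref{ssec:cov}, where the parameters can be moved by the affine action $\tau^f$, and then to bring the parameters into the diagonal part of $\Vv$, where $\Dop^f$ splits block-diagonally.

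\emph{Step 1 (reduction to $\Vv$).} Let $\pquo:\bmdl\to\Vv$ be the quotient map. Since $\pquo$ is a surjective bimodule map, $\pquo\circ\Sop_{\lambda,\mu}=\Sop_{\pquo\lambda,\pquo\mu}$ (the latter computed in $\Vv$). Hence $\cork\Sop_{\lambda,\mu}\ge\cork\Sop_{\pquo\lambda,\pquo\mu}$, and so $\dfr_\ast(\alg,\bmdl)\ge\dfr_\ast(\alg,\Vv)$. Because $\Ff\subseteq\bmdl_{\mix}$, we have $\Vv_P=\bmdl_P$ for $P\in\{I,J\}$. So it suffices to prove \eqref{eq:twoblock} with $\bmdl$ replaced by $\Vv$ on the left-hand side.

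\emph{Step 2 (from $\Sop$ to $\Dop^f$ by rescaling).} For $s\in\kk$ the map $sf$ is again a derivation, and $\Dop^{sf}_{v,w}=s\,\Dop^f_{s^{-1}v,s^{-1}w}$ for $s\ne0$. At $s=0$ it is $\Sop_{v,w}$. The corank is upper semicontinuous on the parameter space $\kk\times\Vv\times\Vv$ (resp.\ $\kk\times\Vv$ along the diagonal). Therefore the generic corank of $\Sop_{v,w}$ is at least the generic corank of $\Dop^{sf}_{v,w}$ over all $(s,v,w)$, and by the rescaling identity the latter equals $\min_{v,w}\cork\Dop^f_{v,w}$ (resp.\ $\min_v\cork\Dop^f_v$). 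It thus remains to bound $\cork\Dop^f_{v,w}$ from below for generic $(v,w)$.

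\emph{Step 3 (moving generic parameters to the diagonal).} Consider the morphism $G\times\Vv_{\diag}\to\Vv$, $(g,u)\mapsto\tau^f_g(u)$. Its differential at $(1,0)$ is $(a,u)\mapsto f(a)+u$. This is surjective because $f(\alg_{\mix})=\Vv_{\mix}$, so the morphism is dominant. Similarly, $(g,h,u,u')\mapsto(\tau^f_g u,\tau^f_h u')$ is dominant onto $\Vv\times\Vv$. Hence a generic $v$ (resp.\ a generic pair $(v,w)$) is of the form $\tau^f_g(u)$ (resp.\ $(\tau^f_g u,\tau^f_h u')$) with $u,u'\in\Vv_{\diag}$. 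By the covariance \eqref{eq:covar}, in the pairwise case $\Dop^f_{v,w}$ has the same rank as $\Dop^f_{u,u'}$. In the diagonal case we take $g=h$, $u=u'$, which is legitimate since the single map $(g,u)\mapsto\tau^f_g(u)$ is already dominant.

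\emph{Step 4 (block decomposition).} For $u,u'\in\Vv_{\diag}$, the corner calculus together with \eqref{eq:reldiag} shows the following. For $a\in\alg_{\diag}$, $\Dop^f_{u,u'}(a)=au-u'a\in\Vv_{\diag}$, with $I$- and $J$-components $\Sop_{u_I,u'_I}(a_I)$ and $\Sop_{u_J,u'_J}(a_J)$. For $a\in\alg_{\mix}$, both $au-u'a$ and $f(a)$ lie in $\Vv_{\mix}$. So $\Dop^f_{u,u'}$ is block diagonal with respect to $\alg_I\oplus\alg_J\oplus\alg_{\mix}\to\Vv_I\oplus\Vv_J\oplus\Vv_{\mix}$. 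Its corank is therefore at least $\cork\Sop_{u_I,u'_I}+\cork\Sop_{u_J,u'_J}$, which is $\ge\dfr_\ast(\alg_I,\bmdl_I)+\dfr_\ast(\alg_J,\bmdl_J)$. Chaining Steps 1--4 gives \eqref{eq:twoblock}, and the ``in particular'' follows since all the invariants are nonnegative.

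I expect the main obstacle to be Step 2–3 bookkeeping: semicontinuity must be applied in the right direction (a special value has corank \emph{at least} the generic one), and the dominance of $(g,u)\mapsto\tau^f_g(u)$ must be stated carefully so as to be compatible with the diagonal constraint $v=w$ in the $\rsub$ case.
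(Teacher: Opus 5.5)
Your proposal is correct and follows the paper's proof essentially step for step. The steps are the reduction to $\Ff=0$, the deformation of $\Sop_{v,w}$ into $\Dop^{tf}_{v,w}$ with semicontinuity at $t=0$, dominance of $(g,u)\mapsto\tau^f_g(u)$ on $G\times\Vv_{\diag}$ combined with covariance to move generic parameters into the diagonal part (\Cref{lem:dom}), and the block computation (\Cref{lem:blockproj}). The differences are cosmetic: you use the rescaling identity $\Dop^{sf}_{v,w}=s\,\Dop^f_{s^{-1}v,s^{-1}w}$ to reduce every $t\ne0$ to the single derivation $f$, where the paper treats each $t\ne0$ separately, and you note that $\Dop^f_{u,u'}$ is block diagonal instead of composing with the projection $\pr$.
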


We will prove \Cref{prop:twoblock} in \Cref{sec:proofoftwoblock}.
We first remark that for the proof, one can assume without loss of generality that $\Ff=0$.
Indeed, let $\pquo : \bmdl \to \Vv$ be the quotient map.
The Peirce decomposition of $\Vv$ is
\[
   \Vv = \bmdl_I \oplus \bmdl_J \oplus \Vv_{\mix}, \qquad \Vv_{\mix} = \bmdl_{\mix}/\Ff,
\]
so that the bimodules $\bmdl$ and $\Vv$ have the same diagonal corners.
Since $\Ff$ is a subbimodule, $\pquo\circ \Sop_{\lambda,\mu} = \Sop_{\pquo(\lambda),\pquo(\mu)}$
for $\lambda,\mu \in \bmdl$, and $\coker(\pquo\circ \Sop_{\lambda,\mu})$ is a quotient of $\coker
\Sop_{\lambda,\mu}$; taking minima (with $\mu =
\lambda$ in the case $\ast = \rsub$) gives $\dfr_\ast(\alg,\Vv) \le
\dfr_\ast(\alg,\bmdl)$. Thus, \Cref{prop:twoblock}, applied to
$\Vv$, yields \eqref{eq:twoblock} for $\bmdl$, and with it the inheritance of \pvsness and \wpvsness.

We specialize to the case $\alg =\End_R(M)$ and $\bmdl = \Ext^1_R(M,M)^*$ where $M$ is an $R$-module.

\begin{definition}[\obt decompositions]\label{def:obtuse}
Let $M = M_I \oplus M_J$ be a direct sum decomposition of $R$-modules,
with the induced idempotent decomposition and Peirce corners of
\Cref{ssec:split} (\Cref{lem:corners}). The decomposition is \emph{\obt}
if there exist an $\alg$-subbimodule $\Ff$ of $\bmdl$ contained in
$\bmdl_{\mix}$ and an $\alg_{\diag}$-bilinear derivation
$f : \alg \to \Vv = \bmdl/\Ff$ with $f(\alg) = \Vv_{\mix}$, as in
\Cref{prop:twoblock}, such that moreover $f$ restricts to an
\emph{isomorphism} of $\alg_{\mix}$ onto $\Vv_{\mix}$.
\end{definition}
In this case
\begin{equation}\label{eq:dimF}
   \dim \Ff \;=\; \dim \bmdl_{\mix} - \dim\alg_{\mix}.
\end{equation}
The terminology will be motivated in \Cref{rem:numerical} below.

\begin{corollary}[heredity for \obt decompositions]\label{cor:schema}
Let $M = M_I \oplus M_J$ be an \obt decomposition.
Then,
\[
  \dfr_\ast(M) \;\ge\; \dfr_\ast(M_I) + \dfr_\ast(M_J) .
\]
In particular, if $M$ is \pvs (resp., \wpvs) then so are $M_I$ and $M_J$.
\end{corollary}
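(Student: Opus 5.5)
This is a direct specialization of \Cref{prop:twoblock}. We take $\alg=\End_R(M)$, $\bmdl=\Ext^1_R(M,M)^*$ and the idempotent decomposition $1=e_I+e_J$ induced by $M=M_I\oplus M_J$. By \Cref{def:obtuse}, an \obt decomposition supplies an $\alg$-subbimodule $\Ff\subseteq\bmdl_{\mix}$ and an $\alg_{\diag}$-bilinear derivation $f:\alg\to\Vv=\bmdl/\Ff$ with $f(\alg)=\Vv_{\mix}$. These are exactly the hypotheses of \Cref{prop:twoblock}; the extra requirement that $f|_{\alg_{\mix}}$ be an isomorphism is not needed here. \Cref{prop:twoblock} therefore gives
\[
\dfr_\ast(\alg,\bmdl)\ \ge\ \dfr_\ast(\alg_I,\bmdl_I)+\dfr_\ast(\alg_J,\bmdl_J),\qquad \ast\in\{\rsub,\psub\}.
\]

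Next we identify the three terms with the module invariants of \eqref{def:df*M}. The left-hand side is $\dfr_\ast(M)$ by definition. For the right-hand side we use \Cref{lem:corners}. It identifies $\alg_P\simeq\End_R(M_P)$ as algebras and $\bmdl_P\simeq\Ext^1_R(M_P,M_P)^*$ as $\alg_P$-bimodules, compatibly with the functorial bimodule structure \eqref{eq:functorial}. Since $\dfr_\ast$ depends only on the isomorphism class of the pair (algebra, bimodule), we get $\dfr_\ast(\alg_P,\bmdl_P)=\dfr_\ast(M_P)$. The one point to check is that the identification intertwines the actions. On $\alg_P$ the action on $\bmdl_P=e_P\bmdl e_P$ is restricted from $\alg$. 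Under $\Ext^1_R(M,M)=\bigoplus_{P,Q}\Ext^1_R(M_P,M_Q)$, the map $\Ext^1_R(a,b)$ for $a,b\in e_P\alg e_P$ acts on the $(P,P)$ summand as $\Ext^1_R(a|_{M_P},b|_{M_P})$. This is the additivity statement of \Cref{lem:corners}, and I take it as given.

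Combining the two steps yields $\dfr_\ast(M)\ge\dfr_\ast(M_I)+\dfr_\ast(M_J)$. The final assertion follows because the invariants are nonnegative. If $M$ is \pvs (resp.\ \wpvs), then $\dfr_\rsub(M)=0$ (resp.\ $\dfr_\psub(M)=0$), so both summands on the right vanish. Hence $M_I$ and $M_J$ are \pvs (resp.\ \wpvs).

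There is no real obstacle at this level. All the substance lies in \Cref{prop:twoblock}, which we are allowed to assume, and the only bookkeeping is the corner identification from \Cref{lem:corners}.
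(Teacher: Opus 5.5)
Your proposal is correct and is essentially the paper's own argument: the corollary is \Cref{prop:twoblock} applied to $\alg=\End_R(M)$, $\bmdl=\Ext^1_R(M,M)^*$ (only the surjectivity of $f$ is used), with the diagonal corners identified with the invariants of $M_I$ and $M_J$ via \Cref{lem:corners}. The final assertion then follows from the nonnegativity of $\dfr_\ast$, exactly as you say.
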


\begin{remark}
Only surjectivity of $f$ is needed for \Cref{prop:twoblock}.
We require injectivity in \Cref{def:obtuse} because this will be the situation in our main application.
However, we certainly have cases where a surjective $f$ exists but not an injective one,
for instance when $\Ext^1_R(M_I,M_J)=\Ext^1_R(M_J,M_I)=0$ but $\Hom_R(M_I,M_J)\ne0$ (where we simply take $f=0$).
\end{remark}

\subsection{Proof of \texorpdfstring{\Cref{prop:twoblock}}{Proposition}}\label{sec:proofoftwoblock}
As remarked after the statement of \Cref{prop:twoblock}, we may and shall assume that $\Ff=0$, 
so that $f : \alg \to \bmdl$ is an $\alg_{\diag}$-bilinear derivation  with $f(\alg) =\bmdl_{\mix}$.

For $t \in \kk$ the scalar multiple $tf$ is again a
derivation, and we apply the constructions of \Cref{ssec:cov} to $\bmdl$ and the derivation $tf$, abbreviating
\[
   \tau^t = \tau^{tf}, \qquad \Dop^t = \Dop^{tf} .
\]
Set
\[
   R_\rsub(t) = \min_{v \in \bmdl}\cork \Dop^t_v,
   \qquad
   R_\psub(t) = \min_{v,w\in \bmdl}\cork \Dop^t_{v,w} .
\]
Note that by \eqref{eq:D}, $\Dop^0_{v,w} = \Sop_{v,w}$, so
\begin{equation}\label{eq:step1}
   R_\ast(0) = \dfr_\ast(\alg,\bmdl), \qquad \ast \in \{\rsub,\psub\} .
\end{equation}

Recall from \eqref{eq:decomp} that $\bmdl_{\diag}$ is a subspace
of $\bmdl$ complementary to $\bmdl_{\mix}$. For every $t$ let
\begin{equation}\label{eq:Phi}
   \Phi_t : G\times\bmdl_{\diag} \longrightarrow \bmdl,
   \qquad
   \Phi_t(g,z) = \tau^t_g(z) = gzg^{-1} + tf(g)g^{-1},
\end{equation}
be the restriction to $\bmdl_{\diag}$ of the affine action
\eqref{eq:tau} of $G$ on $\bmdl$.

\begin{lemma}[dominant action map]\label{lem:dom}
For every $t \ne 0$ the action map $\Phi_t$ of \eqref{eq:Phi} is
dominant, and so is the product map
$\Phi_t\times\Phi_t$ with values in $\bmdl\times\bmdl$. Hence, for $t\ne0$
\begin{equation} \label{eq:Rindiag}
R_\rsub(t) =  \min_{\xi \in \bmdl_{\diag}}\cork \Dop^t_\xi,\ \ \ R_\psub(t) = \min_{\xi,\eta \in \bmdl_{\diag}}\cork \Dop^t_{\xi,\eta}.
\end{equation}
\end{lemma}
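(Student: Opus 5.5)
To prove dominance of $\Phi_t$ for $t\neq0$, I will compute its differential at a point $(1,z)$ with $z\in\bmdl_{\diag}$ and show that it is surjective onto $\bmdl$ for suitable (indeed generic, even $z=0$) $z$. A morphism of irreducible varieties whose differential is surjective at some point is dominant; this holds in any characteristic, because surjectivity of the differential at a smooth point forces the image closure to have full dimension. Since $G$ is open in $\alg$, the source $G\times\bmdl_{\diag}$ is smooth and irreducible, so this criterion applies.

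The computation is as follows. Write $g=1+\epsilon a$ with $a\in\alg$ and perturb $z$ by $\epsilon y$ with $y\in\bmdl_{\diag}$. Since $f(1)=0$ and $g^{-1}=1-\epsilon a$, the first-order term of $\Phi_t$ is
\[
   d\Phi_t|_{(1,z)}(a,y) = az - za + t f(a) + y = \Sop_z(a) + tf(a) + y .
\]
At $z=0$ this becomes $(a,y)\mapsto tf(a)+y$. Its image is $tf(\alg)+\bmdl_{\diag} = \bmdl_{\mix}+\bmdl_{\diag} = \bmdl$, using the hypothesis $f(\alg)=\bmdl_{\mix}$ (recall $\Ff=0$ by the preliminary reduction) and $t\neq0$. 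So the differential is surjective at $(1,0)$, and $\Phi_t$ is dominant.

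For the product map, $\Phi_t\times\Phi_t$ has source $(G\times\bmdl_{\diag})^2$, which is again irreducible and smooth. Its differential at $((1,0),(1,0))$ is the direct sum of the two surjections just found, hence surjective, so the product map is dominant as well.

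For the consequence \eqref{eq:Rindiag}, the key input is the covariance \eqref{eq:covar}. It gives $\cork\Dop^t_{\tau^t_g(\xi),\tau^t_h(\eta)}=\cork\Dop^t_{\xi,\eta}$, and similarly with $g=h$ and $\xi=\eta$ in the diagonal case. Since $v\mapsto\cork\Dop^t_v$ is lower semicontinuous, the minimum $R_\rsub(t)$ is attained on a nonempty open subset $U\subseteq\bmdl$. By dominance, the image of $\Phi_t$ meets $U$, so some $\tau^t_g(\xi)$ with $\xi\in\bmdl_{\diag}$ lies in $U$. Then $\cork\Dop^t_\xi$ equals $R_\rsub(t)$, which gives the inequality $\ge$; the reverse inequality is trivial. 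The same argument, applied with the product map and the open set in $\bmdl\times\bmdl$ where $\cork\Dop^t_{v,w}$ is minimal, handles $R_\psub(t)$, with independent $g,h$.

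There is no real obstacle here. The only point needing care is the dominance criterion in positive characteristic. If one prefers to avoid the differential argument, one can use the explicit route: the map $(g,z)\mapsto$ (image under $\Phi_t$) contains $\{z+tf(g)g^{-1}\}$ near $g=1$, and $\{f(g)g^{-1}: g\in G\}$ has closure containing $\bmdl_{\mix}$ up to the linear term. In any case, the differential criterion is standard: if the image closure $Y$ had $\dim Y<\dim\bmdl$, then $d\Phi_t$ would land in the tangent space of $Y$ at a generic smooth point, contradicting surjectivity at generic points. Surjectivity at generic points follows from surjectivity at one point, since the rank is lower semicontinuous.
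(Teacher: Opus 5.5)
Your proposal is correct and follows the paper's proof. Both use the differential $d\Phi_t|_{(1,0)}(a,z)=z+tf(a)$, which is surjective since $f(\alg)=\bmdl_{\mix}$, to get dominance, and then use the covariance \eqref{eq:covar} together with openness of the argmin sets to get \eqref{eq:Rindiag}. The only difference is that you treat the product map through its differential at $((1,0),(1,0))$, where the paper cites that a product of dominant maps is dominant.

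One small slip: $v\mapsto\cork\Dop^t_v$ is \emph{upper} semicontinuous, not lower (it is the rank that is lower semicontinuous), and this is exactly why its minimum is attained on an open set.
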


\begin{proof}
Indeed, identifying the tangent space of $G$ at $1$ with $\alg$,
the differential of $\Phi_t$ at the point $(1,0)$ is given by
\begin{equation}\label{eq:dPhi}
   d\Phi_t|_{(1,0)}(a,z) = z + tf(a), \qquad a \in \alg,\quad z \in
   \bmdl_{\diag} .
\end{equation}
This is surjective: $f(\alg) = \bmdl_{\mix}$ by the standing
hypothesis, and $\bmdl =\bmdl_{\mix}\oplus\bmdl_{\diag}$ by \eqref{eq:decomp}. As
$G\times\bmdl_{\diag}$ and $\bmdl$ are irreducible and smooth, $\Phi_t$ is therefore
dominant (and separable); and a product of two dominant morphisms is dominant.

Since $\argmin_{\bmdl}\cork \Dop^t_v$ and $\argmin_{\bmdl\times\bmdl}\cork \Dop^t_{v,w}$ are open
by upper semicontinuity, they intersect the images of $\Phi_t$ and $\Phi_t\times\Phi_t$ respectively, and by the covariance \eqref{eq:covar}
\[
\cork \Dop^t_{\xi,\eta}=\cork \Dop^t_{\Phi_t(g,\xi),\Phi_t(h,\eta)},\ \ \forall g,h\in G,\ \xi,\eta\in \bmdl_{\diag}.
\]
Hence, the minimum is already attained on $\bmdl_{\diag}$, which is
\eqref{eq:Rindiag}.
\end{proof}

To finish the proof of \Cref{prop:twoblock} it remains to observe the following.

\begin{lemma}[block projection]\label{lem:blockproj}
Let $\pr : \bmdl \to \bmdl_{\diag}$ be the projection along
$\bmdl_{\mix}$. Let $\xi, \eta \in \bmdl_{\diag}$, $t \in \kk$ and
$a \in \alg$. Write $\xi = \xi_I + \xi_J$, $\eta = \eta_I + \eta_J$ and
$a = a_I + a_J + a_{\mix}$ according to \eqref{eq:decomp}. Then,
\begin{equation}\label{eq:blockcomp}
   \pr\,\Dop^t_{\xi,\eta}(a) = (a_I\xi_I - \eta_Ia_I)\oplus
   (a_J\xi_J - \eta_Ja_J) .
\end{equation}
Therefore,
\[
\coker(\pr\circ\,\Dop^t_{\xi,\eta}) = \coker
\Sop^{\bmdl_I}_{\xi_I,\eta_I}\oplus\coker \Sop^{\bmdl_J}_{\xi_J,\eta_J}
\]
where the superscripts on the right-hand side indicate that we take the maps pertaining to $\bmdl_I$ and $\bmdl_J$
(as $\alg_I$- and $\alg_J$-bimodules) and
\begin{equation}\label{eq:prbound}
   \cork \Dop^t_{\xi,\eta}\ge\cork\bigl(\pr\circ\,\Dop^t_{\xi,\eta}\bigr) =
   \cork \Sop^{\bmdl_I}_{\xi_I,\eta_I} + \cork \Sop^{\bmdl_J}_{\xi_J,\eta_J}.
\end{equation}
\end{lemma}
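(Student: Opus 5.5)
The plan is to compute $\pr\,\Dop^t_{\xi,\eta}(a)$ directly from the definition \eqref{eq:D}, $\Dop^t_{\xi,\eta}(a)=a\xi-\eta a+tf(a)$, using the corner calculus of \Cref{ssec:split}, and then read off the cokernel statement and the corank inequality from the shape of the result.

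First I handle the derivation term. By \eqref{eq:reldiag}, $f(\alg)\subseteq\bmdl_{\mix}$, so $\pr(tf(a))=0$ for every $t$ and every $a$. Next I expand $a\xi-\eta a$ with $a=a_I+a_J+a_{\mix}$ and $\xi=\xi_I+\xi_J$, $\eta=\eta_I+\eta_J$. By the corner calculus, a product $a_{PQ}\xi_{P'P'}$ vanishes unless $Q=P'$, and in that case it lies in the $(P,P')$-corner. So the only diagonal contributions come from $a_I\xi_I$ and $a_J\xi_J$. The cross terms such as $a_I\xi_J$ vanish, and the products involving $a_{\mix}$ land in $\bmdl_{\mix}$: for instance $a_{IJ}\xi_J\in\bmdl_{IJ}$. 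The same holds for $\eta a$. Projecting gives \eqref{eq:blockcomp}.

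For the cokernel statement, \eqref{eq:blockcomp} shows that $\pr\circ\Dop^t_{\xi,\eta}$ vanishes on $\alg_{\mix}$. Its restriction to $\alg_I\oplus\alg_J$ is the direct sum of the maps $\Sop^{\bmdl_I}_{\xi_I,\eta_I}:\alg_I\to\bmdl_I$ and $\Sop^{\bmdl_J}_{\xi_J,\eta_J}:\alg_J\to\bmdl_J$, with target $\bmdl_{\diag}=\bmdl_I\oplus\bmdl_J$. The image is therefore the direct sum of the two images, and the cokernel (as a quotient of $\bmdl_{\diag}$) is the direct sum of the two cokernels. Here I use that $\xi_P,\eta_P\in\bmdl_P$, so these are the maps defined for the corner pairs $(\alg_P,\bmdl_P)$. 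Finally, $\pr$ is surjective onto $\bmdl_{\diag}$, so $\pr$ maps $\im\Dop^t_{\xi,\eta}$ onto $\im(\pr\circ\Dop^t_{\xi,\eta})$. Hence $\coker(\pr\circ\Dop^t_{\xi,\eta})$ is a quotient of $\coker\Dop^t_{\xi,\eta}$, which gives the inequality in \eqref{eq:prbound}.

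None of these steps is a real obstacle; the only care needed is the bookkeeping of which Peirce corner each product lands in, in particular checking that products with $a_{\mix}$ never return to a diagonal corner. That holds because $a_{IJ}$ followed by a diagonal element of $\bmdl$ stays in the $(I,\cdot)$ row with a $J$ column, and similarly for $a_{JI}$.

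This lemma then completes \Cref{prop:twoblock}. Combining \eqref{eq:prbound} with \eqref{eq:Rindiag} gives $R_\ast(t)\ge\dfr_\ast(\alg_I,\bmdl_I)+\dfr_\ast(\alg_J,\bmdl_J)$ for $t\ne0$, taking $\eta=\xi$ when $\ast=\rsub$. Upper semicontinuity of corank in the joint parameter $(t,v,w)$ then shows that the set where the corank is at most $R_\ast(0)$ is open and nonempty, so it meets some $t\ne0$. This gives $R_\ast(0)\ge R_\ast(t)$ for generic $t$, and \eqref{eq:step1} yields the proposition.
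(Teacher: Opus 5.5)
Your proof is correct and follows the paper's argument: $\pr$ kills the derivation term by \eqref{eq:reldiag}, and the corner calculus isolates $a_P\xi_P-\eta_Pa_P$. The cokernel and corank statements then follow from the block-diagonal shape together with the surjectivity of $\pr$. Your closing remark on \Cref{prop:twoblock} uses upper semicontinuity jointly in $(t,v,w)$ rather than the paper's ``infimum of upper semicontinuous functions'' argument, and it is an equally valid variant.
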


\begin{proof}
By the corner calculus, the diagonal-by-diagonal products contribute
$a_P\xi_P - \eta_Pa_P \in \bmdl_P$ (products between the two different diagonal
corners vanish), while the diagonal-by-mixed products
$a_{\mix}\xi_P$ and $\eta_Pa_{\mix}$ lie in
$\bmdl_{\mix}$. Finally $f(a) = f(a_{\mix}) \in
\bmdl_{\mix}$ by \eqref{eq:reldiag}. Applying $\pr$ kills the
mixed contributions and leaves \eqref{eq:blockcomp}.

Thus, $\pr\circ \Dop^t_{\xi,\eta}$ is the direct sum of the maps
$\Sop^{\bmdl_I}_{\xi_I,\eta_I}$ and $\Sop^{\bmdl_J}_{\xi_J,\eta_J}$ of the two corners, precomposed with
the projection $a \mapsto (a_I,a_J)$. The second part follows since precomposition with a surjection
does not change the cokernel, while a postcomposition with a surjection cannot increase the corank.
\end{proof}

For either $\ast \in\{\rsub,\psub\}$,
the inequality \eqref{eq:prbound} of \Cref{lem:blockproj}, together with \eqref{eq:Rindiag} and the definition \eqref{def:pair}, implies that for $t\ne0$
\begin{equation}\label{eq:step2}
   R_\ast(t) \ge \dfr_\ast(\alg_I,\bmdl_I) + \dfr_\ast(\alg_J,\bmdl_J) .
\end{equation}
On the other hand, for each fixed pair $(v,w)$, $t \mapsto \cork \Dop^t_{v,w}$ is upper
semicontinuous; and the pointwise infimum of any family of upper
semicontinuous functions is upper semicontinuous. Hence so is $R_\ast$, and
therefore, except for finitely many $t$'s,
\[
   R_\ast(0) \ge R_\ast(t)
\]
for both $\ast$.
Combining this with \eqref{eq:step1} and \eqref{eq:step2} proves \eqref{eq:twoblock}.
The proof of \Cref{prop:twoblock} is complete.

\subsection{Arbitrary fields}\label{ssec:arbhered}

Let $\kk_0$ be an arbitrary field, not necessarily algebraically closed.
Given a pair $(\alg_0,\bmdl_0)$ consisting of a finite-dimensional $\kk_0$-algebra $\alg_0$ and a finite-dimensional bimodule $\bmdl_0$ 
we define $\dfr_\rsub(\alg_0,\bmdl_0)$ (resp., $\dfr_\psub(\alg_0,\bmdl_0)$) to be the corank of $\Sop_\lambda$, resp.\
$\Sop_{\lambda,\mu}$, over the field of rational functions in the coordinates
of the parameters, taken as indeterminates.
This is the same as $\dfr_\ast(\alg_0\otimes_{\kk_0}\kk,\bmdl_0\otimes_{\kk_0}\kk)$
where $\kk$ is an algebraic closure  of $\kk_0$,
and $\dfr_\ast(\alg_0,\bmdl_0) =\dfr_\ast(\alg_0\otimes_{\kk_0}\kk',\bmdl_0\otimes_{\kk_0}\kk')$ for every
field extension $\kk'/\kk_0$.

As before, we call $\bmdl_0$ \pvs, resp.\ \wpvs, if $\dfr_\rsub(\alg_0,\bmdl_0)=0$ (resp., $\dfr_\psub(\alg_0,\bmdl_0)=0$).
Thus $\bmdl_0$ is \pvs precisely when the algebraic group $G_0 = \alg_0^\times$ over $\kk_0$ has a dense orbit in
$\bmdl_0$, in the geometric sense.

\Cref{prop:twoblock} and \Cref{cor:schema} hold verbatim over an arbitrary field.

If $\kk_0$ is infinite, then
\[
   \min_{\lambda\in\bmdl_0}\cork \Sop_\lambda \;=\; \dfr_\rsub(\alg_0,\bmdl_0),
   \qquad
   \min_{\lambda,\mu\in\bmdl_0}\cork \Sop_{\lambda,\mu} \;=\; \dfr_\psub(\alg_0,\bmdl_0).
\]
If $\kk_0$ is finite, we can only guarantee the inequalities $\ge$.

\section{Nilpotent varieties, rigidity and \prigity}\label{sec:nilp}

In this section we recall well known facts about quivers, preprojective algebras and their representations
and define \prigity in this context. See \cite{MR4847234} and the references therein for more details.

\subsection{Quivers and nilpotent varieties}\label{ssec:quivers}

Let $Q = (\qvert,\qarr)$ be a finite quiver without loops, with vertex set
$\qvert$, and let $\kk Q$ be its path
algebra. An arrow $\alpha \in \qarr$ goes from its \emph{tail}
$t(\alpha) \in \qvert$ to its \emph{head} $h(\alpha) \in \qvert$, and $Q^\circ$
denotes the opposite quiver, in which the two are interchanged. For a
finite-dimensional $\qvert$-graded vector space $V = \oplus_{i\in \qvert}V_i$ of
graded dimension $\bd = \grdim V$, the representations of $Q$ on $V$ form
the affine space
\[
   R_Q(V) \;=\; \bigoplus_{\alpha\in\qarr}
   \Hom\bigl(V_{t(\alpha)},\,V_{h(\alpha)}\bigr) ,
\]
on which the group
$G_V = \prod_{i\in \qvert}\GL(V_i)$ of grading-preserving automorphisms acts by
conjugation, the orbits being the isomorphism classes of representations of
graded dimension $\bd$. Let $\langle\cdot,\cdot\rangle_Q$ be the
Euler form and $(\cdot,\cdot)$ its symmetrization, so that for representations
$M$, $N$ of graded dimensions $\bd$, $\be$,
\[
   \dim\Hom_Q(M,N) - \dim\Ext^1_Q(M,N) =
   \langle\bd,\be\rangle_Q .
\]

\begin{remark}[numerical constraint]\label{rem:numerical}
Suppose that $M$ is a representation of $Q$ with a direct sum decomposition $M=M_I\oplus M_J$.
By \Cref{lem:corners} with its notation we have
\[
   \dim\bmdl_{\mix} - \dim\alg_{\mix}
   \;=\; -\bigl(\grdim M_I,\, \grdim M_J\bigr).
\]
Thus, in the case of an \obt decomposition (\Cref{def:obtuse}) we have
\[
\dim\Ff=-(\grdim M_I, \grdim M_J)
\]
and in particular  $(\grdim M_I, \grdim M_J)\le 0$. This justifies the terminology.
\end{remark}

Let $\overline{Q}$ be the double quiver and $\Pi = \Pi_Q$ the preprojective
algebra, the quotient of the path algebra of $\overline{Q}$ by the
preprojective relation. The $\Pi$-module structures on $V$ form a closed
subvariety $R_\Pi(V)$ of $R_{\overline{Q}}(V) = R_Q(V)\times R_{Q^\circ}(V) =
T^*(R_Q(V))$, the fiber over $0$ of the moment map. Lusztig's nilpotent
variety is
\[
   \Lambda(V) \;=\; R_\Pi(V)\cap R^{\mathrm{nilp}}_{\overline{Q}}(V),
\]
the locus of nilpotent $\Pi$-modules. It is of pure dimension $\dim R_Q(V)$
by \cite{MR1088333}*{Theorem 12.3}, and in fact Lagrangian in
$T^*(R_Q(V))$ \cite{MR1088333}*{Theorem 12.9}; only the former will be used. We
write $\Comp(\bd)$ for its (finite) set of irreducible components,
each of which is $G_V$-stable, and $\Comp = \bigsqcup_{\bd}
\Comp(\bd)$.

By a result of Crawley-Boevey \cite{MR1781930}*{Lemma 1}, for any
$\Pi$-modules $M$, $N$ of graded dimensions $\bd$,
$\be$,
\begin{equation}\label{eq:CB}
   \dim\Hom_\Pi(M,N) - \dim\Ext^1_\Pi(M,N) + \dim\Hom_\Pi(N,M) =
   (\bd,\be).
\end{equation}
Moreover $\Ext^1_\Pi(N,M) \simeq \Ext^1_\Pi(M,N)^*$ functorially. For $\cmp_1,
\cmp_2 \in \Comp$ set
\[
   \hom_\Pi(\cmp_1,\cmp_2) = \min\{\dim\Hom_\Pi(x_1,x_2)\},
   \qquad
   \ext^1_\Pi(\cmp_1,\cmp_2) = \min\{\dim\Ext^1_\Pi(x_1,x_2)\},
\]
the minima over $(x_1,x_2) \in \cmp_1\times \cmp_2$; by upper semicontinuity they
are attained on dense open subsets.

Finally, we recall the duality. The preprojective algebra is isomorphic to
its opposite algebra: more precisely, $\Pi_Q^{\op} \simeq
\Pi_{Q^\circ} = \Pi_Q$, where $Q^\circ$ is the opposite quiver and the first
isomorphism takes a path to its opposite. Composing with the linear duality
$M \mapsto M^* = \Hom_\kk(M,\kk)$, we obtain a self-duality on
$\Pi$-modules, an isomorphism $\Lambda(V) \to
\Lambda(V^*)$, and hence a bijection $\Comp(\grdim V) \to \Comp(\grdim V^*)$, denoted $\cmp
\mapsto \cmp^*$; identifying $V^*$ with $V$ it is an involution on
$\Comp(\bd)$ for every $\bd$
\cite{MR4847234}*{\S2}.

\subsection{The Dynkin case}\label{ssec:dynkin}

From now on we assume the underlying graph of $Q$ is a simply laced Dynkin diagram. Then
the isomorphism classes of indecomposable representations of $Q$ are in bijection
$\beta\mapsto M_Q(\beta)$ with the set $\Psi$ of positive roots.
The inverse map is the graded dimension of the representation \cites{MR0332887,MR0393065}.
Writing $\Mfr = \NN\Psi$ for the monoid of
formal sums $\mm = \beta_1 + \dots + \beta_k$ of positive roots, the map $\mm
\mapsto M_Q(\mm) = M_Q(\beta_1)\oplus\dots\oplus M_Q(\beta_k)$ is a bijection
onto the isomorphism classes of representations of $Q$. In the Dynkin case
$\Lambda(V) = R_\Pi(V)$ is the union of the conormal bundles of the (finitely
many) $G_V$-orbits in $R_Q(V)$, and taking closures of conormal bundles gives
a bijection\footnote{Over $\CC$ this is a special case of Pjasecki{\u\i}'s duality
\cite{MR0390138}*{Corollary 2}.}
\[
   \lambda_Q : \Mfr \longrightarrow \Comp.
\]

We write $\cmp_\mm = \lambda_Q(\mm)$. Note that $\cmp_{\mm}\oplus \cmp_{\mm'}$, the
closure of $G_V\cdot\{x\oplus x'\}$, is always contained in $\cmp_{\mm+\mm'}$.

\subsection{Rigidity and \texorpdfstring{\Prigity}{Pseudorigidity}}\label{ssec:prig}

A $\Pi$-module $x$ is \emph{rigid} if $\Ext^1_\Pi(x,x) = 0$. For $x \in
\Lambda(V)$, of graded dimension $\bd = \grdim V$, rigidity is equivalent
to the orbit $G_V\cdot x$ being open in $\Lambda(V)$; in that case the
closure of the orbit is an irreducible component. Indeed, the stabilizer of
$x$ in $G_V$ is the unit group of $\End_\Pi(x)$, hence open in it, so that
$\dim G_V\cdot x = \dim G_V - \dim\End_\Pi(x)$; on the other hand
$(\bd,\bd) = 2\langle\bd,\bd\rangle_Q = 2\bigl(\dim G_V - \dim R_Q(V)\bigr)$,
so that \eqref{eq:CB} with $M = N = x$ gives
\begin{equation}\label{eq:extcodim}
   \dim\Ext^1_\Pi(x,x) \;=\; 2\dim\End_\Pi(x) - (\bd,\bd)
   \;=\; 2\bigl(\dim R_Q(V) - \dim G_V\cdot x\bigr) ,
\end{equation}
which is twice the codimension of $G_V\cdot x$ in $\Lambda(V)$, the latter
being of pure dimension $\dim R_Q(V)$. An
irreducible component $\cmp$ of $\Lambda(V)$ is called \emph{rigid} if it
contains a rigid module, or equivalently, a (necessarily unique) open $G_V$-orbit
\cite{MR4847234}*{\S4}. See \cite{MR2242628} for rigid modules and rigid
components over preprojective algebras.

Say that $\cmp_1$ and $\cmp_2$ \emph{strongly commute} if $\ext^1_\Pi(\cmp_1,\cmp_2) = 0$;
by the symmetry $\Ext^1_\Pi(x_1,x_2)^* \simeq \Ext^1_\Pi(x_2,x_1)$ the
condition is symmetric in $\cmp_1$, $\cmp_2$. By \cite{MR1944812}*{Theorem 1.2}, $\cmp_1$ and $\cmp_2$ strongly commute if and only
if $\cmp_1\oplus \cmp_2$ is an irreducible component, in which case $\cmp_1\oplus \cmp_2=\cmp_{\mm_1+\mm_2}$
where $\cmp_i=\cmp_{\mm_i}$, $i=1,2$.

\begin{definition}\label{def:prig}
$\cmp \in \Comp$ is \emph{\prig} if $\ext^1_\Pi(\cmp,\cmp) = 0$, that is, if $\cmp$
strongly commutes with itself.
\end{definition}

\begin{remark}\label{rem:prigbasic}
We record the immediate properties.
\begin{enumerate}
\item\label{prigbasic:rigid} Every rigid component is \prig, but the
converse is not true (see \Cref{ex:leclerc} below).
\item\label{prigbasic:dual} \Prigity is preserved by the duality $\cmp \mapsto \cmp^*$ defined at the
end of \Cref{ssec:quivers}, since $\Ext^1_\Pi(x^*,y^*) \simeq
\Ext^1_\Pi(y,x)$.
\item\label{prigbasic:genext} In terms of the ``generic extension'' binary operation of Aizenbud--Lapid, $\cmp_\mm$ is \prig if and only if $\cmp_\mm * \cmp_\mm =
\cmp_{\mm+\mm}$ \cite{MR4847234}*{(9.5)}. 
\item \label{rem:semican} In terms of the dual semicanonical basis, $\{\rho_{\cmp}\}_{\cmp\in \Comp}$
of the coordinate ring of a maximal unipotent subgroup \cite{MR2144987} over $\CC$,
\prigity implies that 
\[
   \rho_{\cmp}^{\,2} \;=\; \rho_{\cmp\oplus \cmp} ,
\]
so that the square of $\rho_{\cmp}$ is again a dual semicanonical basis element \cite{MR2144987}*{Theorem 1.1}.
This is the analogue, for that basis, of a dual canonical basis vector
being \emph{real}. We do not know whether the converse holds.
\end{enumerate}
\end{remark}

Let $M$ be a representation of $Q$, i.e.\ an $R$-module where $R = \kk Q$.
Let $\bd$ be the graded dimension of $M$.
We use the notation of \Cref{ssec:pairs}, with respect to
\[
   \alg \;=\; \End_Q(M),
   \qquad
   \bmdl \;=\; \Ext^1_Q(M,M)^* .
\]
In particular, the group $G = \Aut_Q(M)=\alg^\times$ acts on $\bmdl$ by conjugation, $g\cdot\lambda =g\lambda g^{-1}$.

It will be useful to reinterpret $\bmdl$ using the conormal incarnation of
\Cref{ssec:dynkin}. Write $V$ for the underlying graded
space of $M$ and $T \in R_Q(V)$ for its structure maps. The differential at
$1$ of the orbit map $g \mapsto gTg^{-1}$ is
\begin{equation}\label{eq:dT}
   d_T : \bigoplus_{i\in \qvert}\End(V_i)\longrightarrow R_Q(V), \qquad
   d_T(\phi)_\alpha = \phi_{h(\alpha)}T_\alpha - T_\alpha\phi_{t(\alpha)} ,
   \qquad \alpha\in\qarr ,
\end{equation}
with kernel $\alg$ and cokernel $\Ext^1_Q(M,M)$. Thus, $\bmdl$ is the kernel of the dual map to $d_T$.
We can identify the dual of $R_Q(V)$ with $R_{Q^\circ}(V)$ via the pairing
\[
\langle B,\xi\rangle =\sum_{\alpha\in\qarr}\tr(B_\alpha\xi_\alpha).
\]
Collecting the terms of $\langle B, d_T\phi\rangle$ that involve $\phi_i$, the annihilator of
$\im d_T$ is cut out by the preprojective relation at every vertex, so that
\begin{equation}\label{eq:conormal}
   \bmdl \;=\; \Ext^1_Q(M,M)^* \;=\;
   \bigl\{B \in R_{Q^\circ}(V) \;:\; (T,B) \in R_\Pi(V)\bigr\} ,
\end{equation}
the fiber at $T$ of the conormal bundle of the orbit $O_M=G_V\cdot T$; and the
bimodule structure \eqref{eq:functorial} becomes composition of linear maps,
$a\cdot B\cdot b = a\circ B\circ b$.

By \eqref{eq:conormal} the elements of $\bmdl$ are exactly the $\Pi$-structures
on $M$ compatible with the $Q$-structure, and a homomorphism of $\Pi$-modules
$(M,\lambda)\to(M,\mu)$ is a morphism $a : M \to M$ of representations of $Q$
that intertwines the two extra structures, that is $a\lambda = \mu a$. Hence,
for $\lambda,\mu\in\bmdl$,
\begin{equation}\label{eq:hompi}
   \Hom_\Pi\bigl((M,\lambda),(M,\mu)\bigr) \;=\; \ker \Sop_{\lambda,\mu}.
\end{equation}

The invariants defined in \eqref{def:df*M} have the following meaning.

Write $M=M_Q(\mm)$ where $\mm \in \Mfr$ and let $\cmp_\mm=\lambda_Q(\mm)$ be the irreducible component in $\Comp(\bd)$.
Recall that $\cmp_\mm$ is the closure of the conormal bundle of the $G_V$-orbit of $M$ in $T^*(R_Q(V))$.
For simplicity, we write
\[
\dfr_*(\mm)=\dfr_*^R(M).
\]
Then, $\dfr_\rsub(\mm)$ is the codimension of a generic $G_V$-orbit in
$\cmp_\mm$. Indeed, the stabilizer of $M$ in $G_V$ is $G=\Aut_Q(M)$, so the $G_V$-orbit of
a conormal point $(M,\lambda)$ has dimension $\dim O_M + \dim G\cdot\lambda =
\dim O_M + \rank \Sop_\lambda$, the second equality by the
separability recorded in \Cref{ssec:pairs}; on the other hand $\dim \cmp_\mm =
\dim R_Q(V) = \dim O_M + \dim\bmdl$, so that the codimension of that orbit is
$\cork \Sop_\lambda$. The conormal points over $O_M$ are dense in
$\cmp_\mm$, and taking $\lambda$ generic gives the assertion.

Since $(\bd,\bd) = 2(\dim\alg - \dim\bmdl)$, by \eqref{eq:CB} we have
\[
   \dim\Ext^1_\Pi((M,\lambda),(M,\mu))   = \cork \Sop_{\lambda,\mu} + \cork \Sop_{\mu,\lambda}.
\]
Since the conormal points over $O_M$ are dense in $\cmp_\mm$ and
$\dim\Ext^1_\Pi$ is upper semicontinuous, we get
\[
\min_{x\in \cmp_\mm}\dim\Ext^1_\Pi(x,x)=2\dfr_\rsub(\mm)
\]
and, since $\argmin_{\bmdl\times\bmdl}\cork \Sop_{\lambda,\mu}$ and $\argmin_{\bmdl\times\bmdl}\cork \Sop_{\mu,\lambda}$
intersect, being dense open subsets of the irreducible variety $\bmdl\times\bmdl$,
\begin{equation}\label{eq:extformula}
   \ext^1_\Pi(\cmp_\mm, \cmp_\mm) \;=\; 2\dfr_\psub(\mm).
\end{equation}

In particular, $\dfr_\rsub(\mm)=0$ (i.e., $M$ is \pvs in the language of \Cref{ssec:pairs}) if and only if $\cmp_\mm$ is rigid.
For simplicity we say that $\mm$ is rigid in this case.
Similarly, $\dfr_\psub(\mm)=0$ (i.e., $M$ is \wpvs in the language of \Cref{ssec:pairs}) if and only if $\cmp_\mm$ is \prig.
We say that $\mm$ is \prig in this case.

\subsection{Equioriented type \texorpdfstring{$A$}{A}: segments and the matrix-unit model}\label{ssec:typeA}

We specialize further to the equioriented quiver of type $A_r$,
\[
   1 \longrightarrow 2 \longrightarrow \cdots \longrightarrow r-1\longrightarrow r.
\]
We index the positive roots as segments supported in $[1,r]$ where
a \emph{segment} is an interval $\Delta = [a,b]$ with $a,b \in \ZZ$ and $a \le b$.
The representation attached to a segment is a \emph{graded Jordan block}:
$M([a,b])$ has a one-dimensional space at each vertex of $[a,b]$, zero
elsewhere, and nonzero along each arrow of $[a,b]$. It will be useful to set $M([a,a-1])=0$ for all $a$. We say that the segments $[a,b]$ and $[c,d]$ are \emph{juxtaposed} if $c=b+1$.

It is easy to see that $\Hom_Q(M([a,b]),M([c,d]))$ is $0$ unless $c \le a \le d \le b$ in which case it is one-dimensional.
Similarly $\Ext^1_Q(M([a,b]),M([c,d]))$ is $0$ unless $a+1\le c\le b+1\le d$ in which case it is one-dimensional.\footnote{This follows from
the fact that the Auslander--Reiten translate takes $M([a,b])$ to $M([a+1,b+1])$ (interpreted as $0$ if $b=r$) and $\Ext^1_Q(M([a,b]),M([c,d]))^* \simeq\Hom_Q(M([c,d]),\tau M([a,b]))$.}

A \emph{multisegment} is a formal sum $\mm = \sum_{i \in \Kk}\Delta_i$ over a finite index set $\Kk$, with $\Delta_i = [a_i,b_i]$.
The multisegments supported in $[1,r]$ make up the monoid $\Mfr$ of \Cref{ssec:dynkin}.

For $i,j \in \Kk$ define two relations,
\[
   i\,\XX\,j \iff a_i + 1 \le a_j \le b_i + 1 \le b_j ,
   \qquad
   i\,\YY\,j \iff a_i \le a_j \le b_i \le b_j ,
\]
and set $\XX = \XX_\mm = \{(i,j) \in \Kk^2 : i \XX j\}$, $\YY = \YY_\mm =
\{(i,j) \in \Kk^2 : i \YY j\}$. Note that $(i,i) \in \YY$ for every $i$,
while $(i,i) \notin \XX$.

Thus,
\begin{align*}
   \dim\Hom_Q\bigl(M(\Delta_j),M(\Delta_i)\bigr) &=
   \begin{cases} 1, & (i,j) \in \YY,\\ 0, & \text{otherwise,}\end{cases}\\
   \dim\Ext^1_Q\bigl(M(\Delta_i),M(\Delta_j)\bigr) &=
   \begin{cases} 1, & (i,j) \in \XX,\\ 0, & \text{otherwise.}\end{cases}
\end{align*}

In particular $\dim\alg = \#\YY$ and $\dim\bmdl = \#\XX$. We write down
bases for $\alg$ and $\bmdl$ as follows. Fix, for each $i \in \Kk$ and each $v \in
\Delta_i$, a basis vector $x_{i,v}$ of the $v$-component of the summand
$M(\Delta_i)$ of $M$, so that the structure map $A_v$ of $M$ carries
$x_{i,v}$ to $x_{i,v+1}$ if $v+1\in\Delta_i$ and to $0$ otherwise. For $(i,j)
\in \YY$ let $\algbss_{ij}$ be the degree-preserving endomorphism of $V$, and for
$(i,j) \in \XX$ let $\bmdlbss_{ij}$ be the degree-lowering one, determined by
\begin{equation}\label{eq:explicitbasis}
   \algbss_{ij}(x_{\ell,v}) = \delta_{j\ell}\,x_{i,v}
   \quad (v \in \Delta_i\cap\Delta_j),
   \qquad
   \bmdlbss_{ij}(x_{\ell,v}) = \delta_{j\ell}\,x_{i,v-1}
   \quad (v \in \Delta_j,\ v-1\in\Delta_i),
\end{equation}
both vanishing on the basis vectors not listed. In formulas below, the symbols
$\algbss_{ij}$ and $\bmdlbss_{ij}$ appearing on the right-hand side are understood to be zero if
$(i,j)\notin\YY$, resp.\ $(i,j)\notin\XX$.

The following is a straightforward computation.
(See \cite{MR863522}*{Lemmes II.4 and II.5 and Remarque II.5.1} for the first part,
as well as \cite{MR3866895}*{\S4.1}.)

\begin{lemma}[the matrix-unit model]\label{lem:alg}
The families $\{\algbss_{ij}\}_{(i,j)\in\YY}$ and
$\{\bmdlbss_{ij}\}_{(i,j)\in\XX}$ are bases of $\alg$ and of $\bmdl$ (the latter, in the conormal description \eqref{eq:conormal}).
The identity of $\alg$ is $1 =\sum_{i \in \Kk}\algbss_{ii}$ and
the multiplication table of $\alg$ and the bimodule structure of $\bmdl$ are given by
\begin{equation}\label{eq:units}
   \algbss_{ij}\algbss_{\ell m} = \delta_{j\ell}\algbss_{im},
   \qquad
   \algbss_{ij}\bmdlbss_{\ell m} = \delta_{j\ell}\bmdlbss_{im},
   \qquad
   \bmdlbss_{ij}\algbss_{\ell m} = \delta_{j\ell}\bmdlbss_{im},
\end{equation}
where the subscripts of the factors on the left-hand sides are in $\YY$, resp.\ $\XX$,
and the symbols on the right-hand sides are subject to the zero convention above.
Thus, $[\algbss_{ij},\bmdlbss_{\ell m}] = \delta_{j\ell}\bmdlbss_{im} -\delta_{im}\bmdlbss_{\ell j}$ for
$(i,j)\in\YY$ and $(\ell,m)\in\XX$.
\end{lemma}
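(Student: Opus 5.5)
The plan is to verify everything directly in terms of the explicit graded basis $\{x_{i,v}\}$. We first identify $\alg$ and $\bmdl$ summand by summand, and then read off the products. The main labour is interval bookkeeping; we organize it so that the zero convention comes out automatically rather than through case analysis.

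\emph{Basis of $\alg$.} Since $\alg=\bigoplus_{i,j}\Hom_Q(M(\Delta_j),M(\Delta_i))$ and each summand has dimension $1$ or $0$ according to whether $(i,j)\in\YY$, it suffices to check that $\algbss_{ij}$ is a nonzero morphism of representations when $(i,j)\in\YY$. Let $a_i\le a_j\le b_i\le b_j$, so that $\Delta_i\cap\Delta_j=[a_j,b_i]\neq\emptyset$. We test compatibility with the arrow $v\to v+1$ on $x_{j,v}$:
\begin{itemize}
\item If $v,v+1\in[a_j,b_i]$, both sides equal $x_{i,v+1}$.
\item If $v=b_i$, then $A(x_{i,b_i})=0$, and $\algbss_{ij}x_{j,b_i+1}=0$ because $b_i+1\notin\Delta_i$.
\item The case $v+1\in\Delta_i\setminus\Delta_j$ with $v\in\Delta_j$ cannot occur, since $b_i\le b_j$.
\item The case $v\in\Delta_j\setminus\Delta_i$ forces $v>b_i$, so both sides vanish.
\end{itemize}
Hence $\{\algbss_{ij}\}_{(i,j)\in\YY}$ is a basis of $\alg$ and $1=\sum_i\algbss_{ii}$.

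\emph{Product in $\alg$.} The composite $\algbss_{ij}\algbss_{\ell m}$ is $\delta_{j\ell}$ times the $0/1$ map $x_{m,v}\mapsto x_{i,v}$ for $v\in\Delta_i\cap\Delta_j\cap\Delta_m$. From $a_i\le a_j\le a_m$ and $b_i\le b_j\le b_m$ we get
\[
\Delta_i\cap\Delta_j\cap\Delta_m=[a_m,b_i],
\]
since $\Delta_j\supseteq[a_j,b_j]\supseteq[a_m,b_i]$. This interval is nonempty exactly when $(i,m)\in\YY$, and then the composite equals $\algbss_{im}$.

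\emph{Basis of $\bmdl$.} We use \eqref{eq:conormal}: $\bmdl$ consists of the degree-lowering $B$ satisfying the preprojective relations, which for the equioriented quiver amount to the commutation $AB=BA$ at each vertex (up to an irrelevant sign convention). For $(i,j)\in\XX$, evaluate both sides on $x_{j,v}$:
\[
A\bmdlbss_{ij}x_{j,v}=x_{i,v}\ \text{ if } v-1,v\in\Delta_i,\ v\in\Delta_j,
\qquad
\bmdlbss_{ij}Ax_{j,v}=x_{i,v}\ \text{ if } v,v+1\in\Delta_j,\ v\in\Delta_i.
\]
The inequalities $a_i+1\le a_j\le b_i+1\le b_j$ are exactly what makes the two conditions on $v$ coincide, namely $v\in[a_j,b_i]$. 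Indeed, $a_j\ge a_i+1$ gives $v-1\in\Delta_i$ for all $v\ge a_j$, and $b_j\ge b_i+1$ gives $v+1\in\Delta_j$ for all $v\le b_i$. The $\bmdlbss_{ij}$ are linearly independent because they are supported on distinct matrix blocks. Their number is $\#\XX=\dim\Ext^1_Q(M,M)=\dim\bmdl$, so they form a basis.

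\emph{Bimodule structure.} Here the main obstacle, the case analysis for the zero convention, can be sidestepped. The product $\algbss_{ij}\bmdlbss_{\ell m}$ lies in $\bmdl$, because $\bmdl$ is a bimodule under composition. It vanishes unless $j=\ell$, and it maps the $m$-th summand into the $i$-th summand. Expanding it in the basis $\{\bmdlbss_{ij}\}$, it must therefore be a scalar multiple of $\bmdlbss_{im}$, and in particular $0$ if $(i,m)\notin\XX$. When $(i,m)\in\XX$, the composite is a $0/1$ map $x_{m,v}\mapsto x_{i,v-1}$, so it remains to find one $v$ where it is nonzero. We take $v=b_i+1$:
\begin{itemize}
\item $v\in\Delta_m$ because $a_m\le b_i+1\le b_m$, by $(i,m)\in\XX$;
\item $v-1=b_i\in\Delta_i$ trivially;
\item $b_i\in\Delta_j$ because $(i,j)\in\YY$.
\end{itemize}
Hence $\algbss_{ij}\bmdlbss_{jm}=\bmdlbss_{im}$. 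The product $\bmdlbss_{ij}\algbss_{\ell m}$ is handled symmetrically, choosing $v=a_m$ so that the middle index constraint is again automatic.

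The commutator formula $[\algbss_{ij},\bmdlbss_{\ell m}]=\delta_{j\ell}\bmdlbss_{im}-\delta_{im}\bmdlbss_{\ell j}$ then follows immediately from \eqref{eq:units}.
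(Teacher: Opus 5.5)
Your proposal is correct and is essentially the direct verification in the basis $x_{i,v}$ that the paper has in mind when it calls the lemma a straightforward computation. Your one shortcut also works: since $aB$ and $Ba$ lie in $\bmdl$ and the $\bmdlbss_{pq}$ are supported on distinct blocks, each product is a multiple of $\bmdlbss_{im}$, so the zero convention comes for free. The choices $v=b_i+1$, resp.\ $v=a_m$, then show that the multiple is $1$.
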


It follows that the algebra $\alg$ and the bimodule $\bmdl$ attached to a
multisegment, hence also the invariants $\dfr_\rsub$ and $\dfr_\psub$,
depend only on the pair of relations $(\XX_\mm, \YY_\mm)$:

\begin{corollary}[relabeling invariance]\label{cor:relinv}
Let $\mm$, $\mm'$ be multisegments with index sets $\Kk$, $\Kk'$ and let
$\varphi : \Kk \to \Kk'$ be a bijection with $(\varphi\times\varphi)(\XX_\mm) =
\XX_{\mm'}$ and $(\varphi\times\varphi)(\YY_\mm) = \YY_{\mm'}$. Then
$\dfr_\ast(\mm) = \dfr_\ast(\mm')$ for both $\ast$; in particular $\mm$
is rigid (resp.\ \prig) if and only if $\mm'$ is. The same conclusion
holds if $\varphi$ reverses the pairs, that is, if
$(\varphi\times\varphi)(\XX_\mm)$ and $(\varphi\times\varphi)(\YY_\mm)$
are the relations opposite to $\XX_{\mm'}$ and $\YY_{\mm'}$.
\end{corollary}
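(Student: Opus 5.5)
We want Corollary (relabeling invariance). The plan is to show that the bijection $\varphi$ induces an isomorphism of pairs $(\alg_\mm,\bmdl_\mm)\simeq(\alg_{\mm'},\bmdl_{\mm'})$ in the first case, and an anti-isomorphism in the reversed case. Since $\dfr_\ast$ is intrinsic to the pair, invariance then follows.

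For the first case, Lemma~\ref{lem:alg} presents $\alg_\mm$ as the span of the symbols $\algbss_{ij}$, $(i,j)\in\YY_\mm$, with multiplication $\algbss_{ij}\algbss_{\ell m}=\delta_{j\ell}\algbss_{im}$ subject to the zero convention. Likewise $\bmdl_\mm$ is the span of the $\bmdlbss_{ij}$, $(i,j)\in\XX_\mm$, with the analogous actions. These structure constants are completely determined by the sets $\XX_\mm,\YY_\mm\subseteq\Kk^2$. Define linear maps $\Psi:\algbss_{ij}\mapsto\algbss'_{\varphi(i)\varphi(j)}$ and $\Psi^\bmdl:\bmdlbss_{ij}\mapsto\bmdlbss'_{\varphi(i)\varphi(j)}$. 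The hypothesis $(\varphi\times\varphi)(\YY_\mm)=\YY_{\mm'}$ makes $\Psi$ a bijection on bases. Moreover, membership of $(i,m)$ in $\YY_\mm$ (resp.\ $\XX_\mm$) matches membership of $(\varphi(i),\varphi(m))$ in $\YY_{\mm'}$ (resp.\ $\XX_{\mm'}$), so the zero convention is respected. Hence $\Psi$ is an algebra isomorphism; it is unital because $1=\sum_i\algbss_{ii}$. Also $\Psi^\bmdl$ is semilinear over it: $\Psi^\bmdl(a\lambda b)=\Psi(a)\Psi^\bmdl(\lambda)\Psi(b)$. Consequently $\Psi^\bmdl\circ\Sop_{\lambda,\mu}=\Sop_{\Psi^\bmdl\lambda,\Psi^\bmdl\mu}\circ\Psi$. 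Coranks are therefore preserved, and since $\Psi^\bmdl$ is bijective on parameters the minima in \eqref{def:pair} agree.

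For the reversed case, define $\Psi(\algbss_{ij})=\algbss'_{\varphi(j)\varphi(i)}$ and similarly for $\Psi^\bmdl$. The transposed multiplication table shows that $\Psi$ is an anti-isomorphism $\alg_\mm\to\alg_{\mm'}^{\op}$, with $\Psi^\bmdl(a\lambda b)=\Psi(b)\Psi^\bmdl(\lambda)\Psi(a)$. Then
\[
\Psi^\bmdl(a\lambda-\mu a)=\Psi^\bmdl(\lambda)\Psi(a)-\Psi(a)\Psi^\bmdl(\mu)=-\Sop_{\Psi^\bmdl\mu,\Psi^\bmdl\lambda}(\Psi(a)).
\]
So $\cork\Sop_{\lambda,\mu}=\cork\Sop_{\Psi^\bmdl\mu,\Psi^\bmdl\lambda}$. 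This swaps the two parameters, which is harmless: the minimum over $\bmdl\times\bmdl$ is symmetric under swapping, and the diagonal $\lambda=\mu$ is preserved, so both $\dfr_\rsub$ and $\dfr_\psub$ agree.

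The rigid/\prig\ statement is then immediate from the identification of these properties with $\dfr_\rsub(\mm)=0$ and $\dfr_\psub(\mm)=0$ in \Cref{ssec:prig}. The only point needing care is the zero convention: we must check that a product vanishes in $\mm$ exactly when its image product vanishes in $\mm'$. This holds because the $\delta$-factor transforms under the bijection $\varphi$, and the target pair lies in $\YY$ or $\XX$ exactly when its image does. That is the main check; everything else is formal.
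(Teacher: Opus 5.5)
Your proof is correct and is the argument the paper has in mind: the paper simply observes that by \Cref{lem:alg} the structure constants of $(\alg,\bmdl)$ depend only on $(\XX_\mm,\YY_\mm)$. Your explicit isomorphism spells this out, and so does your anti-isomorphism in the reversed case, together with the swap $(\lambda,\mu)\mapsto(\Psi^{\bmdl}\mu,\Psi^{\bmdl}\lambda)$ (which is also why the relabelings later interchange the color types $\blam$ and $\bmu$). One small wording slip: in the reversed case $\Psi$ is an anti-isomorphism $\alg_\mm\to\alg_{\mm'}$, i.e.\ an isomorphism onto $\alg_{\mm'}^{\op}$, not an anti-isomorphism onto $\alg_{\mm'}^{\op}$; your formulas already use it correctly.
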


\subsection{The rank conditions in coordinates}

We now write the maps $\Sop_{\lambda,\mu}$ of \Cref{ssec:pairs} in the
bases above. For $\lambda = \sum \lambda_{ij}\bmdlbss_{ij}$ and $\mu = \sum
\mu_{ij}\bmdlbss_{ij}$, \eqref{eq:units} gives
\begin{equation}\label{eq:S}
   \Sop_{\lambda,\mu}(\algbss_{ij})
   \;=\;
   \sum_{\substack{m\,:\,(j,m)\in \XX\\ (i,m)\in \XX}} \lambda_{jm}\,\bmdlbss_{im}
   \;-\;
   \sum_{\substack{l\,:\,(l,i)\in \XX\\ (l,j)\in \XX}} \mu_{li}\,\bmdlbss_{lj},
   \qquad (i,j) \in \YY ,
\end{equation}
the coefficient arrays being extended by zero outside $\XX$. Thus the
matrix of $\Sop_{\lambda,\mu}$ in the bases $\{\algbss_{ij}\}_{(i,j)\in\YY}$
of $\alg$ and $\{\bmdlbss_{ik}\}_{(i,k)\in\XX}$ of $\bmdl$ has its rows indexed by $\XX$
and its columns indexed by $\YY$, and $\mm$ is \prig if and only if
this matrix has full row rank for some $(\lambda,\mu)$, rigid if and only
if the same holds at $\mu = \lambda$.

Note that each nonzero entry of the matrix is either $\lambda_{jm}$ or
$-\mu_{li}$. We record which one, in the following bookkeeping device.

\begin{definition}[colors and the colored graph]\label{def:M}
To each $(a,b) \in \XX$ we attach two formal symbols, its \emph{colors}
$\blam_{ab}$ and $\bmu_{ab}$, the first \emph{of type $\blam$} and the
second \emph{of type $\bmu$}. The colored bipartite graph $\Gc_\mm$ on
$(\XX,\YY)$ has, for $(a,b) \in \XX$,
\begin{equation}\label{eq:type12}
\begin{aligned}
   \bmu_{ab} &\text{ colors } (a,c)\to(b,c)
   &&\text{for every } (a,c)\in\XX, (b,c) \in \YY,\\
   \blam_{ab} &\text{ colors } (c,b)\to(c,a)
   &&\text{for every } (c,b)\in\XX, (c,a) \in \YY ,
\end{aligned}
\end{equation}
and every edge arises exactly once in this way. In particular $\bmu_{ab}$
joins the source $(a,b)$ to the diagonal target $(b,b)$, and $\blam_{ab}$
joins $(a,b)$ to $(a,a)$.
\end{definition}

By \eqref{eq:S}, the nonzero entry of $\Sop_{\lambda,\mu}$ in position
$(u,y)$ corresponds to an edge $u \to y$ of $\Gc_\mm$; it is the value of
the color of that edge if the edge is of type $\blam$, and minus that
value if it is of type $\bmu$. For a fixed source and target the type and
the color are unique. Since $\XX \subseteq \YY$ whenever $\mm$ has no
juxtaposed segments, a vertex may occur on either side of an edge, and we
fix the convention that a \emph{neighbor} of $v \in \YY$ is always on the
source side, i.e.\ a $w \in \XX$ with an edge $w \to v$.

\begin{definition}[minimal linked pairs]\label{def:covmm}
A pair $(i,k) \in \XX$ is \emph{minimal} if there is no $j \in \Kk\setminus
\{i,k\}$ with $(i,j) \in \YY$ and $(j,k) \in \XX$, or with $(i,j) \in \XX$
and $(j,k) \in \YY$. We write $\Cov(\mm) \subseteq \XX$ for the set of minimal pairs.
The \emph{cover graph} $\cgr_\mm$ is the undirected graph with vertex set
$\Kk$ and edge set $\Cov(\mm)$.
\end{definition}

For a finite graph $\cgr$ we write $\cyc(\cgr)$ for its \emph{cycle rank}, the dimension
of its cycle space (i.e., the number of edges minus the number of vertices plus the number of connected components).
It vanishes exactly when $\cgr$ is a forest. If $\cgr$
carries additional structure, such as directions or colors of its edges, we
disregard it in both counts.

Recall that a pseudotree is a connected graph that contains at most one cycle, i.e. such that
$\cyc(\cgr)\le1$. A pseudoforest is a graph whose connected components are pseudotrees.

\begin{lemma}[the counting bound]\label{lem:countmm}
Let $\acyc(\cgr_\mm)$ be the number of connected components of $\cgr_\mm$ that
contain no cycle. Then, for all $\lambda,\mu \in \bmdl$,
\begin{subequations}\label{eq:countbound}
\begin{align}
   \cork \Sop_{\lambda,\mu} &\;\ge\;
      \#\Cov(\mm) - \#\Kk + \acyc(\cgr_\mm) , \label{count:pair}\\
   \cork \Sop_\lambda &\;\ge\; \cyc(\cgr_\mm) . \label{count:sing}
\end{align}
\end{subequations}
The right-hand side of \eqref{count:pair} is the sum of the differences
$\#(\text{edges}) - \#(\text{vertices})$ over the components of $\cgr_\mm$
containing a cycle. In particular $\mm$ is not \prig unless $\cgr_\mm$ is
a \emph{pseudoforest} --- and thus not as soon as
$\#\Cov(\mm) > \#\Kk$, while $\mm$ is not rigid unless $\cgr_\mm$ is a forest.
\end{lemma}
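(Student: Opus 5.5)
The plan is to isolate the rows of the matrix of $\Sop_{\lambda,\mu}$ indexed by the minimal pairs $\Cov(\mm)\subseteq\XX$, and to show that these rows are supported only on the diagonal columns $(i,i)\in\YY$. Once this is done, the Cov-block is essentially a weighted incidence matrix of the cover graph $\cgr_\mm$. Its rank can then be bounded component by component.

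\emph{Step 1: computing the Cov rows.} Fix $(i,k)\in\Cov(\mm)$ and read off from \eqref{eq:S} which columns $\algbss_{i'j'}$ can produce a nonzero coefficient at $\bmdlbss_{ik}$.

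From the $\lambda$-sum, we need $i'=i$ and $m=k$, with $(i,j')\in\YY$ and $(j',k)\in\XX$. Minimality of $(i,k)$ forces $j'\in\{i,k\}$. The case $j'=k$ is impossible since $(k,k)\notin\XX$. So the only contribution is $\lambda_{ik}$, in column $\algbss_{ii}$.

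From the $\mu$-sum, we need $l=i$ and $j'=k$, with $(i,i')\in\XX$ and $(i',k)\in\YY$. Minimality forces $i'=k$, so the only contribution is $-\mu_{ik}$, in column $\algbss_{kk}$.

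Hence the row of $(i,k)$ is $\lambda_{ik}\,\epsilon_{i}-\mu_{ik}\,\epsilon_{k}$, where $\epsilon_\ell$ denotes the coordinate of the column $\algbss_{\ell\ell}$. All other entries of the row vanish.

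\emph{Step 2: reducing to the Cov block.} Since the Cov rows are a subset of the rows of a matrix with row set $\XX$, the corank (rows minus rank) of the whole matrix is at least $\#\Cov(\mm)$ minus the rank of the Cov-row block. This uses the elementary fact that the rank of a set of rows is at most the rank of the full matrix restricted to those rows, plus the number of remaining rows. The Cov block is a matrix with rows the edges of $\cgr_\mm$ and columns its vertices $\Kk$. Each row is supported on the two endpoints of its edge, so the block splits as a direct sum over connected components $c$ of $\cgr_\mm$.

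\emph{Step 3: the rank bound.} For \eqref{count:pair}, bound the rank of component $c$ by $\min(e_c,v_c)$, where $e_c$ and $v_c$ are its numbers of edges and vertices. A tree component has $e_c=v_c-1$, so it contributes deficit $0$. A component containing a cycle has $e_c\ge v_c$, so it contributes deficit at least $e_c-v_c$. Summing gives exactly $\#\Cov(\mm)-\#\Kk+\acyc(\cgr_\mm)$.

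For \eqref{count:sing}, put $\mu=\lambda$. Each row becomes $\lambda_{ik}(\epsilon_i-\epsilon_k)$, so the all-ones vector on the vertices of $c$ lies in the kernel of the block for $c$. Hence the rank of that block is at most $v_c-1$, and the deficit is at least $e_c-v_c+1$. Summing over components gives $\cyc(\cgr_\mm)$.

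The final assertions then follow. A component with $e_c>v_c$ has more than one cycle, and it forces positive corank for every pair $(\lambda,\mu)$; therefore $\mm$ is not \prig unless $\cgr_\mm$ is a pseudoforest. For the same reason, any cycle at all obstructs rigidity, so $\mm$ is not rigid unless $\cgr_\mm$ is a forest.

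The only delicate point is Step 1, namely checking carefully that minimality excludes all off-diagonal columns. This includes handling the index conventions of \eqref{eq:S}, and the possibility that $\XX\not\subseteq\YY$ when there are juxtaposed segments, which does not affect the argument.
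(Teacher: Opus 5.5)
Your proposal is correct and follows essentially the same route as the paper. The paper likewise uses that for $(i,k)\in\Cov(\mm)$ the $\bmdlbss_{ik}$-coordinate of $\Sop_{\lambda,\mu}(a)$ is $\lambda_{ik}a_{ii}-\mu_{ik}a_{kk}$, then bounds the rank of the resulting colored incidence matrix component by component, by $\min(e,v)$ in general and by $v-1$ when $\mu=\lambda$. Your Step 1 spells out from \eqref{eq:S} the derivation of that formula, which the paper leaves implicit; note that in the $\mu$-sum the case $i'=i$ is ruled out because $(i,i)\notin\XX$.
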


\begin{proof}
For every $(i,k) \in \Cov(\mm)$ and $a\in\alg$
\begin{equation}\label{eq:covcol}
   \text{the $\bmdlbss_{ik}$-coordinate of } \Sop_{\lambda,\mu}(a)
   \;=\; \lambda_{ik}\,a_{ii} \;-\; \mu_{ik}\,a_{kk} ,
\end{equation}
where $a_{ii}$ are the diagonal coordinates of $a$ (in the basis $\algbss_{pq}$).
Thus the composition $\tilde\Sop_{\lambda,\mu}$ of $\Sop_{\lambda,\mu}$ with the projection of $\bmdl$
onto the coordinates indexed by $\Cov(\mm)$ factors through the $\#\Kk$
diagonal coordinates $a_{jj}$, $j \in \Kk$, and is given there by the
$\Cov(\mm)\times\Kk$ matrix $\mathsf{M}$ whose row $(i,k)$ is $\lambda_{ik}e_i -
\mu_{ik}e_k$: the transposed incidence matrix of $\cgr_\mm$, carrying the
two colors of each edge as its entries. Grouping rows and columns
according to the connected components of $\cgr_\mm$ makes $\mathsf{M}$ block
diagonal, and a component with $v$ vertices and $e$ edges contributes a
block of rank at most $\min(e,v)$, which is $v-1$ when the component is a
tree, since then $e = v-1$. Hence $\rank \mathsf{M} \le \#\Kk - \acyc(\cgr_\mm)$ and
\[
   \cork \Sop_{\lambda,\mu} \;\ge\; \cork \tilde\Sop_{\lambda,\mu}
   \;=\; \#\Cov(\mm) - \rank \mathsf{M} \;\ge\;
   \#\Cov(\mm) - \#\Kk + \acyc(\cgr_\mm) ,
\]
which is \eqref{count:pair}. For $\mu = \lambda$ the row $(i,k)$ of $\mathsf{M}$ is
$\lambda_{ik}(e_i - e_k)$, so that $\rank \mathsf{M}$ is at most the rank of the
oriented incidence matrix of $\cgr_\mm$, that is, $\#\Kk$ minus the number of
connected components; this gives \eqref{count:sing}.
\end{proof}

\begin{corollary}\label{cor:gencov}
If the $G$-orbit of $\lambda$ is open in $\bmdl$ --- equivalently, if
$\Sop_\lambda$ is surjective --- then $\lambda_{ik} \ne 0$ for every
$(i,k) \in \Cov(\mm)$. More generally, if $\Sop_{\lambda,\mu}$ is
surjective then $(\lambda_{ik},\mu_{ik}) \ne (0,0)$ for every $(i,k) \in
\Cov(\mm)$.
\end{corollary}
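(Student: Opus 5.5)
The plan is to read \Cref{cor:gencov} off directly from the coordinate formula \eqref{eq:covcol} established in the proof of \Cref{lem:countmm}. Fix $(i,k)\in\Cov(\mm)$. By \eqref{eq:covcol}, for every $a\in\alg$ the $\bmdlbss_{ik}$-coordinate of $\Sop_{\lambda,\mu}(a)$ equals $\lambda_{ik}a_{ii}-\mu_{ik}a_{kk}$. If $(\lambda_{ik},\mu_{ik})=(0,0)$, then this coordinate vanishes identically on $\alg$. Hence $\im\Sop_{\lambda,\mu}$ lies in the proper hyperplane $\{\nu\in\bmdl:\nu_{ik}=0\}$, so $\Sop_{\lambda,\mu}$ is not surjective. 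This proves the general assertion by contraposition.

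For the first assertion, specialize to $\mu=\lambda$. The coordinate becomes $\lambda_{ik}(a_{ii}-a_{kk})$, and it vanishes for all $a$ precisely when $\lambda_{ik}=0$. So surjectivity of $\Sop_\lambda$ forces $\lambda_{ik}\neq0$. The equivalence of surjectivity of $\Sop_\lambda$ with openness of the $G$-orbit is the separability remark of \Cref{ssec:pairs}.

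The only point needing care is that \eqref{eq:covcol} genuinely holds for minimal pairs, so I would re-derive it from \eqref{eq:S}. The $\bmdlbss_{ik}$-coordinate of $\Sop_{\lambda,\mu}(\algbss_{pq})$ is the sum of two contributions:
\begin{itemize}
\item $\lambda_{qk}$ when $p=i$, $(i,q)\in\YY$ and $(q,k)\in\XX$;
\item $-\mu_{ip}$ when $q=k$, $(i,p)\in\XX$ and $(p,k)\in\YY$.
\end{itemize}
Minimality of $(i,k)$ excludes an intermediate $j\notin\{i,k\}$ in either pattern. In the first pattern, $q=k$ is excluded because $(k,k)\notin\XX$, so only $q=i$ survives, giving the coefficient $\lambda_{ik}$ of $a_{ii}$. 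Symmetrically, in the second pattern only $p=k$ survives, giving the coefficient $-\mu_{ik}$ of $a_{kk}$. I expect no real obstacle here; the argument is immediate once \eqref{eq:covcol} is in hand.
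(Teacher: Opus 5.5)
Your proof is correct and follows the paper's own argument: the paper likewise reads the corollary off from \eqref{eq:covcol}, since $(\lambda_{ik},\mu_{ik})=(0,0)$ forces the image of $\Sop_{\lambda,\mu}$ into the proper hyperplane $\nu_{ik}=0$. Your re-derivation of \eqref{eq:covcol} from \eqref{eq:S} is accurate. It uses the two exclusions in \Cref{def:covmm} together with $(i,i),(k,k)\notin\XX$, and so fills in a step the paper leaves implicit in the proof of \Cref{lem:countmm}.
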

This follows immediately from \eqref{eq:covcol}.

The minimal pairs have an intrinsic meaning: they index the top of the
bimodule, in the sense of \Cref{ssec:gen}.

\begin{lemma}[the top of the bimodule]\label{lem:top}
Assume that the segments of $\mm$ are pairwise distinct, as they are for a
regular multisegment. Then the radical of $\alg$ is spanned by the
$\algbss_{ij}$ with $i \ne j$, and
\begin{equation}\label{eq:top}
   \rbm(\bmdl) \;=\; \Span\{\bmdlbss_{ik} :
      (i,k) \in \XX\setminus\Cov(\mm)\} ,
\end{equation}
so that the classes of $\bmdlbss_{ik}$, $(i,k) \in \Cov(\mm)$, form a basis
of $\topp(\bmdl)$. Consequently, for $\lambda \in \bmdl$,
\begin{equation}\label{eq:gencrit}
   \alg\lambda\alg = \bmdl
   \quad\Longleftrightarrow\quad
   \lambda_{ik} \ne 0 \ \text{ for all } (i,k) \in \Cov(\mm) .
\end{equation}
\end{lemma}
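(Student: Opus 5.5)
We need to prove Lemma (top of the bimodule): when the segments are pairwise distinct, $\rad(\alg)$ is spanned by the off-diagonal $\algbss_{ij}$, and $\rbm(\bmdl)$ is the span of the non-minimal $\bmdlbss_{ik}$. The plan is to read everything off the multiplication table \eqref{eq:units}.

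\emph{The radical.} Since the segments are distinct, $\YY$ is antisymmetric. If $i\YY j$ and $j\YY i$, then $a_i\le a_j\le a_i$ and $b_i\le b_j\le b_i$, forcing $\Delta_i=\Delta_j$, hence $i=j$. Moreover $\YY$ is transitive wherever a product $\algbss_{ij}\algbss_{jm}=\algbss_{im}$ is nonzero, and is reflexive, so it is a partial order on $\Kk$. Extend it to a total order. Then $\alg$ embeds, via \eqref{eq:units}, into the algebra of upper triangular matrices with respect to that order. Let $N=\Span\{\algbss_{ij}:i\ne j\}$. It is a two-sided ideal: a product of two basis elements is either $0$ or $\algbss_{im}$, and if at least one factor is off-diagonal then $i\ne m$ by antisymmetry. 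It is also nilpotent, being strictly triangular. The quotient $\alg/N\simeq\kk^{\Kk}$ is semisimple, so $N=\rad(\alg)$.

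\emph{The radical of the bimodule.} By \eqref{eq:radbim}, $\rbm(\bmdl)=N\bmdl+\bmdl N$. By \eqref{eq:units} this is spanned by the products $\algbss_{ij}\bmdlbss_{jk}$ with $i\ne j$, $(i,j)\in\YY$, $(j,k)\in\XX$, and the products $\bmdlbss_{ij}\algbss_{jk}$ with $j\ne k$, $(i,j)\in\XX$, $(j,k)\in\YY$. Each such product is either $0$ or $\bmdlbss_{ik}$, so $\rbm(\bmdl)$ is a coordinate subspace. It remains to match its index set with $\XX\setminus\Cov(\mm)$.

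\emph{Matching the index sets.} The key step is to show that whenever $(i,j)\in\YY$ and $(j,k)\in\XX$, the pair $(i,k)$ automatically lies in $\XX$; the dual statement for $(i,j)\in\XX$, $(j,k)\in\YY$ is symmetric. This is a check of inequalities. From $a_i\le a_j$ and $a_j+1\le a_k$ we get $a_i+1\le a_k$. From $a_k\le b_j+1\le b_i+1$ we get $a_k\le b_i+1$. From $b_i\le b_j\le b_j+1\le b_k$ we get $b_i+1\le b_k$. Hence $i\XX k$ and the product is the nonzero element $\bmdlbss_{ik}$.

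So $\bmdlbss_{ik}\in\rbm(\bmdl)$ exactly when there exists $j$ with either $(i,j)\in\YY$, $j\ne i$, $(j,k)\in\XX$, or $(i,j)\in\XX$, $(j,k)\in\YY$, $j\ne k$. To identify this with non-minimality in Definition~\ref{def:covmm}, we must exclude the degenerate cases $j=k$ in the first alternative and $j=i$ in the second. These would require $(k,k)\in\XX$ or $(i,i)\in\XX$, which is impossible since $\XX$ is irreflexive. This proves \eqref{eq:top}.

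\emph{Consequences.} The classes of $\bmdlbss_{ik}$ for $(i,k)\in\Cov(\mm)$ form a basis of $\topp(\bmdl)$. Since $\alg_{\sms}=\kk^{\Kk}$ acts diagonally, with $e_i\bar\bmdlbss_{ik}e_k=\bar\bmdlbss_{ik}$, the bimodule $\topp(\bmdl)$ is a direct sum of pairwise non-isomorphic one-dimensional simple bimodules. These are non-isomorphic because distinct pairs $(i,k)$ give distinct idempotent supports. Hence an element generates $\topp(\bmdl)$ if and only if all its coordinates are nonzero, and \eqref{eq:gencrit} follows from \eqref{eq:nakayama}.

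The only potentially delicate point is the inequality check showing that $\YY\cdot\XX\subseteq\XX$ always yields nonzero products; everything else is formal.
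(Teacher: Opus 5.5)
Your overall route is the paper's: the off-diagonal span is the radical, $\rbm(\bmdl)$ is read off \eqref{eq:units}, and \eqref{eq:nakayama} finishes. However, two of your justifications are false as written.

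\textbf{The step you flag as the crux is false.} You claim $\algbss_{ij}\bmdlbss_{jk}\neq 0$ whenever $(i,j)\in\YY$ and $(j,k)\in\XX$. Your inequality $b_j+1\le b_i+1$ is reversed, since $i\YY j$ gives $b_i\le b_j$. For the regular multisegment $[1,2]+[2,5]+[4,6]$, indexed $i,j,k$, you have $i\YY j$ and $j\XX k$. But $a_k=4>b_i+1=3$, so $(i,k)\notin\XX$ and the product is $0$. The dual claim fails likewise for $[1,3]+[2,5]+[5,6]$.

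Fortunately the step is not needed. Minimality is defined only for pairs $(i,k)\in\XX$. For such a pair, any $j$ as in \Cref{def:covmm} gives $\algbss_{ij}\bmdlbss_{jk}=\bmdlbss_{ik}\neq0$ (resp.\ $\bmdlbss_{ij}\algbss_{jk}=\bmdlbss_{ik}\neq0$) directly from \eqref{eq:units}. Conversely, every nonzero product is some $\bmdlbss_{ik}$ with $(i,k)\in\XX$. Delete the step; the rest of your matching, including the exclusion of $j\in\{i,k\}$ by irreflexivity of $\XX$, is exactly the paper's argument.

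\textbf{$\YY$ is not a partial order.} It is not transitive: for $[1,2]+[2,3]+[3,4]$ you have $1\YY2$ and $2\YY3$ but not $1\YY3$. Your remark about transitivity ``wherever the product is nonzero'' is a tautology and gives nothing. Antisymmetry alone does not rule out longer directed cycles, so it does not produce a compatible total order. Moreover, $\algbss_{ij}\mapsto E_{ij}$ is not multiplicative (in this example $\algbss_{12}\algbss_{23}=0$), so there is no embedding into upper triangular matrices of that kind.

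The repair is the paper's observation. If $i\YY j$ with $i\ne j$, then $a_i\le a_j$ and $b_i\le b_j$ with at least one inequality strict, because the segments are distinct. Hence $a+b$ strictly increases along any chain $i_0,\dots,i_r$ with $(i_{t-1},i_t)\in\YY$ and $i_{t-1}\neq i_t$. So every product of $\#\Kk$ off-diagonal basis vectors vanishes, and your ideal $N$ is nilpotent.

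With these two fixes your proof is complete and coincides with the paper's.
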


\begin{proof}
The quotient of $\alg$ by the span of the off-diagonal $\algbss_{ij}$ is
$\kk^{\Kk}$, and that span is nilpotent because $i \YY j$ with $i \ne j$
forces $a_i \le a_j$, $b_i \le b_j$ and, the segments being distinct, at
least one of the two inequalities is strict; this gives the first
assertion. By \eqref{eq:units},
$\rad(\alg)\bmdl$ is spanned by the $\algbss_{ij}\bmdlbss_{jk} =
\bmdlbss_{ik}$ with $(i,j) \in \YY$, $i \ne j$, and $(j,k) \in \XX$, and
$\bmdl\rad(\alg)$ by the $\bmdlbss_{ij}\algbss_{jk} = \bmdlbss_{ik}$ with
$(i,j) \in \XX$ and $(j,k) \in \YY$, $j \ne k$. These are precisely the
two alternatives of \Cref{def:covmm}, whence \eqref{eq:top}.

By \eqref{eq:units} the subbimodule of $\topp(\bmdl)$ generated by
the class of $\lambda$ contains the classes of $\algbss_{ii}\lambda
\algbss_{kk} = \lambda_{ik}\bmdlbss_{ik}$ and is spanned by them, so that
$\lambda$ generates $\topp(\bmdl)$ precisely when all its coordinates
at $\Cov(\mm)$ are nonzero. Now apply \eqref{eq:nakayama}.
\end{proof}

\begin{example}[Leclerc's example]\label{ex:leclerc}
Let $\mm_{\Lec} = [1,2]+[2,4]+[3,3]+[4,5]$, the multisegment of
Leclerc's imaginary vector in type $A_5$ \cite{MR1959765}*{\S2.7}. Here,
\[
   \XX = \{(1,2),\,(1,3),\,(2,4),\,(3,4)\},
   \qquad
   \YY = \{(i,i)\}\cup\{(1,2),\,(2,4)\},
\]
and all elements of $\XX$ are minimal. Thus, $\mm_{\Lec}$ is not rigid by \Cref{lem:countmm}.

On the other hand, for $\lambda=\bmdlbss_{13}+\bmdlbss_{34}$, $\mu=-(\bmdlbss_{12}+\bmdlbss_{24})$ we have
\[
\Sop_{\lambda,\mu}(\algbss_{11}, \algbss_{22},\algbss_{33},\algbss_{44})=
(\bmdlbss_{13},\bmdlbss_{12},\bmdlbss_{34},\bmdlbss_{24}).
\]
Hence $\mm_{\Lec}$ is \prig.
\end{example}

\begin{example} \label{exma:nonprigid}
An example of a non-\prig multisegment is
\[
   \mm = [1,4]+[2,3]+[3,6]+[4,5]+[6,7] .
\]
Here
\begin{equation}\label{eq:nonprigrel}
   \XX = \bigl(\{1,2\}\times\{3,4\}\bigr) \cup
   \bigl(\{3,4\}\times\{5\}\bigr), \qquad
   \YY = \{(i,i)\}\cup\{(1,3),\,(1,4),\,(2,3),\,(3,5)\}.
\end{equation}
Once again, all six elements of $\XX$ are minimal, and by \Cref{lem:countmm} $\mm$ is not \prig.
\end{example}

\subsection{Arbitrary fields}\label{ssec:arbnilp}

Let $\kk_0$ be an arbitrary field with algebraic closure $\kk$, and let $Q$ be
of Dynkin type.

In the Dynkin case all the objects of \Cref{ssec:quivers} and
\Cref{ssec:dynkin} are defined over the prime field. Indeed, $R_Q(V)$,
$R_{\overline{Q}}(V)$ and $R_\Pi(V)$ are cut out by equations with integral
coefficients and, Gabriel's theorem being valid over an arbitrary field, by
the species version of \cite{MR447344}*{Proposition 2.6(a)}, the
indecomposable representations $M_Q(\beta)$ admit models over the prime field.
Consequently each orbit $G_V\cdot M_Q(\mm)$ is the image of the orbit map of a
point defined over the prime field, and $G_V$ is geometrically integral; hence
the orbit, and with it its conormal bundle and the closure $\cmp_\mm$ of the
latter, is geometrically irreducible and defined over the prime field.
Therefore every irreducible component of $\Lambda(V)_{\kk}$ is
defined over the prime field, a fortiori over $\kk_0$, and both $\Comp(\bd)$ and its parametrization
$\lambda_Q$ by formal sums of positive roots are insensitive to the field. This is what allows
us to assume throughout that $\kk$ is algebraically closed. Note also that
the purity of $\Lambda(V)$ quoted in \Cref{ssec:quivers} is available in
every characteristic: the standing hypothesis of \cite{MR1088333} is only
that the ground field be algebraically closed, characteristic zero being
imposed there from \S13 on, whereas \S12 --- which contains
\cite{MR1088333}*{Theorem 12.3} --- is free of it.

The algebra $\alg$ and the bimodule $\bmdl$ attached to $\mm \in \Mfr$ are
likewise defined over the prime field and are compatible with base change. This is clear from their description in \Cref{ssec:prig}.
In equioriented type $A$ this is visible in \Cref{lem:alg}: the multiplication
table \eqref{eq:units} has integral structure constants, determined by
$\XX_\mm$ and $\YY_\mm$ alone.
Accordingly, the deficiencies, rigidity and \prigity of $\cmp_\mm$ are well defined. The geometric readings of $\dfr_\ast(\mm)$ obtained in
\Cref{ssec:prig} are then statements about $\cmp_\mm$ over $\kk$.

\section{Endpoint-separated heredity}\label{sec:esi}

The mechanism of \Cref{sec:hered} requires an
$\alg_{\diag}$-bilinear derivation with prescribed image. We now
produce one, in the equioriented type $A$, under an endpoint-separation hypothesis and deduce the heredity
theorem (\Cref{thm:esi}).

\subsection{Endpoint separation produces a relative derivation}\label{ssec:typeAsep}

We specialize to the equioriented type $A$ and the matrix-unit model of
\Cref{ssec:typeA}. Throughout, $\mm = \sum_{i \in \Kk}\Delta_i$, $\Delta_i =
[a_i,b_i]$, is a multisegment with integral endpoints, and
$\alg$, $\bmdl$ are the algebra and bimodule
attached to $\mm$, with bases $\{\algbss_{ij}\}_{(i,j)\in \YY}$ and
$\{\bmdlbss_{ij}\}_{(i,j)\in \XX}$.

Suppose we have a partition $\Kk = I \sqcup J$. Consider the decomposition 
\begin{equation} \label{eq:Mdecomp}
   M_Q(\mm) \;=\; M_Q(\mm_I)\oplus M_Q(\mm_J),\ \ \    \mm_P = \sum_{i \in P}\Delta_i, P\in\{I,J\}.
\end{equation}
For $P \in \{I,J\}$ put
\[
\XX_P = \XX \cap (P\times P), \qquad   \YY_P = \YY \cap (P\times P),
\]
so that $\XX_P = \XX_{\mm_P}$ and $\YY_P = \YY_{\mm_P}$, and let
$\XX_{\mix}$, $\YY_{\mix}$ be the intersections of $\XX$,
$\YY$ with $(I\times J)\cup(J\times I)$. The orthogonal idempotent
decomposition corresponding to \eqref{eq:Mdecomp} is
\[
   1 = e_I + e_J, \qquad   e_I = \sum_{i\in I}\algbss_{ii}, \quad e_J = \sum_{j\in J}\algbss_{jj} .
\]
The Peirce corners are spanned by the correspondingly indexed basis
vectors: $\alg_{PR}$ has basis $\{\algbss_{ij} : (i,j) \in \YY\cap(P\times
R)\}$ and $\bmdl_{PR}$ has basis $\{\bmdlbss_{ij} : (i,j) \in \XX\cap(P\times
R)\}$. In particular the corners are those attached to $\mm_I$ and
$\mm_J$ (consistently with \Cref{lem:corners}), and
\begin{equation}\label{eq:Xcount}
   \dim \bmdl_P = \#\XX_P, \qquad
   \dim \bmdl_{\mix} = \#\XX_{\mix}, \qquad
   \#\XX = \#\XX_I + \#\XX_J + \#\XX_{\mix} .
\end{equation}
The standing hypothesis is \emph{endpoint separation}:
\begin{equation}\label{eq:sep}
   \{a_i : i \in I\} \cap \{a_j : j \in J\} = \varnothing,
   \qquad
   \{b_i : i \in I\} \cap \{b_j : j \in J\} = \varnothing .
\end{equation}
No distinctness of endpoints is assumed \emph{within} either block. \Cref{fig:sep} shows an example.

\begin{figure}[!htbp]
\centering
\begin{tikzpicture}[scale=0.72]
\draw[gray!45,-{Stealth[length=4pt]}] (0.4,0.3)--(8.1,0.3);
\foreach \x in {1,...,7} {\draw[gray!45] (\x,0.2)--(\x,0.4); \node[gray!70,font=\scriptsize,below=1pt] at (\x,0.2) {$\x$};}
% block I: solid
\draw[very thick] (1,4)--(4,4); \node[permdot] at (1,4){}; \node[permdot] at (4,4){};
\draw[very thick] (3,2)--(6,2); \node[permdot] at (3,2){}; \node[permdot] at (6,2){};
% block J: dashed
\draw[very thick,densely dashed,black!60] (2,3)--(3,3);
\node[permdot,fill=black!60] at (2,3){}; \node[permdot,fill=black!60] at (3,3){};
\draw[very thick,densely dashed,black!60] (5,1)--(7,1);
\node[permdot,fill=black!60] at (5,1){}; \node[permdot,fill=black!60] at (7,1){};
\node[font=\footnotesize,anchor=west] at (8.3,4) {$[1,4]\in\mm_I$};
\node[font=\footnotesize,anchor=west] at (8.3,3) {$[2,3]\in\mm_J$};
\node[font=\footnotesize,anchor=west] at (8.3,2) {$[3,6]\in\mm_I$};
\node[font=\footnotesize,anchor=west] at (8.3,1) {$[5,7]\in\mm_J$};
\draw[black!55,densely dotted,-{Stealth[length=4pt]}]
   (4,4.25) to[out=60,in=120] (5,1.3);
\node[font=\scriptsize,black!55,anchor=south] at (4.5,4.75) {juxtaposed};
\end{tikzpicture}
\caption{An endpoint-separated partition \eqref{eq:sep} of
$\mm = [1,4]+[3,6]+[2,3]+[5,7]$ into $\mm_I = [1,4]+[3,6]$ (solid) and
$\mm_J = [2,3]+[5,7]$ (dashed). The two blocks share no beginning --- $1,3$
against $2,5$ --- and no end --- $4,6$ against $3,7$. The only mixed
juxtaposition is $[1,4]\,|\,[5,7]$; by \Cref{lem:mixed}\labelcref{lem:fringe}
it contributes the single basis vector of the fringe $\Ff$, and by
\Cref{rem:obtusepairs} it is what makes
$-(\grdim M_I,\grdim M_J) = \dim\Ff = 1$.}
\label{fig:sep}
\end{figure}

The result of this subsection is the following.

\begin{proposition}[endpoint separation gives an \obt decomposition]
\label{prop:sepobtuse}
Assume \eqref{eq:sep}. Then
\[
   M_Q(\mm) \;=\; M_Q(\mm_I)\oplus M_Q(\mm_J)
\]
is an \obt decomposition in the sense of \Cref{def:obtuse}.
\end{proposition}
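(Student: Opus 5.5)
We construct $\Ff$ and $f$ explicitly in the matrix-unit model of \Cref{lem:alg}, by comparing the mixed parts of $\YY$ and $\XX$. The plan has three steps: compare $\YY_{\mix}$ with $\XX_{\mix}$; show that the leftover (juxtaposed) pairs span an $\alg$-subbimodule $\Ff$; define $f$ by matching bases with a sign depending on the corner, and check the derivation property.

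\emph{Step 1: comparing $\YY_{\mix}$ with $\XX_{\mix}$.} Let $(i,j)$ be mixed, so $i$ and $j$ lie in different blocks. By \eqref{eq:sep} we then have $a_i\ne a_j$ and $b_i\ne b_j$.
\begin{itemize}
\item If $(i,j)\in\YY$, the inequalities $a_i\le a_j$ and $b_i\le b_j$ are strict. Hence $a_i+1\le a_j\le b_i<b_i+1\le b_j$, so $(i,j)\in\XX$.
\item If $(i,j)\in\XX$, then $a_i<a_j\le b_i+1\le b_j$. This gives $(i,j)\in\YY$ unless $a_j=b_i+1$, in which case the two segments are juxtaposed.
\end{itemize}
Hence
\[
\XX_{\mix}=\YY_{\mix}\sqcup\mathcal J,\qquad \mathcal J=\{(i,j)\ \text{mixed}: a_j=b_i+1\}.
\]
We set $\Ff=\Span\{\bmdlbss_{ij}:(i,j)\in\mathcal J\}\subseteq\bmdl_{\mix}$. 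Then $\Vv_{\mix}=\bmdl_{\mix}/\Ff$ has basis the classes $\bar\bmdlbss_{ij}$, $(i,j)\in\YY_{\mix}$. This already matches the basis $\{\algbss_{ij}\}_{(i,j)\in\YY_{\mix}}$ of $\alg_{\mix}$.

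\emph{Step 2: $\Ff$ is an $\alg$-subbimodule.} By \eqref{eq:units} it suffices to check two things for $(i,j)\in\mathcal J$.
\begin{itemize}
\item Left action: if $(l,i)\in\YY$ and $(l,j)\in\XX$, then $(l,j)\in\mathcal J$. Indeed $b_l\le b_i$, so $a_j=b_i+1\ge b_l+1\ge a_j$. This forces $b_l=b_i$, and by \eqref{eq:sep} $l$ lies in the block of $i$. Therefore $(l,j)$ is mixed and juxtaposed.
\item Right action: $\bmdlbss_{ij}\algbss_{jm}$ is handled symmetrically, using beginnings instead of ends.
\end{itemize}

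\emph{Step 3: defining $f$ and checking the derivation property.} Define $f$ to vanish on $\alg_{\diag}$ and to act on $\alg_{\mix}$ by
\[
f(\algbss_{ij})=\begin{cases}\phantom{-}\bar\bmdlbss_{ij}, & (i,j)\in\YY\cap(I\times J),\\ -\bar\bmdlbss_{ij}, & (i,j)\in\YY\cap(J\times I).\end{cases}
\]
In the notation of \eqref{eq:factor}, this is $\iota(\algbss_{ij})=\bar\bmdlbss_{ij}$ uniformly. By Step 1, $f$ maps the basis of $\alg_{\mix}$ bijectively, up to sign, onto the basis of $\Vv_{\mix}$. So $f|_{\alg_{\mix}}$ is an isomorphism onto $\Vv_{\mix}$ and $f(\alg)=\Vv_{\mix}$.

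It remains to show that $f$ is a derivation. Equivalently, $\iota$ must be $\alg_{\diag}$-bilinear and satisfy \eqref{eq:iotaporp}.
\begin{itemize}
\item Conditions \eqref{eq:iotaporp}: for $a=\algbss_{ij}\in\alg_{IJ}$ and $b=\algbss_{jk}\in\alg_{JI}$, both $\iota(a)b$ and $a\iota(b)$ equal $\bar\bmdlbss_{ik}$ under the zero convention. The second identity in \eqref{eq:iotaporp} is analogous. Here $(i,k)$ is diagonal, so membership in $\XX$ is the same condition for both expressions.
\item $\alg_{\diag}$-bilinearity: take $\algbss_{li}$ with $l,i$ in the same block. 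Then $\algbss_{li}\algbss_{ij}=\algbss_{lj}$ or $0$, according to whether $(l,j)\in\YY$. On the other side, $\algbss_{li}\bar\bmdlbss_{ij}=\bar\bmdlbss_{lj}$ or $0$, according to whether $(l,j)\in\XX\setminus\mathcal J$. Since $(l,j)$ is mixed, these conditions coincide by Step 1. Right multiplication is handled the same way.
\end{itemize}

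The step we expect to be the main obstacle is the bookkeeping in Steps 2 and 3. Everything there rests on the identity $\XX_{\mix}\setminus\mathcal J=\YY_{\mix}$ from Step 1, so that the zero conventions for $\algbss$ and for $\bar\bmdlbss$ agree exactly modulo the fringe. The sign choice is precisely what makes the two terms in the Leibniz rule cancel on products that land in $\alg_{\diag}$.
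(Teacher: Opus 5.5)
Your proposal is correct and follows the paper's argument. Steps~1 and~2 are \Cref{lem:mixed}, including the observation that $b_l=b_i$ (resp.\ $a_m=a_j$) forces the new index into the same block so that the fringe is stable, and Step~3 is the tautological matching $f=\iota\circ\ad(e_I)$ of \Cref{lem:taut}, whose ``straightforward verification'' you carry out soundly; the one unstated point, that every mixed juxtaposed pair automatically lies in $\XX$, is immediate from $a_i\le b_i$ and $a_j\le b_j$.
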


We describe the fringe subbimodule $\Ff\subseteq \bmdl_{\mix}$ in \Cref{lem:mixed}, and the derivation in \Cref{lem:taut}.
Combined with \Cref{cor:schema}, the proposition gives the heredity theorem (\Cref{thm:esi}) below.

The following elementary result is crucial.
\begin{lemma}[mixed pairs and fringe pairs]\label{lem:mixed}
Under \eqref{eq:sep} the following hold.
\begin{enumerate}
\item\label{mix:incl} We have $\YY_{\mix} \subseteq\XX_{\mix}$.
\item\label{mix:adj} For every $(i,j) \in \XX_{\mix}$,
\[
   (i,j) \notin \YY \iff a_j = b_i + 1 .
\]
\item \label{lem:fringe} Define the \emph{fringe subspace}
\begin{equation}\label{eq:fringe}
   \Ff = \Span_\kk\{\bmdlbss_{ij} : (i,j) \in \XX_{\mix}\setminus
   \YY_{\mix}\} \subseteq \bmdl_{\mix} .
\end{equation}
Then, $\Ff$ is an $\alg$-subbimodule of $\bmdl$, contained in $\bmdl_{\mix}$.
\end{enumerate}
\end{lemma}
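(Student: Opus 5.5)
The three parts are checked directly from the definitions of $\XX$ and $\YY$ together with \eqref{eq:sep}, and then the multiplication table \eqref{eq:units}.

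\emph{Part \labelcref{mix:incl}.} Let $(i,j)\in\YY_{\mix}$, so that $a_i\le a_j\le b_i\le b_j$, and $i,j$ lie in different blocks. By \eqref{eq:sep} we have $a_i\ne a_j$ and $b_i\ne b_j$, hence $a_i+1\le a_j$ and $b_i+1\le b_j$. Combined with $a_j\le b_i\le b_i+1$, this is exactly $i\,\XX\,j$.

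\emph{Part \labelcref{mix:adj}.} Let $(i,j)\in\XX_{\mix}$, so that $a_i+1\le a_j\le b_i+1\le b_j$. Then $a_i\le a_j$ and $b_i\le b_j$ hold automatically. Hence $(i,j)\in\YY$ if and only if $a_j\le b_i$, that is, if and only if $a_j\neq b_i+1$. Here separation is not even needed.

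\emph{Part \labelcref{lem:fringe}.} The span $\Ff$ lies in $\bmdl_{\mix}$ by definition, so we only need closure under left and right multiplication by basis vectors $\algbss_{k i}$ and $\algbss_{j k}$.

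Take $(i,j)\in\XX_{\mix}$ with $a_j=b_i+1$, and $(k,i)\in\YY$. By \eqref{eq:units}, $\algbss_{ki}\bmdlbss_{ij}=\bmdlbss_{kj}$ if $(k,j)\in\XX$, and $0$ otherwise. Suppose $(k,j)\in\XX$. Then $b_k+1\le b_j$ and $a_k+1\le a_j$, and $(k,i)\in\YY$ gives $b_k\le b_i$. Also $a_j\le b_k+1$, so $b_i+1=a_j\le b_k+1$, i.e.\ $b_k\ge b_i$. Hence $b_k=b_i$. By \eqref{eq:sep}, $k$ then lies in the same block as $i$, so $(k,j)$ is mixed. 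Moreover $a_j=b_k+1$, so $(k,j)\notin\YY$ by \labelcref{mix:adj}. Thus $\bmdlbss_{kj}\in\Ff$.

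The right action is symmetric. Take $(j,k)\in\YY$, so $a_j\le a_k$. Suppose $(i,k)\in\XX$. Then $a_k\le b_i+1=a_j$, which forces $a_k=a_j$. By \eqref{eq:sep}, $k$ lies in the same block as $j$, so $(i,k)$ is mixed. Since $a_k=b_i+1$, part \labelcref{mix:adj} gives $(i,k)\notin\YY$, so $\bmdlbss_{ik}\in\Ff$.

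\emph{Where the difficulty lies.} The only point needing care is part \labelcref{lem:fringe}. There, the equalities $b_k=b_i$ and $a_k=a_j$ are exactly what endpoint separation converts into block membership. This keeps the resulting pair both mixed and juxtaposed, which is what keeps it inside $\Ff$.
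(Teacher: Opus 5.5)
Your proof is correct and follows the paper's argument. Parts (1) and (2) are direct unpacking of the definitions of $\XX$ and $\YY$. For the fringe, you derive $b_k=b_i$ (resp.\ $a_k=a_j$) from a nonzero left (resp.\ right) product and use \eqref{eq:sep} to keep the resulting pair mixed and juxtaposed, exactly as the paper does. You also write out the right action, which the paper dismisses as symmetric, and your remark that part (2) needs no separation is accurate.
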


\begin{proof}
The first two parts are immediate from the definition of $\YY_{\mix}$ and $\XX_{\mix}$ and the separation assumption.

For the last part, note that if $\bmdlbss_{ij}$, $(i,j)\in\XX_{\mix}$ is a fringe basis vector, so that $a_j = b_i + 1$,
and for some $(u,i) \in \YY$, $\algbss_{ui}\bmdlbss_{ij} =\bmdlbss_{uj}$ is non-zero, i.e.,  $(u,j) \in \XX$,
then necessarily $b_u=b_i$.
By \eqref{eq:sep} this forces $u$ and $i$ to
lie in the same block; hence $u$ and $j$ lie in different blocks and $\bmdlbss_{uj}$ is again a fringe vector.

The case of a nonzero right product is symmetric.
\end{proof}

\begin{remark}[endpoint separation is pairwise obtuseness]\label{rem:obtusepairs}
Let $\gamma_i = \grdim M(\Delta_i)$ be the positive root corresponding to the segment $\Delta_i$.
The symmetrized Euler form of \Cref{ssec:quivers} and the dictionary of
\Cref{ssec:typeA} give, for $i \ne j$ in $\Kk$,
\begin{equation}\label{eq:pairform}
   (\gamma_i,\gamma_j) = \#\bigl(\{(i,j),(j,i)\}\cap \YY\bigr) -
   \#\bigl(\{(i,j),(j,i)\}\cap \XX\bigr) .
\end{equation}
Consequently
\[
   (\gamma_i,\gamma_j) \le 0
   \iff
   a_i \ne a_j \text{ and } b_i \ne b_j .
\]
Thus the endpoint separation hypothesis \eqref{eq:sep} says precisely
that the roots of the segments of $\mm_I$ and $\mm_J$ are \emph{pairwise obtuse}.
Moreover, $(\gamma_i,\gamma_j)\ge-1$ with equality precisely when $[a_i,b_i]$ and $[a_j,b_j]$ are juxtaposed.
The number of mixed juxtaposed pairs is $\#(\XX_{\mix}\setminus \YY_{\mix})$ by \Cref{lem:mixed}.
Thus,
\[
-(\grdim M_I,\grdim M_J) =\#(\XX_{\mix}\setminus \YY_{\mix}),
\]
the dimension of the fringe \eqref{eq:fringe}, in accordance with \eqref{eq:dimF}.
\end{remark}

Put $\Vv = \bmdl/\Ff$ and write $\bar\bmdlbss_{ij}$ for the class of
$\bmdlbss_{ij}$ in $\Vv$. Since $\YY_{\mix} \subseteq
\XX_{\mix}$ (\Cref{lem:mixed}), the mixed basis vectors surviving
in $\Vv_{\mix} = \bmdl_{\mix}/\Ff$ are exactly those
indexed by $\YY_{\mix}$: $\{\bar\bmdlbss_{ij} : (i,j) \in
\YY_{\mix}\}$ is a basis of $\Vv_{\mix}$, while
$\{\algbss_{ij} : (i,j) \in \YY_{\mix}\}$ is a basis of
$\alg_{\mix}$, indexed by the same set.

\begin{lemma}[the tautological matching]\label{lem:taut}
Assume \eqref{eq:sep}. Then the tautological map
\begin{equation}\label{eq:taut}
   \iota : \alg_{\mix} \longrightarrow \Vv_{\mix}, \qquad
   \iota(\algbss_{ij}) = \bar\bmdlbss_{ij},
   \qquad (i,j) \in \YY_{\mix} ,
\end{equation}
is an isomorphism of $\alg_{\diag}$-bimodules and satisfies
\eqref{eq:iotaporp}. Consequently, the associated derivation
$f = \iota\circ\ad(e_I) : \alg \to \Vv$, namely
\[
   f(\algbss_{ij}) = \bigl(\chi(i)-\chi(j)\bigr)\,\bar\bmdlbss_{ij},
   \qquad (i,j) \in \YY ,
\]
where $\chi : \Kk \to \{0,1\}$ is the indicator function of the block $I$,
is an $\alg_{\diag}$-bilinear derivation restricting to an isomorphism of
$\alg_{\mix}$ onto $\Vv_{\mix}$. Thus the fringe $\Ff$ and the derivation
$f$ satisfy the condition of \Cref{prop:twoblock}.
\end{lemma}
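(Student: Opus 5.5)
The plan is to verify the four claims directly in the matrix-unit model of \Cref{lem:alg}, using the zero convention and the endpoint separation \eqref{eq:sep}. We then invoke the converse direction recorded after \eqref{eq:factor} to obtain the derivation.

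First, $\iota$ is a linear isomorphism. By \Cref{lem:mixed}\labelcref{mix:incl} we have $\YY_{\mix}\subseteq\XX_{\mix}$, and the classes $\bar\bmdlbss_{ij}$, $(i,j)\in\YY_{\mix}$, form a basis of $\Vv_{\mix}$ (as noted before the lemma). So $\iota$ matches a basis with a basis.

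Second, $\alg_{\diag}$-bilinearity. Take $(u,i)\in\YY$ with $u,i$ in the same block, and $(i,j)\in\YY_{\mix}$. In $\alg$ we have $\algbss_{ui}\algbss_{ij}=\algbss_{uj}$ if $(u,j)\in\YY$, and $0$ otherwise. In $\Vv$ we have $\algbss_{ui}\bar\bmdlbss_{ij}=\bar\bmdlbss_{uj}$ if $(u,j)\in\XX$, and $0$ otherwise; this class vanishes when $(u,j)\in\XX\setminus\YY$, since then it lies in $\Ff$ (here $u,j$ are in different blocks). So the two agree, provided $(u,j)\in\YY$ implies $(u,j)\in\XX$. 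That holds by \Cref{lem:mixed}\labelcref{mix:incl}, because $(u,j)$ is mixed. The right action is symmetric.

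Third, the identities \eqref{eq:iotaporp}. Take $a=\algbss_{ij}$ with $i\in I$, $j\in J$, and $b=\algbss_{jk}$ with $k\in I$. Then $\iota(a)b=\bar\bmdlbss_{ij}\algbss_{jk}$ equals $\bmdlbss_{ik}$ if $(i,k)\in\XX$, while $a\,\iota(b)=\algbss_{ij}\bar\bmdlbss_{jk}$ also equals $\bmdlbss_{ik}$ if $(i,k)\in\XX$. Now $(i,k)\in\XX_I$ is diagonal, so nothing is killed by $\Ff$, and both sides are the same element. The other identity, $b\,\iota(a)=\iota(b)\,a$, gives $\bmdlbss_{jl}$ with $j,l\in J$ on both sides in the same way. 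All other index patterns vanish on both sides by the corner calculus.

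Finally, the converse statement after \eqref{eq:factor} yields an $\alg_{\diag}$-bilinear derivation $f=\iota\circ\ad(e_I)$. Since $\ad(e_I)\algbss_{ij}=(\chi(i)-\chi(j))\algbss_{ij}$, we get the displayed formula. The map $\ad(e_I)$ acts by $\pm1$ on $\alg_{\mix}$, so $f$ restricts to an isomorphism onto $\Vv_{\mix}$. In particular $f(\alg)=\Vv_{\mix}$, and $\Ff$ is a subbimodule by \Cref{lem:mixed}\labelcref{lem:fringe}, so the hypotheses of \Cref{prop:twoblock} hold.

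The main obstacle is the bilinearity step. There one must check that vanishing in $\alg$ (a product leaving $\YY$) exactly matches vanishing in $\Vv$ (leaving $\XX$, or landing in the fringe). This requires the set identity $\XX_{\mix}\cap\YY=\YY_{\mix}$, together with $\YY_{\mix}\subseteq\XX_{\mix}$, both of which come from \eqref{eq:sep}.
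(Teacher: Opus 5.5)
Your proposal is correct and follows the paper's proof. Both verify the claim directly from \eqref{eq:units} together with \Cref{lem:mixed}: the only mismatch, where $\algbss_{ui}\algbss_{ij}$ leaves $\YY$ while $\bmdlbss_{uj}\ne0$, is absorbed by the fringe, the opposite mismatch is ruled out by $\YY_{\mix}\subseteq\XX_{\mix}$, and both sides of \eqref{eq:iotaporp} are the same Kronecker-delta expression. One small point: in \eqref{eq:iotaporp} the terms with mismatched middle indices vanish because of the $\delta$ in \eqref{eq:units}, not because of the corner calculus.
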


\begin{proof}
This is a straightforward verification.
It amounts to \eqref{eq:units} together with \Cref{lem:mixed}: the
multiplication rules for the $\algbss$'s and the $\bmdlbss$'s have the same index
bookkeeping, and the only discrepancy is that $\algbss_{ui}\algbss_{ij}$ may vanish
(because $\YY$ is not transitive) while $\bmdlbss_{uj}\ne0$; in that case
$(u,j)\in\XX_{\mix}\setminus\YY_{\mix}$, so that $\bar\bmdlbss_{uj}=0$ in $\Vv$.
The opposite discrepancy cannot occur, by \Cref{lem:mixed}\labelcref{mix:incl}.
The identities \eqref{eq:iotaporp} are immediate from \eqref{eq:units}, both sides
being $\delta_{j\ell}\bar\bmdlbss_{im}$, resp.\ $\delta_{mi}\bar\bmdlbss_{\ell j}$.
\end{proof}

This proves \Cref{prop:sepobtuse}.

\subsection{The heredity theorem}

By \Cref{prop:sepobtuse} and \Cref{cor:schema} we conclude

\begin{theorem}[endpoint-separated heredity]\label{thm:esi}
Let $\mm = \sum_{i\in \Kk}[a_i,b_i]$ be a multisegment with integral endpoints
and let $\Kk = I\sqcup J$ satisfy \eqref{eq:sep}. If $\mm$ is rigid, then
$\mm_I$ and $\mm_J$ are rigid. If $\mm$ is \prig, then $\mm_I$ and $\mm_J$ are
\prig.
\end{theorem}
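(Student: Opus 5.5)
The proof is a direct assembly of results already established. We take $M = M_Q(\mm)$ over $R=\kk Q$, with $Q$ the equioriented quiver of type $A_r$ for $r$ large enough to contain all segments. After a translation of the endpoints, which changes neither $\XX_\mm$ nor $\YY_\mm$ and hence, by \Cref{cor:relinv}, neither deficiency, we may assume all segments lie in $[1,r]$. The partition $\Kk=I\sqcup J$ gives the direct sum decomposition \eqref{eq:Mdecomp}, $M=M_Q(\mm_I)\oplus M_Q(\mm_J)$, and the induced idempotent decomposition $1=e_I+e_J$ with $e_P=\sum_{i\in P}\algbss_{ii}$. By \Cref{lem:corners}, and as noted after \eqref{eq:Mdecomp}, the Peirce corners $(\alg_P,\bmdl_P)$ are the algebra and bimodule attached to $\mm_P$. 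Therefore $\dfr_\ast(\alg_P,\bmdl_P)=\dfr_\ast(\mm_P)$.

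Next we invoke \Cref{prop:sepobtuse}: under \eqref{eq:sep}, this decomposition is \obt. Concretely, take the fringe $\Ff$ of \Cref{lem:mixed}\labelcref{lem:fringe}, which is an $\alg$-subbimodule inside $\bmdl_{\mix}$, and the derivation $f=\iota\circ\ad(e_I)$ of \Cref{lem:taut}. This $f$ is an $\alg_{\diag}$-bilinear derivation $\alg\to\Vv=\bmdl/\Ff$, and it maps $\alg_{\mix}$ isomorphically onto $\Vv_{\mix}$. In particular $f(\alg)=\Vv_{\mix}$, since $f$ vanishes on $\alg_{\diag}$.

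\Cref{cor:schema}, which is \Cref{prop:twoblock} applied to this data, then gives
\[
\dfr_\ast(\mm)\ \ge\ \dfr_\ast(\mm_I)+\dfr_\ast(\mm_J),\qquad \ast\in\{\rsub,\psub\}.
\]
Deficiencies are nonnegative, so $\dfr_\ast(\mm)=0$ forces $\dfr_\ast(\mm_I)=\dfr_\ast(\mm_J)=0$. By the dictionary at the end of \Cref{ssec:prig}, $\dfr_\rsub(\mm)=0$ means $\mm$ is rigid and $\dfr_\psub(\mm)=0$ means $\mm$ is \prig. This yields both assertions.

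The only step needing care is the verification hidden in \Cref{lem:taut}, namely that $\iota$ satisfies \eqref{eq:iotaporp} modulo $\Ff$, together with the fact that $\Ff$ is a subbimodule. Both rest on the observation in \Cref{lem:mixed} that endpoint separation forces $\YY_{\mix}\subseteq\XX_{\mix}$, and that the discrepancies between $\algbss$-products and $\bmdlbss$-products land in juxtaposed mixed pairs. Since these are established above, no genuine obstacle remains. The remaining point to check is that translating the segments, or using a larger $r$, does not alter the invariants; this follows from \Cref{cor:relinv}.
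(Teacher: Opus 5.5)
Your proposal is correct and matches the paper's argument: the paper obtains the theorem by combining \Cref{prop:sepobtuse} (the fringe $\Ff$ and the tautological derivation make the decomposition \obt) with \Cref{cor:schema}, and then reads off that $\dfr_\ast$ vanishes on the two corners. Your extra remark about translating the endpoints into $[1,r]$ is harmless, since by \Cref{lem:alg} the algebra and bimodule depend only on $(\XX_\mm,\YY_\mm)$.
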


\Cref{thm:esi} is \Cref{thm:mainB} of the introduction: for an endpoint-separated
partition, rigidity (resp.\ \prigity) of the irreducible component $\cmp_\mm$ is
inherited by $\cmp_{\mm_I}$ and $\cmp_{\mm_J}$.
Note that when the $a_i$'s are distinct and the $b_i$'s are distinct,
i.e., $\mm$ is regular in the sense of \cite{MR3866895}*{\S3.5},
the hypothesis \eqref{eq:sep} is satisfied by \emph{every} partition.

The hypothesis \eqref{eq:sep} cannot simply be dropped, and neither half of
it can be dropped by itself. In the first example below only one half is
dropped and rigidity already fails to be inherited.\footnote{Upon reflection of the segments $[a,b] \mapsto [r+1-b,r+1-a]$,
the same conclusion holds for the other half.}
In the second example both halves are dropped, and the conclusion fails for both rank conditions at once.

\begin{example}[one half of \eqref{eq:sep} is not enough]\label{ex:half}
Let
\[
   \mm \;=\; \underbrace{[1,2]+[2,5]+[3,4]+[5,6]}_{\textstyle \mm_I}
   \;+\; \underbrace{[4,4]}_{\textstyle \mm_J}.
\]
The beginnings are separated, while the ends are not.
Exactly as in \Cref{ex:leclerc},
\[
   \XX_{\mm_I} = \{(1,2),\,(1,3),\,(2,4),\,(3,4)\}, \qquad
   \YY_{\mm_I} = \{(i,i)\}\cup\{(1,2),\,(2,4)\} ,
\]
and $\mm_I$ is not rigid. For $\mm$ itself
\[
   \XX = \XX_{\mm_I}\cup\{(5,4)\}, \qquad
   \YY = \YY_{\mm_I}\cup\{(5,5),\,(3,5)\} ,
\]
so $\dim\bmdl = 5$ and $\dim\alg = 8$. For $\lambda \;=\; \bmdlbss_{12}+\bmdlbss_{13}+\bmdlbss_{24}+\bmdlbss_{54}$ we have
\[
   \Sop_\lambda(\algbss_{11},\algbss_{22},\algbss_{33},\algbss_{55},\algbss_{35})=
   (\bmdlbss_{12}+\bmdlbss_{13},\bmdlbss_{24}-\bmdlbss_{12},-\bmdlbss_{13},\bmdlbss_{54},\bmdlbss_{34}).
\]
Hence $\Sop_\lambda$ is surjective (over any field) and $\mm$ is rigid. Thus, rigidity is not inherited.

The mechanism of \Cref{ssec:typeAsep} breaks down at its very first step:
here $\YY_{\mix} = \{(3,5)\}$ is not contained in $\XX_{\mix} =
\{(5,4)\}$, against \Cref{lem:mixed}\labelcref{mix:incl}, and the line
$\kk\bmdlbss_{54}$ that \eqref{eq:fringe} would take for the fringe is not
an $\alg$-subbimodule, since $\algbss_{35}\bmdlbss_{54} = \bmdlbss_{34}
\ne 0$ lies in $\bmdl_I$.
\end{example}

\begin{example}[\prigity is not inherited without separation]\label{ex:notsepflat}
Let
\[
   \mm \;=\; \underbrace{[1,4]+[2,3]+[3,6]+[4,5]+[6,7]}_{\textstyle \mm_I}
   \;+\; \underbrace{[3,4]}_{\textstyle \mm_J} .
\]
Both halves of \eqref{eq:sep} fail.
Here $\mm_I$ is the multisegment of \Cref{exma:nonprigid}, which is not \prig. Its relations
$\XX_{\mm_I}$, $\YY_{\mm_I}$ are those displayed in \eqref{eq:nonprigrel}. For $\mm$,
\[
   \XX = \XX_{\mm_I}\cup\{(2,6),\,(6,4)\}, \qquad
   \YY = \YY_{\mm_I}\cup\{(6,6),\,(1,6),\,(2,6),\,(6,3),\,(6,4)\} ,
\]
so $\dim\bmdl = 8$ and $\dim\alg = 14$. Taking $\lambda \;=\; \bmdlbss_{13}+\bmdlbss_{26}+\bmdlbss_{35}+\bmdlbss_{45}+\bmdlbss_{64}$ we get
\begin{align*}
   \Sop_\lambda(\algbss_{11}) &= \bmdlbss_{13}, &
   \Sop_\lambda(\algbss_{16}) &= \bmdlbss_{14}, &
   \Sop_\lambda(\algbss_{22}) &= \bmdlbss_{26}, &
   \Sop_\lambda(\algbss_{26}) &= \bmdlbss_{24},\\
   \Sop_\lambda(\algbss_{63}) &= -\bmdlbss_{23}, &
   \Sop_\lambda(\algbss_{33}) &= \bmdlbss_{35}-\bmdlbss_{13}, &
   \Sop_\lambda(\algbss_{55}) &= -\bmdlbss_{35}-\bmdlbss_{45}, &
   \Sop_\lambda(\algbss_{66}) &= \bmdlbss_{64}-\bmdlbss_{26}.
\end{align*}
Thus, $\Sop_\lambda$ is surjective over every field and hence $\mm$ is rigid, and in particular \prig,
although $\mm_I$ is neither. So \eqref{eq:twoblock} fails here for both rank
conditions at once, and not merely its consequence for $\dfr_\rsub$ as in
\Cref{ex:half}.
\end{example}

\subsection{Arbitrary fields}\label{ssec:arbesi}

\Cref{thm:esi}, and with it \Cref{thm:mainB}, holds over an arbitrary field
$\kk_0$, the invariants being read as in \Cref{ssec:arbnilp}. Indeed, the
whole input is \Cref{prop:sepobtuse}, which asserts that the decomposition
$M_Q(\mm) = M_Q(\mm_I)\oplus M_Q(\mm_J)$ is \obt; the fringe $\Ff$ of
\Cref{lem:mixed} and the tautological matching of \Cref{lem:taut} are defined
by the relations $\XX$ and $\YY$ alone, hence over the prime field, so the
decomposition is \obt already over $\kk_0$. \Cref{cor:schema} over $\kk_0$
(\Cref{ssec:arbhered}) then gives the conclusion.

Both examples above are, likewise, valid over an arbitrary field: the
displayed $\lambda$ has coordinates $0$ and $1$ and the resulting
surjectivity is witnessed by a triangular submatrix with diagonal entries
$\pm1$.

\section{Permutations and the \psm class}\label{sec:perm}

The remaining sections specialize the theory to a distinguished family of
multisegments, those attached to permutations, and prove the
characterization of the \prig members of that family announced as
\Cref{thm:mainA} below. We rely on an exhaustive finite check of permutations of sizes at most eight
(\Cref{prop:critsmall}), and on the smaller check of \Cref{cor:notower};
both are done externally.

\subsection{Conventions on permutations}\label{ssec:permconv}

Permutations $\sigma \in S_n$ act on $\{1,2,\dots,n\}$ and are written
in one-line notation $\sigma = \sigma_1\sigma_2\cdots\sigma_n$; we
write $|\sigma| = n$. We speak of the \emph{points} of $\sigma$, a point
being a position together with its value; for a point $x$ we write
$\pos(x)$ for its position and $\sigma_x$ for its value. A point $y$ lies
\emph{northeast} of a point $x$ if $\pos(y) > \pos(x)$ and $\sigma_y >
\sigma_x$, and \emph{southwest} if the reverse holds. A pair of positions
$i<j$ is an \emph{inversion} if $\sigma_i > \sigma_j$ and an \emph{ascent
pair} if $\sigma_i < \sigma_j$; a subsequence is decreasing exactly when
all its pairs are inversions.

\emph{Patterns and deletions.} For $\ptst \subseteq \{1,\dots,n\}$,
$\std(\sigma|_{\ptst})$ is the permutation of $\{1,\dots,|\ptst|\}$
order-isomorphic to the subsequence of $\sigma$ indexed by $\ptst$. We write
$\tau \preceq \sigma$, and say that $\sigma$ \emph{contains} $\tau$, if
$\tau = \std(\sigma|_{\ptst})$ for some $\ptst$; for a single position $d$ we write
$\sigma - d = \std(\sigma|_{\{1,\dots,n\}\setminus\{d\}})$. For a set
$W$ of permutations, $\Av(W)$ denotes the class of permutations
containing no member of $W$. Such a class is closed under pattern
containment, and if $W$ is an antichain in the pattern order then $W$ is
the set of minimal permutations outside $\Av(W)$, the \emph{basis} of the
class.

\emph{Symmetries.} Let $\wo \in S_n$ be the longest element, $\wo(i) =
n+1-i$, and let $\rc\sigma = \wo\sigma\wo$ be the reverse-complement,
$(\rc\sigma)_i = n+1-\sigma_{n+1-i}$, the rotation of the plot of
$\sigma$ by $180^\circ$. Together with inversion $\sigma \mapsto
\sigma^{-1}$ it generates a Klein four-group acting on $S_n$.
Inversion reflects the plot in the main diagonal, so it preserves
\emph{decreasing} subsequences, as does $\rc$.

\emph{Direct sums.} For $\alpha \in S_a$, $\beta \in S_b$ the
\emph{direct sum} $\alpha\oplus\beta \in S_{a+b}$ is
$(\alpha\oplus\beta)_i = \alpha_i$ for $i \le a$ and
$(\alpha\oplus\beta)_i = \beta_{i-a} + a$ for $i > a$. A permutation is
\emph{decomposable} if it is such a sum with both summands nonempty, and
\emph{indecomposable} otherwise. The decompositions of $\sigma \in S_n$
correspond to the \emph{cuts} $t$, that is, the indices $1 \le t \le n-1$ with
$\sigma(\{1,\dots,t\}) = \{1,\dots,t\}$; every permutation is uniquely a
direct sum of indecomposable summands.

\emph{Records, windows, corners.} A \emph{record} of $\sigma$ is either a
left-to-right maximum, that is a position $d$ with $\sigma_e < \sigma_d$
for all $e < d$, or a right-to-left minimum. For a left-to-right maximum
$d$ the \emph{window} is $\win(d) = \{k > d : \sigma_k < \sigma_d\}$, read
as the subsequence $\sigma|_{\win(d)}$; for a right-to-left minimum the
window is defined through $\rc$. A record is \emph{\good} if its window
is decreasing. The four \emph{corners} of $\sigma$ (which are not necessarily distinct) are the first
position, the position of the value $n$, the position of the value $1$
and the last position, called the \emph{\Wcr, \Ncr, \Scr} and \emph{\Ecr}
corner respectively. The names refer to the plot of $\sigma$, positions
increasing to the east and values to the north: these four points are the
leftmost, the highest, the lowest and the rightmost one. The \Wcr and \Ncr
corners are left-to-right maxima, the \Scr and \Ecr corners right-to-left
minima. A corner is \emph{\good} if it is a \good record and
\emph{blocked} otherwise, and $\sigma$ is \emph{\cb} if all four of its
corners are blocked, that is, if it has no \good corner.

The Klein four-group permutes the four corner \emph{labels} simply
transitively (the corners themselves may coincide, as the \Wcr and \Ncr
corners do in $4231$):
\begin{equation}\label{eq:cornersym}
\begin{aligned}
   \sigma &\mapsto \sigma^{-1} &&:\quad \text{(\Wcr \Scr)(\Ncr \Ecr)},\\
   \sigma &\mapsto \rc\sigma   &&:\quad \text{(\Wcr \Ecr)(\Ncr \Scr)} .
\end{aligned}
\end{equation}
Records, windows and \goodness are likewise preserved by the action, so \cbness is invariant.

Deleting a \good corner does not destroy indecomposability:

\begin{lemma}[deleting a \good corner]\label{lem:cornerindec}
Let $\sigma \in S_n$, $n \ge 2$, be indecomposable and let $\crn$ be a
\good corner of $\sigma$. Then $\sigma - \crn$ is indecomposable.\footnote{\Goodness is needed here: $312$ is indecomposable, but its \Wcr corner,
whose window reads $12$, leaves $12$ upon deletion.}
\end{lemma}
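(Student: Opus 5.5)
By the Klein four-group symmetry \eqref{eq:cornersym} we may assume that $\crn$ is the \Wcr corner, i.e.\ position $1$, with value $v=\sigma_1$. Inversion and $\rc$ preserve indecomposability, deletion commutes with them (up to relabelling the deleted corner), and they preserve \goodness, so this reduction costs nothing. \Goodness of the \Wcr corner means that the window $\win(1)=\{k>1:\sigma_k<v\}$ is decreasing. Since the values below $v$ are exactly $1,\dots,v-1$, the window consists of the positions of $v-1,v-2,\dots,1$, occurring in this (decreasing) order from left to right.

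We argue by contradiction. Suppose $\tau=\sigma-1$ has a cut $t$ with $1\le t\le n-2$, so that $\tau(\{1,\dots,t\})=\{1,\dots,t\}$. Translated back to $\sigma$, this says that the entries $\sigma_2,\dots,\sigma_{t+1}$ are, after standardization by removing $v$, the $t$ smallest values among $\{1,\dots,n\}\setminus\{v\}$.

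There are two cases. If $t\ge v$, then $\{\sigma_2,\dots,\sigma_{t+1}\}=\{1,\dots,t+1\}\setminus\{v\}$. Together with $\sigma_1=v$, this gives $\sigma(\{1,\dots,t+1\})=\{1,\dots,t+1\}$. Since $t+1\le n-1$, this is a cut of $\sigma$, contradicting indecomposability. If $t<v$, then $\{\sigma_2,\dots,\sigma_{t+1}\}=\{1,\dots,t\}$, and these lie in the window. The window is decreasing, and the positions of $1,\dots,v-1$ must come in the order $v-1,\dots,1$. So the positions of $1,\dots,t$ can form the initial segment $2,\dots,t+1$ of positions only if $t=v-1$, in which case the first $t$ window entries are $v-1,\dots,1$ and these occupy positions $2,\dots,v$. (If $t<v-1$, the value $v-1$ would have to appear before the values $\le t$, which is impossible because those values fill positions $2,\dots,t+1$ immediately after position $1$.) Hence $t=v-1$ and $\sigma(\{1,\dots,v\})=\{1,\dots,v\}$. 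This is a cut of $\sigma$ at $v$ unless $v=n$. When $v=n$, $t=n-1$ exceeds the range $t\le n-2$, so there is no cut. In every case we reach a contradiction.

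The only delicate point is the case $t<v$, where the decreasing order of the window is used to force $t=v-1$. The footnote's example $312$ shows exactly what fails without \goodness: there the window reads $12$, and we get $t=1<v-1=2$.
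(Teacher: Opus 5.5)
Your proof is correct and follows the paper's approach. You reduce to the \Wcr corner, then split according to whether $\sigma_1$ lies inside the putative cut (which yields a cut of $\sigma$ itself) or not (where the decreasing window gives a contradiction). The differences are cosmetic: your $t$ is the paper's $t-1$, you handle the boundary case $t=v-1$ in your second case rather than the first, and your out-of-order window value is $v-1$ rather than the paper's $t+1$ (in your indexing).
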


\begin{proof}
Decomposability is invariant under the Klein four-group, since
$(\alpha\oplus\beta)^{-1} = \alpha^{-1}\oplus\beta^{-1}$ and
$\rc(\alpha\oplus\beta) = \rc\beta\oplus\rc\alpha$, so by
\eqref{eq:cornersym} we may assume that $\crn$ is the \Wcr corner. Put $a
= \sigma_1$ and suppose that $\sigma-\crn$ had a cut, that is, that for
some $2 \le t \le n-1$ the values $\sigma_2,\dots,\sigma_t$ are the $t-1$
smallest elements of $\{1,\dots,n\}\setminus\{a\}$.

If $a \le t$, these are $\{1,\dots,t\}\setminus\{a\}$, so that
$\sigma(\{1,\dots,t\}) = \{1,\dots,t\}$ and $t$ is a cut of $\sigma$,
contrary to the assumption.

If $a > t$, they are $\{1,\dots,t-1\}$ and all lie in the window $\win(1)$.
\Goodness makes $\sigma|_{\win(1)}$ decreasing, so that $\sigma_k = t-k+1$
for $2 \le k \le t$, and in particular $\sigma_t = 1$. On the other hand
the value $t$ is none of $a,1,\dots,t-1$, hence occupies a position $k >
t$, and $t < a$ puts $k$ in $\win(1)$. Thus $\sigma|_{\win(1)}$ takes the value
$1$ at $t$ and the larger value $t$ at $k > t$, contradicting \goodness.
\end{proof}

\emph{Minimal inversions and the cover graph.} A \emph{witness} against
an inversion $(i,k)$ is a position $j$ with $i<j<k$ and $\sigma_i >
\sigma_j > \sigma_k$, that is, a point lying strictly inside the
rectangle spanned by the two; the inversion is \emph{minimal} if it has
none. We write $\Cov(\sigma)$ for the set of minimal inversions of
$\sigma$, and $\cgr_\sigma$ for the \emph{cover graph}, the undirected
graph on the $n$ positions of $\sigma$ with one edge for each minimal
inversion. For instance an inversion with consecutive values is minimal. Both notions are visibly invariant under the Klein
four-group, which therefore carries $\Cov(\sigma)$ to $\Cov(\sigma^{-1})$
and $\Cov(\rc\sigma)$, and $\cgr_\sigma$ isomorphically to
$\cgr_{\sigma^{-1}}$ and $\cgr_{\rc\sigma}$.

Deleting a point $p$ can destroy a minimal inversion only through $p$:
if $(i,k) \in \Cov(\sigma)$ and $p \notin \{i,k\}$, then $(i,k)$ is still
an inversion of $\sigma - p$, and a witness against it there would be a
witness in $\sigma$. Identifying throughout the points of $\sigma - p$
with the points of $\sigma$ other than $p$, this gives
\begin{subequations}\label{eq:covdelete}
\begin{align}
   \{(i,k) \in \Cov(\sigma) : p \notin \{i,k\}\}
   &\;\subseteq\; \Cov(\sigma - p) ,
   \label{eq:covdelete-incl}\\
   |\Cov(\sigma)|
   &\;\le\; |\Cov(\sigma - p)| + \deg_{\cgr_\sigma}(p) .
   \label{eq:covdelete-count}
\end{align}
\end{subequations}
The inclusion is in general strict, the deletion of a witness turning a
non-edge into an edge. For a corner it is an equality,
\begin{subequations}\label{eq:covcorner}
\begin{align}
   \Cov(\sigma-\crn) &= \{(i,k) \in \Cov(\sigma) : \crn \notin \{i,k\}\},
   \label{eq:covcorner-set}\\
   \cgr_{\sigma-\crn} &= \cgr_\sigma - \crn ,
   \label{eq:inhrtcrn}
\end{align}
\end{subequations}
the second being the first read on the cover graph: deleting a corner
removes its vertex and nothing else. Indeed, the inclusion $\supseteq$ is
\eqref{eq:covdelete-incl}; for $\subseteq$, let $(i,k) \in
\Cov(\sigma-\crn)$ and let $w$ be a witness against $(i,k)$ in $\sigma$.
Then $w = \crn$, since any other witness would survive in
$\sigma - \crn$. But a corner is extreme in position or in value, hence
never lies strictly inside the rectangle spanned by two other points; so
$\crn$ witnesses against no inversion at all, and $(i,k) \in
\Cov(\sigma)$.

If the corner is \good, more is true. Indeed
\begin{equation}\label{eq:gooddegree}
\text{a \good corner of $\sigma$ lies in at most one minimal inversion,}
\end{equation}
for let $\crn$ be the \Wcr corner, its window $w_1,\dots,w_r$ being
listed in increasing position and, by \goodness, in decreasing value.
A witness against $(\crn,w_k)$ is a point later than $\crn$ of smaller
value, hence lies in the window; so $(\crn,w_1)$ is minimal, while for
$k > 1$ the point $w_1$ witnesses against $(\crn,w_k)$. The other three
corners follow by \eqref{eq:cornersym}. Together with
\eqref{eq:inhrtcrn} this says that for a \good corner $\crn$ the graph
$\cgr_\sigma$ is recovered from $\cgr_{\sigma-\crn}$ by adding a new
vertex which is either isolated or a leaf.

\subsection{The multisegment of a permutation}\label{ssec:dictionary}

As in \cite{MR1458969}*{\S8.1} we consider type $A_{2n-1}$ and graded dimension
\[
   (1,2,\dots,n-1,n,n-1,\dots,2,1).
\]
The open $G_V$-stable subvariety of $R_Q(V)$ where the first $n-1$ arrows are injective
and the last $n-1$ are surjective encodes the flag variety of $\GL(n)$ with the Borel action.
The relevant multisegments are of the form $\mm_\sigma$, $\sigma \in S_n$
with index set $\Kk = \{1,\dots,n\}$ and segments
\begin{equation}\label{def:mm}
   \Delta_i \;=\; [\,a_i,\,b_i\,] \;=\; [\,i,\; 2n - \sigma_i\,],
   \qquad i \in \Kk .
\end{equation}

We abbreviate $\XX_\sigma = \XX_{\mm_\sigma}$, $\YY_\sigma =
\YY_{\mm_\sigma}$ and $\Gc_\sigma = \Gc_{\mm_\sigma}$ for the data of
\Cref{def:M}. These are the combinatorial data of $\sigma$ itself:
\begin{equation}\label{eq:dictUV}
\begin{gathered}
\begin{aligned}
   \XX_\sigma &= \{(i,k) : 1 \le i < k \le n,\ \sigma_i > \sigma_k\},\\
   \YY_\sigma &= \{(i,j) : 1 \le i \le j \le n,\ \sigma_i \ge \sigma_j\},\\
   \DD_\sigma &= \{(c,c) : 1 \le c \le n\},
\end{aligned}\\[2pt]
   \YY_\sigma = \XX_\sigma \sqcup \DD_\sigma ;
\end{gathered}
\end{equation}
that is, $\XX_\sigma$ is the set of inversions of $\sigma$. The minimal
pairs of $\mm_\sigma$ are likewise the minimal inversions of $\sigma$, so
that $\Cov(\sigma)=\Cov(\mm_\sigma)$ and $\cgr_\sigma=\cgr_{\mm_\sigma}$ in the notation of
\Cref{ssec:permconv} and \Cref{def:covmm}.

Note that dualizing and relabeling the vertices of the quiver by $v \mapsto 2n-v$,
takes a segment $[a,b]$ to $[2n-b,2n-a]$ and therefore $\mm_\sigma$ to $\mm_{\sigma^{-1}}$.\footnote{On the other hand,
rotation is not induced by an automorphism or a duality of the quiver: the segments $[i,2n-\sigma_i]$ and $[\wo(i),2n-\wo(\sigma_i)]$
have complementary lengths.}

Inversion and rotation act on pairs by the relabelings
\begin{equation}\label{eq:pairsym}
   \text{inversion:}\quad (i,k) \mapsto (\sigma_k,\sigma_i),
   \qquad
   \text{rotation:}\quad (i,k) \mapsto (\wo(k),\,\wo(i)) ,
\end{equation}
which carry $\XX_\sigma$ and $\YY_\sigma$ bijectively onto
$\XX_{\sigma^{-1}}$ and $\YY_{\sigma^{-1}}$, resp.\ onto
$\XX_{\rc\sigma}$ and $\YY_{\rc\sigma}$.

Note that $\mm_\sigma$ is regular and $a_i\le b_j$ for all $i,j$ and in particular there are no juxtaposed segments, so that $\XX_\sigma \subseteq \YY_\sigma$.
Thus, $\Ff = 0$ and $\Vv =\bmdl$ in the notation of \Cref{sec:esi}.

\begin{remark}\label{rem:reggen}
Conversely, if $\mm=\sum_{i\in \Kk}[a_i,b_i]$ is a regular multisegment such that $a_i\le b_j$ for all $i,j$, then
$\dfr_*(\mm)=\dfr_*(\mm_\sigma)$ where upon indexing $\Kk = \{1,\dots,n\}$ such that $a_1 < \dots < a_n$,
$\sigma\in S_n$ is the permutation such that $\sigma_i > \sigma_j \iff b_i < b_j$.
This follows from \Cref{cor:relinv} since $\XX_\mm = \XX_\sigma$ and $\YY_\mm =\YY_\sigma$.
\end{remark}

\begin{definition}\label{def:prigperm}
A permutation $\sigma \in S_n$ is \emph{\prig} (resp.\ \emph{rigid}) if
the multisegment $\mm_\sigma$ is \prig (resp.\ rigid);
this is the \prigity (resp.\ rigidity) of the irreducible component
$\cmp_\sigma := \cmp_{\mm_\sigma}$.
\end{definition}

By \eqref{eq:pairsym} and \Cref{cor:relinv},
\begin{equation} \label{quot:klein}
\begin{gathered}
\text{\Prigity and rigidity of permutations are invariant}\\\text{under reverse-complement and inversion.}
\end{gathered}
\end{equation}

This definition is geometric in nature; \Cref{thm:smoothrigid,thm:mainA} will characterize both notions purely combinatorially.

We can write the rank problem of \Cref{ssec:typeA} in terms of permutations.
For $\mm = \mm_\sigma$ the matrix of $\Sop_{\lambda,\mu}$ in the
bases of \eqref{eq:S} has its rows indexed by $\XX_\sigma$ and its
columns by $\YY_\sigma$; we denote it by $\Sop^\sigma_{\lambda,\mu}$ and
$\Sop^\sigma_\lambda = \Sop^\sigma_{\lambda,\lambda}$. Its entries are
described by the colored graph $\Gc_\sigma$ of \Cref{def:M}. Spelled out
in terms of $\sigma$, its edges are
\begin{align*}
\text{\emph{of type $\blam$}}\quad & \bigl((i,k),(i,j)\bigr)
   \quad\text{for } i \le j < k \text{ with } \sigma_i \ge \sigma_j >
   \sigma_k, \quad\text{colored } \blam_{jk};\\
\text{\emph{of type $\bmu$}}\quad & \bigl((i,k),(j,k)\bigr)
   \quad\text{for } i < j \le k \text{ with } \sigma_i > \sigma_j \ge
   \sigma_k, \quad\text{colored } \bmu_{ij};
\end{align*}
an edge of type $\blam$ shortens the inversion $(i,k)$ at its right end
and an edge of type $\bmu$ at its left end.

The relabelings \eqref{eq:pairsym} transport the colors as well, interchanging the two types:
\begin{equation}\label{eq:colorsym}
\begin{aligned}
   \text{under inversion,} \quad \blam_{ij} &\mapsto \bmu_{\sigma_j\sigma_i},
   & \bmu_{ij} &\mapsto \blam_{\sigma_j\sigma_i},\\
   \text{under rotation,} \quad \blam_{ij} &\mapsto \bmu_{\wo(j)\wo(i)},
   & \bmu_{ij} &\mapsto \blam_{\wo(j)\wo(i)} .
\end{aligned}
\end{equation}
Hence they are isomorphisms of colored graphs, carrying $\Gc_\sigma$ to
$\Gc_{\sigma^{-1}}$, resp.\ $\Gc_{\rc\sigma}$. Both relabelings reverse the
pairs, so it is the second half of \Cref{cor:relinv} that applies; this is
also why they interchange the two types of colors.

The \emph{value} of $\blam_{ab}$, resp.\ $\bmu_{ab}$, at a
pair $(\lambda,\mu) \in \bmdl\times\bmdl$ is the parameter $\lambda_{ab}$,
resp.\ $\mu_{ab}$, of \eqref{eq:S}; giving a value to every color is the
same as giving the pair $(\lambda,\mu)$.

The nonzero entry $(u,y)$ of $\Sop^\sigma_{\lambda,\mu}$,
i.e., the coefficient of $\bmdlbss_u$ in $\Sop_{\lambda,\mu}(\algbss_y)$,
corresponds to an edge $u \to y$ in $\Gc_\sigma$.
It is the value of the color of that edge if the edge is of type $\blam$ and minus that value if it is of type $\bmu$.

The definition \eqref{def:pair} gives

\begin{corollary}\label{cor:psGS}
Let $\sigma \in S_n$. Then $\sigma$ is \prig if and only if
$\Sop^\sigma_{\lambda,\mu}$ is surjective for some $(\lambda,\mu)$, and rigid
if and only if $\Sop^\sigma_\lambda$ is surjective for some $\lambda$.
Equivalently, $\sigma$ is \prig, resp.\ rigid, if and only if
$\Sop^\sigma_{\lambda,\mu}$, resp.\ $\Sop^\sigma_\lambda$, has row rank
$|\XX_\sigma|$ over the field of rational functions in the $2|\XX_\sigma|$,
resp.\ $|\XX_\sigma|$, parameters, taken as indeterminates.
\end{corollary}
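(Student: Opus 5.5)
This is essentially a restatement of \Cref{def:prigperm} combined with the definitions \eqref{def:pair} and the generic-rank remarks of \Cref{ssec:pairs}, so the plan is to unwind definitions in three steps.

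First, by \Cref{def:prigperm}, $\sigma$ is \prig (resp.\ rigid) exactly when $\mm_\sigma$ is, that is, when $\dfr_\psub(\mm_\sigma)=0$ (resp.\ $\dfr_\rsub(\mm_\sigma)=0$), as recorded at the end of \Cref{ssec:prig}. By \eqref{def:pair}, $\dfr_\psub(\alg,\bmdl)=0$ means that some $\Sop_{\lambda,\mu}$ has corank zero, i.e.\ is surjective onto $\bmdl$. Likewise, $\dfr_\rsub=0$ means that some $\Sop_\lambda$ is surjective. By \Cref{lem:alg}, in the bases $\{\algbss_{ij}\}_{(i,j)\in\YY_\sigma}$ and $\{\bmdlbss_{ik}\}_{(i,k)\in\XX_\sigma}$ the map $\Sop_{\lambda,\mu}$ is represented by the matrix $\Sop^\sigma_{\lambda,\mu}$ given by \eqref{eq:S}. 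Surjectivity of the map is therefore surjectivity of this matrix, which proves the first sentence.

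Second, for the equivalent formulation I use the remark after \eqref{def:pair} that the minimal corank is attained on a dense open set of parameters and equals the corank over the rational function field $\kk(\lambda_{ij},\mu_{ij})$. Concretely, the entries of $\Sop^\sigma_{\lambda,\mu}$ are $\pm$ coordinates of $(\lambda,\mu)$, so its rank over the function field equals the maximal size of a minor that is a nonzero polynomial. Over the infinite field $\kk$, such a polynomial has a nonvanishing specialization, while no specialization can raise the rank. Hence the generic rank equals $\max_{(\lambda,\mu)}\rank\Sop^\sigma_{\lambda,\mu}$. Full row rank $|\XX_\sigma|$ is the same as surjectivity, since the rows are indexed by $\XX_\sigma$ and $\dim\bmdl=|\XX_\sigma|$. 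The same argument with $\mu=\lambda$ handles the rigid case, using $|\XX_\sigma|$ indeterminates.

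There is no real obstacle here. The only point requiring care is the direction convention: rows are indexed by the target basis $\XX_\sigma$, so surjectivity means full \emph{row} rank, not column rank.
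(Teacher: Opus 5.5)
Your proposal is correct and follows the paper, which obtains the corollary directly from the definition \eqref{def:pair}, the identification of \prigity and rigidity with $\dfr_\psub(\mm_\sigma)=0$ and $\dfr_\rsub(\mm_\sigma)=0$, and the generic-corank remark of \Cref{ssec:pairs}. Your minor-specialization argument, which uses that $\kk$ is infinite, just spells out that last remark, which the paper states without proof.
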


%\subsection{Heredity for permutations}\label{ssec:heredperm}

For the multisegments $\mm_\sigma$, \emph{every} partition of the index set is endpoint-separated.
From \Cref{thm:esi} and \Cref{rem:reggen} we conclude:

\begin{theorem}[heredity]\label{thm:hered}
Let $\tau \preceq \sigma$. If $\sigma$ is rigid/\prig then so is $\tau$.
In other words, the class of \prig permutations and the class of rigid permutations are closed under pattern
containment.
\end{theorem}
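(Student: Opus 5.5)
The plan is to derive the statement from \Cref{thm:esi} applied to the partition of $\Kk=\{1,\dots,n\}$ into the positions occupied by the pattern and the remaining positions. Then I will use \Cref{rem:reggen} to identify the resulting sub-multisegment with $\mm_\tau$.

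Fix $\ptst\subseteq\{1,\dots,n\}$ with $\tau=\std(\sigma|_{\ptst})$. Put $I=\ptst$ and $J$ its complement; if $J$ is empty there is nothing to prove.

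The first step is to check the endpoint-separation hypothesis \eqref{eq:sep} for $\mm_\sigma$. By \eqref{def:mm} the beginnings $a_i=i$ are pairwise distinct and the ends $b_i=2n-\sigma_i$ are pairwise distinct, since $\sigma$ is a bijection. So $\mm_\sigma$ is regular, and every partition of $\Kk$ is endpoint-separated. \Cref{thm:esi} then gives: if $\mm_\sigma$ is rigid (resp.\ \prig), so is $\mm_I=\sum_{i\in\ptst}[i,2n-\sigma_i]$. By \Cref{def:prigperm}, this is the hypothesis on $\sigma$.

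The second step is to identify $\mm_I$ with $\mm_\tau$ up to the invariants. The sub-multisegment $\mm_I$ is regular, because its beginnings and its ends are still distinct. It also satisfies $a_i\le b_j$ for all $i,j\in I$, since $a_i\le n\le 2n-\sigma_j$. Hence \Cref{rem:reggen} applies. Index $\ptst=\{u_1<\dots<u_k\}$, so that $a_{u_1}<\dots<a_{u_k}$. The permutation produced there is characterized by $\pi_s>\pi_t\iff b_{u_s}<b_{u_t}\iff\sigma_{u_s}>\sigma_{u_t}$, and this is exactly $\std(\sigma|_{\ptst})=\tau$. Therefore $\dfr_\ast(\mm_I)=\dfr_\ast(\mm_\tau)$ for both $\ast$, and rigidity (resp.\ \prigity) of $\mm_I$ is that of $\tau$.

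There is no real obstacle here: all the substance is in \Cref{thm:esi}. The only point needing care is the bookkeeping in \Cref{rem:reggen}, namely that the relations $\XX,\YY$ of $\mm_I$ restricted to $\ptst$ are carried by $u_s\mapsto s$ onto $\XX_\tau,\YY_\tau$ of \eqref{eq:dictUV}. This holds because both relations are defined only through the relative order of positions and of values.
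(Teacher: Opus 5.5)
Your proposal is correct and is essentially the paper's argument. The paper likewise notes that every partition of the index set of $\mm_\sigma$ is endpoint-separated, applies \Cref{thm:esi}, and identifies $\mm_I$ with $\mm_\tau$ via \Cref{rem:reggen}. You also write out the bookkeeping that the paper leaves implicit, namely $a_i\le b_j$ and that the resulting permutation is $\std(\sigma|_{\ptst})$.
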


Equivalently, in the contrapositive form in which we shall use it: if $\tau
\preceq \sigma$ and $\dfr_\ast(\mm_\tau) > 0$, then
$\dfr_\ast(\mm_\sigma) > 0$.

\subsection{\Smth permutations and rigidity}\label{ssec:smoothrigid}

Before defining the class that governs \prigity we settle the analogous
question for rigidity, where the answer coincides with the smoothness criterion of Lakshmibai--Sandhya.
This was proved by Lapid--M\'{\i}nguez in \cite{MR3866895} but we give here a shorter argument with a sharper statement.
The implication rigid $\Rightarrow$ \smth is an immediate consequence of \Cref{thm:hered},
and hence of \Cref{thm:esi}; no hypothesis on $\kk$ is needed.

Recall that a permutation is \emph{\smth} if it avoids $3412$ and $4231$.
By \cite{MR1051089}, these are exactly the permutations for which the corresponding Schubert variety is smooth.
We will not use this fact directly.

\begin{definition}[\cores]\label{def:cores}
An \emph{\SWcr \core} of $\sigma$ is a four-point set $\Cm$ with the
following properties.
\begin{enumerate}
\item\label{core:corners} The set $\Cm$ contains the \Wcr corner and the \Scr
corner of $\sigma$.
\item\label{core:pattern} The pattern of $\Cm$ is $3412$ or $4231$.
\item\label{core:ascent} The window of the \Wcr corner and the window of
the \Scr corner each contain an ascent pair of points of $\Cm$.
\end{enumerate}
An \emph{\NEcr \core} $\Cp$ is defined dually, that is, as a set whose
image under $\rc$ is an \SWcr \core of $\rc\sigma$. We write $\Cpm$
when both are meant.
\end{definition}

A permutation may have several \SWcr \cores: $34512$ has two, $\{1,2,4,5\}$ and
$\{1,3,4,5\}$. In the situation of \Cref{prop:leaves}\labelcref{lv:attach} below
they turn out to be unique.

The crucial combinatorial input is the following elementary fact, which is essentially \cite{MR2450335}*{Lemma 17}.
Since the requirement \labelcref{core:ascent} is not part of the statement there, we include the short proof.
\begin{lemma}[blocked \SWcr corners]\label{lem:swcore}
A permutation whose \Wcr corner and \Scr corner are both blocked has
an \SWcr \core. Similarly, one whose \Ncr and \Ecr corners are both blocked has an \NEcr \core.
\end{lemma}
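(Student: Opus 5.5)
The plan is to treat the \SWcr statement directly, reading off one ascent pair from each blocked corner and assembling the core from those points together with the two corners; the \NEcr statement then follows by applying the \SWcr case to $\rc\sigma$, since $\rc$ exchanges the corner labels as in \eqref{eq:cornersym} and preserves windows and \goodness.

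First I will make the two windows explicit. Write $a=\sigma_1$ and let $s$ be the position of the value $1$. The \Wcr window is $\win(1)=\{k>1:\sigma_k<a\}$. Transporting the definition through $\rc$, the \Scr window is the set of points lying northwest of the \Scr corner, which is simply all positions $k<s$, since every other value exceeds $1$. If $s=1$, the \Wcr window would be empty and hence \good. So $s>1$ and $a>1$.

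Blockedness of the \Wcr corner then gives $1<p<q$ with $\sigma_p<\sigma_q<a$. Blockedness of the \Scr corner gives $i<j<s$ with $\sigma_i<\sigma_j$. The argument splits according to where these pairs sit.

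\emph{Case 1: some ascent pair $p<q$ of $\win(1)$ has $q<s$.} Take $\Cm=\{1,p,q,s\}$. Its positions are $1<p<q<s$ and its values are $a,\sigma_p,\sigma_q,1$ with $a>\sigma_q>\sigma_p>1$, so the pattern is $4231$. For the ascent condition, $\{p,q\}$ is an ascent pair in $\win(1)$. Since $p,q<s$, the same pair also lies in the \Scr window. So $\Cm$ is an \SWcr \core.

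\emph{Case 2: every ascent pair of $\win(1)$ has its later point after $s$.} First I locate the \Scr pair. If $\sigma_j<a$, then $i\neq1$, because $\sigma_1=a>\sigma_j>\sigma_i$ is impossible. Hence $i,j\in\win(1)$ would be an ascent pair with $j<s$, which is excluded in this case. Therefore $\sigma_j>a$, and $j>1$.

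Now fix an ascent pair $p<q$ in $\win(1)$; by the case assumption $q>s$. Take $\Cm=\{1,j,s,q\}$. Its positions satisfy $1<j<s<q$ and its values are $a,\sigma_j,1,\sigma_q$ with $\sigma_j>a>\sigma_q>1$, so the pattern is $3412$. For the ascent condition:
\begin{itemize}
\item $s$ and $q$ both lie in $\win(1)$ and satisfy $\sigma_s=1<\sigma_q$, so they form an ascent pair there;
\item $1$ and $j$ both lie in the \Scr window and satisfy $a<\sigma_j$, so they form an ascent pair there.
\end{itemize}
So $\Cm$ is again an \SWcr \core.

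I expect no real obstacle here. The only points needing care are identifying the \Scr window correctly through $\rc$, and, in Case 2, the short argument forcing $\sigma_j>a$, which is what makes the $3412$ configuration available.
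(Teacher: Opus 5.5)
Your proof is correct and follows the paper's argument. Your Case 1 is exactly the paper's case in which the set $R$ of points strictly between the \Wcr and \Scr corners (in both position and value) contains an ascent pair, and it yields the same $4231$ core; in Case 2 your points $j$ (above $\sigma_1$, before the \Scr corner) and $q$ (below $\sigma_1$, after it) are the paper's $y\in Y$ and $x\in X$, giving the same $3412$ core, and the \NEcr half is likewise obtained via $\rc$.
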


\begin{proof}
The second statement follows from the first by applying $\rc$.
Let $W$ and $S$ be the \Wcr and \Scr corners, let $R$ be the set of points strictly between them in both position and value,
let $X$ be the set of points after $S$ and below $W$, and $Y$ the set of points before $S$ and above $W$.
Then $\win(W) = R \cup \{S\} \cup X$ and $\win(S) = R \cup \{W\} \cup Y$.
If $R$ contains an ascent pair $k < l$, then $\{W,k,l,S\}$ has pattern $4231$, and $k,l$ lie in both windows.
Otherwise $R \cup \{S\}$ and $R \cup \{W\}$ are decreasing, so $X$ and $Y$ are nonempty since neither window is decreasing.
For $x \in X$ and $y \in Y$ the set $\{W,y,S,x\}$ has pattern $3412$, and $S,x$ and $W,y$ are ascent pairs in $\win(W)$ and $\win(S)$ respectively.
\end{proof}

Since a \good corner cannot belong to an occurrence of $3412$ or of $4231$ we infer:
\begin{corollary}[\smthness recursively]\label{prop:smoothrec}
A nonempty permutation $\sigma$ is \smth if and only if it has a
\good corner $\crn$ (which can be taken to be the \Wcr or the \Scr corner) such that $\sigma - \crn$ is \smth.
\end{corollary}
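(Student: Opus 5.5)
Both implications reduce to the definitions of \smthness and \goodness, using the remark preceding the statement: a \good corner never belongs to an occurrence of $3412$ or $4231$. I would first prove that remark directly. By \eqref{eq:cornersym}, and since rotation and inversion preserve the set $\{3412,4231\}$, it suffices to treat the \Wcr corner $\crn$. Suppose $\crn$ lies in an occurrence $P$ of $3412$ or $4231$. Because $\crn$ is the leftmost point of $\sigma$, it is the leftmost point of $P$, which has value $3$ or $4$ in the pattern. In $3412$ the points valued $1,2$ of the pattern lie after $\crn$ and below it, and they form an ascent. In $4231$ the points valued $2,3$ do the same. Either way $\win(\crn)$ contains an ascent pair, so it is not decreasing and $\crn$ is not \good.

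For the ``if'' direction, let $\crn$ be a \good corner with $\sigma-\crn$ \smth. Any occurrence of $3412$ or $4231$ in $\sigma$ avoids $\crn$ by the remark. Hence it is an occurrence in $\sigma-\crn$, which is impossible, so $\sigma$ is \smth.

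For the ``only if'' direction, let $\sigma$ be nonempty and \smth. If $n=1$, the single point is a \good \Wcr corner and deleting it gives the empty permutation, which is \smth. For $n\ge2$, I would argue by contradiction that the \Wcr or \Scr corner is \good. If both were blocked, \Cref{lem:swcore} would give an \SWcr \core $\Cm$, whose pattern is $3412$ or $4231$, contradicting \smthness. So some $\crn\in\{\Wcr,\Scr\}$ is \good. The permutation $\sigma-\crn$ is a pattern of $\sigma$, and avoidance classes are closed under pattern containment, so $\sigma-\crn$ is \smth.

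There is no real obstacle: the only substantive input is \Cref{lem:swcore}, which is already proved. The one point needing care is the symmetry reduction, namely that the Klein four-group preserves \goodness of corners and the set $\{3412,4231\}$. This holds because $3412$ and $4231$ are both fixed by inversion and by $\rc$.
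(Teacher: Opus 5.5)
Your proof is correct and is essentially the paper's argument. The paper also derives the corollary from \Cref{lem:swcore} together with the observation that a \good corner lies in no occurrence of $3412$ or $4231$; the paper states that observation without proof, and you verify it directly for the \Wcr corner and then transfer it to the other corners by the Klein four-group symmetry.
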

This is \cite{MR2450335}*{Proposition 18}, up to applying $\rc$: the two alternatives
there single out the \Ncr and the \Ecr corner.
For the Schubert-theoretic context --- maximal singular loci, the singularities
governing them, rational smoothness, and further combinatorics of \smth
permutations --- see \cites{MR1990570,MR2422304,MR1278700,MR4290619}.

The minimal inversions are the only coordinates that matter.
By \Cref{lem:top} the set of generators of $\bmdl$ is
\begin{equation}\label{eq:Dcirc}
   \bmdl^{\gen} \;=\; \{\lambda \in \bmdl :
      \lambda_{ab} \ne 0 \text{ for all } (a,b) \in \Cov(\sigma)\} ;
\end{equation}
it is open and nonempty and $G$-stable.
By \eqref{eq:opengen}, or directly by \Cref{cor:gencov}, an open $G$-orbit in
$\bmdl$, if there is one, is contained in $\bmdl^{\gen}$. The next proposition
shows that for \smth $\sigma$ it is all of $\bmdl^{\gen}$.

\begin{proposition}[the open orbit]\label{prop:covparam}
Let $\sigma$ be \smth. Then $G$ acts transitively on $\bmdl^{\gen}$,
which is therefore the open $G$-orbit in $\bmdl$.
In particular $\Sop^\sigma_\lambda$ is surjective for every $\lambda \in
\bmdl^{\gen}$, and $\sigma$ is rigid.
\end{proposition}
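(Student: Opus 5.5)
Our plan is to induct on $n$ using \Cref{prop:smoothrec} and \Cref{lem:cornersplit}. By \eqref{quot:klein} and \eqref{eq:cornersym}, and since all the objects involved (the generator set \eqref{eq:Dcirc}, the group action, transitivity) are transported by the relabelings \eqref{eq:pairsym}, we may assume that the \good corner provided by \Cref{prop:smoothrec} is the \Wcr corner, i.e.\ position $1$, and that $\sigma' = \sigma - 1$ is \smth. For $n=1$ there is nothing to prove, since $\bmdl = 0$.

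Take $e = \algbss_{11}$. We first check \eqref{eq:cornerax}: $\alg e$ is spanned by the $\algbss_{i1}$ with $(i,1)\in\YY_\sigma$, which forces $i=1$, so $\alg e = \kk e$. Likewise $\bmdl e$ is spanned by the $\bmdlbss_{i1}$ with $(i,1) \in \XX_\sigma$, and there are none. With $e' = 1-e$, the pair $(\alg',\bmdl')$ of \eqref{eq:cornerpieces} is the pair attached to $\sigma'$, because $e'\alg e'$ and $e'\bmdl$ are spanned by the basis vectors with indices in $\{2,\dots,n\}$, with the same multiplication table. The block $e\bmdl$ has basis $\bmdlbss_{1k}$ for $k\in\win(1)$. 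By \Cref{lem:top} and \eqref{eq:gooddegree}, $e\bmdl/e\rbm(\bmdl)$ has dimension at most $1$: it is $1$-dimensional, spanned by $\bmdlbss_{1w_1}$, if $\win(1)\neq\varnothing$, and it is $0$ otherwise. In the latter case $e\bmdl = 0$ as well, and the claim reduces at once to $\sigma'$. So assume $\win(1) = \{w_1<\dots<w_r\}$ is nonempty and decreasing in value.

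By induction, $G'$ acts transitively on $(\bmdl')^{\gen}$. Given $\lambda,\lambda'\in\bmdl^{\gen}$, \eqref{eq:gensplit} gives $p(\lambda),p(\lambda') \in (\bmdl')^{\gen}$, so we can find $g'\in G'$ carrying one to the other. The element $e+g'\in G$ then carries $\lambda$ into the fiber $p^{-1}(p(\lambda'))\cap\bmdl^{\gen}$, using that $p$ is equivariant along $\pi$ and that $\bmdl^{\gen}$ is $G$-stable. By \Cref{lem:cornersplit}\labelcref{cs:transitive} it then suffices to show that $\mathcal N\lambda = e\rbm(\bmdl)$ for every $\lambda\in\bmdl^{\gen}$. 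Here $e\rbm(\bmdl)$ is spanned by the $\bmdlbss_{1w_k}$ with $k\ge2$, and $\mathcal N$ is spanned by the $\algbss_{1j}$ with $j\in\win(1)$. Now $\algbss_{1w_{k-1}}\lambda = \sum_m \lambda_{w_{k-1}m}\bmdlbss_{1m}$, where $m$ ranges over $\win(1)$ beyond $w_{k-1}$, that is, over $w_k,\dots,w_r$, because the window is decreasing. The coefficient of $\bmdlbss_{1w_k}$ is $\lambda_{w_{k-1}w_k}$, and $(w_{k-1},w_k)$ is a minimal inversion: any witness against it would be a point of the window lying between $w_{k-1}$ and $w_k$ in position, and there is none. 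Hence this coefficient is nonzero for $\lambda \in \bmdl^{\gen}$. The matrix expressing $\algbss_{1w_1},\dots,\algbss_{1w_{r-1}}$ applied to $\lambda$ in terms of $\bmdlbss_{1w_2},\dots,\bmdlbss_{1w_r}$ is therefore triangular with nonzero diagonal, which proves the equality.

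Finally, the transitive orbit $\bmdl^{\gen}$ is nonempty and open, so the orbit map is dominant and, by separability, $\Sop^\sigma_\lambda$ is surjective for every $\lambda\in\bmdl^{\gen}$. The main thing to verify with care is that the reduction to the \Wcr corner through the Klein symmetries is legitimate. These symmetries reverse pairs (\Cref{cor:relinv}, second half), so they correspond to passing to the opposite algebra and bimodule; transitivity of $G$ and the generator set are preserved under that identification, since $\alg^{\op}$ has the same unit group up to inversion.
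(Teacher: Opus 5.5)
Your proposal is correct and follows essentially the same route as the paper: reduce by symmetry to a \good \Wcr corner, split off $e=\algbss_{11}$ with \Cref{lem:cornersplit}, lift transitivity from $G'$ on $(\bmdl')^{\gen}$, and establish $\mathcal N\lambda=e\rbm(\bmdl)$ through the triangular system whose diagonal entries $\lambda_{w_{k-1}w_k}$ sit on minimal inversions. The differences are cosmetic. You get $\dim e\bmdl/e\rbm(\bmdl)=1$ directly from \Cref{lem:top} and \eqref{eq:gooddegree}, and minimality of $(w_{k-1},w_k)$ from the absence of witnesses, whereas the paper uses consecutive values. You also spell out the pair-reversing symmetry more explicitly than the paper does.
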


\begin{proof}
We use induction on $|\sigma|$; the case $\XX_\sigma = \emptyset$ is trivial.

By \Cref{prop:smoothrec} and symmetry we may assume that the \Wcr corner
$\crn$ of $\sigma$ is \good and that $\tau = \sigma - \crn$ is \smth. Put
$a = \sigma_1$, let $j_k$ be given by $\sigma_{j_k} = k$, so that $j_a = 1$,
and put $u_k = (1,j_k)$ for $1 \le k < a$; these are the inversions of
$\sigma$ not inherited from $\tau$. Put $e = \algbss_{11}$ and $e' = 1-e$. Since
$(i,1) \in \YY_\sigma$ only for $i=1$, and $(i,1) \notin \XX_\sigma$,
\begin{equation}\label{eq:Wsplit}
   \alg e = \kk e, \qquad \bmdl e = 0 .
\end{equation}

Thus $e$ satisfies the axioms \eqref{eq:cornerax} and we may apply
\Cref{lem:cornersplit}. The quotient pair $(\alg',\bmdl')$ of
\eqref{eq:cornerpieces} is the pair attached to $\tau$, so that the
proposition for $\tau$ asserts that $G'$ is transitive on
$(\bmdl')^{\gen}$. If $a = 1$ then
$e\bmdl = 0$, so $p$ is bijective and we are done by induction; assume
therefore $a>1$. It remains to check the two hypotheses of
\Cref{lem:cornersplit}\labelcref{cs:transitive}, namely that $\dim
e\bmdl/e\rbm(\bmdl) = 1$ and that $\mathcal N\lambda = e\rbm(\bmdl)$.

Since $\sigma_1 = a$, the pairs in $\XX_\sigma$
with first entry $1$ are exactly the $u_k$, $1 \le k < a$; as $\YY_\sigma
= \XX_\sigma\sqcup\DD_\sigma$ by \eqref{eq:dictUV}, both $e\bmdl$ and $\mathcal N =
e\alg e'$ have bases $\bmdlbss_{u_k}$, resp.\ $\algbss_{u_k}$, $1 \le k <
a$. \Goodness of $\crn$ makes the points $j_a,\dots,j_1$ a decreasing
chain, so that $(j_k,j_l) \in \XX_\sigma$ precisely for $l<k$, and
\begin{equation}\label{eq:Ztri}
   \algbss_{u_k}\lambda = \sum_{l<k}\lambda_{j_kj_l}\,\bmdlbss_{u_l} .
\end{equation}
Fix $\lambda \in \bmdl^{\gen}$. The inversions $(j_k,j_{k-1})$ carry
consecutive values, hence are minimal, so that $\lambda_{j_kj_{k-1}} \ne
0$; by \eqref{eq:Ztri} the element $\algbss_{u_{i+1}}\lambda$ is
$\lambda_{j_{i+1}j_i}\bmdlbss_{u_i}$ modulo $\Span\{\bmdlbss_{u_l} :
l<i\}$, so that $\bmdlbss_{u_i} \in \mathcal N\lambda$ for every $i \le a-2$, by
induction on $i$.
Thus $\mathcal N\lambda$ contains the hyperplane $\Span\{\bmdlbss_{u_l} : l \le
a-2\}$ of $e\bmdl$. On the other hand $\mathcal N = e\mathcal N \subseteq e\rad(\alg)$ gives
$\mathcal N\lambda \subseteq e\rbm(\bmdl)$, while $e\rbm(\bmdl) \ne e\bmdl$, since
$u_{a-1}$ carries the consecutive values $a,a-1$, hence is minimal, so
that $\bmdlbss_{u_{a-1}} \notin \rbm(\bmdl)$ by \Cref{lem:top}. Therefore
$\mathcal N\lambda = e\rbm(\bmdl)$ is that hyperplane: both hypotheses hold.

Let $\lambda,\lambda' \in \bmdl^{\gen}$. By
induction $G'$ is transitive on $(\bmdl')^{\gen}$, and $G \to G'$ is
onto, so after replacing $\lambda'$ by a $G$-translate --- which stays in
$\bmdl^{\gen}$ --- we may assume $p(\lambda') = p(\lambda)$;
then $\lambda'\in H\cdot\lambda$ by
\Cref{lem:cornersplit}\labelcref{cs:transitive}.
\end{proof}

\begin{remark}[the Borel picture -- cf.\ \cite{MR3866895}*{Remark 4.11 and Example 4.12}]\label{rem:borel}
For $\sigma = \wo$ we have $\alg = \mathfrak{b}$, the upper triangular
matrices, $G = B$, $\bmdl = \mathfrak{n}$ its nilpotent radical, and
$\rbm(\bmdl) = [\mathfrak{n},\mathfrak{n}]$ is spanned by the root
vectors above the first superdiagonal, so that $\Cov(\wo) =
\{(i,i+1)\}$: the generators of the bimodule are the regular nilpotent
elements of $\mathfrak{n}$, and the proposition is the classical fact that
they form a dense $B$-orbit.
\end{remark}

\begin{theorem}[\smth $=$ rigid]\label{thm:smoothrigid}
A permutation is rigid if and only if it is \smth.
In this case, we can choose over any field a parameter $\lambda$ with coordinates in $\{0,1\}$
(or even the all-ones parameter) such that $\Sop^\sigma_\lambda$ is surjective. 
\end{theorem}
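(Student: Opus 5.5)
The plan is to prove the two implications separately, and then extract the $\{0,1\}$-parameter refinement from the proof of \Cref{prop:covparam}.

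For \smth $\Rightarrow$ rigid, \Cref{prop:covparam} already shows that $G$ acts transitively on $\bmdl^{\gen}$. It also shows that $\Sop^\sigma_\lambda$ is surjective for every $\lambda\in\bmdl^{\gen}$. By \eqref{eq:Dcirc}, the all-ones parameter, and more generally any $\lambda$ with $\lambda_{ab}=1$ on $\Cov(\sigma)$ and arbitrary $0/1$ coordinates elsewhere, lies in $\bmdl^{\gen}$. Hence $\Sop^\sigma_\lambda$ is surjective over $\kk$.

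To pass to an arbitrary field $\kk_0$, observe that the matrix of $\Sop^\sigma_\lambda$ at such $\lambda$ has integer entries. Its rank over a field depends only on the characteristic. Since \Cref{prop:covparam} holds over the algebraic closure of every prime field, full row rank holds in every characteristic, hence over every field. Alternatively, the proof of \Cref{prop:covparam} is itself field-independent: the inductive splitting of \Cref{lem:cornersplit} only uses that $\lambda_{j_kj_{k-1}}\neq0$ and triangularity, as in \eqref{eq:Ztri}. Unwinding that induction therefore gives an explicit triangular surjectivity certificate with unit pivots. I would phrase it this way to avoid any appeal to algebraic closure.

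For rigid $\Rightarrow$ \smth, argue by contraposition. If $\sigma$ is not \smth, it contains $3412$ or $4231$. By \Cref{thm:hered} (via \Cref{thm:esi}, valid over any field by \Cref{ssec:arbesi}), it suffices to show that $\mm_{3412}$ and $\mm_{4231}$ are not rigid.

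For $3412$, the inversions are $(1,3),(1,4),(2,3),(2,4)$, all of which are minimal. So $\cgr_\sigma$ is a four-cycle and $\cyc=1$.

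For $4231$, the inversions are $(1,2),(1,3),(1,4),(2,4),(3,4)$. Of these, $(1,4)$ has witnesses while the other four are minimal, so again $\cgr_\sigma$ is a four-cycle.

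In both cases \eqref{count:sing} of \Cref{lem:countmm} gives $\cork\Sop_\lambda\ge1$ for every $\lambda$, so neither permutation is rigid.

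The only delicate point is the field-independence in the first implication. I expect to handle it by the integrality/characteristic argument above, or by reading off the triangular certificate from the induction. The rest is a direct assembly of earlier results.
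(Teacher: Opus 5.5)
Your proposal is correct and follows the paper's proof essentially step for step. Smooth $\Rightarrow$ rigid comes from \Cref{prop:covparam}, with the all-ones parameter lying in $\bmdl^{\gen}$ by \eqref{eq:Dcirc}. Descent to an arbitrary field works because the matrix is defined over the prime field, so surjectivity over an algebraic closure gives surjectivity over $\kk_0$. Rigid $\Rightarrow$ smooth comes from \Cref{thm:hered} together with the four-cycle cover graphs of $3412$ and $4231$ and \eqref{count:sing}.

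Your optional alternative also works but is not needed. It unwinds the corner-splitting induction into a triangular certificate with unit pivots: $\kk e\oplus\mathcal N$ maps onto $e\bmdl$, and $\alg'$ surjects onto $\bmdl'$ modulo $e\bmdl$.
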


\begin{proof}
A \smth permutation is rigid by \Cref{prop:covparam}. For the second assertion,
let $\kk_0$ be any field and let $\lambda$ be the parameter all of whose coordinates equal $1$. It lies in
$\bmdl^{\gen}$ by \eqref{eq:Dcirc}, so $\Sop^\sigma_\lambda$ is surjective over
an algebraic closure of $\kk_0$ by \Cref{prop:covparam}, hence over $\kk_0$ itself.

It remains to see that a rigid permutation is \smth. A permutation that is
not \smth contains $3412$ or $4231$, so by \Cref{thm:hered} it suffices
that these two be not rigid. We have
\begin{align*}
   \Cov(3412) &= \{(1,3),(1,4),(2,3),(2,4)\},\\
   \Cov(4231) &= \{(1,2),(1,3),(2,4),(3,4)\} ,
\end{align*}
forming a single cycle in the cover graph, so that $\cyc(\cgr_\sigma) = 1$ for these $\sigma$'s
and \Cref{lem:countmm} gives $\dfr_\rsub \ge1$.
\end{proof}

\subsection{\Psm permutations}\label{ssec:PA}

\emph{Plateaus, quotients, inflations.} A \emph{plateau pair} is a pair
of adjacent positions carrying consecutive decreasing values, that is
$\sigma_{i+1} = \sigma_i - 1$; a \emph{plateau} is a maximal run of
consecutive positions carrying consecutive decreasing values, and
$\sigma$ is \emph{plateau-free} if every plateau is a single point. A
plateau occupies simultaneously an interval of positions and an interval
of values. Writing $D_\ell$ for the decreasing permutation of length
$\ell$, the (descending-interval) \emph{inflation}
$q[\ell] = q[D_{\ell_1},\dots,D_{\ell_k}]$ of $q \in S_k$ by
$\ell=(\ell_1,\dots,\ell_k)$ with $\ell_i \ge 1$ for all $i$ replaces the
$i$-th entry of $q$ by a block of $\ell_i$ consecutive values in
decreasing order, the blocks being placed left to right and their
value-intervals ordered according to $q$. Contracting each plateau of $\sigma$ to a
point gives the \emph{quotient} $q(\sigma)$, which is plateau-free,
and $\sigma$ is the inflation of $q(\sigma)$ by the plateau lengths.
This is the unique way to write $\sigma$ as an inflation of a plateau-free permutation.

The following properties are standard.
\begin{lemma} \label{lem:eleminf}
Let $\sigma=\tau[\ell]$.
\begin{enumerate}
\item\label{part:cutinf} The cuts of $\sigma$ are the images of the cuts
of $\tau$; in particular $\sigma$ is indecomposable if and only if $\tau$
is, and if $\tau = \alpha\oplus\beta$ then $\sigma$ is the direct sum of
inflations of $\alpha$ and $\beta$.
\item A corner of $\sigma$ is an endpoint of the block of the corresponding corner of $\tau$ --- the first
point of the block for the \Wcr and \Ncr corners, the last
point for the \Scr and \Ecr corners --- and it is \good if and
only if the corresponding corner of $\tau$ is.
In particular \cbness is invariant under descending-interval inflation.
\item An inflation of $\sigma$ is also an inflation of $\tau$.
\item \label{part:deleteinf} If $c$ is a point in $\sigma$, in the block $d$ of $\tau$, then $\sigma-c$ is either an inflation
of $\tau$ with $\ell_d$ decreased by one if $\ell_d>1$, or the inflation of $\tau-d$ if $\ell_d=1$.
\item\label{part:covinf} The minimal inversions of $\sigma$ are of two
kinds: the pairs of adjacent positions inside a single block, which make
each block span a path of $\cgr_\sigma$, and, for each minimal inversion
of $\tau$, the single pair joining the last point of the earlier block to
the first point of the later one. Thus $\cgr_\sigma$ is obtained from
$\cgr_\tau$ by expanding each vertex $i$ into a path on $\ell_i$
vertices, namely its block, and reattaching the edges formerly at $i$ to
the two ends of that path: those leading to a later block at its last
vertex, those leading to an earlier one at its first, the two ends
coinciding when $\ell_i = 1$. 
Conversely, $\cgr_\tau$ is obtained from $\cgr_\sigma$ by contracting each block path to a vertex.
In particular,
\begin{equation}\label{eq:covinf}
   |\Cov(\sigma)| = |\Cov(\tau)| + \bigl(|\sigma| - |\tau|\bigr) ,
   \qquad
   \cyc(\cgr_\sigma) = \cyc(\cgr_\tau) .
\end{equation}
\end{enumerate}
\end{lemma}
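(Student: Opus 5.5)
The plan is to set up explicit coordinates for the inflation and read off each of the five parts from them. Write $\sigma=\tau[\ell]$ with $\tau\in S_k$. For each $d\le k$ let $B_d$ be the block of positions of $\sigma$ replacing the entry $d$ of $\tau$. The blocks $B_1,\dots,B_k$ are consecutive intervals of positions in this order. The value sets $V_d=\sigma(B_d)$ are intervals of values ordered according to $\tau$. Inside each block the values decrease by one per step. The basic observation, used throughout, is the following: for points $x\in B_d$ and $y\in B_{d'}$ with $d\ne d'$, the relative order of $x,y$ in position and in value is that of $d,d'$ in $\tau$. Hence $(x,y)$ is an inversion of $\sigma$ iff $(d,d')$ is an inversion of $\tau$, and every pair inside one block is an inversion.

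Parts (1)–(3) should follow from this quickly.

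For (1), a cut $t$ of $\sigma$ must be a union of initial blocks. If $t$ splits a block $B_d$, then the prefix contains the larger values of $V_d$ but not the smaller ones, so it is not an initial segment of values. Conversely, unions of initial blocks are cuts of $\sigma$ exactly when the corresponding index set is a cut of $\tau$. The statement about $\tau=\alpha\oplus\beta$ then follows by restricting $\ell$.

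For (2), the \Wcr corner is the first point of $B_{\tau\text{'s }W}$, since $B_1$ comes first. The \Ncr corner is the first point of the block carrying the top value interval, since that block is decreasing. The \Scr and \Ecr corners are handled the same way, or by applying the Klein four-group. For goodness, the window of the \Wcr corner of $\sigma$ consists of the rest of its own block, which is decreasing, followed by the inflations of the blocks in the window of the \Wcr corner of $\tau$. This union is decreasing iff the window in $\tau$ is decreasing, because any ascent of $\sigma$ must occur between distinct blocks.

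For (3), inflating a descending block of $\tau[\ell]$ again gives a descending block, which is a composition of inflations.

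For (4), deleting $c\in B_d$ shortens $B_d$ by one. If $\ell_d=1$, the block disappears, and we obtain the inflation of $\tau-d$ with the remaining lengths.

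Part (5) is the only step requiring care. The plan is to classify the witnesses for each inversion $(x,y)$.

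If $x,y$ lie in the same block, any point strictly between them in position lies in that block and has intermediate value, so it is a witness. Hence $(x,y)$ is minimal iff $x,y$ are adjacent.

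If $x\in B_d$ and $y\in B_{d'}$ with $d<d'$ and $(d,d')$ an inversion of $\tau$, two kinds of witnesses can occur. Any other point of $B_d$ after $x$ is a witness, and so is any point of $B_{d'}$ before $y$. So minimality forces $x$ to be the last point of $B_d$ and $y$ the first point of $B_{d'}$. After that, a witness must lie in some third block $B_{d''}$, and by the basic observation this happens iff $d''$ is a witness for $(d,d')$ in $\tau$.

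This gives the description of $\Cov(\sigma)$ and the path-expansion of $\cgr_\tau$. Counting then gives the formulas: the block paths contribute $\sum_d(\ell_d-1)=|\sigma|-|\tau|$ edges. Each path adds $\ell_d-1$ vertices and $\ell_d-1$ edges while keeping connectivity, so contracting each path to a vertex preserves the cycle rank. This yields \eqref{eq:covinf}.
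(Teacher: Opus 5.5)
Your proof is correct. The paper gives no argument for this lemma (it calls the properties standard), and reducing everything to the observation that cross-block pairs inherit their relative order from $\tau$ while intra-block pairs are inversions is exactly the routine verification intended. The one point worth writing out is that the symmetries respect inflation: $\rc(\tau[\ell])=(\rc\tau)[\ell_k,\dots,\ell_1]$ and $(\tau[\ell])^{-1}=\tau^{-1}[\ell_{\tau^{-1}(1)},\dots,\ell_{\tau^{-1}(k)}]$, with first and last points of blocks interchanged. This is what justifies transferring your \Wcr-corner analysis to the other three corners.
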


\begin{definition}[the eight tower quotients]\label{def:towers}
The \emph{tower quotients} are the eight involutions
\begin{equation}\label{eq:eight}
   \{3412\},\quad \{4231\},\quad \{35142,\,42513\},\quad \{45312\},\quad
   \{426153\},\quad \{463152,\,526413\},
\end{equation}
grouped into their orbits under $\rc$. A \emph{tower} is an inflation $q[\ell]$ of one of these eight.
\end{definition}

Each of the eight quotients is indecomposable and \cb, hence so is every tower, by \Cref{lem:eleminf}.
They are also plateau-free, so that the quotient $q(t)$ of a tower $t$ is one of the eight and
the presentation $t=q(t)[\ell]$ is unique.
Their cover graphs are drawn in \Cref{fig:towers}.

\begin{figure}[!htbp]
\centering
\begin{tabular}{@{}cccc@{}}
\begin{tikzpicture}[scale=0.345,baseline=(current bounding box.center)]
\draw[gray!25] (0.35,0.35) rectangle (4.65,4.65);
\draw[covedge] (1,3)--(3,1);
\draw[covedge] (1,3)--(4,2);
\draw[covedge] (2,4)--(3,1);
\draw[covedge] (2,4)--(4,2);
\node[permdot] at (1,3) {};
\node[permdot] at (2,4) {};
\node[permdot] at (3,1) {};
\node[permdot] at (4,2) {};
\end{tikzpicture} &
\begin{tikzpicture}[scale=0.345,baseline=(current bounding box.center)]
\draw[gray!25] (0.35,0.35) rectangle (4.65,4.65);
\draw[covedge] (1,4)--(2,2);
\draw[covedge] (1,4)--(3,3);
\draw[covedge] (2,2)--(4,1);
\draw[covedge] (3,3)--(4,1);
\node[permdot] at (1,4) {};
\node[permdot] at (2,2) {};
\node[permdot] at (3,3) {};
\node[permdot] at (4,1) {};
\end{tikzpicture} &
\begin{tikzpicture}[scale=0.345,baseline=(current bounding box.center)]
\draw[gray!25] (0.35,0.35) rectangle (5.65,5.65);
\draw[covedge] (1,3)--(3,1);
\draw[covedge] (1,3)--(5,2);
\draw[covedge] (2,5)--(3,1);
\draw[covedge] (2,5)--(4,4);
\draw[covedge] (4,4)--(5,2);
\node[permdot] at (1,3) {};
\node[permdot] at (2,5) {};
\node[permdot] at (3,1) {};
\node[permdot] at (4,4) {};
\node[permdot] at (5,2) {};
\end{tikzpicture} &
\begin{tikzpicture}[scale=0.345,baseline=(current bounding box.center)]
\draw[gray!25] (0.35,0.35) rectangle (5.65,5.65);
\draw[covedge] (1,4)--(2,2);
\draw[covedge] (1,4)--(5,3);
\draw[covedge] (2,2)--(4,1);
\draw[covedge] (3,5)--(4,1);
\draw[covedge] (3,5)--(5,3);
\node[permdot] at (1,4) {};
\node[permdot] at (2,2) {};
\node[permdot] at (3,5) {};
\node[permdot] at (4,1) {};
\node[permdot] at (5,3) {};
\end{tikzpicture}\\
$3412$ & $4231$ & $35142$ & $42513$\\[10pt]
\begin{tikzpicture}[scale=0.345,baseline=(current bounding box.center)]
\draw[gray!25] (0.35,0.35) rectangle (5.65,5.65);
\draw[covedge] (1,4)--(3,3);
\draw[covedge] (2,5)--(3,3);
\draw[covedge] (3,3)--(4,1);
\draw[covedge] (3,3)--(5,2);
\node[permdot] at (1,4) {};
\node[permdot] at (2,5) {};
\node[permdot] at (3,3) {};
\node[permdot] at (4,1) {};
\node[permdot] at (5,2) {};
\end{tikzpicture} &
\begin{tikzpicture}[scale=0.345,baseline=(current bounding box.center)]
\draw[gray!25] (0.35,0.35) rectangle (6.65,6.65);
\draw[covedge] (1,4)--(2,2);
\draw[covedge] (1,4)--(6,3);
\draw[covedge] (2,2)--(4,1);
\draw[covedge] (3,6)--(4,1);
\draw[covedge] (3,6)--(5,5);
\draw[covedge] (5,5)--(6,3);
\node[permdot] at (1,4) {};
\node[permdot] at (2,2) {};
\node[permdot] at (3,6) {};
\node[permdot] at (4,1) {};
\node[permdot] at (5,5) {};
\node[permdot] at (6,3) {};
\end{tikzpicture} &
\begin{tikzpicture}[scale=0.345,baseline=(current bounding box.center)]
\draw[gray!25] (0.35,0.35) rectangle (6.65,6.65);
\draw[covedge] (1,4)--(3,3);
\draw[covedge] (2,6)--(3,3);
\draw[covedge] (2,6)--(5,5);
\draw[covedge] (3,3)--(4,1);
\draw[covedge] (3,3)--(6,2);
\draw[covedge] (5,5)--(6,2);
\node[permdot] at (1,4) {};
\node[permdot] at (2,6) {};
\node[permdot] at (3,3) {};
\node[permdot] at (4,1) {};
\node[permdot] at (5,5) {};
\node[permdot] at (6,2) {};
\end{tikzpicture} &
\begin{tikzpicture}[scale=0.345,baseline=(current bounding box.center)]
\draw[gray!25] (0.35,0.35) rectangle (6.65,6.65);
\draw[covedge] (1,5)--(2,2);
\draw[covedge] (1,5)--(4,4);
\draw[covedge] (2,2)--(5,1);
\draw[covedge] (3,6)--(4,4);
\draw[covedge] (4,4)--(5,1);
\draw[covedge] (4,4)--(6,3);
\node[permdot] at (1,5) {};
\node[permdot] at (2,2) {};
\node[permdot] at (3,6) {};
\node[permdot] at (4,4) {};
\node[permdot] at (5,1) {};
\node[permdot] at (6,3) {};
\end{tikzpicture}\\
$45312$ & $426153$ & $463152$ & $526413$
\end{tabular}
\caption{The eight tower quotients \eqref{eq:eight}. Each is drawn as the plot
of its points --- positions to the east, values to the north --- with the
minimal inversions of \Cref{def:covmm} joined by segments, so that the picture
is the cover graph $\cgr_\tau$ laid over the plot. In the order of
\eqref{eq:eight} the edge counts are $4,4,5,5,4,6,6,6$ against $4,4,5,5,5,6,6,6$
vertices, and each graph is connected; so each $\cgr_\tau$ is a pseudotree.
Inflating an entry expands its vertex into a path
(\Cref{lem:eleminf}\labelcref{part:covinf}), changing neither the components nor
the cycle rank, whence \Cref{prop:PApf} for towers.}
\label{fig:towers}
\end{figure}

Now we define the class of \psm permutations recursively.
\begin{definition}[\psm permutations]\label{def:PA}
The identity permutation in $S_1$ is \psm.
An indecomposable $\sigma$ with $|\sigma| > 1$ is \psm if it is either a tower or has a \good corner $\crn$ with $\sigma - \crn$ \psm.
(See \Cref{lem:cornerindec}.)

In general, a permutation is \psm if all its indecomposable summands are;
in particular the empty permutation is \psm. 
\end{definition}
We write $\PA$ for the class of \psm permutations.

The terminology is motivated by \Cref{thm:smoothrigid}: the rigid permutations are exactly the \smth
ones, while by \Cref{thm:mainA} the \prig ones are exactly the members of
$\PA$; so \psm stands to \smth as \prig stands to rigid.

\begin{remark} \label{rem:psm45}
Every permutation of size $\le4$ is \psm. (The only non-smooth ones are $4231$ and $3412$.)
Of the 32 non-smooth permutations in $S_5$ the ones which are not \psm are $\{34512,\,45123\}, \{45231,\,53412\}, \{52341\}$.
In particular, any permutation contained in one of the eight tower quotients is \psm.
\end{remark}

The \Wcr and \Scr corners of $\sigma$ lie in its first indecomposable
summand and the \Ncr and \Ecr corners in its last, and in each case the
window is contained in that summand; so a \good corner of $\sigma$ is a
corner of the same type, with the same window, of the indecomposable
summand $\gamma$ of $\sigma$ containing it, and it is \good there. By
\Cref{lem:cornerindec} the indecomposable summands of $\sigma - \crn$ are
those of $\sigma$ with $\gamma$ replaced by $\gamma - \crn$ (or removed, if $|\gamma|=1$). Hence
\begin{equation}\label{eq:goodloc}
\text{if $\crn$ is a \good corner of $\sigma$ and $\sigma-\crn \in \PA$,
then $\sigma \in \PA$.}
\end{equation}
\begin{equation}\label{eq:PAhered}
\text{The class $\PA$ is closed under inflation and pattern containment.}
\end{equation}
For inflation this follows by induction using \Cref{lem:eleminf}.
For pattern containment, it suffices to show that $\sigma - d \in \PA$ for $\sigma \in \PA$ and
every point $d$; we induct on $|\sigma|$ and write $\tau = \sigma - d$.
The summands of $\tau$ are those of $\sigma$ with the one containing $d$,
say $\gamma$, replaced by the summands of $\gamma - d$; so we may assume
that $\sigma$ is indecomposable, and that $|\sigma| > 1$.

If $\sigma$ has a \good corner $\crn$ with $\sigma - \crn \in \PA$: for $d = \crn$ there
is nothing to prove. Otherwise, $\crn$ is still the corresponding corner of $\tau$, and
it is still \good since its window there is its window in $\sigma$ with $d$ removed.
Moreover $\tau - \crn = (\sigma - \crn) - d \in \PA$ by induction, so that
$\tau\in\PA$ by \eqref{eq:goodloc}.

If $\sigma$ is a tower, say $\sigma=q[\ell]$, then by \Cref{lem:eleminf}\labelcref{part:deleteinf} $\tau$ is either an
inflation of $q$ or an inflation of $q-d'$ for some point $d'$ of $q$. In the former case $\tau$ is
again a tower, hence in $\PA$. In the latter case $q-d'$ is contained in $q$ and
therefore lies in $\PA$ by \Cref{rem:psm45}, so that $\tau \in\PA$ by the closure
of $\PA$ under inflation.

Consequently the recursion of \Cref{def:PA} involves no choice, clarifying
a possible ambiguity in the definition:
\begin{equation}\label{eq:nochoice}
\text{if $\crn$ is a \good corner of $\sigma$, then $\sigma\in\PA$ if and
only if $\sigma - \crn \in \PA$.}
\end{equation}
Indeed, one direction is \eqref{eq:goodloc} and the other is
\eqref{eq:PAhered}.

\begin{remark}[deciding \psmness]\label{rem:algorithm}
This gives rise to an efficient algorithm to decide \psmness:
\eqref{eq:nochoice} turns \Cref{def:PA} into a deterministic procedure.
First we reduce to the indecomposable case by splitting $\sigma$ into its indecomposable summands.
In the indecomposable case, we either delete a \good corner $\crn$ if there is one, applying the procedure
for $\sigma-\crn$ --- which is again indecomposable, by
\Cref{lem:cornerindec}, so that no further splitting is needed --- or if
there is none test whether $\sigma$ is a tower.
It decides membership in $\PA$ in $O(n^2)$ steps, since there are at most $n$ deletions and each is
surrounded by $O(n)$ work --- finding the summands from the prefix maxima,
locating the four corners, testing that their windows are decreasing and,
in the terminal case, contracting the plateaus and comparing the quotient
with \eqref{eq:eight}.
\end{remark}

The recursion also bounds the cover graph, by a purely combinatorial argument.

\begin{proposition}[the cover graph of a \psm permutation]\label{prop:PApf}
Let $\sigma \in \PA$. Then $\cgr_\sigma$ is a pseudoforest: every
connected component of it contains at most one cycle.
\end{proposition}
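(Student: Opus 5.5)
We argue by induction on $|\sigma|$, following the recursion of \Cref{def:PA}. The empty permutation and $1\in S_1$ have a cover graph with no edges, which is trivially a pseudoforest.

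The first reduction is to indecomposable $\sigma$. If $\sigma=\alpha\oplus\beta$ with both summands nonempty, then no pair of positions with one in $\alpha$ and one in $\beta$ is an inversion: every value of the $\beta$-block exceeds every value of the $\alpha$-block. For a pair inside one summand, any witness lies between the two points in position and value, so it lies in the same summand. Hence $\Cov(\sigma)$ is the disjoint union of the (shifted) minimal inversions of the summands, and $\cgr_\sigma$ is the disjoint union of the graphs $\cgr_\alpha$ and $\cgr_\beta$. A disjoint union of pseudoforests is a pseudoforest. Since all indecomposable summands of a \psm permutation are \psm and strictly smaller whenever $\sigma$ is decomposable, it suffices to treat indecomposable $\sigma\in\PA$ with $|\sigma|>1$.

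For indecomposable $\sigma$ there are two cases.

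\emph{The good-corner case.} Suppose $\sigma$ has a \good corner $\crn$ with $\sigma-\crn\in\PA$. By induction $\cgr_{\sigma-\crn}$ is a pseudoforest. By \eqref{eq:inhrtcrn} we have $\cgr_{\sigma-\crn}=\cgr_\sigma-\crn$, and by \eqref{eq:gooddegree} the vertex $\crn$ has degree at most one in $\cgr_\sigma$. So $\cgr_\sigma$ arises from a pseudoforest by adding either an isolated vertex or a leaf. Neither operation changes the cycle rank of any component: attaching a leaf adds one vertex and one edge within an existing component. Hence $\cgr_\sigma$ is again a pseudoforest.

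\emph{The tower case.} Suppose $\sigma=q[\ell]$ is a tower. By \Cref{lem:eleminf}\labelcref{part:covinf}, $\cgr_\sigma$ is obtained from $\cgr_q$ by expanding each vertex into a path and reattaching its edges at the ends. Contracting these paths back recovers $\cgr_q$, and the correspondence between components is bijective. Within each component, inflating entry $i$ adds $\ell_i-1$ vertices and $\ell_i-1$ edges, so the cycle rank is unchanged component by component, consistent with \eqref{eq:covinf}. It therefore remains to check the eight quotients in \eqref{eq:eight}. This is a finite verification: list the minimal inversions (as recorded in \Cref{fig:towers}) and observe that each graph is connected with edge count at most its vertex count. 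That makes each a pseudotree of cycle rank at most one. By the symmetry under $\rc$, which preserves cover graphs up to isomorphism, only six of the eight need to be checked.

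The only step requiring real care is the component-wise invariance of the cycle rank under inflation in the tower case. One must confirm that the path-expansion never merges or splits components and adds exactly as many edges as vertices inside each one. This follows directly from the explicit description in \Cref{lem:eleminf}\labelcref{part:covinf}, after which the remaining work is the small computation for the eight quotients.
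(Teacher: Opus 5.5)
Your proposal is correct and follows the paper's proof essentially step for step. Like the paper, you reduce to indecomposable $\sigma$ via the disjoint union of cover graphs, use \eqref{eq:gooddegree} and \eqref{eq:inhrtcrn} to add an isolated vertex or a leaf in the \good-corner case, and handle towers by the component- and cycle-rank-preserving path expansion of \Cref{lem:eleminf}\labelcref{part:covinf} together with a check of the eight quotients.
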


\begin{proof}[Proof of \Cref{prop:PApf}]
If $\sigma = \gamma_1 \oplus \dots \oplus \gamma_k$ then the graph $\cgr_\sigma$ is the disjoint union of the
$\cgr_{\gamma_i}$. Therefore, we may assume that $\sigma$ is indecomposable.

If $\sigma$ is a tower, say $\sigma = q[\ell]$ with $q$ one of the eight
quotients of \Cref{def:towers}, then by
\Cref{lem:eleminf}\labelcref{part:covinf} the graph $\cgr_\sigma$ is a
blow-up of $\cgr_q$ that changes neither the components nor the cycle
rank; so it is enough that each of the eight $\cgr_q$ be a pseudotree,
and indeed each is connected with at most as many edges as vertices,
having, in the order of \eqref{eq:eight} (see \Cref{fig:towers}), respectively $4,4,5,5,4,6,6,6$ minimal
inversions against sizes $4,4,5,5,5,6,6,6$.

Otherwise either $|\sigma|=1$ or $\sigma$ has a \good corner $\crn$ with $\tau = \sigma - \crn
\in \PA$. By \eqref{eq:gooddegree} the graph $\cgr_\sigma$ is
$\cgr_\tau$ with a new vertex added, isolated or a leaf, which creates no
cycle. We can therefore argue by induction.
\end{proof}

By \Cref{lem:countmm} this is the combinatorial shadow of \prigity, and
\Cref{thm:mainA} below makes the two agree; but the proposition is a
statement about permutations alone, proved from \Cref{def:PA} alone.
\Cref{thm:covercount} is its converse at the minimal obstructions (see \Cref{rem:PApattern}).

\subsection{The characterization}\label{ssec:mainthm}

\begin{theorem}\label{thm:mainA}
Let $\sigma \in S_n$. Then $\sigma$ is \prig if and only if $\sigma$ is \psm.
In this case, we can choose over any field parameters $\lambda,\mu$ with coordinates in $\{0,1\}$ such that
$\Sop^\sigma_{\lambda,\mu}$ is surjective.
\end{theorem}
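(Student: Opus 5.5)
Both directions are handled separately, following the outline of the introduction.

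\emph{Forward direction ($\PA\Rightarrow$ \prig).} We induct along the recursion of \Cref{def:PA}. For decomposable $\sigma=\gamma_1\oplus\gamma_2$ there are no inversions between summands. Hence $\XX_\sigma$ is the disjoint union of the $\XX_{\gamma_i}$, and the columns $\algbss_{ij}$ with $i,j$ in different summands are absent. So $\Sop^\sigma_{\lambda,\mu}$ is block diagonal, and surjectivity holds summand by summand.

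For a \good corner, by symmetry (\eqref{quot:klein}, \eqref{eq:colorsym}) take $\crn$ to be the \Wcr corner, and split off $e=\algbss_{11}$ as in \Cref{lem:cornersplit} and the proof of \Cref{prop:covparam}. The rows of $\Sop^\sigma_{\lambda,\mu}$ split into those of $\tau=\sigma-\crn$ and the new rows $u_k=(1,j_k)$, $1\le k<a$. The columns split into those of $\tau$ and the new columns $\algbss_{11},\algbss_{u_k}$. The new columns have zero component in the $\tau$-rows, since $\algbss_{u_k}\lambda$ and $\mu\algbss_{u_k}$ lie in $e\bmdl$. The matrix is therefore block triangular. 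It suffices to show that the square block (new rows) $\times$ (columns $\algbss_{11},\algbss_{u_1},\dots,\algbss_{u_{a-2}}$) is invertible for suitable values of the new colors. By \eqref{eq:Ztri} and its $\mu$-analogue, this block is triangular with diagonal entries $\lambda_{j_{k+1}j_k}$, plus the entry $\lambda_{u_1}$ from $\algbss_{11}$. Choosing the values $1$ on the consecutive-value inversions of the decreasing window makes it unimodular. Here the induction hypothesis must be strengthened to fix the shape of the certificate: a pair $(\lambda,\mu)$ with $\{0,1\}$ coordinates, in which the coordinates $\lambda_{j_kj_{k-1}}$ internal to the window may be set freely. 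Concretely, I would prove that one can take $\lambda_{ik}=1$ on some prescribed set of minimal inversions, so that the triangularity is not destroyed when the window entries are also constrained by $\tau$'s certificate.

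For towers $q[\ell]$ we induct on $\sum(\ell_i-1)$. Deleting an endpoint of a block of length $>1$ plays the role of a \good corner locally: block points form a decreasing chain, and by \Cref{lem:eleminf}\labelcref{part:covinf} they attach to the cover graph as a path. So the same triangular extension argument should work. The base cases are the eight quotients, for which I would exhibit explicit $\{0,1\}$ pairs $(\lambda,\mu)$, supported on $\Cov(q)$ plus a few other inversions, together with square unimodular triangular submatrices, in the style of \Cref{ex:leclerc}. I expect the inflation step, rather than the eight finite checks, to be the main technical obstacle of this direction: choosing a uniform support pattern compatible with adding a block point.

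\emph{Converse direction (\prig $\Rightarrow\PA$).} Suppose $\sigma\notin\PA$. By \eqref{eq:PAhered} $\PA$ is closed under containment, so $\sigma$ contains a critical $\tau$: not in $\PA$, while every point deletion is. By \Cref{thm:hered} it suffices to show that $\tau$ is not \prig. By \Cref{lem:countmm} it is enough that $\cgr_\tau$ have a component with more edges than vertices. The main obstacle is the combinatorial claim that every critical $\tau$ has $\cgr_\tau$ connected and bicyclic. For $|\tau|\le8$ this is the exhaustive check (\Cref{prop:critsmall}).

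For larger $\tau$, the plan runs as follows. A critical $\tau$ is indecomposable and \cb, since otherwise a summand or a \good corner deletion, via \eqref{eq:nochoice}, would reduce it. So \Cref{lem:swcore} provides an \SWcr \core $\Cm$ and an \NEcr \core $\Cp$, each a $3412$/$4231$ pattern, hence a $4$-cycle of $\cgr$ after accounting for minimality. I would then show that, by criticality, the points off $\Cm\cup\Cp$ form a chain of plateaus connecting the two cores. Otherwise deleting a point would keep the permutation outside $\PA$: either it leaves both cores intact and non-tower, or it exposes no \good corner. This yields the dumbbell structure of \Cref{cor:dumbbell}. Then $\#\Cov(\tau)=\#\Kk+1$, and \eqref{count:pair} gives $\dfr_\psub(\mm_\tau)\ge1$, completing the proof.
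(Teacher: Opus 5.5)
\textbf{Forward direction.} Your corner step is sound in outline. With $e=\algbss_{11}$ the new columns meet only the new rows, the old diagonal block is $\Sop^\tau$, and surjectivity reduces to that of the new block. Two slips: the nonsingular square part of that block uses the columns $\algbss_{11},\algbss_{u_2},\dots,\algbss_{u_{a-1}}$, since the column $\algbss_{u_1}$ is zero; and it requires $\lambda_{u_{a-1}}\ne0$, not $\lambda_{u_1}$.

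For \prigity over algebraically closed $\kk$ no strengthening is needed here, since $\tau$'s certificate may be taken generic. The strengthening matters only for the $\{0,1\}$ claim, and there your ``prescribed set'' is left unspecified. It has to contain, for each of the four corner types, the consecutive pairs of the corresponding window: of type $\blam$ on the bottom-value chain, of type $\bmu$ on the decreasing prefix, and rotated for \Ncr, \Ecr. This is exactly the frame $\Fr(\sigma)$ of \Cref{def:frame}, and it must be reproduced by every clause of \Cref{def:PA}, towers included.

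The genuine gap, however, is the tower step: deleting a block endpoint $c$ does not behave like deleting a \good corner.
\begin{itemize}
\item If $c$ is not a corner, then some earlier point lies above $c$ and some later point below it, since towers have no records other than corners. So $\algbss_{cc}$ satisfies neither \eqref{eq:cornerax} nor its mirror image, new columns such as $\algbss_{ic}$ hit old rows, and there is no block triangularity.
\item If $c$ is a corner, it is blocked (towers are \cb), and the local block need not be surjective. Take $\sigma=43512=3412[2,1,1,1]$ and $c=1$, so that $\sigma-c=3412$. The new rows $(1,2),(1,4),(1,5)$ receive from the new columns only $\Sop_{\lambda,\mu}(\algbss_{11})=\lambda_{12}\bmdlbss_{12}+\lambda_{14}\bmdlbss_{14}+\lambda_{15}\bmdlbss_{15}$ and $\Sop_{\lambda,\mu}(\algbss_{12})=\lambda_{24}\bmdlbss_{14}+\lambda_{25}\bmdlbss_{15}$, which have rank at most $2$.
\end{itemize}
Covering those rows requires old columns such as $\algbss_{22}$ (through $\mu_{12}$). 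So surjectivity for $\sigma$ is not an extension of a certificate for $\tau$; it needs a global ordering. That is the missing idea: the tower data of \Cref{def:towerdatum} and \Cref{tab:certs} on the eight quotients. Their conditions (T1)--(T4) are precisely what allows \Cref{lem:blocklift} to lift the peeling order to all inflations at once via the key $\kappa$.

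\textbf{Converse direction.} Your outline follows the paper's route (critical patterns, \Cref{thm:hered}, \cores), but its two decisive steps are only asserted.

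First, a \core of pattern $3412$ or $4231$ gives a four-cycle of $\cgr_\tau$ only if no point outside it witnesses against its four minimal inversions, and minimality is not inherited by patterns. The paper secures this by showing that $\Cm$ and $\Cp$ lie in the two distinct leaf blocks $\Rm\ne\Rp$ of the inversion graph. Each leaf block is its \core, or the \core plus an attachment point that witnesses nothing inside it, and block-locality then applies (\Cref{prop:leaves}, \Cref{lem:blockloc}). This is also what makes the two cycles distinct, and it uses that the inversion graph is not $2$-connected (\Cref{lem:coreconf}).

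Second, your confinement step (the deletion ``leaves both cores intact and non-tower'') presupposes that no one-point deletion of $\tau$ is a tower. For $|\tau|\ge9$ this is \Cref{cor:notower}, which needs the plateau-loss bound of \Cref{lem:plateaubound} and, at $|\tau|=9$, an additional finite verification.

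Finally, the off-core points cannot form a ``chain of plateaus'', since critical permutations are plateau-free. What one proves is that every non-cut point lies in $\Cm\cup\Cp$. The dumbbell shape is then a consequence of the upper bound in \Cref{rem:covexact}, not an ingredient of the lower bound.
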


The two implications are proved in \Cref{sec:forward} and
\Cref{sec:converse} respectively. The converse direction runs through the
permutations that are minimal outside $\PA$, called \emph{critical}: it
is shown in \Cref{sec:converse} that each of them carries a counting
obstruction to \prigity, and \Cref{thm:hered} then propagates the
obstruction to every permutation containing one.

\section{\Psm implies \prig}\label{sec:forward}

Throughout this section $\sigma \in S_n$, and $\XX_\sigma$, $\YY_\sigma$,
$\Gc_\sigma$ and $\Sop^\sigma_{\lambda,\mu}$ are as in
\Cref{ssec:dictionary}. By \Cref{cor:psGS} it suffices to exhibit, for
every \psm $\sigma$, a single value of $(\lambda,\mu)$ at which
$\Sop^\sigma_{\lambda,\mu}$ is surjective. We do so by
a combinatorial mechanism --- peeling --- which is stable under the
clauses of \Cref{def:PA}.

\subsection{Peeling}\label{ssec:peeling}

For any subset $\pset$ of colors of $\sigma$ (\Cref{def:M}) we denote by
$\Gc^\pset_\sigma$ the subgraph of $\Gc_\sigma$ consisting of the edges whose color lies in $\pset$. 

\begin{definition}[peeling]\label{def:peel}
A \emph{peeling datum} for $\pset$ consists of a partial order $\le$ on
$\XX_\sigma$ and a map $h : \XX_\sigma \to \YY_\sigma$ such that, in
$\Gc^\pset_\sigma$,
\begin{equation}\label{eq:peelorder}
   x \to h(x) \text{ is an edge for every } x, \text{ and }
   x' \to h(x) \text{ is one only if } x' \le x .
\end{equation}
A color set admitting a peeling datum is called a \emph{\pal}, and
$\sigma$ is \emph{\spc} if it has a \pal.
\end{definition}

By \eqref{eq:pairsym} and \eqref{eq:colorsym}, inversion and rotation carry
a \pal for $\sigma$ to a \pal for $\sigma^{-1}$, resp.\ for $\rc\sigma$;
so \spcness is invariant under the Klein four-group.

We can think of $h$ as a preferred matching in the graph $\Gc^\pset_\sigma$.
It is automatically injective: if $h(x) = h(x')$ then
\eqref{eq:peelorder} gives $x \le x'$ and $x' \le x$, and $\le$ is
antisymmetric. This is one of the places where it matters that $\le$ is a
partial order and not merely a preorder.
Given $h$ (and $\pset$), the existence of a partial order satisfying
\eqref{eq:peelorder} is equivalent to the acyclicity of the \emph{strict}
dependency digraph on $\XX_\sigma$,
\[
   x' \longrightarrow_D x
   \quad\Longleftrightarrow\quad
   x' \ne x \ \text{ and }\ x' \to h(x)\text{ is an edge in }\Gc^\pset_\sigma .
\]
The clause $x'\ne x$ cannot be omitted: $x \to h(x)$ is an edge for every
$x$, so without it the relation would have a loop at every vertex and
would never be acyclic.
Any refinement of $\le$ also satisfies \eqref{eq:peelorder}, so we may take
$\le$ to be a total order.

In the totally ordered case, writing $\XX_\sigma =
\{x_1,\dots,x_N\}$ with $x_1 < \dots < x_N$ and $y_i = h(x_i)$, the
condition reads: $x_i \to y_i$ is an edge and $x_j \to y_i$ is not for
$j > i$. These are exactly the successful runs of the process that gives
the name: as long as some $y \in \YY_\sigma$ has exactly one surviving
neighbor in $\Gc^\pset_\sigma$, delete that neighbor. Indeed, in a run that
deletes all of $\XX_\sigma$, take for $x_i$ the $i$-th inversion deleted
and for $y_i$ a vertex witnessing its deletion; conversely, along such an
order the $x_i$'s are deleted one by one.

\begin{proposition}[soundness of peeling]\label{prop:peel}
Let $\pset$ be a \pal for $\sigma$. Then $\Sop^\sigma_{\lambda,\mu}$
is surjective at every $(\lambda,\mu)$ giving a nonzero value to each color
in $\pset$ and the value zero to every other color. Every \spc permutation is
\prig, over every field.
\end{proposition}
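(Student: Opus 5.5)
The plan is to show that, at such a $(\lambda,\mu)$, the square submatrix of $\Sop^\sigma_{\lambda,\mu}$ with rows $x_1,\dots,x_N$ and columns $y_1,\dots,y_N$ is triangular with nonzero diagonal. First fix a peeling datum $(\le,h)$ for $\pset$ and, as noted after \Cref{def:peel}, refine $\le$ to a total order. This gives $\XX_\sigma=\{x_1<\dots<x_N\}$ and $y_i=h(x_i)$, and the $y_i$ are pairwise distinct because $h$ is injective.

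Next, take $(\lambda,\mu)$ with every color in $\pset$ nonzero and every other color zero. By the description of the entries after \Cref{def:M}, the entry in position $(u,y)$ is nonzero exactly when $u\to y$ is an edge of $\Gc_\sigma$ whose color lies in $\pset$, that is, an edge of $\Gc^\pset_\sigma$. Here we use that for a fixed source and target the edge, and hence its color and type, is unique. So the entry $(u,y)$ equals $\pm$ the value of that color, or $0$ if the edge's color is outside $\pset$.

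Now consider the $N\times N$ submatrix $T_{ij}=$ the $(x_i,y_j)$ entry. By \eqref{eq:peelorder}, $x_i\to y_i$ is an edge of $\Gc^\pset_\sigma$, so $T_{ii}\neq0$. Also, $x_i\to y_j$ is an edge of $\Gc^\pset_\sigma$ only if $x_i\le x_j$, so $T_{ij}=0$ for $i>j$. Thus $T$ is upper triangular with nonzero diagonal, hence invertible. The rows of $\Sop^\sigma_{\lambda,\mu}$ are indexed by all of $\XX_\sigma$, so it has full row rank $|\XX_\sigma|$ and is surjective. This works over any field, since it only requires nonzero values, for example all equal to $1$.

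For the last sentence: if $\sigma$ is \spc, choose a \pal $\pset$ and the values above. Then $\Sop^\sigma_{\lambda,\mu}$ is surjective, and \Cref{cor:psGS} gives \prigity. Over an arbitrary field $\kk_0$, the same specialization shows that the generic corank is $0$, in the sense of \Cref{ssec:arbhered}. No step is a real obstacle; the only care needed is the bookkeeping that each nonzero matrix entry corresponds to a unique colored edge, which the paper already records.
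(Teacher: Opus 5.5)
Your proposal is correct and is essentially the paper's proof: you refine the peeling order to a total order, take the square submatrix on rows $x_1,\dots,x_N$ and columns $h(x_1),\dots,h(x_N)$, note that it is upper triangular with nonzero diagonal, and conclude with \Cref{cor:psGS}. The only difference is that you spell out the entry bookkeeping the paper leaves implicit: each entry comes from a unique colored edge, and colors outside $\pset$ contribute zeros.
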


\begin{proof}
Fix a peeling datum $(\le,h)$ for $\pset$, refine $\le$ to a total order
$x_1 < \dots < x_N$ of $\XX_\sigma$ and put $y_i = h(x_i)$; these are
distinct. For $(\lambda,\mu)$ as in the statement, the square submatrix of $\Sop^\sigma_{\lambda,\mu}$
with rows $x_1,\dots,x_N$ and columns $y_1,\dots,y_N$ is upper triangular with nonzero
diagonal entries, hence invertible. Hence, $\Sop^\sigma_{\lambda,\mu}$ is surjective and by \Cref{cor:psGS} $\sigma$ is \prig.
\end{proof}

Thus a \pal furnishes a pair $(\lambda,\mu)$ of parameters
--- $\lambda$ supported on the colors of $\pset$ of type $\blam$ and $\mu$ on
those of type $\bmu$ --- at which the rank problem of \Cref{def:pair}
attains its maximum. Note that, by \Cref{cor:gencov} every \pal contains,
for each minimal inversion $(i,k)$, at least one of the two colors $\blam_{ik}$,
$\bmu_{ik}$. The remainder of the section proves that every \psm
permutation admits a \pal.

\subsection{The frame}\label{ssec:frame}

A peeling datum by itself does not propagate along the recursion of
\Cref{def:PA}: the inductive step at a \good corner needs to know that
certain colors are available in the smaller permutation. Fortunately, it is
possible to amend the situation by requiring extra assumptions.

For a value $v$ let $p_v = \sigma^{-1}(v)$ be its position. Let $r$ be
the last index of the maximal decreasing prefix, $\sigma_1 > \sigma_2 >
\dots > \sigma_r$, and $s$ the first index of the maximal decreasing
suffix, $\sigma_s > \dots > \sigma_n$. Let $a$ be the largest element
of $\{1,\dots,n\}$ with $p_a < p_{a-1} < \dots < p_1$, and $b$ the
smallest element of $\{1,\dots,n\}$ with $p_n < p_{n-1} < \dots <
p_b$; both conditions are vacuous for $a = 1$, resp.\ $b = n$.

\begin{definition}[the frame]\label{def:frame}
The \emph{frame} of $\sigma$ is the (possibly empty) color set
\begin{equation}\label{eq:frame}
\begin{aligned}
   \Fr(\sigma) = {}& \{\bmu_{i,i+1} : 1 \le i < r\}
   \;\cup\; \{\blam_{i,i+1} : s \le i < n\}\\
   &\cup\; \{\blam_{p_vp_{v-1}} : 2 \le v \le a\}
   \;\cup\; \{\bmu_{p_vp_{v-1}} : b < v \le n\} ,
\end{aligned}
\end{equation}
the four families corresponding to the decreasing prefix, the decreasing
suffix, the bottom-value chain and the top-value chain. A \pal containing
$\Fr(\sigma)$ is a \emph{\fpal}, and the permutation $\sigma$ is
\emph{frame-\spc} if it has one.
\end{definition}

The frame is compatible with inversion and rotation, whose effect on pairs
is \eqref{eq:pairsym} and on colors \eqref{eq:colorsym}.
Since a \pal is carried to a \pal (\Cref{ssec:peeling}),
\begin{equation}\label{eq:framesym}
   \sigma \text{ frame-\spc} \iff \sigma^{-1} \text{ frame-\spc}
   \iff \rc\sigma \text{ frame-\spc} .
\end{equation}

Every frame-\spc permutation is \spc. We prove that every \psm permutation is frame-\spc.

\subsection{Towers}\label{ssec:towers}

Throughout this subsection $\tau \in S_k$ is a base permutation, and
$\XX_\tau$, $\YY_\tau$, $\DD_\tau$, $\Gc_\tau$, the types and the
convention on neighbors are those of \Cref{def:M} and \eqref{eq:dictUV}.

\begin{definition}[tower datum]\label{def:towerdatum}
Let $\pset_\tau$ be a set of colors of $\tau$. A \emph{tower datum} for
$\pset_\tau$ consists of a partial order $\le$ on $\YY_\tau$ and a map $h_\tau :
\XX_\tau \to \YY_\tau$ such that $x \to h_\tau(x)$ is an edge of
$\Gc^{\pset_\tau}_\tau$ for every $x$, and such that the following hold.
\begin{enumerate}
\renewcommand{\theenumi}{T\arabic{enumi}}%
\renewcommand{\labelenumi}{(\theenumi)}%
\item\label{T:cover} Every $x \in \XX_\tau$ is the unique element of $\YY_\tau$ covered by $h_\tau(x)$.
Thus, $\YY_\tau$ being finite, $z < h_\tau(x)$ implies $z \le x$ for every
$z \in \YY_\tau$.
\item\label{T:onto} Every element of $\DD_\tau$ lies in the image of $h_\tau$.
\item\label{T:type} Whenever $h_\tau(x) \in \XX_\tau$, the edges $x \to h_\tau(x)$
and $h_\tau(x) \to h_\tau(h_\tau(x))$ have the same type.
\item\label{T:mono} In $\Gc^{\pset_\tau}_\tau$, every edge $w \to v$ satisfies $w < v$.
\end{enumerate}
\end{definition}

Condition \labelcref{T:cover} has the following consequences.
First, the map $h_\tau$ is injective, and for $x \in \XX_\tau$
the iterates $x, h_\tau(x), h_\tau^2(x),\dots$ --- the \emph{orbit} of $x$ --- form
a chain in $\le$, which must end at a diagonal vertex. The maximal orbits are pairwise disjoint, and
together with \labelcref{T:onto}, they partition $\YY_\tau$ into precisely $k$ chains, one for
each element of $\DD_\tau$ (as a terminal vertex of the chain).
Let $\orbs$ be the set of maximal orbits, let $\orb(y) \in \orbs$ be the maximal
orbit containing $y \in \YY_\tau$ and let $\pstn(y) \in \NN$ be the position of $y$ along that orbit.
Define a partial order on $\orbs$ by comparing the initial vertices of the orbits. Then, again by \labelcref{T:cover}, the map
\begin{equation}\label{eq:orbcoord}
   \YY_\tau \longrightarrow \orbs\times\NN,
   \qquad
   y \longmapsto \bigl(\orb(y),\,\pstn(y)\bigr) ,
\end{equation}
is injective and order preserving with respect to the lexicographic order
on $\orbs\times\NN$. Indeed, let $y' < y$. If the two lie in one orbit
the assertion is clear; otherwise, applying \labelcref{T:cover} repeatedly
along the orbit of $y$ gives $y' < u$, where $u$ is the initial vertex of
$\orb(y)$, while the initial vertex of $\orb(y')$ is $\le y'$.

Finally, by \labelcref{T:type}, all edges along an orbit
are of the same type, and we assign that type to every vertex of the
orbit, the terminal diagonal vertex included. We write
\[
   \typ : \YY_\tau \longrightarrow \{\blam,\bmu\}
\]
for the resulting function.

A tower datum is in particular a peeling datum for $\pset_\tau$ in the sense
of \Cref{def:peel}, with $\le$ restricted to $\XX_\tau$: if $w \to h_\tau(x)$
is an edge then $w < h_\tau(x)$ by \labelcref{T:mono}, hence $w \le x$ by
\labelcref{T:cover}, which is \eqref{eq:peelorder}. Thus $\pset_\tau$ is a
\pal; what the four conditions add is the shape of the peeling, and
that is what lifts to the inflations.

Now let $\sigma = \tau[\ell_1,\dots,\ell_k] \in S_n$, with $n = \ell_1 +
\dots + \ell_k$, be an inflation of $\tau$. The block replacing the $i$-th
entry of $\tau$ occupies an interval of $\ell_i$ consecutive positions,
and we write
\[
   \prj : \{1,\dots,n\} \longrightarrow \{1,\dots,k\}
\]
for the resulting projection, so that $B_i = \prj^{-1}(i)$ is the $i$-th
block. Let
\[
   \lmin,\ \lmax : \{1,\dots,k\} \longrightarrow \{1,\dots,n\}
\]
be the two sections of $\prj$ sending $i$ to the first, resp.\ the
last, position of $B_i$; thus $B_i = [\lmin(i),\lmax(i)]$ and $\lmax(i) -
\lmin(i) + 1 = \ell_i$.

Each block carries consecutive values in decreasing order, and the value
intervals are ordered according to $\tau$. Hence, for $p < q$, the pair
$(p,q)$ is an inversion of $\sigma$ if and only if $\prj(p) = \prj(q)$ or
$(\prj(p),\prj(q)) \in \XX_\tau$. Consequently $\prj\times\prj$ restricts
to a surjection
\[
   \prjY : \YY_\sigma \longrightarrow \YY_\tau,
   \qquad
   \prjY(i,j) = \bigl(\prj(i),\prj(j)\bigr) ,
\]
whose fibers are
\begin{equation}\label{eq:fiber}
\begin{aligned}
   \prjY^{-1}(i,j) &= B_i\times B_j \subseteq \XX_\sigma,
   && (i,j) \in \XX_\tau,\\
   \prjY^{-1}(c,c) &= \{(p,q) : p,q \in B_c,\ p \le q\},
   && (c,c) \in \DD_\tau ,
\end{aligned}
\end{equation}
of cardinalities $\ell_i\ell_j$ and $\binom{\ell_c+1}{2}$; the second
fiber meets $\DD_\sigma$ in the $\ell_c$ diagonal vertices of the block
and $\XX_\sigma$ in the remaining $\binom{\ell_c}{2}$ pairs. We call $x
\in \XX_\sigma$ \emph{external} if $\prjY(x) \in \XX_\tau$, and
\emph{internal} to the block $B_c$ if $\prjY(x) = (c,c)$; in both cases we
set $\typ(x) = \typ(\prjY(x))$, the \emph{type} of $x$.

The \emph{corner lift} is the (partial) section
\begin{equation}\label{eq:cornerlift}
   \cl : \XX_\tau \longrightarrow \XX_\sigma,
   \qquad
   \cl(i,j) = \bigl(\lmax(i),\,\lmin(j)\bigr)
\end{equation}
of $\prjY$: it selects, out of the fiber $B_i\times B_j$, the inversion
joining the last position of $B_i$ to the first position of $B_j$. It
lifts colors as well,
\begin{equation}\label{eq:colorlift}
   \cl(\blam_{ij}) = \blam_{\cl(i,j)},
   \qquad
   \cl(\bmu_{ij}) = \bmu_{\cl(i,j)},
   \qquad (i,j) \in \XX_\tau ,
\end{equation}
a color of $\tau$ lifting to a color of $\sigma$ of the same type. Lift
the color set by
\begin{equation}\label{eq:liftedA}
   \pset_\sigma \;=\; \cl(\pset_\tau) \;\cup\;
   \bigl\{\blam_{p,p+1},\ \bmu_{p,p+1} \;:\; \prj(p) = \prj(p+1)\bigr\} :
\end{equation}
each selected base color is taken on its corner lift, and both colors are
taken on every adjacent inversion inside a block.

We now write down a peeling datum for $\pset_\sigma$. Define $h_\sigma :
\XX_\sigma \to \YY_\sigma$ on an inversion $x = (p,q)$ of $\sigma$ by
\begin{equation}\label{eq:hmu}
   h_\sigma(x) =
   \begin{cases}
      (p+1,q), & p \ne \lmax(\prj(p)),\\
      (\lmin(a'),q), & p = \lmax(\prj(p)),\ h_\tau(\prjY(x)) = (a',c),
   \end{cases}
   \qquad \typ(x) = \bmu ,
\end{equation}
and by the mirror image
\begin{equation}\label{eq:hlambda}
   h_\sigma(x) =
   \begin{cases}
      (p,q-1), & q \ne \lmin(\prj(q)),\\
      (p,\lmax(b')), & q = \lmin(\prj(q)),\ h_\tau(\prjY(x)) = (c,b'),
   \end{cases}
   \qquad \typ(x) = \blam .
\end{equation}
The second alternative makes sense. It occurs only for external $x$: an
internal $x = (p,q)$ has $p < q$ inside one block, so that $p \ne
\lmax(\prj(p))$ and $q \ne \lmin(\prj(q))$. For external $x$ the vertex
$\prjY(x)$ lies in the domain of $h_\tau$, and the edge $\prjY(x) \to
h_\tau(\prjY(x))$ is of type $\typ(x)$, so by \eqref{eq:type12} its target has
the displayed shape --- the same second coordinate for the type $\bmu$,
the same first coordinate for the type $\blam$. In both alternatives $x
\to h_\sigma(x)$ is an edge of $\Gc^{\pset_\sigma}_\sigma$: in the first one,
which covers all internal inversions, the edge carries an adjacent
internal color, and in the second one a lifted color.

Finally we order the inversions of $\sigma$. Put
\[
   \delta(x) =
   \begin{cases}
      -q, & \typ(x) = \bmu,\\
      p, & \typ(x) = \blam,
   \end{cases}
   \qquad x = (p,q) \in \XX_\sigma ,
\]
and consider the \emph{key}
\begin{equation}\label{eq:towerorder}
   \kappa : \XX_\sigma \longrightarrow \orbs\times\ZZ\times\NN,
   \qquad
   \kappa(x) =
   \bigl(\orb(\prjY(x)),\ \delta(x),\ \pstn(\prjY(x))\bigr) ,
\end{equation}
with the target ordered lexicographically:
base orbits in their order; inside a base orbit, columns from right to
left, resp.\ rows from top to bottom; and only then the base vertices
along the orbit. This is the order \eqref{eq:orbcoord} of $\prjY(x)$, with
$\delta(x)$ inserted between its two coordinates. Note that $\delta$ is compared only inside a
base orbit, where all vertices have one and the same type.

The key $\kappa$ need not be injective. Its first and last coordinates
depend on $x$ only through $\prjY(x)$, while over a base inversion of type
$\blam$ the middle one is $\delta(p,q) = p$, which ignores $q$; so any two
inversions in one fiber $B_i\times B_j$ with the same first coordinate have
the same key. For instance, for the inflation of $3412$ by descending
blocks of length two, namely $\sigma = 65872143$, the distinct inversions
$(4,5)$ and $(4,6)$ lie over the base inversion $(2,3)$, which is of type
$\blam$ by \Cref{tab:certs}, and $\kappa(4,5) = \kappa(4,6)$. Pulling the
lexicographic order back along $\kappa$ would therefore give only a
preorder, with no antisymmetry. We order $\XX_\sigma$ instead by
\begin{equation}\label{eq:towerpo}
   x' \le x \quad\Longleftrightarrow\quad
   x' = x \quad\text{or}\quad \kappa(x') < \kappa(x) ,
\end{equation}
declaring distinct inversions with equal keys incomparable. This is a
partial order: reflexivity is built in, antisymmetry holds because
$\kappa(x') < \kappa(x)$ and $\kappa(x) < \kappa(x')$ cannot both occur,
and transitivity follows from that of the lexicographic order, the case of
an equality being trivial.

\begin{lemma}[block lifting]\label{lem:blocklift}
Suppose that $\pset_\tau$ admits a tower datum. Then the set $\pset_\sigma$ of
\eqref{eq:liftedA} is a \pal. Moreover, if $\pset_\tau$ is a \fpal for
$\tau$ then $\pset_\sigma$ is a \fpal for $\sigma$.
\end{lemma}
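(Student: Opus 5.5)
We verify directly that $(\le,h_\sigma)$ of \eqref{eq:towerpo}, \eqref{eq:hmu}, \eqref{eq:hlambda} is a peeling datum for $\pset_\sigma$. We already know that $x\to h_\sigma(x)$ is an edge of $\Gc^{\pset_\sigma}_\sigma$. It remains to check \eqref{eq:peelorder}: if $x'\ne x$ and $x'\to h_\sigma(x)$ is an edge of $\Gc^{\pset_\sigma}_\sigma$, then $\kappa(x')<\kappa(x)$. By the symmetry \eqref{eq:colorsym}, which interchanges the two types and transports $\delta$ and the tower datum compatibly, it suffices to treat $\typ(x)=\bmu$, i.e.\ $x=(p,q)$ with $h_\sigma(x)=(p',q)$ for some $p'>p$.

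The first step is to classify the edges $x'\to(p',q)$ in $\Gc^{\pset_\sigma}_\sigma$. A $\bmu$-edge into $(p',q)$ has source $(p'',q)$ with color $\bmu_{p''p'}$. A $\blam$-edge has source $(p',q'')$ with $q''>q$ and color $\blam_{qq''}$. Each color lies in $\pset_\sigma$ only if it is an adjacent internal color or a corner lift $\cl(\cdot)$ of a color in $\pset_\tau$.

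\emph{Internal colors.} For $\bmu_{p''p'}$ with $p''=p'-1$ in the same block, the edge forces $p''=p$ and hence $x'=x$. For $\blam_{q,q+1}$ internal, $x'=(p',q+1)$ has the same projection as $x$ or lies in the fiber above it. Here I would compare $\delta$: in the first alternative of \eqref{eq:hmu}, $\prjY(x')$ and $\prjY(x)$ lie over the same base vertex, or over $h_\tau(\prjY x)$, which lies later along the same orbit. If the type of $x'$ is $\bmu$, then $\delta(x')=-(q+1)<-q=\delta(x)$ within the same orbit. If its type is $\blam$, I must use \labelcref{T:mono} and \labelcref{T:type} to show that the orbit of $\prjY(x')$ precedes that of $\prjY(x)$.

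\emph{Lifted colors.} Corner-lifted colors project to edges of $\Gc^{\pset_\tau}_\tau$ into $\prjY(h_\sigma(x))$, which is either $\prjY(x)$ itself (internal first alternative) or $h_\tau(\prjY x)$. By \labelcref{T:mono} and \labelcref{T:cover}, the projection of $x'$ is then $\le\prjY(x)$ in $\YY_\tau$. By \eqref{eq:orbcoord}, either it lies in a strictly earlier orbit, and we are done by the first key coordinate, or it lies in the same orbit with smaller or equal position. In the latter case $\prjY(x')=\prjY(x)$ forces the edge to be the very one used by $h_\tau$, so its type is $\bmu$. Corner lifting then pins the source row to $\lmax$ or column to $\lmin$, and comparing $\delta=-q$ either gives a strict inequality or forces $x'=x$.

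\emph{Main obstacle.} The hardest part is the mixed case, in which an edge of the opposite type enters $h_\sigma(x)$ from within the same base orbit, where $\delta$ is not comparable. There I would use \labelcref{T:type} to show that such an edge would contradict the orbit having a single type: its projection would be an edge of $\Gc^{\pset_\tau}_\tau$ of type $\blam$ into a vertex of a $\bmu$-orbit. By \labelcref{T:mono} and \labelcref{T:cover}, its source would then lie strictly before that vertex in the order, hence in an earlier orbit or at a smaller position. This must be checked carefully against the definition of the corner lift.

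\emph{Frame.} Compute $\Fr(\sigma)$ from $\Fr(\tau)$. The decreasing prefix of $\sigma$ is the inflation of the prefix of $\tau$ together with possibly part of the next block. Its adjacent pairs are either internal, and both colors lie in $\pset_\sigma$, or of the form $(\lmax(i),\lmin(i+1))=\cl(i,i+1)$ with $\bmu_{i,i+1}\in\Fr(\tau)\subseteq\pset_\tau$. The same holds for the suffix and for the value chains, via \eqref{eq:colorsym}. Hence $\Fr(\sigma)\subseteq\pset_\sigma$.
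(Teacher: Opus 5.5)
Your skeleton is the paper's: you check \eqref{eq:peelorder} for $h_\sigma$ against the key $\kappa$, sort the edges entering $h_\sigma(x)=(p',q)$ by internal versus lifted color and by type, and use (T1), (T4) and the orbit order \eqref{eq:orbcoord}. Your frame argument also matches the paper's. The gap is in the step you call the main obstacle, which rests on a misreading of the definitions.

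The type of an inversion is $\typ(x')=\typ(\prjY(x'))$, the type of the base orbit containing $\prjY(x')$. It has nothing to do with the type of the color on the edge $x'\to h_\sigma(x)$. So there is no case in which $\delta$ is incomparable. Whenever $\prjY(x')\in\orb(\prjY(x))$, the inversion $x'$ has type $\bmu$. If the edge is $\blam$-colored, then $x'=(p',b)$ with $b>q$, so $\delta(x')=-b<-q=\delta(x)$. The first key coordinates agree, hence $\kappa(x')<\kappa(x)$. This one observation settles every $\blam$-colored edge whose source projects into the orbit of $\prjY(x)$, internal or lifted; it is the paper's third case. A $\blam$-colored edge from another orbit is necessarily lifted. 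For it, (T4), together with (T1) in the second alternative of \eqref{eq:hmu}, gives $\prjY(x')\le\prjY(x)$, hence a strictly earlier orbit.

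The resolution you propose instead is not valid. You claim that, via (T3), such an edge would contradict the orbit having a single type. But (T3) constrains only the orbit edges $y\to h_\tau(y)$, and edges of the other type do enter vertices of a $\bmu$-orbit, even from sources over the same orbit. Take $\sigma=65872143=3412[2,2,2,2]$ and $x=(2,5)$, which lies over $13$ in the $\bmu$-orbit $13\to(3,3)$ of \Cref{tab:certs}. Then $h_\sigma(x)=(5,5)$, and the internal color $\blam_{56}$ gives the edge $(5,6)\to(5,5)$ with $\prjY(5,6)=(3,3)$ in that same orbit. This edge is harmless only because $\delta(5,6)=-6<-5=\delta(x)$. (For lifted $\blam$-colors the same-orbit situation does happen to be vacuous, but for a coordinate reason, not by (T3): the vertices of a $\bmu$-orbit share their second coordinate, which a $\blam$-edge changes. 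You do not need this.)

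Two further points in your sketch need correcting:
\begin{enumerate}
\item The subcase ``if its type is $\blam$'' in your internal $\blam_{q,q+1}$ paragraph is empty, since there $\prjY(x')=\prjY(h_\sigma(x))$ lies in $\orb(\prjY(x))$. Your plan to show that its orbit precedes that of $\prjY(x)$ could not succeed anyway, because the two orbits are equal.
\item In the lifted $\bmu$ case with $\prjY(x')$ strictly earlier along the same orbit, $\delta(x')=\delta(x)=-q$. Strictness comes from the third key coordinate $\pstn$, not from $\delta$, and your sketch omits this.
\end{enumerate}
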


\begin{proof}
We check \eqref{eq:peelorder} for $h_\sigma$ and the order
\eqref{eq:towerpo}.
Take $x = (p,q)$ of type $\bmu$, the type $\blam$ being the mirror image,
and write $h_\sigma(x) = (P,q)$: both alternatives of \eqref{eq:hmu} keep
the second coordinate $q$ of $x$, and
\begin{equation}\label{eq:baseofh}
   \prjY(h_\sigma(x)) =
   \begin{cases}
      \prjY(x) & \text{in the first alternative},\\
      h_\tau(\prjY(x)) & \text{in the second}.
   \end{cases}
\end{equation}
Suppose that $x'\to h_\sigma(x)$ is an edge in $\Gc_\sigma$ with color
$\clr\in \pset_\sigma$. By \eqref{eq:liftedA} the color $\clr$ is either
adjacent internal or lifted, and by \eqref{eq:type12} it is of type
$\bmu$ with second index $P$, or of type $\blam$ with first index $q$.
There are thus three cases to consider.

If $\clr$ is an adjacent internal color of type $\bmu$, then $\clr=\bmu_{P-1,P}$, so that
$\prj(P-1) = \prj(P)$ and $x' = (P-1,q)$. In the second alternative of
\eqref{eq:hmu} we have $P = \lmin(\prj(P))$ and no such color exists; in
the first, $P = p+1$ and $x' = x$.

If $\clr$ is a lifted color $\cl(\bmu_{ia'})$ of type $\bmu$, with $\bmu_{ia'} \in
\pset_\tau$, then $P = \lmin(a')$, which forces the second alternative of \eqref{eq:hmu}.
In this case $\prjY(h_\sigma(x)) = h_\tau(\prjY(x)) = (a',c)$ with $c
= \prj(q)$, and $x' = (\lmax(i),q)$ has $\prjY(x') = (i,c)$, a neighbor of
$(a',c)$. By \labelcref{T:mono}, $\prjY(x') < (a',c)$, and
\labelcref{T:cover} turns this into $\prjY(x') \le \prjY(x)$. If equality holds
then $x'=x$ since $x$ is the only vertex of the fiber of $\prjY(x)$ joined to
$h_\sigma(x)$. Otherwise, $\prjY(x') < \prjY(x)$, whence either
$\orb(\prjY(x')) < \orb(\prjY(x))$, or the two base orbits coincide and
then $\pstn(\prjY(x')) < \pstn(\prjY(x))$ while $\delta(x') = \delta(x) =
-q$. In both cases $\kappa(x') < \kappa(x)$.

The last case is where $\clr$ is of type $\blam$, so that $x' = (P,b)$ with $b
> q$. If $\prjY(x')$ lies in the orbit of $\prjY(x)$ --- as it does when $\clr$
is an adjacent internal color, for which $\prjY(x') = \prjY(h_\sigma(x))$ (using
\eqref{eq:baseofh}) --- then $x'$ is of type $\bmu$ and $\delta(x') = -b <
-q = \delta(x)$, so $\kappa(x') < \kappa(x)$. Otherwise $\clr$ is a lifted color and
$\prjY(x')$ is a neighbor of $\prjY(h_\sigma(x))$; by \labelcref{T:mono},
and by \labelcref{T:cover} in the second alternative of
\eqref{eq:baseofh}, this gives $\prjY(x') \le \prjY(x)$, so
$\orb(\prjY(x')) \le \orb(\prjY(x))$, with strict inequality since the two
orbits differ. Again $\kappa(x') < \kappa(x)$.

Thus in every case either $x' = x$ or $\kappa(x') < \kappa(x)$; that is,
$x' \le x$ in the order \eqref{eq:towerpo}. Note that the argument never
compares two inversions with equal keys, which is what allows
\eqref{eq:towerpo} to leave them incomparable. Hence $\pset_\sigma$ is a
\pal.

Suppose now that $\Fr(\tau) \subseteq \pset_\tau$. A frame color of $\sigma$
whose two positions lie in one block is an adjacent internal color, and
those are selected in both types. A decreasing prefix or suffix of
$\sigma$ can cross a block boundary only if the corresponding adjacent
entries of $\tau$ decrease, and a bottom- or top-value chain can pass from
one value block to the next only if the corresponding consecutive values
of $\tau$ form the analogous chain; in either case the crossing frame
color is the lift \eqref{eq:colorlift} of the corresponding color of
$\Fr(\tau)$, which lies in $\pset_\tau$ by assumption. Hence $\Fr(\sigma)
\subseteq \pset_\sigma$.
\end{proof}

It remains to produce a tower datum for each of the eight tower
quotients; this is what \Cref{tab:certs,tab:frames} do, and \Cref{fig:peel}
illustrates the case $\tau = 4231$. For a base
permutation $\tau$ we write $ij$ for the inversion $(i,j)$, and
$\pset_{\bmu}(\tau)$, $\pset_{\blam}(\tau)$ for the sets of inversions
carrying a selected color of type $\bmu$, resp.\ $\blam$; the \pal
$\pset_\tau$ is determined by the two of them. \Cref{tab:certs} gives the maximal orbits
of $h_\tau$, the symbol in brackets recording the type of the orbit:
$h_\tau$ sends a vertex to its successor in its orbit, and $\le$ is the
total order in which the vertices are listed --- orbits in the order
given, each read from its start. \Cref{tab:frames} gives the two sets of
inversions and the frame \eqref{eq:frame}; in each of its rows
$\pset_{\bmu}(\tau)$ and $\pset_{\blam}(\tau)$ consist of the inversions used by
the orbits together with those occurring in the frame, and of nothing
else.

\begin{table}[!htbp]
\centering
\renewcommand{\arraystretch}{1.25}
\begin{tabular}{@{}l@{\qquad}>{\raggedright\arraybackslash}p{0.76\textwidth}@{}}
\hline
$\tau$ & maximal orbits of $h_\tau$, in increasing order\\
\hline
$3412$ &
$[\bmu]\,13\to(3,3)$;\quad $[\blam]\,14\to(1,1)$;\quad
$[\blam]\,23\to(2,2)$;\quad $[\bmu]\,24\to(4,4)$\\
$4231$ &
$[\blam]\,14\to12\to(1,1)$;\quad $[\blam]\,24\to(2,2)$;\quad
$[\bmu]\,34\to(4,4)$;\quad $[\bmu]\,13\to(3,3)$\\
$35142$ &
$[\bmu]\,13\to(3,3)$;\quad $[\blam]\,15\to(1,1)$;\quad
$[\blam]\,23\to(2,2)$;\quad $[\bmu]\,25\to45\to(5,5)$;\quad
$[\bmu]\,24\to(4,4)$\\
$42513$ &
$[\bmu]\,14\to24\to(4,4)$;\quad $[\bmu]\,12\to(2,2)$;\quad
$[\blam]\,15\to(1,1)$;\quad $[\blam]\,34\to(3,3)$;\quad
$[\bmu]\,35\to(5,5)$\\
$45312$ &
$[\blam]\,15\to(1,1)$;\quad $[\blam]\,24\to23\to(2,2)$;\quad
$[\bmu]\,25\to35\to(5,5)$;\quad $[\bmu]\,14\to34\to(4,4)$;\quad
$[\bmu]\,13\to(3,3)$\\
$426153$ &
$[\bmu]\,14\to24\to(4,4)$;\quad $[\bmu]\,12\to(2,2)$;\quad
$[\blam]\,16\to(1,1)$;\quad $[\blam]\,34\to(3,3)$;\quad
$[\bmu]\,36\to56\to(6,6)$;\quad $[\bmu]\,35\to(5,5)$\\
$463152$ &
$[\bmu]\,14\to34\to(4,4)$;\quad $[\bmu]\,26\to56\to(6,6)$;\quad
$[\bmu]\,25\to(5,5)$;\quad $[\blam]\,16\to13\to(1,1)$;\quad
$[\blam]\,24\to23\to(2,2)$;\quad $[\blam]\,36\to(3,3)$\\
$526413$ &
$[\blam]\,15\to12\to(1,1)$;\quad $[\blam]\,25\to(2,2)$;\quad
$[\blam]\,36\to34\to(3,3)$;\quad $[\bmu]\,35\to45\to(5,5)$;\quad
$[\bmu]\,16\to46\to(6,6)$;\quad $[\bmu]\,14\to(4,4)$\\
\hline
\end{tabular}
\caption{Tower data for the eight tower quotients \eqref{eq:eight}: the
maximal orbits of $h_\tau$, listed in the order that defines $\le$.}
\label{tab:certs}
\end{table}

\begin{table}[!htbp]
\centering
\renewcommand{\arraystretch}{1.25}
\begin{tabular}{@{}l@{\qquad}l@{\qquad}l@{\qquad}l@{}}
\hline
$\tau$ & $\pset_{\bmu}(\tau)$ & $\pset_{\blam}(\tau)$ & $\Fr(\tau)$\\
\hline
$3412$   & $\{13,24\}$          & $\{14,23\}$             & $\varnothing$\\
$4231$   & $\{12,13,34\}$       & $\{12,24,34\}$          &
   $\{\bmu_{12},\bmu_{13},\blam_{24},\blam_{34}\}$\\
$35142$  & $\{13,24,45\}$       & $\{15,23,45\}$          &
   $\{\bmu_{24},\blam_{45}\}$\\
$42513$  & $\{12,24,35\}$       & $\{15,24,34\}$          &
   $\{\bmu_{12},\blam_{24}\}$\\
$45312$  & $\{13,23,34,35\}$    & $\{15,23,34\}$          & $\varnothing$\\
$426153$ & $\{12,24,35,56\}$    & $\{16,24,34,56\}$       &
   $\{\bmu_{12},\bmu_{35},\blam_{24},\blam_{56}\}$\\
$463152$ & $\{13,25,34,56\}$    & $\{13,23,34,36,56\}$    &
   $\{\bmu_{25},\blam_{56}\}$\\
$526413$ & $\{12,14,34,45,46\}$ & $\{12,25,34,46\}$       &
   $\{\bmu_{12},\blam_{25}\}$\\
\hline
\end{tabular}
\caption{The selected colors and the frame.}
\label{tab:frames}
\end{table}

\begin{lemma}[the eight tower data]\label{lem:eightdata}
Let $\tau$ be one of the eight tower quotients \eqref{eq:eight}, let
$\pset_\tau$ and $\Fr(\tau)$ be as in \Cref{tab:frames}, and let $h_\tau$ and
$\le$ be read off \Cref{tab:certs} as above. Then $(\le,h_\tau)$ is a
tower datum for $\pset_\tau$ in the sense of \Cref{def:towerdatum}, and
$\Fr(\tau) \subseteq \pset_\tau$.
\end{lemma}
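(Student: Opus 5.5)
The lemma is a finite verification, carried out separately for each of the eight base permutations $\tau$. I will organize it so that each condition of \Cref{def:towerdatum} reduces to a small table lookup.

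\textbf{Step 1: the graph $\Gc^{\pset_\tau}_\tau$.} For each $\tau$, list $\XX_\tau$ (the inversions) and $\YY_\tau=\XX_\tau\sqcup\DD_\tau$. Then list the edges of $\Gc^{\pset_\tau}_\tau$ from the explicit description after \eqref{eq:dictUV}. A selected $\bmu_{ij}$ (with $ij\in\pset_{\bmu}(\tau)$) gives the edges $(i,k)\to(j,k)$ for all $k\ge j$ with $\tau_j\ge\tau_k$ and $(i,k)$ an inversion. A selected $\blam_{jk}$ gives the edges $(i,k)\to(i,j)$ for all $i\le j$ with $\tau_i\ge\tau_j$ and $(i,k)$ an inversion. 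The graphs are small, with at most $9$ inversions and $15$ vertices of $\YY_\tau$ for $k=6$. In the same pass, check that every orbit step $x\to h_\tau(x)$ in \Cref{tab:certs} is one of these edges with the bracketed type. This simultaneously checks that $h_\tau$ is well defined into $\Gc^{\pset_\tau}_\tau$ and that \labelcref{T:type} holds.

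\textbf{Step 2: the structural conditions.} Condition \labelcref{T:onto} is visible from the table: each of the $k$ orbits ends at a distinct diagonal vertex $(c,c)$, and the orbits exhaust $\XX_\tau$. The order $\le$ is the total order in which the table lists vertices. Hence \labelcref{T:cover} says that in this list, $x$ immediately precedes $h_\tau(x)$. This holds because each orbit is listed contiguously from its start, so every non-initial vertex is preceded by its predecessor in the orbit. The only remaining point is that every vertex $h_\tau(x)$ is non-initial, which is true by construction. So \labelcref{T:cover} is automatic from the format of \Cref{tab:certs}.

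\textbf{Step 3: monotonicity and the frame.} The real content, and the expected main obstacle, is \labelcref{T:mono}: every edge $w\to v$ of $\Gc^{\pset_\tau}_\tau$ must go forward in the listed order. To check it, number the vertices $1,\dots,|\YY_\tau|$ by their position in the list. Then run through all selected colors and all edges they generate from Step 1, and confirm that each source index is less than its target index. The dangerous edges are those whose target is an early diagonal vertex such as $(1,1)$, or an early vertex of an orbit of the opposite type. I would check those first, since the choice of $\pset_\tau$ (only the orbit colors plus the frame) is exactly what prevents backward edges. If one appears, it would indicate a transcription error rather than a conceptual one.

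Finally, compute $\Fr(\tau)$ directly from \Cref{def:frame}. Find $r$ and $s$ (the ends of the maximal decreasing prefix and start of the maximal decreasing suffix) and $a$ and $b$ (the bottom and top value chains). Compare the result with the last column of \Cref{tab:frames}, and note that each listed frame color belongs to $\pset_{\bmu}(\tau)$ or $\pset_{\blam}(\tau)$ according to its type. By \eqref{eq:framesym} and \eqref{eq:colorsym}, the members of a $\rc$-orbit ($35142/42513$ and $463152/526413$) could in principle be handled by transport. However, since the tables are given independently for each $\tau$, I will simply check all eight directly, preferably by a short machine check alongside hand verification of the smallest cases $3412$ and $4231$.
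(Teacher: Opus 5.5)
Your proposal is correct and is essentially the paper's own proof. It is a finite check in which \labelcref{T:cover,T:onto,T:type} are read off the format of \Cref{tab:certs}: contiguous orbits, totally ordered, each of a single type and ending at distinct diagonal vertices. Edge membership of the orbit steps and \labelcref{T:mono} are then verified directly from \eqref{eq:type12}, and $\Fr(\tau)\subseteq\pset_\tau$ is checked against \Cref{tab:frames}; your recomputing the frame from \Cref{def:frame} is a harmless extra. One small slip: $463152$ and $526413$ have ten inversions, so $\YY_\tau$ has sixteen vertices, not nine inversions and fifteen vertices.
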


\begin{proof}
Conditions \labelcref{T:cover,T:onto,T:type} are visible from
\Cref{tab:certs}: the orbits partition $\YY_\tau$, each of them ends at a
diagonal vertex, and each carries a single type. That every displayed
arrow is an edge of the stated color, and that \labelcref{T:mono} holds,
is a direct application of \eqref{eq:type12}: a finite check on
permutations of length at most six. The last assertion is read off
\Cref{tab:frames}.
\end{proof}

\begin{figure}[!htbp]
\centering
\begin{tikzpicture}[scale=0.42,baseline=(current bounding box.center)]
\draw[gray!25] (0.35,0.35) rectangle (4.65,4.65);
\draw[covedge] (1,4)--(2,2);
\draw[covedge] (1,4)--(3,3);
\draw[covedge] (2,2)--(4,1);
\draw[covedge] (3,3)--(4,1);
\node[permdot] at (1,4) {};
\node[permdot] at (2,2) {};
\node[permdot] at (3,3) {};
\node[permdot] at (4,1) {};
\end{tikzpicture}
\hspace{1.1em}
\begin{tikzpicture}[baseline=(current bounding box.center),
   every node/.style={font=\footnotesize,inner sep=1.5pt},
   yscale=1.30]
\node (a1) at (0,3) {$14$};   \node (a2) at (2.1,3) {$12$};  \node (a3) at (4.2,3) {$(1,1)$};
\node (b1) at (0,2) {$24$};   \node (b2) at (2.1,2) {$(2,2)$};
\node (c1) at (0,1) {$34$};   \node (c2) at (2.1,1) {$(4,4)$};
\node (d1) at (0,0) {$13$};   \node (d2) at (2.1,0) {$(3,3)$};
\draw[peelarr] (a1)--node[above,font=\scriptsize]{$\blam_{24}$}(a2);
\draw[peelarr] (a2)--node[above,font=\scriptsize]{$\blam_{12}$}(a3);
\draw[peelarr] (b1)--node[above,font=\scriptsize]{$\blam_{24}$}(b2);
\draw[peelarr] (c1)--node[above,font=\scriptsize]{$\bmu_{34}$}(c2);
\draw[peelarr] (d1)--node[above,font=\scriptsize]{$\bmu_{13}$}(d2);
\draw[auxarr] (a2)--node[right,font=\scriptsize]{$\bmu_{12}$}(b2);
\draw[auxarr] (c1)--node[sloped,above,font=\scriptsize,pos=0.45]{$\blam_{34}$}(d2);
\draw[auxarr] (a1) to[bend right=25]
   node[right=1pt,font=\scriptsize,pos=0.6]{$\bmu_{12}$} (b1);
\draw[auxarr] (a1) to[bend right=45]
   node[left=1pt,font=\scriptsize,pos=0.5]{$\bmu_{13}$} (c1);
\draw[auxarr] (a1) to[bend right=60]
   node[left=1pt,font=\scriptsize,pos=0.5]{$\blam_{34}$} (d1);
\end{tikzpicture}
\hspace{1.1em}
$\displaystyle
\begin{pmatrix}
\blam_{24} & 0 & 0 & 0 & 0\\
0 & \blam_{12} & -\bmu_{12} & 0 & 0\\
0 & 0 & \blam_{24} & 0 & 0\\
0 & 0 & 0 & -\bmu_{34} & \blam_{34}\\
0 & 0 & 0 & 0 & -\bmu_{13}
\end{pmatrix}$
\caption{Peeling for $\tau = 4231$, with the \fpal $\pset_\tau$ of
\Cref{tab:frames}: the colors $\bmu_{12},\bmu_{13},\bmu_{34}$ and
$\blam_{12},\blam_{24},\blam_{34}$. Left: the plot of $\tau$ with its cover
graph. Middle: the graph $\Gc^{\pset_\tau}_\tau$, its rows being the four
maximal orbits of $h_\tau$ listed in \Cref{tab:certs}; solid arrows are
$x\to h_\tau(x)$, dashed arrows the remaining edges. Right: the square
submatrix of $\Sop^\tau_{\lambda,\mu}$ on the rows $14,12,24,34,13$ --- the
order $\le$ of \Cref{tab:certs} restricted to $\XX_\tau$ --- and the columns
$h_\tau$ of those rows. It is upper triangular with nonzero diagonal, which is
the proof of \Cref{prop:peel}; a color of type $\bmu$ enters with a minus sign.}
\label{fig:peel}
\end{figure}

\begin{proposition}\label{prop:C0}
Every tower is frame-\spc.
\end{proposition}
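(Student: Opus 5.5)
The plan is to combine \Cref{lem:eightdata} with \Cref{lem:blocklift}. Let $t$ be a tower. By \Cref{def:towers} and the remark following it, $t$ has a unique presentation $t=\tau[\ell]$, where $\tau=q(t)$ is one of the eight tower quotients of \eqref{eq:eight} and $\ell=(\ell_1,\dots,\ell_k)$ records the plateau lengths. The argument then has three steps.

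First, take $\pset_\tau$ as in \Cref{tab:frames}, with $h_\tau$ and $\le$ read off \Cref{tab:certs}. By \Cref{lem:eightdata}, $(\le,h_\tau)$ is a tower datum for $\pset_\tau$ in the sense of \Cref{def:towerdatum}. In particular, as noted right after that definition, it is a peeling datum, so $\pset_\tau$ is a \pal. The same lemma gives $\Fr(\tau)\subseteq\pset_\tau$, so $\pset_\tau$ is a \fpal for $\tau$.

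Second, apply \Cref{lem:blocklift} to the inflation $\sigma=t=\tau[\ell]$, with the lifted color set $\pset_\sigma$ of \eqref{eq:liftedA}. The first part of that lemma says $\pset_\sigma$ is a \pal, using the order \eqref{eq:towerpo} and the map $h_\sigma$ from \eqref{eq:hmu} and \eqref{eq:hlambda}. Since $\pset_\tau$ contains $\Fr(\tau)$, the second part says $\pset_\sigma$ contains $\Fr(\sigma)$. Hence $t$ has a \fpal, that is, it is frame-\spc.

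Third, the degenerate case $\ell=(1,\dots,1)$ needs no separate treatment. Then $\sigma=\tau$, there are no internal inversions, $\cl$ is the identity and $\pset_\sigma=\pset_\tau$, so the lifting lemma reduces to the base statement.

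All the substance lies in the two cited lemmas; the proposition is just their composition. The only step that could fail is the finite verification inside \Cref{lem:eightdata}. There one must check, for each of the eight quotients, two things. Every arrow listed in \Cref{tab:certs} must be an edge of $\Gc^{\pset_\tau}_\tau$ with the stated color, according to \eqref{eq:type12}. And \labelcref{T:mono} must hold: every edge $w\to v$ of $\Gc^{\pset_\tau}_\tau$ satisfies $w<v$ in the listed total order. Since that lemma is already established, I will simply invoke it.
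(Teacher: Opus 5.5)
Your proposal is correct and is exactly the paper's argument: take the \fpal $\pset_\tau$ with its tower datum from \Cref{lem:eightdata} and apply both parts of \Cref{lem:blocklift} to the lifted color set $\pset_\sigma$ of \eqref{eq:liftedA}. As you note yourself, the separate remark on $\ell=(1,\dots,1)$ is harmless but unnecessary.
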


\begin{proof}
Let $\sigma = \tau[\ell]$ with $\tau$ one of the eight quotients
\eqref{eq:eight}, and let $\pset_\sigma$ be the lift \eqref{eq:liftedA} of the
color set $\pset_\tau$ of \Cref{lem:eightdata}. By that lemma and
\Cref{lem:blocklift}, $\pset_\sigma$ is a \fpal for $\sigma$.
\end{proof}

\subsection{Direct sums and \good corners}\label{ssec:forwardsteps}

\begin{proposition}\label{prop:directsum}
If $\alpha$ and $\beta$ are frame-\spc, so is $\alpha\oplus\beta$.
\end{proposition}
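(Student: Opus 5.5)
The plan is to take the disjoint union of the two \fpals. Write $m=|\alpha|$ and $\sigma=\alpha\oplus\beta\in S_{m+|\beta|}$, and identify the points of $\beta$ with the positions $m+1,\dots,n$ of $\sigma$. If either summand is empty there is nothing to prove, so assume both are nonempty. Every value of the first $m$ positions is smaller than every value of the later positions. Hence an inversion $(i,k)$ of $\sigma$ has both ends in one summand, and
\[
\XX_\sigma=\XX_\alpha\sqcup(\XX_\beta+m),\qquad \YY_\sigma=\YY_\alpha\sqcup(\YY_\beta+m),
\]
where $+m$ shifts both coordinates.

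The first step is to show that the colored graph splits in the same way. An edge of type $\blam$ of $\Gc_\sigma$ has the form $((i,k),(i,j))$ with $i\le j<k$ and $\sigma_i\ge\sigma_j>\sigma_k$. Since $(i,k)$ lies in one summand and $j$ lies between $i$ and $k$, the point $j$ lies in that summand as well. The mirror statement holds for type $\bmu$. Consequently $\Gc_\sigma=\Gc_\alpha\sqcup(\Gc_\beta+m)$ as colored graphs. The colors of $\sigma$ are the colors of $\alpha$ together with the shifted colors of $\beta$, and each color colors only edges inside its own summand.

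Next I take \fpals $\pset_\alpha$ and $\pset_\beta$ with peeling data $(\le_\alpha,h_\alpha)$ and $(\le_\beta,h_\beta)$, and set $\pset=\pset_\alpha\cup(\pset_\beta+m)$. Let $h$ be $h_\alpha$ on $\XX_\alpha$ and the shift of $h_\beta$ on the shifted $\XX_\beta$. Let $\le$ be the disjoint-union partial order, in which the two parts are incomparable, or alternatively the ordinal sum. Condition \eqref{eq:peelorder} then holds. Indeed, $x\to h(x)$ is an edge of $\Gc^\pset_\sigma$, and any edge $x'\to h(x)$ of $\Gc^\pset_\sigma$ lies in the summand containing $h(x)$. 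So $x'$ is in the same summand as $x$, and the condition reduces to the one for that summand. Thus $\pset$ is a \pal.

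The remaining step, which is the only one requiring care, is $\Fr(\sigma)\subseteq\pset$. I would go through the four families of \eqref{eq:frame} one at a time.
\begin{enumerate}
\item The maximal decreasing prefix of $\sigma$ stays inside $\alpha$, because $\sigma_m<\sigma_{m+1}$. So $r_\sigma=r_\alpha\le m$, and the prefix colors $\bmu_{i,i+1}$ are frame colors of $\alpha$.
\item Symmetrically, the decreasing suffix of $\sigma$ stays inside $\beta$ and yields shifted frame colors of $\beta$.
\item For the bottom-value chain, the values $1,\dots,m$ occupy positions $\le m$, while the value $m+1$ sits at a position $>m\ge p_m$. The condition $p_{m+1}<p_m$ therefore fails, so $a_\sigma=a_\alpha\le m$, and the colors $\blam_{p_vp_{v-1}}$ are frame colors of $\alpha$ with identical positions.
\item Dually, the top-value chain stays in $\beta$, so $b_\sigma=b_\beta+m$, and its colors are shifted frame colors of $\beta$.
\end{enumerate}
The frame of $\sigma$ may be strictly smaller than $\Fr(\alpha)\cup\Fr(\beta)$, for instance it omits the suffix of $\alpha$, but it is contained in that union. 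Hence $\Fr(\sigma)\subseteq\pset$, and $\sigma$ is frame-\spc.

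I expect no real obstacle. The only points to check carefully are the splitting of $\Gc_\sigma$ and the boundary bookkeeping in the frame families (whether each chain or run can cross the cut at $m$), and these are settled by the order relation between the value blocks of the two summands.
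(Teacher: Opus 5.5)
Your proposal is correct and is essentially the paper's argument: take the disjoint union of the two framed palettes, combine the two maps $h$ and the two partial orders, observe that $\Gc_\sigma$ splits along the summands, and note that no frame chain crosses the sum boundary. Your family-by-family check of $r$, $s$, $a$, $b$ simply spells out the paper's one-line justification of $\Fr(\sigma)\subseteq\Fr(\alpha)\sqcup\Fr(\beta)$.
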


\begin{proof}
There are no inversions between the two summands, so after shifting the
indices of the second, $\Gc_\sigma = \Gc_\alpha \sqcup \Gc_\beta$ for
$\sigma = \alpha\oplus\beta$, and the union $\pset_\sigma = \pset_\alpha \sqcup
\pset_\beta$ of two \pals is again a \pal: take the two maps $h$ together and
the disjoint union of the two partial orders, no target of one summand
being joined to an inversion of the other. No
frame chain crosses the sum boundary --- the boundary is an ascent in
position order, the bottom values lie in $\alpha$ and the top values in
$\beta$ --- so $\Fr(\sigma) \subseteq \Fr(\alpha)\sqcup\Fr(\beta)
\subseteq \pset_\sigma$.
\end{proof}

\begin{proposition}[\good-corner extension]\label{prop:corner}
Let $\sigma$ be indecomposable with $|\sigma|>1$ and let $\crn$ be a \good corner of $\sigma$.
If $\sigma - \crn$ is frame-\spc, then so is $\sigma$.
\end{proposition}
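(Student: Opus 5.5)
By \eqref{eq:framesym} and \eqref{eq:cornersym} I may assume that $\crn$ is the \Wcr corner. Put $\tau=\sigma-\crn$, identified with the points of $\sigma$ at positions $2,\dots,n$, and fix a \fpal $\pset_\tau$ with a peeling datum $(\le_\tau,h_\tau)$. Let $a=\sigma_1$ and let $j_k=p_k$ be the position of the value $k$.

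By \goodness, the window of $\crn$ consists of the values $a-1,\dots,1$ at positions $j_{a-1}<\dots<j_1$. The inversions of $\sigma$ not coming from $\tau$ are $u_k=(1,j_k)$ for $1\le k<a$. The vertices of $\YY_\sigma$ not coming from $\tau$ are $(1,1)$ and the $u_k$. If $a=1$ there is nothing new to peel, so assume $a>1$. The plan is to take
\[
\pset_\sigma=\pset_\tau\cup\{\blam_{u_{a-1}}\}\cup\Fr(\sigma)
\]
and to peel the new rows first.

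\emph{The new rows.} Set $h(u_k)=(1,j_{k+1})$ for $k<a-1$, an edge of type $\blam$ colored $\blam_{j_{k+1}j_k}$. Set $h(u_{a-1})=(1,1)$, colored $\blam_{1j_{a-1}}=\blam_{u_{a-1}}$. The colors $\blam_{j_{k+1}j_k}$ with $k+1\le a-1$ belong to the bottom-value chain of $\tau$, because $p_{a-1}<\dots<p_1$ holds in $\tau$ as well. Hence they lie in $\Fr(\tau)\subseteq\pset_\tau$. I order $u_1<u_2<\dots<u_{a-1}$ and place all of them below every inversion of $\tau$.

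\emph{The order condition.} I check \eqref{eq:peelorder} in three steps, using \eqref{eq:type12}.
\begin{enumerate}
\item A vertex $(1,y)$ has no neighbor of type $\bmu$, since that would require an index before $1$. Its neighbors are therefore the $(1,m)$ joined by $\blam_{y m}$. For $y=j_{k+1}$ this forces $m=j_l$ with $l<k+1$, so $u_l\le u_k$. For $y=1$ the only color of the form $\blam_{1m}$ in $\pset_\sigma$ is $\blam_{u_{a-1}}$. Here I must make sure that $\pset_\sigma$ contains no other $\blam_{1j_k}$. The colors $\blam_{1j_k}$ do not exist in $\tau$, and among the frame colors of $\sigma$ only the bottom-chain color $\blam_{p_ap_{a-1}}=\blam_{u_{a-1}}$ has this shape.
\item No old inversion is a neighbor of a new target, since the new targets lie in row $1$ and all neighbors of row-$1$ targets lie in row $1$.
\item Edges from new rows into old targets $h_\tau(x)$ are harmless, because every new row precedes every old one.
\end{enumerate}
Together with the old datum for the rows of $\tau$, this gives a peeling datum for $\pset_\sigma$. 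The old rows keep their targets, and the only new edges into old targets come from rows placed below them, so \eqref{eq:peelorder} still holds there.

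\emph{The frame.} The main obstacle, and the step I would check most carefully, is verifying $\Fr(\sigma)\subseteq\pset_\sigma$ while keeping the added colors as above. I would compare the four chains of $\sigma$ with those of $\tau$.
\begin{itemize}
\item The bottom-value chain of $\sigma$ is that of $\tau$, extended by $\blam_{u_{a-1}}$, and possibly beyond value $a$ by old colors of $\tau$.
\item The decreasing prefix of $\sigma$, when $\sigma_2<\sigma_1$, is that of $\tau$ with $\bmu_{12}$ prepended. This new color is of type $\bmu$ with first index $1$. Its edges go from row $1$ into rows $\ge2$, which step 3 above already allows.
\item The suffix and the top-value chain are unchanged, unless $a=n$ or $\crn$ also lies at the end. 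These degenerate cases I would treat separately: in them the extra colors again only join new rows to old targets or to $(1,1)$.
\end{itemize}
In each case a frame color of $\sigma$ is either a frame color of $\tau$, or one of the harmless new colors just described. This completes the reduction.
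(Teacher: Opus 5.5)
Your proposal is correct and is essentially the paper's proof: the same reduction to the \Wcr corner, the same added color $\blam_{1j_{a-1}}$, the same targets $h(u_k)=(1,j_{k+1})$ and $h(u_{a-1})=(1,1)$, and the same order with $u_1<\dots<u_{a-1}$ below all old inversions. Adding $\Fr(\sigma)$ wholesale gives exactly the paper's palette $\pset_\tau\cup\{\blam_{1j_{a-1}}\}\cup E$, because $\Fr(\sigma)\subseteq\Fr(\tau)\cup\{\blam_{1j_{a-1}}\}\cup E$; your frame bookkeeping is slightly loose here but harmless, since the bottom chain of $\sigma$ stops exactly at value $a$ (as $p_a=1$), its top chain is in general only contained in that of $\tau$, and both of your degenerate cases force $\sigma=\wo$, where the only extra frame colors are $\blam_{12}=\blam_{u_{a-1}}$ and $\bmu_{12}$.
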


\begin{proof}
Apply the unique symmetry of \eqref{eq:cornersym} carrying $\crn$ to the \Wcr
corner; \goodness is preserved by \Cref{ssec:permconv} and
frame-\spcness by
\eqref{eq:framesym}, and the color set and the peeling datum produced
below are transported back by the same symmetry. We may therefore assume
that $\crn$ is the \Wcr corner. Write $\tau = \sigma - \crn$, which is
frame-\spc by hypothesis.

Put $a = \sigma_1$; then $a > 1$, since $a = 1$ would give $\sigma = 1
\oplus \tau$. The window of the first entry consists of the positions of
the values $a-1,a-2,\dots,1$, and \goodness means that these occur in that
order: writing $j_k$ for the position of the value $k$, this says that
$1 = j_a < j_{a-1} < \dots < j_1$. The inversions of $\sigma$ not
inherited from $\tau$ are exactly $u_k = (1,j_k)$, $1 \le k < a$.

Choose a \fpal $\pset_\tau$ for $\tau$, and identify the
positions and colors of $\tau$ with the corresponding ones of $\sigma$
(this only shifts the labels). Since the values $a-1,\dots,1$ occur in
decreasing order in $\tau$, its bottom-value frame contains the colors
$\blam_{j_{k+1}j_k}$ for $1 \le k \le a-2$. Put
\begin{equation}\label{eq:cornerA}
   \pset_\sigma = \pset_\tau \cup \{\blam_{1j_{a-1}}\} \cup E,
   \qquad
   E = \begin{cases}
      \{\bmu_{12}\}, & \sigma_1 > \sigma_2,\\
      \varnothing, & \sigma_1 < \sigma_2;
   \end{cases}
\end{equation}
note that if $\sigma_1 > \sigma_2$ then the first member of the window
sits at position $2$, that is $j_{a-1} = 2$.

Order the inversions of $\sigma$ by putting the new ones first, among
themselves $u_1 < u_2 < \dots < u_{a-1}$, and above them the
inversions of $\tau$ with a peeling datum of $\pset_\tau$; take for $h$ on
the latter the map of that datum. For $k \le a-2$ put $h(u_k) =
(1,j_{k+1})$: the color $\blam_{j_{k+1}j_k}$ supplies the edge $(1,j_k)
\to (1,j_{k+1})$ by \eqref{eq:type12}. The target of an edge of type
$\bmu$ cannot have first coordinate $1$, so every inversion adjacent to
$(1,j_{k+1})$ is of the
form $(1,m)$ with $m > j_{k+1}$ and $\sigma_m < \sigma_{j_{k+1}}$, hence
$m \in \{j_k,\dots,j_1\}$; those are $u_k$ itself and
$u_{k-1},\dots,u_1$, which are smaller. Put $h(u_{a-1}) = (1,1) =
(1,j_a)$, using
$\blam_{1j_{a-1}}$; no old color has first coordinate $1$, so $(1,1)$ has
no other neighbor at all.

It remains to check \eqref{eq:peelorder} at the targets inherited from
$\tau$. By \eqref{eq:type12} the added color $\blam_{1j_{a-1}}$ colors only
the edge $u_{a-1} \to (1,1)$, while $\bmu_{12}$, when present, colors the
edges $(1,c) \to (2,c)$ for $(2,c) \in \YY_\sigma$; in both cases the source has
first coordinate $1$ and is therefore one of the new inversions. Hence
every edge of $\Gc^{\pset_\sigma}_\sigma$ whose source is an old inversion
already has its color in $\pset_\tau$, so a target inherited from $\tau$
keeps exactly the neighbors it had in $\Gc^{\pset_\tau}_\tau$, apart from
new inversions that $\bmu_{12}$ may attach to it. The latter precede all
old inversions in the order, and so \eqref{eq:peelorder} holds at those
targets as well.

For the frame: the only frame colors created by the new entry are the
bottom-value color $\blam_{1j_{a-1}}$, joining the value $a$ to the value
$a-1$, and, when the first two entries form a descent, the prefix color
$\bmu_{12}$; all others are inherited from $\tau$. Hence $\Fr(\sigma)
\subseteq \Fr(\tau)\cup\{\blam_{1j_{a-1}}\}\cup E \subseteq \pset_\sigma$.
\end{proof}

\subsection{The forward theorem}\label{ssec:forwardthm}

\begin{theorem}\label{thm:forward}
Every \psm permutation is frame-\spc; in particular every \psm
permutation is \spc, hence \prig. Moreover, for \psm $\sigma$ and any field
$\kk_0$, the map $\Sop^\sigma_{\lambda,\mu}$ is surjective at a parameter
with coordinates $0$ and $1$.
\end{theorem}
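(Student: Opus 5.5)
The plan is a structural induction along the recursion of \Cref{def:PA}, carrying the stronger invariant ``frame-\spc'' (\Cref{def:frame}) rather than \spcness alone. We induct on $|\sigma|$. The empty permutation and $1\in S_1$ have no inversions, so the empty color set is a \fpal with the empty peeling datum (their frame is empty). If $\sigma$ is decomposable, each indecomposable summand is \psm and strictly smaller, hence frame-\spc by induction, and \Cref{prop:directsum} assembles them. If $\sigma$ is indecomposable with $|\sigma|>1$, there are two cases. If $\sigma$ is a tower, \Cref{prop:C0} applies directly. Otherwise $\sigma$ has a \good corner $\crn$ with $\sigma-\crn$ \psm. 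Then $\sigma-\crn$ is frame-\spc by induction, and \Cref{prop:corner} lifts this to $\sigma$. This proves the first assertion.

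The remaining implications follow from results already in place. A \fpal is in particular a \pal, so $\sigma$ is \spc. By \Cref{prop:peel}, $\sigma$ is then \prig over every field.

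For the parameter statement, fix a \pal $\pset$ for $\sigma$ and let $(\lambda,\mu)$ assign the value $1$ to each color in $\pset$ and $0$ to every other color. By the proof of \Cref{prop:peel}, refining the peeling order to a total order gives a square submatrix of $\Sop^\sigma_{\lambda,\mu}$ that is upper triangular with diagonal entries $\pm1$. This argument uses only that the chosen values are nonzero and never inverts anything other than these diagonal entries, so the submatrix is invertible over any field $\kk_0$, including finite fields and those of positive characteristic. Hence $\Sop^\sigma_{\lambda,\mu}$ is surjective at a $\{0,1\}$-valued parameter over $\kk_0$.

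No step is a real obstacle, since the substantive work sits in \Cref{prop:C0,prop:directsum,prop:corner}. The one point to check is that the induction is well founded and the hypotheses of \Cref{prop:corner} are met. The recursion of \Cref{def:PA} only applies the corner clause to indecomposable $\sigma$ with $|\sigma|>1$, which is exactly what that proposition assumes. Each step passes to strictly smaller permutations: a corner deletion removes one point, and the summands of a decomposable permutation are proper. Because \eqref{eq:nochoice} makes membership in $\PA$ independent of the corner chosen, any witnessing \good corner may be used.
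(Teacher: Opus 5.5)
Your proposal is correct and follows the paper's own proof essentially verbatim. The paper also argues by induction along \Cref{def:PA}, using \Cref{prop:directsum}, \Cref{prop:corner} and \Cref{prop:C0}, and then \Cref{prop:peel}; it gets the $\{0,1\}$-valued parameter by giving every palette color the value $1$, so that the triangular submatrix has diagonal entries $\pm1$.
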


\begin{proof}
Induction along \Cref{def:PA}. The base cases, the empty permutation and $|\sigma|=1$, are trivial.
If $\sigma$ is decomposable, its indecomposable summands are frame-\spc by induction and
\Cref{prop:directsum} applies.
If $\sigma$ is indecomposable with $|\sigma|>1$ and has a \good corner $\crn$, then $\sigma -
\crn$ is frame-\spc by induction and \Cref{prop:corner} applies. If
$\sigma$ is a tower, \Cref{prop:C0} applies.
\Prigity then follows by \Cref{prop:peel}.

For the last assertion, note that nothing in \Cref{def:peel} or in the
constructions of \Crefrange{ssec:frame}{ssec:forwardsteps} involves $\kk_0$
beyond the requirement that the colors in the \pal be given nonzero values,
so we may give them all the value $1$; by \Cref{prop:peel} the resulting
square submatrix of $\Sop^\sigma_{\lambda,\mu}$ is then triangular with
diagonal entries $\pm1$, hence invertible over every field.
\end{proof}

The proof is algorithmic: it constructs a \pal and a peeling
datum, by recursing on the first applicable clause of \Cref{def:PA} ---
splitting at the first sum cut, deleting the first \good corner in the
order \Wcr, \Ncr, \Scr, \Ecr,
or, at a tower, writing it as the inflation of its quotient and lifting the tower datum of \Cref{tab:certs}. By \eqref{eq:nochoice} these choices are
immaterial: the recursion terminates on a \psm permutation whichever
\good corner is deleted at each step, and it has depth at most $n$ (\Cref{rem:algorithm}).

\section{\Prig implies \psm}\label{sec:converse}

In this section we prove the converse part of \Cref{thm:mainA}: a \prig permutation is \psm
(\Cref{cor:converse}).
We say that a permutation is \emph{critical} if it is not \psm but minimally so ---
any point deletion of it is \psm (\Cref{def:crit}).
By \Cref{thm:hered} it is enough to show that a critical permutation $\sigma$ is not \prig.
To that end we will use the criterion of \Cref{lem:countmm}, namely we will show that
the cover graph $\cgr_\sigma$ has at least
$|\sigma|+1$ edges (\Cref{thm:covercount}). In fact, this lower bound is an
equality and for $|\sigma|\ge9$ the graph $\cgr_\sigma$ is a dumbbell:
two four-cycles joined by a path (\Cref{cor:dumbbell}). This will be done by examining the \emph{inversion graph} of a critical permutation and showing that its two leaf blocks
carry the two \cores of \Cref{def:cores} which in turn carry the four-cycles.

Recall that the cover graph $\cgr_\sigma = \cgr_{\mm_\sigma}$ of
\Cref{def:covmm} has vertex set $\{1,\dots,n\}$, the positions of
$\sigma$, and edge set $\Cov(\sigma)=\Cov(\mm_\sigma)$, the minimal inversions of $\sigma$.
The latter are exactly the transpositions realizing the covers
$\sigma \gtrdot \sigma\cdot(i\,k)$ of the Bruhat order.
By \Cref{lem:countmm} a necessary condition for $\sigma$ to be \prig is that $\cgr_\sigma$ is a pseudoforest.

In a follow-up paper we will classify the critical permutations completely and provide more combinatorial information about them;
in particular it will turn out that, unlike in the smooth case, there are infinitely many of them.
None of this is used here.
We will need however to consider the case $|\sigma|\le8$ by an ad hoc argument, either
by brute force enumeration or by a more selective argument (which still involves a substantial number of cases);
see \Cref{ssec:proofcover}.

\subsection{Critical permutations}\label{ssec:crit}

\begin{definition}\label{def:crit}
A permutation is \emph{critical} if it is not \psm but every proper
pattern of it is; equivalently, the criticals are the minimal elements,
in the pattern order, of the complement of $\PA$.
\end{definition}

\noindent
Thus, by \eqref{eq:PAhered} a permutation is \psm if and only if it does not contain a critical pattern.

\Cref{def:PA} , \eqref{eq:goodloc} and \eqref{eq:PAhered} give obvious constraints on critical permutations, which we
will use repeatedly:
\begin{equation}\label{eq:critbasic}
\text{a critical permutation is indecomposable, \cb, plateau-free and not a tower.}
\end{equation}
For the remainder of this section we will prove the following statement.

\begin{theorem}[the cover graph of a critical permutation]\label{thm:covercount}
Let $\sigma$ be a critical permutation of size $n$. Then $\cgr_\sigma$ is
connected but not a pseudotree, i.e.,
\begin{equation}\label{eq:covercount}
   |\Cov(\sigma)| \;\ge\; n+1.
\end{equation}
Consequently $\sigma$ is not \prig, by \Cref{lem:countmm}.
\end{theorem}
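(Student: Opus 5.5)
The argument follows the outline announced at the start of the section: small cases by computation, the general case by locating two \cores in a critical permutation and showing that each contributes a four-cycle to $\cgr_\sigma$.

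\emph{Step 1: reduction to cycle rank $\ge 2$.} By \eqref{eq:critbasic}, $\sigma$ is indecomposable. The cuts of $\sigma$ are exactly the separations of its inversion graph, so $\cgr_\sigma$ is connected: every inversion is refined by a chain of minimal inversions, so $\cgr_\sigma$ and the inversion graph have the same components. Connectedness gives $|\Cov(\sigma)| = n-1+\cyc(\cgr_\sigma)$. Hence \eqref{eq:covercount} is equivalent to $\cyc(\cgr_\sigma)\ge2$, and the non-\prigity then follows from \eqref{count:pair} with $\acyc=0$.

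\emph{Step 2: small sizes.} For $n\le8$ I would verify $\cyc(\cgr_\sigma)\ge2$ for every critical $\sigma$ by exhaustive enumeration. Criticality is decidable by the $O(n^2)$ procedure of \Cref{rem:algorithm} applied to $\sigma$ and to its $n$ point deletions. This is the finite check of \Cref{prop:critsmall}.

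\emph{Step 3: two cores for $n\ge9$.} Since $\sigma$ is \cb, \Cref{lem:swcore} provides an \SWcr \core $\Cm$ and an \NEcr \core $\Cp$. The plan is to show that each core forces a four-cycle in $\cgr_\sigma$ and that the two four-cycles are distinct, so $\cyc\ge2$. For a core $C$ with pattern $3412$ or $4231$, the four pairs forming the cycle in $\cgr_{\std(\sigma|_C)}$ are inversions of $\sigma$, but they need not be minimal in $\sigma$. I would choose the core extremally, for example taking the inner points of $\win(W)\cap\win(S)$ closest to the corners, so that any witness inside one of these rectangles could replace a core point and give a smaller core. Iterating this gives a core whose four edges are minimal inversions, or a chain of minimal inversions that still closes up into a cycle through $W$ and $S$.

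To get two independent cycles I would use criticality. Deleting a point $d$ far from both cores yields a \psm permutation by hypothesis, and $\cgr_{\sigma-d}$ is a pseudoforest by \Cref{prop:PApf}. Comparing with \eqref{eq:covdelete-count} bounds the effect of $d$ and should force the two cycles to lie in different leaf blocks of the inversion graph. Deleting a point of $\Cm$ must destroy only the \SWcr cycle, since otherwise $\sigma-d$ would still have two cycles and would not be \psm by \Cref{prop:PApf}.

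\emph{Main obstacle.} The real difficulty is excluding the case $\Cm=\Cp$, or two cores sharing the same cycle. I expect this to be where the tower quotients enter. If the two cores overlap heavily, $\sigma$ is small, or after contracting plateaus it lies among the configurations in \eqref{eq:eight}, which contradicts \eqref{eq:critbasic}; the threshold $n\ge9$ should be exactly what makes the overlap impossible. I would first attempt to prove that the \SWcr and \NEcr cores are disjoint when $n\ge9$. The idea is to show that any shared point is a corner of both windows and that a permutation with all four corners in overlapping cores has at most eight points.
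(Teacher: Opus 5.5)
Your Steps 1 and 2 are fine and agree with the paper. Step 3, which carries the whole content of the theorem for $n\ge 9$, is a list of hopes, and the mechanisms you suggest do not work.

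An extremal choice of the \core cannot make its four inversions minimal. The \Wcr and \Scr corners are forced into $\Cm$, so a witness against an inversion between them cannot be removed by re-choosing. Concretely, $45312$ is \cb and its unique \SWcr \core is $\{1,2,4,5\}$, with pattern $3412$. The point at position $3$ witnesses against all four of $(1,4)$, $(1,5)$, $(2,4)$, $(2,5)$, and $\cgr_{45312}$ is a star. So no cycle of any kind ``closes up''. This $\sigma$ is of course not critical, but your extremal argument never uses criticality.

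Your disjointness strategy is aimed at the wrong target. Overlapping \cores do not bound $n$: in $n\,2\,3\cdots(n-1)\,1$ the two \cores can coincide for every $n\ge4$. Critical permutations themselves are unbounded in size. Finally, the fact that $\cgr_{\sigma-d}$ is a pseudoforest only constrains cycles you already have; it cannot produce them.

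The missing idea is \emph{core confinement} (\Cref{lem:coreconf}). It uses $\sigma-w\in\PA$ through \Cref{def:PA}, not through \Cref{prop:PApf}. Suppose $w\notin\Cm\cup\Cp$ is not a cut point of $\igr_\sigma$. Then $\sigma-w$ is indecomposable, and its four corners, being those of $\sigma$, stay blocked because the ascent pairs of \Cref{def:cores}\labelcref{core:ascent} survive. Hence $\sigma-w$ would have to be a tower.

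This is where towers enter, not through overlapping \cores. \Cref{cor:notower} rules the tower case out for $n\ge9$: plateau loss gives $|q(\sigma-d)|\ge n-3$, against tower quotients of size at most $6$, plus a finite check at $n=9$.

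Confinement then drives a block-tree argument on $\igr_\sigma$:
\begin{enumerate}
\item If $\igr_\sigma$ is $2$-connected, every point lies in $\Cm\cup\Cp$, so $n\le8$.
\item Otherwise every leaf block contains a non-cut point, hence is one of the blocks $\Rm$, $\Rp$ containing the \cores. These two blocks are distinct, and $\Rm\setminus\{a_-\}\subseteq\Cm$.
\item The attachment $a_-$ lies either above all of $\Cm$ in value or to the right of all of $\Cm$ in position, so it witnesses nothing inside $\Cm$.
\item By block-locality (\Cref{lem:blockloc}: a witness spans a triangle, which lies in the block of the edge), the four minimal inversions of $\std(\sigma|_{\Cm})$ are edges of $\cgr_\sigma$.
\end{enumerate}
This is exactly what disposes of troublesome witnesses like those in $45312$. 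The fact $\Rm\ne\Rp$ then gives two distinct four-cycles.
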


The proof has two halves. The sizes
$n \ge 9$ are handled by the block structure of the inversion graph,
developed in \Cref{ssec:invgraph,ssec:critblocks} and summarized in
\Cref{prop:leaves}, which produces two distinct cycles in $\cgr_\sigma$;
the sizes $n \le 8$ are the computation recorded in \Cref{prop:critsmall}.
The two halves are put together in \Cref{ssec:proofcover}.

\begin{remark}[the count is exact, and the possible shapes]\label{rem:covexact}
It is easy to see that for every critical permutation $\sigma$ of size $n$ we have $|\Cov(\sigma)|\le n+1$.
Indeed, $\sigma - p$ lies in $\PA$ for every point
$p$, so that $\cgr_{\sigma-p}$ is a pseudoforest by \Cref{prop:PApf} and hence
\[
   |\Cov(\sigma - p)| \;\le\; n-1 \qquad\text{for every point } p .
\]
Summing \eqref{eq:covdelete-count} over $p$ we get
\[
(n-2) |\Cov(\sigma)| \le\sum_p |\Cov(\sigma - p)| \le n(n-1)
\]
so that
\[
   |\Cov(\sigma)| \;\le\; \frac{n(n-1)}{n-2} \;=\; n+1+\frac{2}{n-2} .
\]
Since $n \ge 5$ by \Cref{rem:psm45} we get the required inequality.

Thus, once we know that the connected graph $\cgr_\sigma$ is not a pseudotree, the inequality \eqref{eq:covercount} is in fact an
equality, i.e., $\cyc(\cgr_\sigma) = 2$. Moreover $\cgr_\sigma$ has minimum degree at least two, since by
\eqref{eq:covdelete-count}
\[
   \deg_{\cgr_\sigma}(p) \;\ge\; |\Cov(\sigma)| - |\Cov(\sigma - p)|
   \;\ge\; (n+1) - (n-1) \;=\; 2
   \qquad\text{for every point } p .
\]
A connected graph with $n$ vertices, $n+1$ edges and minimum degree at
least two is a theta graph (two vertices joined by three internally
disjoint paths), or two cycles sharing a single vertex, or a
\emph{dumbbell} (two vertex-disjoint cycles joined by a path). Hence
$\cgr_\sigma$ is necessarily one of these three graphs. All three occur
(\Cref{rem:smallshape}), but only the dumbbell for $n \ge 9$ (\Cref{cor:dumbbell}).
\end{remark}

\subsection{The inversion graph}\label{ssec:invgraph}

\begin{definition}\label{def:invgraph}
The \emph{inversion graph} $\igr_\sigma$ of $\sigma \in S_n$ has as vertices
the $n$ points of $\sigma$, with an edge between $i < j$ if and only if
$(i,j)$ is an inversion; by \eqref{eq:dictUV} its edge set is
$\XX_\sigma$.
\end{definition}

Non-adjacency of two points means that they form an ascent pair; a set of
points is a clique exactly when it is a decreasing subsequence, and an
independent set exactly when it is an increasing one. The induced
subgraph on a point set $\ptst$ is the inversion graph of $\std(\sigma|_{\ptst})$;
in particular $\igr_{\sigma - e} = \igr_\sigma - e$. The graph $\cgr_\sigma$
of \Cref{def:covmm} is a spanning subgraph of $\igr_\sigma$; being a Hasse
diagram, it is triangle-free \cite{MR2376109}*{\S4.1.1}. A witness
against an inversion is a common neighbor of its two endpoints, so
\begin{equation}\label{eq:notriangle}
\text{an edge of $\igr_\sigma$ lying in no triangle is an edge of
$\cgr_\sigma$;}
\end{equation}
this applies in particular to an edge lying in no cycle at all.

The following is standard.\footnote{For \labelcref{dict:comp} and \labelcref{dict:cover} see also
\cite{MR1317469}*{Lemma 1.10}.}

\begin{lemma}[the graph dictionary]\label{lem:dict}
Let $\sigma \in S_n$.
\begin{enumerate}
\item\label{dict:comp} If $\sigma = \gamma_1 \oplus \cdots \oplus
\gamma_k$ is the decomposition into indecomposable summands, the
connected components of
$\igr_\sigma$ are exactly the point sets underlying $\gamma_1,\dots,
\gamma_k$, and they are linearly ordered, each earlier one lying entirely
southwest of each later one. In particular $\igr_\sigma$ is connected if and
only if $\sigma$ is indecomposable.
\item\label{dict:cover} The spanning subgraph $\cgr_\sigma$ has the same
connected components as $\igr_\sigma$. In particular $\cgr_\sigma$ is
connected if and only if $\sigma$ is indecomposable, in which case
$|\Cov(\sigma)| \ge n-1$.
\item\label{dict:corner} For each of the four corners $\crn$, the window of
$\crn$ is the neighborhood of $\crn$ in $\igr_\sigma$. Hence $\crn$ is a \good corner
if and only
if $\crn$ is a \emph{simplicial} vertex, that is, its neighborhood is a
clique.
\item\label{dict:sumorder} Suppose that $\sigma_1 \ne 1$, that is, that
the \Wcr and \Scr corners are distinct, as is the case when $\sigma$ is
indecomposable and $n \ge 2$. Then for every point $v$ at least one of
these two corners survives in $\sigma - v$, and the indecomposable
summand of $\sigma - v$ containing it is the first one.
\end{enumerate}
Moreover the Klein four-group acts by graph isomorphisms, permuting the
four corners.
\end{lemma}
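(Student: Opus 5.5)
I will prove the four parts separately, each by unwinding definitions in the plot of $\sigma$, and then record the Klein four-group statement.

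For \labelcref{dict:comp}, the first step is to show that a cut $t$ of $\sigma$ disconnects $\igr_\sigma$. Every point in positions $\le t$ has value $\le t$, and every point after has value $>t$, so no inversion crosses the cut. Conversely, take a connected component $C$ of $\igr_\sigma$ and a point $x\notin C$. Then $x$ forms an ascent pair with every point of $C$. I would use this to show that the components are intervals of positions and values, ordered southwest to northeast. Let $t$ be the largest position of the component $C_1$ containing position $1$. Any point at a position $<t$ not in $C_1$ would have to lie northeast of position $1$'s point and southwest of the point at $t$; chasing a path inside $C_1$ from $1$ to $t$ gives an edge of that path that straddles it in position. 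That point then lies inside the rectangle of an inversion, and so it is adjacent to one of its two endpoints, a contradiction. This gives that $C_1$ occupies positions $1,\dots,t$ and lies southwest of everything else, so $t$ is a cut. Induction finishes the part. I expect this interval argument to be the only real step, and I would phrase it as the standard fact that a point $z$ with $i<z<k$ in position, where $(i,k)$ is an inversion, forms an inversion with $i$ or with $k$.

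For \labelcref{dict:cover}, the approach is to show that every inversion $(i,k)$ is joined by a path of minimal inversions. Induct on the number of witnesses: if $j$ is a witness, then $(i,j)$ and $(j,k)$ are inversions whose rectangles are strictly smaller. Hence $\cgr_\sigma$ and $\igr_\sigma$ have the same components, and connectedness of $\cgr_\sigma$ gives at least $n-1$ edges.

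For \labelcref{dict:corner}, I will check the \Wcr corner directly. Its neighbors are the later points with smaller value, which is exactly $\win(1)$. A clique is a decreasing subsequence, so $\crn$ is \good exactly when it is simplicial. The other corners follow from the fact that inversion and $\rc$ act on $\igr_\sigma$ by graph isomorphisms. This holds because an inversion $(i,k)$ maps to an inversion under \eqref{eq:pairsym}, and these maps permute the corner labels as in \eqref{eq:cornersym}; this also settles the final assertion of the lemma.

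For \labelcref{dict:sumorder}, $v$ cannot equal both corners, because they are distinct. If $\Wcr$ survives, it is the first position of $\sigma-v$ and lies in the first summand. If $\Scr$ survives, it has the minimal value and hence, by \labelcref{dict:comp}, lies in the southwest-most component, which is the first summand.
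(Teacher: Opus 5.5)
Your proposal is correct. For part~\labelcref{dict:sumorder}, the only part the paper actually proves, you use the paper's own argument: one of the two distinct corners survives, and by part~\labelcref{dict:comp} its summand is the first one, since the \Wcr corner has the smallest position and the \Scr corner the smallest value. The paper calls parts \labelcref{dict:comp}--\labelcref{dict:corner} standard and cites the literature for the first two. Your arguments for them are sound: the straddling-edge argument for the components, induction on witnesses for the cover graph, and the \Wcr-corner check transported by the symmetries, which carry windows to windows as the paper notes.
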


\begin{proof}[Proof of part \labelcref{dict:sumorder}]
The two corners being distinct, deleting the single point $v$ leaves at
least one of them. By \labelcref{dict:comp} the first summand of $\sigma
- v$ occupies an initial interval of positions carrying an initial
interval of values. If the \Wcr corner survives, its position is smaller
than all others; if only the \Scr corner does, its value is. Either way
its summand comes first.
\end{proof}

\paragraph{Blocks.\protect\footnote{In this section ``block'' always refers to the
inversion graph $\igr_\sigma$ in the sense defined here, rather than the blocks
of an inflation (\Cref{lem:eleminf}, \Cref{prop:C0}), which play no role here.}}
We use the standard vocabulary. A \emph{cut vertex} of a connected graph
is a vertex whose removal disconnects it; a \emph{block} is a maximal
connected subgraph without a cut vertex of its own, that is, a maximal
$2$-connected subgraph, or an edge whose removal disconnects (a
\emph{bridge}), or an isolated vertex. Two blocks meet in at most one
vertex, which is then a cut vertex; a non-cut vertex lies in exactly one
block; the edge sets of the blocks partition the edge set; every cycle
lies in a single block. The blocks and the cut vertices form the
\emph{block tree}, a block being joined to each cut vertex it contains. A
\emph{leaf block} contains exactly one cut vertex, and a connected graph
with more than one block has at least two leaf blocks.

Since the vertices of $\igr_\sigma$ are the points of $\sigma$, we transfer
the terminology to an indecomposable permutation $\sigma$: a \emph{cut point}
of $\sigma$ is a cut vertex of $\igr_\sigma$, that is, by
\Cref{lem:dict}\labelcref{dict:comp}, a point $e$ with $\sigma - e$
decomposable. Blocks are \emph{induced} subgraphs, hence determined by
their vertex sets, and we identify them with these: a \emph{block} of
$\sigma$ is a set $B$ of points of $\sigma$ which is the vertex set of a
block of $\igr_\sigma$. When the graph itself is needed we write
$\igr_\sigma[B]$ for the subgraph of $\igr_\sigma$ induced on $B$, so that
$\igr_\sigma[B]$ is $2$-connected, or a single edge, or a single point. Note
that $\igr_\sigma[B] = \igr_{\std(\sigma|_B)}$ is the inversion graph of the
pattern of $B$. The cover graph, by contrast, is not inherited by patterns.
Nonetheless, the following lemma says that it is inherited by blocks, and it is what makes the argument of this
section local.

\begin{lemma}[block-locality of minimal inversions]\label{lem:blockloc}
Let $B$ be a block of $\sigma$ and let $\{u,v\}$ be an edge of
$\igr_\sigma$ with $u,v \in B$. Then
\[
   \{u,v\} \in \Cov(\sigma)
   \iff
   \{u,v\} \in \Cov\bigl(\std(\sigma|_B)\bigr) .
\]
\end{lemma}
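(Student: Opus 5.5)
Since $\std(\sigma|_B)$ has as inversions exactly the pairs of $B$ that are inversions of $\sigma$, a witness in $\std(\sigma|_B)$ against $\{u,v\}$ is a point of $B$ lying strictly inside the rectangle spanned by $u$ and $v$, and hence also a witness in $\sigma$. This gives the implication $\Leftarrow$: if $\{u,v\}$ is minimal in $\sigma$, it remains minimal in the pattern. For the converse we have to show that any witness $w$ against $\{u,v\}$ in $\sigma$ automatically lies in $B$.

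So let $w$ be a witness in $\sigma$ against the inversion $\{u,v\}$, where $u,v\in B$. Recall that a witness is a common neighbor of $u$ and $v$ in $\igr_\sigma$, since $(u,w)$ and $(w,v)$ are both inversions. This means $u,w,v$ form a triangle in $\igr_\sigma$. We now use the fact from the block vocabulary that every cycle lies in a single block, so the triangle $\{u,w,v\}$ lies in some block $B'$. The edge $\{u,v\}$ then belongs to both $B$ and $B'$, because the block containing an edge is determined by the edge, the edge sets of the blocks partitioning the edge set of the graph. We also use that blocks are induced subgraphs, so the edge $\{u,v\}$ with $u,v\in B$ is an edge of $\igr_\sigma[B]$. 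Hence $B'=B$. (Alternatively, two distinct blocks meet in at most one vertex, whereas $B$ and $B'$ share both $u$ and $v$.) It follows that $w\in B$, and $w$ is then a witness against $\{u,v\}$ in $\std(\sigma|_B)$, which proves $\Rightarrow$ in contrapositive form.

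The only delicate point is the identification of the block containing the edge. This is settled by the standard facts that two distinct blocks share at most one vertex and that blocks are induced subgraphs. The degenerate cases need no separate treatment: if $B$ is a bridge, then $\{u,v\}$ lies in no triangle at all, and both sides of the equivalence hold by \eqref{eq:notriangle} applied to $\sigma$ and to the two-point pattern. Beyond this, the argument is purely local and needs no earlier results other than the block vocabulary and the description of witnesses.
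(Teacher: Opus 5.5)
Your proof is correct and is essentially the paper's argument: the easy direction holds because a witness lying in $B$ is a witness in $\sigma$, and for the other direction a witness forms a triangle with $u,v$, the triangle lies in a single block, and that block must be $B$ because the edge $\{u,v\}$ belongs to exactly one block. One slip: you have the arrows swapped. The witness-inheritance argument proves $\Rightarrow$, and the triangle argument proves $\Leftarrow$ (in contrapositive form); both directions are nevertheless established.
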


\begin{proof}
A witness lying in $B$ is a witness for $\sigma$ as well, which gives the
implication $\Rightarrow$. Conversely, let $w$ be a witness against
$\{u,v\}$ in $\sigma$. As in \eqref{eq:notriangle}, $w$ is a common
neighbor of $u$ and $v$, so $u,v,w$ span a triangle of $\igr_\sigma$.
Every cycle lies in a single block, and the edge sets of the blocks
partition the edge set, so that block is the one containing $\{u,v\}$,
namely $B$. Hence $w \in B$ and $\{u,v\} \notin
\Cov(\std(\sigma|_B))$.
\end{proof}

\subsection{The block structure of a critical
permutation}\label{ssec:critblocks}

Throughout this subsection $\sigma$ is critical of size $n$; in particular
$\sigma$ is indecomposable by \eqref{eq:critbasic}, so that $\igr_\sigma$
is connected and the vocabulary of blocks applies. The goal is
\Cref{prop:leaves}: as long as no one-point deletion of $\sigma$ is a tower and
$\igr_\sigma$ is not $2$-connected --- both hold for
$n \ge 9$ --- $\sigma$ has exactly two leaf blocks, of at most five
points each, and they carry the two \cores.
Nothing is asserted about the blocks in between. The main tools are the
\cores of \Cref{def:cores} and \Cref{lem:swcore}.

\begin{lemma}[the \core{} lies in a block]\label{lem:core}
Let $\sigma$ be indecomposable with an \SWcr \core{} $\Cm$. Then $\Cm$ is
contained in a unique block of $\sigma$; dually for $\Cp$. If $\sigma$ is critical, \Cref{lem:swcore} (with \eqref{eq:critbasic}) provides an \SWcr
\core{} and an \NEcr \core; we fix a choice $\Cm$, $\Cp$ of them once and for
all (they need not be unique in general, but see \Cref{prop:leaves}\labelcref{lv:attach})
and write $\Rm$, $\Rp$ for the blocks containing them.
\end{lemma}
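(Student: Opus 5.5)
The plan is to show that the induced subgraph $\igr_\sigma[\Cm]$ is $2$-connected. A $2$-connected subgraph of a connected graph lies inside a single block, and two distinct blocks share at most one vertex, so a $2$-connected set of four points lies in exactly one block. Uniqueness is therefore automatic once $2$-connectivity is established. The statement for $\Cp$ will follow by applying $\rc$, which acts on $\igr_\sigma$ by graph isomorphisms and carries \NEcr \cores to \SWcr \cores (\Cref{lem:dict}). The remaining clauses of the lemma are bookkeeping. A critical $\sigma$ is indecomposable and \cb by \eqref{eq:critbasic}. Hence its \Wcr and \Scr corners are both blocked, and so are its \Ncr and \Ecr corners, and \Cref{lem:swcore} supplies $\Cm$ and $\Cp$.

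The key step is to read off $\igr_\sigma[\Cm]$ from the pattern of $\Cm$. Since $\igr_\sigma[\Cm]=\igr_{\std(\sigma|_{\Cm})}$, only the two patterns $3412$ and $4231$ need to be checked.

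For $3412$, the inversions are $13,14,23,24$. These form the four-cycle $1\!-\!3\!-\!2\!-\!4\!-\!1$, which is $2$-connected.

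For $4231$, the inversions are $12,13,14,24,34$. These form the four-cycle $1\!-\!2\!-\!4\!-\!3\!-\!1$ together with the chord $14$, which is again $2$-connected.

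In either case every edge of $\igr_\sigma[\Cm]$ lies on a cycle contained in $\Cm$. Since every cycle lies in a single block, and these cycles cover all four points, the whole of $\Cm$ lies in one block $\Rm$.

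Note that only condition \labelcref{core:pattern} of \Cref{def:cores} is used. Indecomposability is needed only so that the block vocabulary applies, since $\igr_\sigma$ is then connected by \Cref{lem:dict}\labelcref{dict:comp}.

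There is no real obstacle. The one point to get right is the standard graph fact that a $2$-connected subgraph, or simply a cycle, is contained in a unique block. This is already recorded in the paragraph on blocks, where it is stated that every cycle lies in a single block and that two blocks meet in at most one vertex.
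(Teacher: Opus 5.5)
Your proposal is correct and follows the paper's own proof. Both observe that the inversion graph of $3412$ is a four-cycle and that of $4231$ is a four-cycle with one chord. Both conclude that the pattern's inversion graph is $2$-connected, hence lies in a single block, and uniqueness follows because two blocks share at most one vertex.
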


\begin{proof}
The inversion graph of $3412$ is a four-cycle and that of $4231$ is a
four-clique less one edge, both $2$-connected; a $2$-connected subgraph
lies in a single block.
\end{proof}

From \Cref{def:PA} and \Cref{lem:dict} we immediately get:
\begin{lemma}[exit trichotomy]\label{lem:exit3}
Let $\sigma$ be critical and $d$ any point. Then at least one of the
following holds.
\begin{enumerate}
\renewcommand{\theenumi}{M\arabic{enumi}}%
\renewcommand{\labelenumi}{(\theenumi)}%
\item\label{M:cut} The point $d$ is a cut point of $\sigma$.
\item\label{M:corner} The permutation $\sigma - d$ has a \good corner.
\item\label{M:tower} The permutation $\sigma - d$ is a tower.
\end{enumerate}
\end{lemma}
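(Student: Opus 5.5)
The plan is to examine the permutation $\tau=\sigma-d$, which lies in $\PA$ because $\sigma$ is critical (\Cref{def:crit}), and read off which clause of \Cref{def:PA} certifies its membership. First suppose $\tau$ is decomposable. By \Cref{lem:dict}\labelcref{dict:comp} this means $\igr_\tau=\igr_\sigma-d$ is disconnected. Since $\sigma$ is indecomposable by \eqref{eq:critbasic}, $\igr_\sigma$ is connected, and so $d$ is a cut point: this is \labelcref{M:cut}. For the rest of the argument we may therefore assume that $\tau$ is indecomposable.

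If $|\tau|=1$, then $|\sigma|=2$. This is impossible, because $n\ge5$ by \Cref{rem:psm45}. So $|\tau|>1$, and since $\tau$ is an indecomposable member of $\PA$, \Cref{def:PA} says that $\tau$ is either a tower or has a \good corner $\crn$ with $\tau-\crn\in\PA$. In the first case we are in \labelcref{M:tower}, and in the second we are in \labelcref{M:corner}. The case of the empty permutation does not arise, again because $n\ge 5$.

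This is essentially an unwinding of the definitions, so I do not expect a real obstacle. The only points needing care are the small degenerate sizes, which \Cref{rem:psm45} excludes, and the translation between decomposability of $\sigma-d$ and $d$ being a cut vertex of $\igr_\sigma$. That translation is exactly \Cref{lem:dict}\labelcref{dict:comp}, applied together with the identity $\igr_{\sigma-d}=\igr_\sigma-d$.
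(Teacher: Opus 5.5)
Your proposal is correct and follows the paper's own argument: the paper likewise treats the lemma as an immediate unwinding of \Cref{def:PA} (applied to $\sigma-d\in\PA$, with the size bound $n\ge5$ making the recursive clause available) together with \Cref{lem:dict}\labelcref{dict:comp}, which turns decomposability of $\sigma-d$ into $d$ being a cut point of the connected graph $\igr_\sigma$.
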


The following is elementary.
\begin{lemma}[plateau loss]\label{lem:plateaubound}
Let $\sigma$ be plateau-free and $d$ a point of $\sigma$. Then every
plateau pair of $\sigma - d$ is of exactly one of two kinds: its points
straddle $d$ in position, the deletion having made them adjacent, or they
straddle $d$ in value, the deletion having made their values consecutive.
A pair of the first kind determines the position of $d$ and a pair of the
second its value; so there is at most one of each, and in particular
\begin{equation}\label{eq:plateaubound}
   \sigma - d \text{ has at most two plateau pairs, and }
   |q(\sigma - d)| \ge |\sigma| - 3 .
\end{equation}
\end{lemma}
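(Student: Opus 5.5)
I will compare adjacency of positions and of values before and after the deletion. Identify the points of $\sigma-d$ with the points of $\sigma$ other than $d$. Let $x,y$ be a plateau pair of $\sigma-d$: $x$ precedes $y$, they are adjacent in position in $\sigma-d$, and the value of $y$ in $\sigma-d$ is one less than that of $x$.

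Deleting $d$ shifts by one every later position and every larger value. Hence two points adjacent in position in $\sigma-d$ are either adjacent in $\sigma$, or have exactly one position between them in $\sigma$, namely $\pos(d)$. Likewise, two points with consecutive values in $\sigma-d$ either have consecutive values in $\sigma$, or their values in $\sigma$ differ by two with $\sigma_d$ strictly between them.

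If neither straddling occurred, $x,y$ would be adjacent in position in $\sigma$ with $\sigma_y=\sigma_x-1$. That is a plateau pair of $\sigma$, contradicting plateau-freeness. So at least one straddling occurs.

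The two kinds are exclusive. Suppose both held, so $\pos(x)<\pos(d)<\pos(y)$ and $\sigma_x>\sigma_d>\sigma_y$. Then $x,d,y$ would be consecutive positions carrying the consecutive decreasing values $\sigma_x,\sigma_x-1,\sigma_x-2$. That is a plateau of $\sigma$ of length three, again contradicting plateau-freeness. This proves ``exactly one''.

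A pair of the first kind is the pair of points at positions $\pos(d)-1$ and $\pos(d)+1$ of $\sigma$, so it is determined by $\pos(d)$ and there is at most one such pair. A pair of the second kind is the pair of points with values $\sigma_d+1$ and $\sigma_d-1$, determined by $\sigma_d$, so there is at most one of those too. Thus $\sigma-d$ has at most two plateau pairs.

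Contracting plateaus lowers the size by exactly the number of plateau pairs, since each plateau of length $m$ contributes $m-1$ pairs. Hence
\[
|q(\sigma-d)| \ge (|\sigma|-1)-2 = |\sigma|-3,
\]
which is \eqref{eq:plateaubound}.

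The only step needing care is the exclusivity argument. It is the one place where plateau-freeness of $\sigma$ is used beyond length two: it rules out a hidden plateau of length three through $d$.
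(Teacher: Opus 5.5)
Your proof is correct and follows the paper's argument essentially step for step. The steps match: a plateau pair of $\sigma-d$ cannot already be one in $\sigma$, so $d$ must separate its two points in position or in value. Uniqueness of each kind comes from the neighbors of $d$ in position and in value. Exclusivity comes from the would-be plateau $x,d,y$ of length three. The size bound comes from removing one point per plateau pair. The only difference is that you spell out the index and value shifting, which the paper leaves implicit.
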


\begin{proof}
Let $\{x,y\}$ be a plateau pair of $\sigma - d$, with $x$ the earlier
point. Since $\sigma$ is plateau-free it is not a plateau pair of
$\sigma$, so $d$ separates $x$ and $y$ either in position or in value ---
these are the two kinds --- and in either case $d$ is the only point of
$\sigma$ that does so. Thus a pair of the first kind consists of the two
neighbors of $d$ in position and a pair of the second of the two
neighbors of $d$ in value, which gives the uniqueness of each kind. The
two kinds are exclusive: were $d$ to separate $x$ and $y$ both in
position and in value, the points $x,d,y$ would occupy three consecutive
positions of $\sigma$ carrying three consecutive decreasing values,
a plateau of length three. Finally, contracting the plateaus of
$\sigma-d$ removes one point per plateau pair, so $|q(\sigma - d)| \ge
(|\sigma|-1) - 2$.
\end{proof}

\begin{corollary}[no deletion is a tower]\label{cor:notower}
Let $\sigma$ be critical with $n \ge 9$. Then no one-point deletion of
$\sigma$ is a tower; equivalently, mode \labelcref{M:tower} of
\Cref{lem:exit3} does not occur.\footnote{As it turns out, the corollary holds for $n=8$ as well.}
\end{corollary}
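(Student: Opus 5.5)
The plan is to argue by contradiction. Suppose $\sigma$ is critical with $n\ge9$, and that $\sigma-d$ is a tower for some point $d$. By \Cref{lem:plateaubound}, the quotient $q(\sigma-d)$ has size at least $n-4$. The permutation $\sigma$ is plateau-free by \eqref{eq:critbasic}, so $\sigma-d$ has at most two plateau pairs. One pair may straddle $d$ in position and the other in value. Hence $\sigma-d=q[\ell]$ with $q$ one of the eight quotients \eqref{eq:eight}, with $\sum_i(\ell_i-1)\le 2$, and with at most one block of length $>1$ of each kind. Since $|q|\le 6$, we get $n-1=|\sigma-d|\le 6+2=8$. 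This forces $n\le 9$, so the only case left is $n=9$.

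For $n=9$ we are in a tightly constrained situation. We have $|q|=6$, so $q\in\{426153,463152,526413\}$. The inflation has exactly two blocks of length two (or one block of length three; but a length-three block contains two plateau pairs of the same kind, which is excluded by the uniqueness in \Cref{lem:plateaubound}). One of these plateau pairs straddles $d$ in position and the other straddles $d$ in value. This determines $\sigma$ from finitely many data:
\begin{itemize}
\item the choice of $q$;
\item the choice of the two inflated entries of $q$ (possibly equal? no, by the kind count: one pair is the two position-neighbours of $d$, the other the two value-neighbours, and these are distinct pairs, though they could share a point, which I will also allow);
\item the insertion of $d$ between the members of the position pair, at a value that splits the value pair.
\end{itemize}
The position of $d$ is forced, and so is its value, up to the obvious bookkeeping. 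Hence there are only a few dozen candidate $\sigma$'s, at most $3\cdot 6\cdot 6$ before symmetry reduction.

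For each candidate I would check that $\sigma$ is not critical. This is the ``smaller check of \Cref{cor:notower}'' referred to at the start of \Cref{sec:perm}. A candidate fails to be critical if it is itself in $\PA$, for example because it has a \good corner, so that \eqref{eq:nochoice} applies. It also fails if some one-point deletion of it is not in $\PA$, which is decided by the algorithm of \Cref{rem:algorithm}. Two reductions cut down the list before this check. First, the $\rc$-symmetry of \eqref{eq:eight} groups $463152$ with $526413$, and inversion further identifies candidates. Second, we may discard any $\sigma$ that is decomposable or has a \good corner, using \eqref{eq:critbasic}: in particular, inserting $d$ must block all four corners, which kills most placements immediately.

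The main obstacle is precisely this residual finite check. Conceptually, the reduction to $n=9$ and the description of the candidates are routine given \Cref{lem:plateaubound}. The verification that each remaining candidate either lies in $\PA$ or contains a smaller non-\psm pattern (typically one of the five non-\psm permutations of $S_5$ listed in \Cref{rem:psm45}) is case-by-case. I would first try to exhibit, for each surviving candidate, an explicit non-\psm proper pattern such as $34512$ or $52341$; this certifies non-criticality without running the full recursion. The remaining cases I would delegate to the external computation, as the paper does.
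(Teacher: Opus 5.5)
Your route is the paper's. You use plateau loss (\Cref{lem:plateaubound}) to force $n=9$, $|q(\sigma-d)|=6$ and exactly two plateau pairs in $\sigma-d$, one of each kind. Matching the pairs with the kinds pins down $d$, and a finite pattern check finishes. The paper records that $59$ of the $72$ distinct candidates contain one of the five non-\psm permutations of size $5$ from \Cref{rem:psm45}, and the remaining $13$ contain $361452$, $523614$ or $524613$. A small slip first: the bound from \Cref{lem:plateaubound} is $|q(\sigma-d)|\ge n-3$, not $n-4$. With $n-4$ you would only get $n\le 10$; your computation $n-1\le 6+2$ in fact uses $n-3$.

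The substantive problem is your exclusion of a block of length three, which is false. The lemma forbids two plateau pairs of the \emph{same} kind, but the two pairs of a length-three block can be of different kinds. Take four consecutive positions $x,d,y,z$ of $\sigma$ with values $v+1,v-1,v,v-2$, i.e.\ the pattern $4231$ with $d$ playing the role of the $2$. Then $\sigma$ has no plateau pair there. In $\sigma-d$, however, $x,y,z$ form a plateau of length three, with $\{x,y\}$ straddling $d$ in position and $\{y,z\}$ straddling it in value. So for each quotient you must check the $6$ compositions of $8$ into six parts having a part equal to $3$, alongside the $15$ with two parts equal to $2$. Each such $\tau$ gives two candidates, one per matching of pairs to kinds; this is the paper's count of $2\cdot 21$ towers and $84$ candidates after the $\rc$ reduction. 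Your parenthetical ``they could share a point, which I will also allow'' would cover these cases, but it contradicts the sentence just before it. Your bound $3\cdot 6\cdot 6$ also undercounts them: the correct count is $3\cdot 42$ before symmetry. Once the case list includes these compositions consistently, the argument coincides with the paper's, including its reliance on an external finite verification.
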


\begin{proof}
Suppose that $\tau = \sigma - d$ is a tower. Being critical, $\sigma$ is
plateau-free by \eqref{eq:critbasic}, so the quotient of $\tau$ has at
least $n-3$ points (\eqref{eq:plateaubound}), while the quotient of a
tower is one of the eight permutations of \Cref{def:towers}, of size at
most $6$. For $n \ge 10$ this is already a contradiction, so let $n = 9$.

Then the quotient of $\tau$ has exactly six points, so it is one of the
three tower quotients of that size. By symmetry, we may assume that the
quotient is $426153$ or $463152$, as the third one $526413$ is obtained
from the second one by $\rc$. Thus $\tau = q[\ell]$ for a composition
$\ell$ of $8$ into six positive parts, which leaves $2 \cdot 21 = 42$
possibilities for $\tau$. Moreover $\tau$ has exactly two plateau pairs,
one of each kind, and by \Cref{lem:plateaubound} the first fixes the
position of $d$ and the second its value. The two ways of matching the
pairs with the two kinds give exactly two candidates for each $\tau$,
hence $84$ in all, $72$ of them distinct.

A direct verification shows that $59$ of these $72$ permutations contain
as a proper pattern one of the five permutations of size $5$ listed in
\Cref{rem:psm45}, and that the remaining $13$ contain one of
\[
   361452,\quad 523614,\quad 524613 .
\]
None of these eight permutations is \psm. This refutes the criticality of $\sigma$.
\end{proof}

\begin{lemma}[core confinement]\label{lem:coreconf}
Let $\sigma$ be critical, and suppose that no one-point deletion of $\sigma$ is a tower.
(This holds for $n\ge9$ by \Cref{cor:notower}.)
Then the following hold.
\begin{enumerate}
\item\label{cc:conf} Every point of $\sigma$ which is not a cut point lies in
$\Cm \cup \Cp$.
\item\label{cc:not2} If $\igr_\sigma$ is $2$-connected then every point of
$\sigma$ lies in $\Cm \cup \Cp$, and in particular $n \le 8$.\footnote{As it turns out, at $n = 8$ every critical permutation has a
unique \SWcr \core{} and a unique \NEcr \core, and the two are disjoint, so that every point lies in exactly one of them;
the ones whose inversion graph is $2$-connected are $42863175$ and its
inverse $62518473$.}
\end{enumerate}
\end{lemma}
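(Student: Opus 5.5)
The plan is a direct contrapositive through the exit trichotomy, \Cref{lem:exit3}. For \labelcref{cc:conf}, take a point $d$ of $\sigma$ that is not a cut point and suppose $d\notin\Cm\cup\Cp$. Since \labelcref{M:cut} fails by assumption and \labelcref{M:tower} is excluded by hypothesis, \Cref{lem:exit3} leaves \labelcref{M:corner}: $\sigma-d$ has a \good corner. The goal is to contradict this by showing that $\sigma-d$ is \cb.

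\emph{Step 1: the corners do not move.} By \Cref{def:cores}\labelcref{core:corners}, the four corners of $\sigma$ lie in $\Cm\cup\Cp$, so $d$ is none of them. Deleting a non-corner point leaves the leftmost, lowest, highest and rightmost points unchanged. Hence, under the identification of the points of $\sigma-d$ with those of $\sigma$ other than $d$, the \Wcr, \Scr, \Ncr and \Ecr corners of $\sigma-d$ are the same points as in $\sigma$.

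\emph{Step 2: the corners stay blocked.} For the \Wcr corner $W$, the window in $\sigma-d$ consists of the later points of smaller value. This is exactly $\win(W)\setminus\{d\}$ computed in $\sigma$, since relative order of positions and values is preserved under standardization. By \Cref{def:cores}\labelcref{core:ascent}, $\win(W)$ contains an ascent pair of points of $\Cm$. Neither point is $d$, so the pair survives in the window in $\sigma-d$, which is therefore not decreasing; thus $W$ remains blocked. The same argument applies to the \Scr corner using $\Cm$, and to the \Ncr and \Ecr corners using $\Cp$ (via $\rc$, or directly from the dual definition of $\Cp$). Hence all four corners of $\sigma-d$ are blocked, and $\sigma-d$ has no \good corner, contradicting \labelcref{M:corner}. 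This proves \labelcref{cc:conf}.

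\emph{Step 3: part \labelcref{cc:not2}.} If $\igr_\sigma$ is $2$-connected, it has no cut vertex, so $\sigma$ has no cut point. By \labelcref{cc:conf}, every point then lies in $\Cm\cup\Cp$, and $n\le|\Cm|+|\Cp|=8$.

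The only point needing care is the claim in Step 2 that windows are computed compatibly after deletion. This holds because the window is defined purely by relative position and value, which standardization preserves. Otherwise the argument is routine.
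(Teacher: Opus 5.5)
Your proposal is correct and follows the paper's proof essentially step for step. The corners lie in $\Cm\cup\Cp$, so deleting a non-cut point $d\notin\Cm\cup\Cp$ leaves them as the corners of $\sigma-d$, with windows that still contain the ascent pairs from \Cref{def:cores}; hence $\sigma-d$ is \cb, which excludes \labelcref{M:corner}, while the other two modes of \Cref{lem:exit3} are ruled out by hypothesis, and part \labelcref{cc:not2} follows from the same count $n\le|\Cm|+|\Cp|=8$.
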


\begin{proof}
Let $w \notin \Cm \cup \Cp$ be a point which is not a cut point. Then $\sigma - w$ is
indecomposable (\Cref{lem:dict}\labelcref{dict:comp}). The four corners
lie in $\Cm \cup
\Cp$, so they survive and are the corners of $\sigma - w$; the window of
each in $\sigma - w$ is its window in $\sigma$ with $w$ removed, hence still
contains the ascent pair provided by
\Cref{def:cores}\labelcref{core:ascent}. So all four
corners of $\sigma - w$ are blocked, and modes \labelcref{M:cut} and
\labelcref{M:corner} of \Cref{lem:exit3} both fail at $w$; hence $\sigma - w$
is a tower, contrary to the hypothesis. This proves \labelcref{cc:conf}.

If $\igr_\sigma$ is $2$-connected then $\sigma$ has no cut point, so
every point lies in $\Cm \cup \Cp$ by \labelcref{cc:conf}, whence
$n \le |\Cm| + |\Cp| = 8$.
\end{proof}

We can now state the main structural statement about the blocks of $\sigma$.

\begin{proposition}[the \core{} blocks are the leaves]\label{prop:leaves}
Let $\sigma$ be critical, and suppose that no one-point deletion of $\sigma$ is
a tower and that $\igr_\sigma$ is not $2$-connected. Then $\Rm$ and $\Rp$ are distinct,
and they are exactly the leaf blocks of $\sigma$. Moreover:
\begin{enumerate}
\item\label{lv:attach} Either $\Rm = \Cm$ or $\Rm = \Cm \cup \{a_-\}$, where
$a_-$ is the unique cut point of $\sigma$ in $\Rm$, its
\emph{attachment}; in particular $4 \le |\Rm| \le 5$. Since $\Rm$ is the
leaf block containing the \Wcr and \Scr corners, this determines $\Cm$:
the \SWcr \core{} is unique, and so is the \NEcr \core.
\item\label{lv:sep} Every point outside $\Rm$ lies northeast of every
point of $\Rm \setminus \{a_-\}$.
\item\label{lv:notwitness} Suppose $|\Rm| = 5$, so that $\Rm = \Cm \cup
\{a_-\}$ with $a_- \notin \Cm$. Then either $\sigma_{a_-} > \sigma_c$ for every
$c \in \Cm$, or $\pos(a_-) > \pos(c)$ for every $c \in \Cm$. In
particular $a_-$ is not a witness against any inversion with both
endpoints in $\Cm$.
\item\label{lv:cycle} The four minimal inversions of the pattern of
$\Cm$ are edges of $\cgr_\sigma$, and they form a four-cycle $\cyclem$
of $\cgr_\sigma$ with vertex set $\Cm$.
Similarly for $\Cp$. The four-cycles $\cyclem$ and $\cyclep$ are distinct.
\end{enumerate}
\end{proposition}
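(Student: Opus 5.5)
The plan is to work entirely in the block tree of $\igr_\sigma$, using core confinement (\Cref{lem:coreconf}\labelcref{cc:conf}) as the main engine. Since $\igr_\sigma$ is connected (by \eqref{eq:critbasic}) but not $2$-connected, it has at least two leaf blocks. A leaf block $L$ has exactly one cut point, and every other point of $L$ is a non-cut point, so it lies in $\Cm\cup\Cp$. The first step is to show that each leaf block contains a corner. If a leaf block $L$ contained no corner, then $L$ minus its attachment would be a nonempty set of non-cut points. These would lie in $\Cm\cup\Cp\subseteq\Rm\cup\Rp$. Because they are non-cut points, they lie in exactly one block, so $L\in\{\Rm,\Rp\}$; but those blocks contain corners. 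Hence every leaf block is $\Rm$ or $\Rp$. Since there are at least two leaf blocks, $\Rm\ne\Rp$ and these are exactly the leaves.

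For \labelcref{lv:attach}, every point of $\Rm$ other than its unique cut point $a_-$ lies in $\Cm\cup\Cp$. The points of $\Cp\cap\Rm$ must be cut points, since $\Cp\subseteq\Rp\ne\Rm$ and the non-cut points lie in a single block. Thus $\Rm\setminus\{a_-\}\subseteq\Cm$, giving $\Rm=\Cm$ or $\Cm\cup\{a_-\}$. Uniqueness of the core follows because any \SWcr core lies in the block containing $W$ and $S$, namely $\Rm$, which has at most five points. I then expect to exclude a second core by the ascent condition, or else observe that the statement only needs it once $\Rm$ is fixed.

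For \labelcref{lv:sep}, I would use that removing the cut point $a_-$ splits $\igr_\sigma$ into components. By \Cref{lem:dict}\labelcref{dict:comp}, $\sigma-a_-$ decomposes as a direct sum, and $\Rm\setminus\{a_-\}$ (which contains $W,S$) forms, by \Cref{lem:dict}\labelcref{dict:sumorder}, the first summand or union of initial summands. Everything else, being in later components, lies northeast. Here I must check that $\Rm\setminus\{a_-\}$ is a whole union of components of $\igr_\sigma-a_-$; this holds since $\Rm$ is a leaf block. Part \labelcref{lv:notwitness} then follows from a case analysis. The point $a_-$ has a neighbor outside $\Rm$, which lies northeast of all of $\Cm$. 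For $a_-$ to be inverted with it, $a_-$ must be either later or higher than it, hence later or higher than every point of $\Cm$ in at least one coordinate. I expect that $2$-connectivity of $\Rm$ (each vertex has $\ge2$ neighbors in $\Cm$) forces a uniform choice. A witness must lie strictly inside a rectangle spanned by two points of $\Cm$, which is excluded.

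For \labelcref{lv:cycle}, I would apply \Cref{lem:blockloc}. The minimal inversions of $\sigma$ inside $\Rm$ are those of $\std(\sigma|_{\Rm})$. If $\Rm=\Cm$, the four minimal inversions of $3412$ or $4231$ are immediate. If $|\Rm|=5$, part \labelcref{lv:notwitness} shows that $a_-$ witnesses nothing inside $\Cm$. The distinctness of $\cyclem$ and $\cyclep$ follows because their vertex sets lie in different blocks and share at most the attachment point. The main obstacle I anticipate is \labelcref{lv:notwitness}, where the geometric case analysis for the attachment point must be made airtight.
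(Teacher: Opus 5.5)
Your route is the paper's: confinement identifies the leaf blocks, the same inclusion gives $\Rm\setminus\{a_-\}\subseteq\Cm\subseteq\Rm$, and you then use deletion of $a_-$ with \Cref{lem:dict}\labelcref{dict:sumorder}, a neighbour of $a_-$ outside $\Rm$, and \Cref{lem:blockloc}. But the uniqueness of the \SWcr \core{} in \labelcref{lv:attach} is left unproved. Since it is part of the statement, ``the statement only needs it once $\Rm$ is fixed'' is not available. The bound $|\Rm|\le 5$ does not settle it either: when $|\Rm|=5$, another core could a priori be $\Rm\setminus\{c\}$ for some $c\in\Cm$, and so contain $a_-$.

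The missing step is to rerun your inclusion argument for an arbitrary \SWcr \core{} $C'_-$. \Cref{lem:coreconf} holds for any choice of cores. Moreover $C'_-$ lies in $\Rm$: it lies in one block (\Cref{lem:core}) containing the distinct \Wcr and \Scr corners, and two blocks share at most one point. Hence $\Rm\setminus\{a_-\}\subseteq C'_-\subseteq\Rm$, which forces $C'_-=\Cm$. This is your ``ascent condition'' idea made precise: the excluded point $c$ would be a non-cut point outside $C'_-\cup\Cp$, and deleting it would leave all four corners blocked.

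Second, the step you single out as the main obstacle is already complete where you stop. The alternative is decided once, by the single inversion $\{a_-,x\}$, and transported to all of $\Cm$ through $x$:
\begin{itemize}
\item if $a_-$ is later than $x$, then $\pos(a_-)>\pos(x)>\pos(c)$ for every $c\in\Cm$;
\item if $a_-$ is higher than $x$, then $\sigma_{a_-}>\sigma_x>\sigma_c$ for every $c\in\Cm$.
\end{itemize}
No $2$-connectivity is needed. The per-point reading (``for each $c$, later or higher than $c$'') would not suffice, since a point later than $u$ and higher than $v$ can witness against $(u,v)$.

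A few minor points. In \labelcref{lv:sep} you need $\Rm\setminus\{a_-\}$ to be exactly the first summand, not a union of initial ones. It is connected because $\igr_\sigma[\Rm]$ is $2$-connected, and no edge leaves it except through $a_-$ because $\Rm$ is a leaf block. It need not contain both \Wcr and \Scr (for $|\Rm|=4$ the attachment could a priori be one of them), but \Cref{lem:dict}\labelcref{dict:sumorder} only uses the surviving one. Finally, the ``no corner'' framing in your first step is superfluous: the non-cut point of $L$ already places $L$ in $\{\Rm,\Rp\}$ directly.
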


\begin{proof}
\Cref{lem:coreconf} applies, by the first hypothesis. By the second,
$\sigma$ has at least two blocks and its block tree at least two leaf
blocks. Let $L$ be a leaf block. It contains a point
other than its unique cut point; that point is not a cut point, so it lies in no
other block, and it belongs to $\Cm \cup \Cp$ by \labelcref{cc:conf}; since $\Cpm \subseteq \Rpm$, its unique block is
$\Rm$ or $\Rp$, so $L \in \{\Rm,\Rp\}$. If $\Rm = \Rp$ there would be at
most one leaf block; so $\Rm \ne \Rp$, and these are exactly the two
leaves.

Being a leaf block, $\Rm$ contains exactly one cut point $a_-$. Its
other points are not cut points, hence lie in $\Cm \cup \Cp$; a point of $\Cp$
lies in $\Rp \ne \Rm$ and, not being a cut point, in no other block, so they lie
in $\Cm$. With $\Cm \subseteq \Rm$ this gives $\Rm \setminus \{a_-\}
\subseteq \Cm \subseteq \Rm$. For the uniqueness, note that any \SWcr \core{}
lies in a single block (\Cref{lem:core}) containing the \Wcr and \Scr corners,
which are distinct, and two blocks share at most one vertex; so that block is
$\Rm$, and the inclusions just proved, applied to this \core, leave no choice:
it is $\Rm$ if $|\Rm| = 4$ and $\Rm \setminus \{a_-\}$ if $|\Rm| = 5$.

\labelcref{lv:sep} Delete $a_-$. The set $\Rm \setminus \{a_-\}$ is
contained in a single component of $\igr_\sigma - a_-$, since
$\igr_\sigma[\Rm]$ is $2$-connected and $|\Rm| \ge 4$. That component
contains the surviving one of
the \Wcr and \Scr corners, hence is the first summand by
\Cref{lem:dict}\labelcref{dict:sumorder}; and since $\Rm$ is a leaf block with attachment
$a_-$, every point outside $\Rm$ lies in a later component, northeast of
all of $\Rm \setminus \{a_-\}$.

\labelcref{lv:notwitness} Suppose $|\Rm| = 5$, so that $\Rm \setminus
\{a_-\} = \Cm$. Being a cut point, $a_-$ lies in a block other than
$\Rm$, which meets
$\Rm$ in $a_-$ alone; let $x \ne a_-$ be a point of that block adjacent to
$a_-$ in $\igr_\sigma$, so that $x \notin \Rm$ and $\{a_-,x\}$ is an inversion. By
\labelcref{lv:sep}, $x$ lies northeast of every point
of $\Cm$:
\begin{equation}\label{eq:xNE}
   \pos(x) > \pos(c) \quad\text{and}\quad \sigma_x > \sigma_c
   \qquad \text{for every } c \in \Cm .
\end{equation}
If $\pos(a_-) < \pos(x)$, then the inversion $\{a_-,x\}$ gives
$\sigma_{a_-} > \sigma_x$, and \cref{eq:xNE} gives $\sigma_x > \sigma_c$ for every $c
\in \Cm$; so $\sigma_{a_-} > \sigma_c$ for every $c \in \Cm$. If $\pos(a_-) >
\pos(x)$, then \cref{eq:xNE} gives $\pos(a_-) > \pos(x) > \pos(c)$ for
every $c \in \Cm$.

These are the two alternatives. In the first, $a_-$ has a value above
every value of $\Cm$, and in the second, a position after every position
of $\Cm$; in neither case can $a_-$ lie strictly between two points of
$\Cm$ in both position and value.

\labelcref{lv:cycle} The pattern of $\Cm$ is $3412$ or $4231$
(\Cref{def:cores}\labelcref{core:pattern}). A direct
inspection gives
\begin{align*}
   \Cov(3412) &= \bigl\{(1,3),\ (1,4),\ (2,3),\ (2,4)\bigr\} ,\\
   \Cov(4231) &= \bigl\{(1,2),\ (1,3),\ (2,4),\ (3,4)\bigr\} ,
\end{align*}
a four-cycle in either case: for $3412$ all four inversions are minimal,
and for $4231$ the five inversions are these four together with $(1,4)$,
which has the witness of value $2$.

Fix one of these four pairs $\{u,v\}$, with $u,v \in \Cm$. No point of
$\Cm$ witnesses against it, by the displayed lists. By
\labelcref{lv:attach} the only other possible point of
$\Rm$ is the attachment $a_-$, and if it is there it does not witness
against $\{u,v\}$ either, by \labelcref{lv:notwitness}. Hence $\{u,v\}$ is a
minimal inversion of $\std(\sigma|_{\Rm})$, and therefore an edge of
$\cgr_\sigma$ by \Cref{lem:blockloc}, the two endpoints lying in the block
$\Rm$. The four pairs thus give a four-cycle of $\cgr_\sigma$ with vertex set $\Cm$.

Finally, the statement for $\Cp$ follows by applying $\rc$. The two
cycles have vertex sets $\Cm$ and $\Cp$, so they are distinct as soon as
$\Cm \ne \Cp$; and if $\Cm = \Cp$ then $\Rm$ and $\Rp$ would both be the
unique block containing that set (\Cref{lem:core}), against $\Rm \ne \Rp$.
\end{proof}

\begin{figure}[!htbp]
\centering
\begin{tikzpicture}[scale=0.52,baseline=(current bounding box.center)]
\draw[gray!25] (0.35,0.35) rectangle (9.65,9.65);
\draw[covedge] (1,3)--(3,1);
\draw[covedge] (1,3)--(5,2);
\draw[covedge] (2,4)--(3,1);
\draw[covedge] (2,4)--(5,2);
\draw[covedge] (4,6)--(5,2);
\draw[covedge] (4,6)--(8,5);
\draw[covedge] (6,8)--(8,5);
\draw[covedge] (6,8)--(9,7);
\draw[covedge] (7,9)--(8,5);
\draw[covedge] (7,9)--(9,7);
\node[permdot] at (1,3) {};
\node[permdot] at (2,4) {};
\node[permdot] at (3,1) {};
\node[permdot] at (4,6) {};
\node[permdot] at (5,2) {};
\node[permdot] at (6,8) {};
\node[permdot] at (7,9) {};
\node[permdot] at (8,5) {};
\node[permdot] at (9,7) {};
\draw[corebox] (0.55,0.55) rectangle (5.45,4.45);
\draw[corebox] (5.6,4.55) rectangle (9.45,9.45);
\node[cutdot] at (5,2) {};
\node[cutdot] at (4,6) {};
\node[cutdot] at (8,5) {};
\node[font=\scriptsize,below right=1pt and 2pt] at (5,2) {$a_-$};
\node[font=\scriptsize,below right=1pt and 2pt] at (8,5) {$a_+$};
\node[font=\scriptsize,above left=1pt and 2pt] at (4,6) {$4$};
\node[font=\footnotesize,anchor=east] at (0.15,2.5) {$\Rm=\Cm$};
\node[font=\footnotesize,anchor=west] at (9.85,7.0) {$\Rp=\Cp$};
\foreach \x in {1,...,9}
  {\node[gray!75,font=\tiny,anchor=north] at (\x,0.3) {\x};}
\end{tikzpicture}
\caption{The critical permutation $\sigma = 341628957$ of \Cref{tab:crit},
drawn with its minimal inversions. Here all ten inversions are minimal, so
$\igr_\sigma = \cgr_\sigma$ and one picture carries both graphs. The blocks of
$\sigma$ are the point sets $\{1,2,3,5\}$ and $\{6,7,8,9\}$ (indexed by position),
on each of which $\igr_\sigma$ induces a four-cycle,
together with $\{4,5\}$ and $\{4,8\}$, which induce bridges; the cut
points are the circled points $5,4,8$. The first two blocks are the leaf
blocks $\Rm$, $\Rp$ of \Cref{prop:leaves}, here equal to the \cores $\Cm$,
$\Cp$, with attachments $a_-=5$ and $a_+=8$; each has pattern $3412$. Every
point outside $\Rm$ lies northeast of every point of $\Rm\setminus\{a_-\}$, as
\Cref{prop:leaves}\labelcref{lv:sep} asserts. The graph
is a dumbbell: two four-cycles joined by the path
$5,4,8$, with $|\Cov(\sigma)| = 10 = n+1$.}
\label{fig:critical}
\end{figure}

\subsection{Proof of \texorpdfstring{\Cref{thm:covercount}}{the theorem}}\label{ssec:proofcover}

The sizes $n \le 8$ are handled by an exhaustive computation.

\begin{proposition}[the small sizes]\label{prop:critsmall}
The critical permutations $\sigma$ with $|\sigma| \le 8$ are the
fifty-seven listed in \Cref{tab:crit}, falling into twenty-two orbits
of the Klein four-group; the smallest are of size $5$, and each of the fifty-seven
satisfies $|\Cov(\sigma)| = |\sigma| + 1$. The table also lists the
eighteen critical permutations of size $9$, which are not needed here.
\end{proposition}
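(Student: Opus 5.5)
This is a finite computation, and I would organize it so that the search space stays small and every step can be checked mechanically. The key reduction is that, by \eqref{eq:PAhered}, a permutation is critical exactly when it is not \psm and every one-point deletion is \psm. Criticality is therefore decided by $n+1$ membership tests in $\PA$. Each test is the $O(n^2)$ procedure of \Cref{rem:algorithm}: split into indecomposable summands, peel \good corners (by \eqref{eq:nochoice} the order does not matter), and at the end compare the plateau quotient with \eqref{eq:eight}.

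\textbf{Candidate generation.} Rather than scanning all of $S_n$ for $n\le 8$, which is feasible anyway ($8!=40320$), I would generate candidates level by level. A critical $\sigma$ of size $n$ has all its one-point deletions in $\PA$. Moreover, by \eqref{eq:critbasic} it is indecomposable, \cb, plateau-free, and not a tower. So I would enumerate the $\sigma\in S_n$ passing these cheap filters and then check the deletions. The base cases are \Cref{rem:psm45}, which already gives that nothing of size $\le 4$ is critical and that the five listed permutations of size $5$ are exactly the critical ones there. The output for each $n\le 8$ is then compared entry by entry with \Cref{tab:crit}. Next I would group the list into orbits of the Klein four-group generated by $\sigma\mapsto\sigma^{-1}$ and $\rc$, check that there are twenty-two orbits, and confirm that the total is fifty-seven. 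Both criticality and $\PA$ are invariant under this group, so it suffices to test orbit representatives and transport the results.

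\textbf{Counting minimal inversions.} For each listed $\sigma$ I would compute $\Cov(\sigma)$ directly from its definition as the inversions with no witness, and verify $|\Cov(\sigma)|=|\sigma|+1$. \Cref{rem:covexact} already gives $|\Cov(\sigma)|\le n+1$ for critical $\sigma$, so only the lower bound needs checking. Since $\cgr_\sigma$ is connected by \Cref{lem:dict}\labelcref{dict:cover}, an alternative is to exhibit two independent cycles. For most entries the two four-cycles on the \cores from \Cref{prop:leaves}\labelcref{lv:cycle} should do this, but for $n\le8$ that proposition need not apply, so I would simply count. The size-$9$ list is not needed and would be produced by the same code as a sanity check.

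\textbf{Main obstacle.} The difficulty is not mathematical but one of reliability: the statement is only as trustworthy as the implementation of the $\PA$ test. The delicate points there are corner coincidences and the tower recognition, which requires correctly contracting plateaus. To guard against errors I would cross-validate in two ways. First, for all $n\le 7$ (and ideally $n=8$), compare the $\PA$ test against a direct rank computation of $\Sop^\sigma_{\lambda,\mu}$ at random parameters over a large prime field, using \Cref{cor:psGS} together with \Cref{thm:forward}. Second, confirm that the symmetric counts agree across each Klein orbit.
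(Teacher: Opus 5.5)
Your plan is correct and is essentially the paper's proof. The paper runs a computer sieve over the indecomposable \cb candidates, applying the deterministic test of \Cref{rem:algorithm} to $\sigma$ and to each of its one-point deletions, and then counts $\Cov(\sigma)$ directly. Your added cross-check against random ranks of $\Sop^\sigma_{\lambda,\mu}$ is a reasonable sanity check of the code. It is not independent confirmation in the non-\psm direction, however, because the converse half of \Cref{thm:mainA} is exactly what this proposition is used to prove.
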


\begin{proof}
By computer. The criticals are obtained by sieving: a permutation is
critical when it fails the recursion of \Cref{def:PA} while each of its
one-point deletions passes it, using \Cref{rem:algorithm};
\Cref{tab:census} records how few of the indecomposable \cb permutations
survive. For each of them $|\Cov(\sigma)|$ is computed directly.
\end{proof}

\begin{remark}[a smaller search]\label{rem:smallsearch}
For $n = 7,8$ the computation can be cut down substantially,
since we only have to consider permutations $\sigma$ that fail at least one of the
two hypotheses of \Cref{prop:leaves}. If $\sigma$ is plateau-free and some $\tau=\sigma - d$ is a tower, then
\Cref{lem:plateaubound} restricts the possibilities for $\tau$ and $\sigma$: for $n = 8$ there are $63$
towers of size $7$ whose quotient has at least five points, out of the $103$ of
\Cref{tab:census}, and they yield $243$ candidates; and for
$n = 7$ all $38$ towers of size $6$ qualify and they yield $194$. If
no $\sigma - d$ is a tower but $\igr_\sigma$ is $2$-connected, then every point of $\sigma$ lies in $\Cm \cup \Cp$ by
\Cref{lem:coreconf}\labelcref{cc:not2}. Such decompositions are
enumerated directly: choose the pattern of each \core and which of its
points is shared, then interleave the two position orders around the
shared point and, independently, the two value orders, subject to the
\Wcr and \Scr corners falling in $\Cm$ and the \Ncr and \Ecr corners in
$\Cp$. At $n = 8$ the two \cores are disjoint and there is nothing to
share: $1600$ decompositions, yielding $1273$ permutations. At $n = 7$
they meet in one point: $729$ decompositions, yielding $434$
permutations.
So we only need to verify \Cref{thm:covercount} for the critical ones among the explicitly generated
$243+1273$ (resp., $194+434$) candidates for $n=8$ (resp., $n=7$).
\end{remark}

\begin{proof}[Proof of \Cref{thm:covercount}]
For $n \le 8$ this is \Cref{prop:critsmall}. Let $n \ge 9$. The hypotheses of
\Cref{prop:leaves} hold, by \Cref{cor:notower} and
\Cref{lem:coreconf}\labelcref{cc:not2}, so $\cgr_\sigma$ --- connected by
\Cref{lem:dict}\labelcref{dict:cover} --- carries two distinct four-cycles by
\Cref{prop:leaves}\labelcref{lv:cycle}, and in particular is not a pseudotree.
\end{proof}

\begin{corollary}[the shape for $n \ge 9$]\label{cor:dumbbell}
Let $\sigma$ be critical of size $n \ge 9$. Then $\cgr_\sigma$ is a dumbbell:
the four-cycles $\cyclem$ and $\cyclep$ of \Cref{prop:leaves}, necessarily
vertex-disjoint, joined by a path through the remaining $n-8$ points.
\end{corollary}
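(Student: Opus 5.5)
The plan is to combine the exact edge count of \Cref{rem:covexact} with the two four-cycles of \Cref{prop:leaves}\labelcref{lv:cycle}. Since $n\ge9$, the hypotheses of \Cref{prop:leaves} hold by \Cref{cor:notower} and \Cref{lem:coreconf}\labelcref{cc:not2}. By \Cref{thm:covercount} together with \Cref{rem:covexact}, $\cgr_\sigma$ is connected, has $n+1$ edges, and has minimum degree at least two. So it is a theta graph, two cycles sharing one vertex, or a dumbbell. Each of these has cycle rank $2$, and it contains the two distinct four-cycles $\cyclem$ and $\cyclep$.

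The first step rules out the theta graph and the figure-eight with a shared vertex by showing that $\Cm\cap\Cp=\varnothing$. Since $\Rm\ne\Rp$ are distinct blocks, they share at most one point, and that point would be a cut point. By \Cref{prop:leaves}\labelcref{lv:attach}, a cut point in $\Rm$ must be $a_-$. So $\Cm\cap\Cp\subseteq\{a_-\}\cap\{a_+\}$, and such a common point must be the attachment of both leaves.

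If $a_-=a_+$ lay in both cores, then both $\Rm=\Cm$ and $\Rp=\Cp$ would hold. Every block is then contained in $\Rm\cup\Rp$, since these are the only leaves and they meet at the single cut point $a_-$; a leaf with its attachment in the other leaf leaves no room for intermediate blocks. This gives $n\le7$, a contradiction. More concretely, the block tree would consist of just these two blocks, so $n=|\Rm|+|\Rp|-1=7$.

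So the cores are vertex-disjoint. A theta graph cannot contain two vertex-disjoint cycles, and neither can two cycles sharing a vertex. Hence $\cgr_\sigma$ is a dumbbell. In a dumbbell the only cycles are the two end cycles, so these must be exactly $\cyclem$ and $\cyclep$. The remaining $n-8$ points, all outside $\Cm\cup\Cp$, then lie on the connecting path. That path joins a vertex of $\Cm$ to a vertex of $\Cp$, and this is the claim.

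The main obstacle is the disjointness step, specifically ruling out the case where the shared point is a cut point common to both leaf blocks. I expect the counting above, which bounds $n$ by $7$ when the two leaves touch, to settle it, possibly supplemented by \Cref{prop:leaves}\labelcref{lv:sep} to exclude degenerate configurations.
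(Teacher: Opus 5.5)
Your proof is correct, but it excludes the theta graph and the figure-eight by a different route from the paper. The paper only counts vertices. A theta graph containing two distinct four-cycles has at most six vertices, and two four-cycles sharing a vertex span seven. So for $n\ge 9$ only the dumbbell remains, and the vertex-disjointness of $\cyclem$ and $\cyclep$ is then read off from the dumbbell shape.

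You instead first prove $\Cm\cap\Cp=\varnothing$ from the block structure of $\igr_\sigma$. A common point must be the common attachment $a_-=a_+$. That forces $\Rm=\Cm$ and $\Rp=\Cp$, and collapses the block tree to these two blocks, giving $n=7$. You then use that neither a theta graph nor a figure-eight contains two vertex-disjoint cycles.

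The paper's version is shorter. It needs nothing beyond the shape classification of \Cref{rem:covexact} and the existence of two distinct four-cycles. Yours goes back into the block tree, but in exchange it yields the disjointness of the \cores as a fact about the inversion graph, independent of the shape classification.

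When you write it up, make the block-tree step explicit. Since $\Rm$ and $\Rp$ are the only leaf blocks, the block tree is a path with these two endpoints. If both are adjacent to the cut vertex $a_-=a_+$, that path has only three nodes, so there are no other blocks. Your closing hedge about needing \Cref{prop:leaves}\labelcref{lv:sep} is unnecessary.
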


\begin{proof}
By \Cref{rem:covexact}, $\cgr_\sigma$ is a theta graph, two cycles sharing a
vertex, or a dumbbell, and it contains the two distinct four-cycles $\cyclem$,
$\cyclep$. A theta graph with two distinct four-cycles has at most six
vertices, and two four-cycles sharing a vertex have seven.
\end{proof}

\begin{remark}[the shape of the cover graph for small sizes]\label{rem:smallshape}
A direct computation shows that all three shapes allowed by
\Cref{rem:covexact} occur among the critical permutations of
size at most $8$. For $n = 5$ all five are the theta graph $K_{2,3}$. For
$n = 6$ all thirteen are theta graphs, with paths of lengths $1,3,3$ (the
orbits of $351624$, $364152$, $426351$, $635241$) or $2,2,3$ (the orbits of
$361452$ and $462513$). For $n = 7$ the cover graph is two four-cycles
sharing a vertex for all but one orbit; for the orbit of $3517462$ it is a theta graph with paths of
lengths $1,3,4$, which contains a single four-cycle. For $n = 8$ all
eighteen are dumbbells, two four-cycles joined by a single edge; for $n = 9$
all eighteen are dumbbells with a path of length two, as in
\Cref{fig:critical}.
\end{remark}

\begin{table}[htbp]
\centering
\renewcommand{\arraystretch}{1.25}
\begin{tabular}{@{}l@{\quad}l@{\quad}l@{\quad}>{\raggedright\arraybackslash}p{0.66\textwidth}@{}}
\hline
$n$ & \# & orbits & the critical permutations of size $n$\\
\hline
$5$ & $5$ & $3$ &
   $\{34512,\,45123\}$, $\{45231,\,53412\}$, $\{52341\}$\\
$6$ & $13$ & $6$ &
   $\{351624\}$, $\{361452,\,523614\}$,
   $\{364152,\,461352,\,524613,\,526314\}$, $\{426351,\,624153\}$,
   $\{462513,\,536142\}$, $\{635241,\,642531\}$\\
$7$ & $21$ & $7$ &
   $\{3416725,\,3612745\}$, $\{3417562,\,3712564,\,4236715,\,6231745\}$,
   $\{3517462,\,3715264,\,4263715,\,6241735\}$,
   $\{3751264,\,4267315,\,4517362,\,6251734\}$, $\{4237561,\,7231564\}$,
   $\{4275631,\,5347261,\,7261453,\,7523164\}$, $\{5274163\}$\\
$8$ & $18$ & $6$ &
   $\{34172856,\,35127846\}$,
   $\{34182675,\,35128674,\,42371856,\,52317846\}$,
   $\{34186275,\,36128574,\,42731856,\,52417836\}$,
   $\{42381675,\,52318674\}$,
   $\{42386175,\,42831675,\,52418673,\,62318574\}$,
   $\{42863175,\,62518473\}$\\
$9$ & $18$ & $6$ &
   $\{341628957,\,351284967\}$,
   $\{341629785,\,351294786,\,423618957,\,523184967\}$,
   $\{341729685,\,351297486,\,426318957,\,524183967\}$,
   $\{423619785,\,523194786\}$,
   $\{423719685,\,426319785,\,523197486,\,524193786\}$,
   $\{427319685,\,524197386\}$\\
\hline
\end{tabular}
\caption{The critical permutations of size at most nine, braced into
orbits of the Klein four-group; \Cref{prop:critsmall} records the rows up
to $n = 8$. Each permutation listed has exactly $n+1$ minimal
inversions.}
\label{tab:crit}
\end{table}

\begin{table}[htbp]
\centering
\renewcommand{\arraystretch}{1.2}
\begin{tabular}{@{}r@{\quad}rr@{\qquad}rr@{\qquad}rr@{\qquad}rr@{}}
\hline
 & \multicolumn{2}{c}{indecomposable,} & \multicolumn{2}{c}{towers}
 & \multicolumn{2}{c}{not \psm} & \multicolumn{2}{c}{critical}\\
 & \multicolumn{2}{c}{\cb} & & & & & &\\
$n$ & \# & orbits & \# & orbits & \# & orbits & \# & orbits\\
\hline
$4$ & $2$        & $2$       & $2$   & $2$   & $0$        & $0$       & $0$  & $0$\\
$5$ & $16$       & $8$       & $11$  & $5$   & $5$        & $3$       & $5$  & $3$\\
$6$ & $124$      & $49$      & $38$  & $16$  & $86$       & $33$      & $13$ & $6$\\
$7$ & $1076$     & $320$     & $103$ & $34$  & $973$      & $286$     & $21$ & $7$\\
$8$ & $10\,384$  & $2\,812$  & $238$ & $76$  & $10\,146$  & $2\,736$  & $18$ & $6$\\
$9$ & $109\,486$ & $28\,043$ & $490$ & $142$ & $108\,996$ & $27\,901$ & $18$ & $6$\\
\hline
\end{tabular}
\caption{Indecomposable \cb permutations of size $n$: each
\psm one is a tower. Every column is stable under the Klein four-group,
whose orbits are counted alongside.}
\label{tab:census}
\end{table}

By \Cref{thm:covercount} and \Cref{thm:hered} we conclude:
\begin{corollary}\label{cor:converse}
Every \prig permutation is \psm.
\end{corollary}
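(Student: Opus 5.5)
We argue by contraposition. Suppose $\sigma$ is \prig but not \psm. Since $\PA$ is closed under pattern containment by \eqref{eq:PAhered}, the complement of $\PA$ is upward closed in the pattern order. Its minimal elements are exactly the critical permutations of \Cref{def:crit}, so $\sigma$ contains some critical pattern $\tau\preceq\sigma$. It therefore suffices to show two things:
\begin{enumerate}
\item every critical $\tau$ is not \prig;
\item non-\prigity propagates upward from $\tau$ to $\sigma$.
\end{enumerate}

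For the first step we invoke \Cref{thm:covercount}. For a critical $\tau$ of size $m$, the cover graph $\cgr_\tau$ is connected and has $|\Cov(\tau)|\ge m+1$ edges. Hence \Cref{lem:countmm} gives $\acyc(\cgr_\tau)=0$ (the unique component contains a cycle), and
\[
   \dfr_\psub(\mm_\tau)\;\ge\;\#\Cov(\tau)-m\;\ge\;1 .
\]
So $\tau$ is not \prig.

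For the second step we apply \Cref{thm:hered} in its contrapositive form. Since $\tau\preceq\sigma$ and $\dfr_\psub(\mm_\tau)>0$, we get $\dfr_\psub(\mm_\sigma)>0$. This contradicts the pseudorigidity of $\sigma$.

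The only point needing care is the passage from ``not \psm'' to ``contains a critical pattern''. This follows from finiteness: take a minimal pattern of $\sigma$ lying outside $\PA$. All of the substantive work is already contained in \Cref{thm:covercount} (including the computer check for $n\le 8$) and in the heredity theorem, so no further obstacle is expected.
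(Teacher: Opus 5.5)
Your proposal is correct and is the paper's own argument: a non-\psm permutation contains a critical pattern, which is not \prig by \Cref{thm:covercount} together with \Cref{lem:countmm}, and \Cref{thm:hered} propagates the positive \pwd upward to the permutation containing it. You are also right that a critical pattern exists by taking a minimal-size pattern outside $\PA$; closure of $\PA$ under pattern containment is not needed for that step.
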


Together with \Cref{thm:forward} this concludes the proof of \Cref{thm:mainA}, our main result.

\subsection{Complements}\label{ssec:complements}

\begin{remark}[forest-like permutations and the coefficient of $q$]\label{rem:forest}
The permutations for which the graph $\cgr_\sigma$ is acyclic were
characterized in \cite{MR2376109}*{Theorem 1.1} in terms of (a modified form of) pattern avoidance.\footnote{The
conventions in \cite{MR2376109} are opposite to ours: their permutation is $\wo\sigma$.}
Combined with the criterion of \cite{MR2264071}*{Proposition 2}, this condition
also characterizes the Schubert varieties that are locally factorial.

Also, the $q$ coefficient of the Kazhdan--Lusztig polynomials $P_{e,w}(q)$
is $\cyc(\cgr_w)$ (see \cite{MR1635681}, \cite{MR1317469}*{Proposition 1.7},
\cite{MR1279583}*{Corollary 1.8}). In particular it vanishes exactly when
$\cgr_w$ is a forest.

Neither class coincides with the \psm permutations: $3412$ is \psm but $\cgr_{3412}$ is a
four-cycle, while $\cgr_{456312}$ is a tree but $456312$ is not \psm (\Cref{rem:PApattern}).
Likewise, the permutations with $P_{e,w}(1) \le 2$, i.e., with $P_{e,w}(q) = 1$ or $1+q^h$ for some $h\ge1$,
were characterized in \cite{MR2515773}*{Theorem 1.1}, and purely in terms of pattern avoidance
in \cite{MR2515773}*{Theorem A.1} (in the appendix by Billey and Weed);
the tower quotients $463152$ and $526413$ are among the patterns excluded there, so the \psm permutations do
not form that class either.
\end{remark}

\begin{remark}[a pattern-avoidance reformulation]\label{rem:PApattern}
Combining \Cref{prop:PApf} with \eqref{eq:PAhered} and \Cref{thm:covercount} we get
\[
   \sigma \in \PA \iff \cgr_\tau \text{ is a pseudoforest for every pattern $\tau$ of $\sigma$.}
\]
Indeed, if $\sigma \notin \PA$, let $\tau$ be a pattern of $\sigma$ of minimal size outside $\PA$;
then $\tau$ is critical, so that $\cgr_\tau$ is
connected with $|\tau|+1$ edges by \Cref{thm:covercount}.
The condition on $\cgr_\sigma$ alone is not sufficient: for $\sigma = 456312$ the graph $\cgr_\sigma$ is a tree,
while deleting the entry $3$ gives $34512 \notin \PA$ (cf.\ \Cref{rem:psm45}), whose cover graph is $K_{2,3}$.
\end{remark}

\subsection*{Acknowledgements}
The first named author is indebted to Alberto M\'inguez for discussions in the early stages of this project
and to the University of Vienna for its hospitality.
The second named author is supported by the European Union (ERC, Function Fields, 101161909)
and is the Dr.~A.~Edward Friedmann Career Development Chair in Mathematics.

AI tools (Anthropic's Claude and OpenAI's ChatGPT) were used extensively in the preparation of this paper,
for exploratory computations, for drafting and checking arguments, and for adversarial reviews of successive versions.
All results were verified by the authors, who take full responsibility for the content.

%\bibliographystyle{amsalpha}
%\bibliography{../../Bibfiles/all}
%\end{document}
\def\cprime{$'$}

\end{document}